\newtheorem{theorem}{Theorem}
\newtheorem{definition}{Definition}
\newtheorem{lemma}{Lemma}
\newtheorem{proposition}[theorem]{Proposition}
\newtheorem{remark}{Remark}
\let\e=\varepsilon
\let\p=\partial
\let\O=\Omega
\numberwithin{equation}{section}
\let\hide\iffalse
\let\unhide\fi
\newcommand{\R}{\mathbb{R}}
\renewcommand{\S}{\mathbb{S}}
\newcommand{\be}{\begin{equation}}
\newcommand{\bm}{\begin{multline}}
\newcommand{\ee}{\end{equation}}
\newcommand{\dd}{\mathrm{d}}
\newcommand{\xb}{x_{\mathbf{b}}}
\newcommand{\tb}{t_{\mathbf{b}}}
\newcommand{\vb}{v_{\mathbf{b}}}
\newcommand{\Bes}{\begin{eqnarray*}}
\newcommand{\Ees}{\end{eqnarray*}}
\newcommand{\Be}{\begin{equation} }
\newcommand{\Ee}{\end{equation}}
\def\p{\partial}
\def\O{\Omega}
\def\R{\mathbb{R}}
\def\B{\begin{equation}}
\def\E{\end{equation}}
\def\BN{\begin{eqnarray*}}
\def\EN{\end{eqnarray*}}
\begin{document}

\title{Local Well-posedness of Vlasov-Poisson-Boltzmann Equation with Generalized Diffuse Boundary Condition}

 \author{Hongxu Chen}

 \author{Chanwoo Kim}

 \author{Qin Li}

 \address{Department of Mathematics, University of Wisconsin, Madison, WI 53706 USA}

 \begin{abstract}
The Vlasov-Poisson-Boltzmann equation is a classical equation governing the dynamics of charged particles with the electric force being self-imposed. We consider the system in a convex domain with the Cercignani-Lampis boundary condition. We construct a uniqueness local-in-time solution based on an $L^\infty$-estimate and $W^{1,p}$-estimate. In particular, we develop a new iteration scheme along the characteristic with the Cercignani-Lampis boundary for the $L^\infty$-estimate, and an intrinsic decomposition of boundary integral for $W^{1,p}$-estimate.
 \end{abstract}

\maketitle
\tableofcontents

 \date{\today}

\section{Introduction}
In this paper we study the Vlasov-Poisson-Boltzmann system, which is a classical model for describing the dynamics of dilute charged particles (such as plasma) with a self-imposed electric field (see ~\cite{GVPB,CKL} and reference therein). We denote $F(t,x,v)$ the phase-space-distribution function of charged particles at time $t$, location $x\in\Omega$, a bounded domain $\mathbb{R}^3$, with velocity $v\in\mathbb{R}^3$. The evolution of the system is described as:
\begin{equation}\label{eqn: VPB equation}
\partial_t F + v\cdot \nabla_x F - E\cdot \nabla_v F = Q(F,F),\ \
F|_{t=0} = F_0(x,v).
\end{equation}
The characteristics solves the following Hamilton ODEs
\begin{equation}
\dot{x} = v ,\quad\dot{v} = -E.\label{hamilton_ODE_1}
\end{equation}
The collision operator $Q$ on the right, as a functional of $F$, describes the binary collisions between particles and takes the form of
\begin{equation}\label{eqn: Q}
\begin{split}
 Q(F_1,F_2)(v)   &=Q_{\text{gain}}(F_1,F_2)(v)-Q_{\text{loss}} (F_1,F_2)(v)= Q_{\text{gain}}(F_1,F_2)-\nu(F_1)F_2 \\
    & :=\iint_{\mathbb{R}^3\times \mathbb{S}^2} B(v-u,\omega)F_1(u')F_2(v') d\omega du-\left(\iint_{\mathbb{R}^3\times \mathbb{S}^2} B(v-u,\omega)F_1(u) d\omega du\right)F_2(v) .
\end{split}
\end{equation}
In the collision process, momentum and energy are conserved, namely,
\begin{equation}\notag\label{eqn: conservation}
  u'+v'=u+v ,\quad |u'|^2+|v'|^2=|u|^2+|v|^2,
\end{equation}
where the post-velocities are denoted as
\begin{equation}\label{eqn: u' v'}
u'=u-[(u-v)\cdot \omega]\omega,\quad \quad v'=v+[(u-v)\cdot \omega]\omega.
\end{equation}
In ~\eqref{eqn: Q}, $B$ is called a collision kernel, and we use the hard potential model in this paper:
\[
B(v-u,\omega)=|v-u|^{\mathcal{K}}q_0\Big(\frac{v-u}{|v-u|}\cdot \omega\Big) ,\quad\text{with}\quad 0< \mathcal{K}\leq 1 ,\quad 0\leq q_0\Big(\frac{v-u}{|v-u|}\cdot \omega\Big)\leq C\Big|\frac{v-u}{|v-u|}\cdot \omega\Big|.
\]
In ~\eqref{eqn: VPB equation}, $E$ denotes the electrostatic field, and we consider a self-imposed electric field in this paper: namely, the charged particles themselves form a potential that in turn drives their own dynamics. This is in particular a relevant model for plasma particles without extra magnetic field. More specifically,
\begin{equation}\label{eqn: E}
  E(t,x)=-\nabla_x \phi(t,x) \,,
\end{equation}
with the electrostatic potential $\phi$ determined by the Poisson equation
\begin{equation}\label{eqn:Possion}
- \Delta_ x \phi (t,x) = \int_{\mathbb{R}^3} F(t,x,v) d v - \rho_0
 \text{ in }  \Omega
, \ \   \frac{\partial \phi}{\partial n}=0 \text{ on } \partial\Omega,
\end{equation}
where $\rho_0$ is a background constant charge density. We set $\rho_0$ as an average of the initial total mass:
\begin{equation}\label{eqn: Background density}
  \rho_0=\frac{1}{|\Omega|}\int_{\Omega\times \mathbb{R}^3} F(0,x,v)dvdx \,.
\end{equation}


A boundary condition of $F$ is determined by an interaction of the charged particles and a physical boundary. We denote a boundary of the phase space as $\gamma:=\{(x,v)\in \partial \Omega\times \mathbb{R}^3\}.$ Let $n=n(x)$ be the outward normal direction at $x\in \p\Omega$. We split the phase boundary into an incoming ($\gamma_-$) and outgoing ($\gamma_+$) set as:
\begin{equation}\label{eqn: incoming outgoing}
\gamma_\mp:=\{(x,v)\in \partial \Omega\times \mathbb{R}^3 :n(x)\cdot v\lessgtr 0\}\quad\text{or}\quad \gamma_\mp(x):=\{v\in \partial \Omega\times \mathbb{R}^3 :n(x)\cdot v\lessgtr 0\} \,.
\end{equation}
The boundary condition determines the distribution on $\gamma_-$, and describes how particles bounces back to the domain, which can be determined by a scattering kernel $R(u \rightarrow v;x,t)$ through a general balance law of
\begin{equation}\begin{split}\label{eqn:BC}
&F(t,x,v) |n(x) \cdot v|= \int_{\gamma_+(x)}
R(u \rightarrow v;x,t) F(t,x,u)
\{n(x) \cdot u\} d u, \quad \text{ on }\gamma_-
.
\end{split}
\end{equation}
Physically, $R(u\to v;x,t)$ represents the probability of a molecule striking in the boundary at $x\in\partial\Omega$ with velocity $u$ to be bounced back to the domain with velocity $v$ at the same location $x$ and time $t$. In this paper we use a model proposed by Cercignani and Lampis in~\cite{CIP,CL}. With two accommodation coefficients
\begin{equation}\label{eqn: r condition}
  0<r_\perp\leq 1,\quad 0<r_\parallel<2 ,
\end{equation}
the Cercignani-Lampis boundary condition (C-L boundary condition) can be written as
\begin{equation}\label{eqn: Formula for R}\begin{split}
&R(u \rightarrow v;x,t)\\
:=& \frac{1}{r_\perp r_\parallel (2- r_\parallel)\pi/2} \frac{|n(x) \cdot v|}{(2T_w(x))^2}
\exp\left(- \frac{1}{2T_w(x)}\left[
\frac{|v_\perp|^2 + (1- r_\perp) |u_\perp|^2}{r_\perp}
+ \frac{|v_\parallel - (1- r_\parallel ) u_\parallel|^2}{r_\parallel (2- r_\parallel)}
\right]\right)\\
& \times  I_0 \left(
 \frac{1}{2T_w(x)}\frac{2 (1-r_\perp)^{1/2} v_\perp u_\perp}{r_\perp}
\right).
\end{split}
\end{equation}
Here $T_w(x)$ is a wall temperature on the boundary and
\begin{equation*}
I_0 (y) := \pi^{-1} \int^{\pi}_0e^{y \cos \phi } d \phi\,.
\end{equation*}
In this formula, $v_\perp$ and $v_\parallel$ denote the normal and tangential components of the velocity respectively:
   \begin{equation}\label{eqn: def of vperppara}
   v_\perp= v\cdot n(x) ,\quad v_\parallel = v- v_\perp n(x)\,.
\end{equation}
Similarly $u_\perp= u\cdot n(x)$ and $u_\parallel = u- u_\perp n(x)$.

This model can be considered as a generalization of fundamental boundary conditions. For instance if we set $r_\perp=1$ and $r_\parallel=1$, the scattering kernel equals
\[R(u\to v;x,t)=\frac{2}{\pi (2T_w(x))^2}e^{-\frac{|v|^2}{2T_w(x)}} |n(x)\cdot v|.\]
This corresponds the so-called diffuse boundary condition:
\begin{equation}\label{eqn: diffuse}
 F(t,x,v)= \frac{2}{\pi (2T_w(x))^2}e^{-\frac{|v|^2}{2T_w(x)}}\int_{n(x)\cdot u>0} F(t,x,u)\{n(x)\cdot u\}du \text{ on }(x,v)\in\gamma_-\,.
\end{equation}
With $r_\perp=0,r_\parallel=0$, the scattering kernel is given by
  \[R(u\to v;x,t)=\delta(u-\mathfrak{R}_xv),\]
  with $\mathfrak{R}_xv=v-2n(x)(n(x)\cdot v)$. This corresponds the specular reflection boundary condition $F(t,x,v)=F(t,x,\mathfrak{R}_xv)$. With $r_\perp=0,r_\parallel=2$, the scattering kernel is given by
  \[R(u\to v;x,t)=\delta(u+v), \]
   which corresponds the bounce-back reflection reflection boundary condition $F(t,x,v)=F(t,x,-v)$. The C-L model is related to the Maxwell boundary condition since both models can describe the intermediate reflection law between diffuse and specular reflection boundary conditions. The comparison of both models is found in~\cite{HC}.

It is important to note that the C-L boundary condition satisfies the reciprocity property
\Be\label{eqn: reciprocity}
    R(u\to v;x,t)=R(-v\to -u;x,t) \frac{e^{-|v|^2/(2T_w(x))}}{e^{-|u|^2/(2T_w(x))}}\frac{|n(x)\cdot v|}{|n(x)\cdot u|}\,,
\Ee
and the normalization property (see the proof in appendix)
\Be\label{eqn: normalization}
\int_{\gamma_-(x)} R(u\to v;x,t) dv=1\,.
\Ee
We note that the normalization~\eqref{eqn: normalization} property immediately leads to the null flux condition for $F$:
\begin{equation}\label{eqn: Null flux condition}
  \int_{\mathbb{R}^3}F(t,x,v)\{n(x)\cdot v\}dv=0 ,\quad \text{for }x\in \partial\Omega.
\end{equation}
This guarantees the conservation of total mass:
\begin{equation}\label{eqn: Mass conservation}
  \int_{\Omega\times \mathbb{R}^3}F(t,x,v)dvdx=\int_{\Omega\times \mathbb{R}^3}F(0,x,v)dvdx \text{ for all }t\geq 0.
\end{equation}
We note that from the conservation of mass~\eqref{eqn: Mass conservation} and our choice~\eqref{eqn: Background density}, we satisfiy the compatibility condition of~\eqref{eqn:Possion} with the Neumann boundary condition.

The generality of the C-L model allows it to be applicable to many problems, including the rarefied gas flow studied in~\cite{KB,SF,SF1}; gas surface interaction model presented in~\cite{L,WR}; and rigid-sphere interaction model investigated in~\cite{CS,Gar}, to name a few. There also emerged many other derivations of C-L model besides the original one, and we refer interested readers to~\cite{C,CIP,CC}.

%

\subsection{Main result}
We now discuss the main result of this paper. Throughout this paper we assume the domain is $C^3$, which means for any $p\in \partial \Omega$, there exists sufficiently small $\delta_1>0$, $\delta_2>0$, and an one-to-one and onto $C^3$-map $\eta_p$ so that
\begin{align}\label{eqn: C3 map}
  \begin{split}
 \eta_p: \{ x_\parallel \in \mathbb{R}^2:|x_\parallel|<\delta_1\}& \rightarrow \partial \Omega \cap B(p,\delta_2), \\
      x_\parallel=(x_{\parallel,1},x_{\parallel,2})   &\mapsto \eta_p(x_{\parallel,1},x_{\parallel,2}).
  \end{split}
\end{align}
We further assume the domain is \textit{convex}: there exists $C_\eta>0$ and $C_\Omega>0$ such that at all $p\in \partial \Omega$, the Hessian of the corresponding $\eta_p$, defined in~\eqref{eqn: C3 map} are upper and lower bounded for all $x_\parallel$ in (\ref{eqn: C3 map}) as
\begin{equation}\label{eqn: convex}
  -C_\eta |\zeta|^2\leq \sum_{i,j=1}^2     \zeta_i \zeta_j \partial_i \partial_j \eta_p(x_\parallel)\cdot n(x_\parallel)\leq -C_\Omega |\zeta|^2 ,\quad\forall \zeta \in \mathbb{R}^2\,.
\end{equation}


%

We define the global Maxwellian using the maximum wall temperature as
\begin{equation}\label{eqn: def for weight}
\mu:=e^{-\frac{|v|^2}{2T_M}}\,,\ \text{ with }T_M:=\max_{x\in \partial \Omega}\{T_w(x)\}.
\end{equation}
We set
\Be\label{F_def}
F=\sqrt{\mu}f.
\Ee
Then $f$ satisfies
\begin{equation}\label{equation for f}
\begin{cases}
  \partial_t f+v\cdot \nabla_x f-\nabla_x \phi\cdot \nabla_v f+\frac{1}{2T_M}fv\cdot \nabla_x \phi=\Gamma(f,f)\\
  f(t=0,x,v) = f_0(x,v):= \mu^{-1/2}F_0\\
  f(t,x,v)|n(x)\cdot v||_{\gamma_-}=\frac{1}{\sqrt{\mu}}\int_{n(x)\cdot u>0}   R(u\to v;x,t)  f(t,x,u)\sqrt{\mu(u)}\{n(x)\cdot u\}du
  \end{cases}
\end{equation}
where the collision operator becomes
\begin{equation}\label{Def: Gamma}
\Gamma(f_1,f_2)=\Gamma_{\text{gain}}(f_1,f_2)-\nu(F_1)F_2/\mu=\frac{1}{\sqrt{\mu}}Q_{\text{gain}}(\sqrt{\mu}f_1,\sqrt{\mu}f_2)-\nu(F_1)f_2 ,
\end{equation}
and $\phi $ solves
\begin{equation}\label{equation for phi_f}
  -\Delta_x\phi(t,x)=\int_{\mathbb{R}^3}f(t,x,v)\sqrt{\mu(v)}dv-\rho_0 \text{ in } \Omega,\quad \frac{\partial \phi}{\partial n}=0 \text{ on } \partial\Omega.
\end{equation}

Now using the reciprocity property (\ref{eqn: reciprocity}) we derive that, for $(x,v)\in \gamma_-$,
\[
f(t,x,v)|n(x)\cdot v|=\frac{1}{\sqrt{\mu}}\int_{n(x)\cdot u>0}   R(-v\to -u;x,t) \frac{e^{-|v|^2/(2T_w(x))}}{e^{-|u|^2/(2T_w(x))}}  f(t,x,u)\sqrt{\mu(u)}\frac{|n(x)\cdot v|}{|n(x)\cdot u|}\{n(x)\cdot u\}du.
\]
Let us denote
\begin{equation}\label{eqn:probability measure}
d\sigma(u,v):=R(-v\to -u;x,t)du.
\end{equation}
Due to the normalization property~\eqref{eqn: normalization}, it is a probability measure in space $\gamma_+(x)$, and then the boundary condition in~\eqref{equation for f} writes:
\begin{equation}\label{eqn:C-L boundary condition in pro measure}
f(t,x,v)|_{\gamma_-}=e^{[\frac{1}{4T_M}-\frac{1}{2T_w(x)}]|v|^2}\int_{n(x)\cdot u>0} f(t,x,u)e^{-[\frac{1}{4T_M}-\frac{1}{2T_w(x)}]|u|^2}d\sigma(u,v) \,.
\end{equation}
We furthermore denote
\begin{equation}\label{Def: w_theta}
w_{\theta}:=e^{\theta |v|^2}\,,\quad \langle v\rangle:=\sqrt{|v|^2+1}\,.
\end{equation}
Now we state our main theorem of the paper:
\begin{theorem}\label{local_existence}
Assume $\Omega \subset \mathbb{R}^3$ is open bounded, and convex $C^3$ domain. A wall temperature $T_w(x)>0$ is defined on $x\in \partial \Omega$ and smooth. We assume that two accommodation coefficients of~\eqref{eqn: r condition} satisfy

\begin{equation}\label{eqn: Constrain on T}
\frac{\min_{x\in \partial \Omega}\{T_w(x)\}}{\max_{x\in \partial \Omega}\{T_w(x)\}}>\max\Big(\frac{1-r_\parallel}{2-r_\parallel},\frac{\sqrt{1-r_\perp}-(1-r_\perp)}{r_\perp}\Big)\,.
\end{equation}
Let
\begin{equation}\label{eqn: Constrain on theta}
0< \tilde{\theta}< \theta <\frac{1}{4\max_{x\in \partial \Omega}\{T_w(x)\}}.
\end{equation}

Assume
\begin{equation}
\| w_\theta f_0 \|_\infty < \infty, \label{eqn: w f_0}
\end{equation}
\begin{equation}
\| w_{\tilde{\theta}} \nabla_v f_0 \|_{L^{3}_{x,v}}<\infty,\label{eqn: assumption nabla v f} \end{equation}
\begin{equation}
\| w_{\tilde{\theta}} \alpha_{f_0, \epsilon }^\beta \nabla_{x,v } f_0 \|_{ {L}^{p } ( {\O} \times \R^3)} <\infty\quad \text{for} \quad 3< p < 6\,,\, 1-\frac{2}{p }< \beta< \frac{2}{3}.\label{eqn: alpha f0}
\end{equation}
 \hide

\begin{equation}\label{eqn: w f_0}
\| w_\theta f_0 \|_\infty < \infty\,,
\end{equation}		
\begin{equation}\label{eqn: assumption nabla v f}
\| w_{\tilde{\theta}} \nabla_v f_0 \|_{L^{3}_{x,v}}<\infty\,,
\end{equation}
\begin{equation}\label{eqn: alpha f0}
\| w_{\tilde{\theta}} \alpha_{f_0, \epsilon }^\beta \nabla_{x,v } f_0 \|_{ {L}^{p } ( {\O} \times \R^3)} <\infty\quad \text{for} \quad 3< p < 6\,,\, 1-\frac{2}{p }< \beta< \frac{2}{3}.
\end{equation}\unhide
Then there is a unique solution $f(t,x,v)$ to~\eqref{equation for f} in a time interval of $t \in [0,\bar{t}]$ with
\begin{equation}\label{eqn: bar t in thm}
\bar{t}=\bar{t}(\Vert w_\theta f_0\Vert_\infty,\| w_{\tilde{\theta}} \alpha_{f_0, \epsilon }^\beta \nabla_{x,v } f_0 \|_{ {L}^{p } ( {\O} \times \R^3)},\Vert w_{\tilde{\theta}}\nabla_v f_0\Vert_{L^3_{x,v}} ,r_\parallel,r_\perp,\Omega,T_M,\min(T_w(x))).
\end{equation}
Moreover, there are $\mathfrak{C}>0$ and $\lambda>0$, so that $f$ satisfies
\begin{equation}\label{infty_local_bound}
\sup_{0 \leq t \leq \bar{t}}\| w_{\theta}e^{-\mathfrak{C} \langle v\rangle^2 t} f  (t) \|_{\infty}\lesssim \| w_\theta f_0 \|_\infty  ,
\end{equation}
\Be\label{31_local_bound}
\sup_{0 \leq t \leq \bar{t}}\| \nabla_v f (t) \|_{L^3_xL^{1+ \delta}_v}< \infty ,
\Ee
\begin{equation}\label{W1p_local_bound}
\sup_{0 \leq t \leq \bar{t}}\Big\{ \| w_{\tilde{\theta}}e^{-\lambda t\langle v\rangle }\alpha_{f,\epsilon }^\beta \nabla_{x,v} f (t) \|_{p} ^p+ \int^t_0  |w_{\tilde{\theta}} e^{-\lambda s\langle v\rangle } \alpha_{f,\epsilon}^\beta \nabla_{x,v} f (t) |_{p,+}^p\Big\}< \infty .
\end{equation}
Here, $\alpha$ is a weight and its precise definition will be seen in~\eqref{alphaweight}.
\end{theorem}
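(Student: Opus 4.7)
My plan is to construct the solution via a Picard-type iteration. Starting from $f^0 \equiv f_0$, given $f^n$ I would solve the Poisson equation (\ref{equation for phi_f}) for $\phi^n$ with $f^n$ as the charge density, then define $f^{n+1}$ as the solution of the linearized Vlasov-Boltzmann problem
\begin{equation*}
\p_t f^{n+1} + v \cdot \nab_x f^{n+1} - \nab_x \phi^n \cdot \nab_v f^{n+1} + \tfrac{1}{2T_M} f^{n+1} v \cdot \nab_x \phi^n + \nu(F^n) f^{n+1} = \Gamma_{\text{gain}}(f^n, f^n),
\end{equation*}
retaining the C-L boundary condition (\ref{eqn:C-L boundary condition in pro measure}) with $f^{n+1}$ on both sides. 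The goal is to establish, uniformly in $n$, the three bounds (\ref{infty_local_bound}), (\ref{31_local_bound}), (\ref{W1p_local_bound}) on a short interval $[0, \bar t]$, and then to show contraction in a weaker norm so as to pass to the limit.

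The $L^\infty$ bound would be obtained by Duhamel along the backward characteristics driven by $E^n = -\nab \phi^n$. Whenever the trajectory meets $\p \O$ at $(\xb^1, \vb^1)$, the C-L boundary condition rewrites $f^{n+1}(\xb^1, \vb^1)$ as an average against the probability measure $d\sigma$ of (\ref{eqn:probability measure}) over an outgoing velocity $u^1$; from $(\xb^1, u^1)$ the trajectory continues backwards to a second hit, and so on. Iterating produces, after $k$ bounces, a $k$-fold velocity integral weighted by factors of $e^{[\frac{1}{4T_M} - \frac{1}{2T_w}]|u^i|^2}$. Hypothesis (\ref{eqn: Constrain on T}) on the ratio of wall temperatures is exactly what ensures each such factor is integrable against $d\sigma$ and yields a contractive constant, so finitely many bounces make the ``no-return'' remainder arbitrarily small. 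The time-dependent weight $e^{-\mathfrak{C} \langle v\rangle^2 t}$ absorbs the $v$-growth of $v\cdot \nab_x \phi^n$ in the weight equation and of the collision frequency $\nu$.

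The bound (\ref{31_local_bound}) on $\nab_v f$ is the intermediate input needed to control $\nab_x E$ in $L^p$ through elliptic regularity applied to (\ref{equation for phi_f}), which is in turn the source for the $W^{1,p}$ estimate. For (\ref{W1p_local_bound}) I would differentiate (\ref{equation for f}) in $(x,v)$, multiply by $|\alpha^\beta \nab f|^{p-1}$, and integrate; the interior terms are controlled because the weight $\alpha$, defined in (\ref{alphaweight}), is quasi-invariant along the Hamiltonian flow and the range $1-2/p<\beta<2/3$ handles Jacobian blow-up at grazing. The delicate point is the differentiated boundary condition: derivatives of (\ref{eqn:C-L boundary condition in pro measure}) produce kernel-derivatives that are singular at $n\cdot v=0$ through the factor $v_\perp / r_\perp$ in the exponential and through the Bessel function $I_0$. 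The \emph{intrinsic decomposition} splits the differentiated boundary integral into (i) a piece where the derivative falls on a Gaussian-like factor, controlled pointwise by $|u|$ absorbed into $\sqrt{\mu}$, (ii) a piece where the derivative falls on the outgoing argument of $f$, treated by integration by parts against the probability measure $d\sigma$, and (iii) a residual grazing piece absorbed precisely by $\alpha^\beta$. Together with the outgoing trace contribution $|\cdot|_{p,+}$, this yields (\ref{W1p_local_bound}).

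With these uniform a priori bounds, the difference $f^{n+1}-f^n$ satisfies the same linear system with sources quadratic in $(f^n,f^{n-1})$ and with electric difference $\nab(\phi^n-\phi^{n-1})$ controlled by $\|f^n-f^{n-1}\|$ via Poisson; a short-time $L^\infty$ energy argument yields a contraction on $[0,\bar t]$, producing both existence and uniqueness of $f$ with $\bar t$ as in (\ref{eqn: bar t in thm}). I expect the main obstacle to be the differentiated boundary analysis behind (\ref{W1p_local_bound}): unlike diffuse or specular reflection, the C-L kernel mixes normal and tangential velocity components nontrivially through $I_0$, so standard trace theory does not apply, and essentially all of the quantitative gain in the $W^{1,p}$ estimate rests on designing the intrinsic decomposition so that each surviving singular contribution lives exactly in the admissible range $1-2/p<\beta<2/3$.
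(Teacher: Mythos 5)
Your skeleton matches the paper's (iteration with lagged field and gain term, Duhamel along characteristics with $k$-fold boundary integrals for $L^\infty$, a weighted $W^{1,p}$ energy estimate with $\alpha^\beta$, then contraction), but two load-bearing steps are wrong or missing. First, in the $L^\infty$ argument you assert that the temperature condition \eqref{eqn: Constrain on T} makes each bounce integral ``contractive,'' so that finitely many bounces kill the remainder. That is not how the smallness arises and cannot be: $d\sigma$ is a probability measure, and the $k$-fold integral of the weights $e^{[\frac{1}{4T_M}-\frac{1}{2T_w}]|u_i|^2}$ is merely \emph{bounded} by a constant of the form $(2C_{T_M,\xi})^{2k}$, which grows in $k$. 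The decay in $k$ has to come from estimating the measure of $\mathbf{1}_{\{t_k>0\}}$, and for the C-L kernel the standard grazing-set decomposition fails because $e^{-|u_\parallel-(1-r_\parallel)v_\parallel|^2}$ gives no smallness for large $|u_\parallel|$ when $u_\parallel$ is close to $(1-r_\parallel)v_\parallel$. The actual role of \eqref{eqn: Constrain on T} is to force the effective reflection coefficients $\eta_{i,\parallel},\eta_{i,\perp}$ to be strictly less than $1$, so that in the ``bad'' cases the backward velocity must grow by $\delta^{-1}$ at each such bounce, and the accumulated Gaussian decay then beats the $(2C_{T_M,\xi})^{2k}$ growth. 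Your proposal contains no mechanism of this kind, and without it the remainder term after $k$ bounces is not small.

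Second, your closing contraction ``in $L^\infty$'' cannot work: the difference equation carries the source $(\nabla_x\phi^n-\nabla_x\phi^{n-1})\cdot\nabla_v f^n$, and $\nabla_v f^n$ is \emph{not} in $L^\infty$ precisely because of the boundary singularity $1/(n(\xb)\cdot\vb)$ in \eqref{deriv_singular}; the weight $\alpha^\beta$ only yields $\nabla_{x,v}f^n$ in weighted $L^p$. The paper instead proves an unweighted $L^3_xL^{1+\delta}_v$ bound on $\nabla_v f^n$ and runs the stability estimate in $L^{1+\delta}$, pairing $\nabla_x(\phi^n-\phi^{n-1})$ (controlled via Poisson and Sobolev embedding by $\|f^n-f^{n-1}\|_{1+\delta}$) against $\nabla_v f^n\in L^3_xL^{1+\delta}_v$ by H\"older with exponents $\frac{3(1+\delta)}{2-\delta},3,\frac{1+\delta}{\delta}$. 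Relatedly, for the $W^{1,p}$ boundary term your ``intrinsic decomposition'' by where the derivative falls, with an integration by parts against $d\sigma$, is not the decomposition that closes the estimate; the paper's key device is a \emph{$v$-dependent} grazing set $\gamma_+^{v,x,\epsilon}$ centered at $u_\parallel\approx\frac{2T_M(1-r_\parallel)}{2T_M+(T_w-2T_M)r_\parallel(2-r_\parallel)}v_\parallel$ (and similarly for $u_\perp$), which restores the $O(\epsilon)$ smallness that the standard grazing set loses for the C-L kernel, combined with a further split in $|v|$ and the trace lemma on the non-grazing part. Without that set your piece (iii) does not acquire the smallness needed to absorb the outgoing trace term into the left-hand side.
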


\begin{remark}
We do not assume the smallness of our initial data, but we need the small scale of the time $\bar{t}$. Setting $r_\perp=1$ and $r_\parallel=1$, this theorem also provides the first large date well-posedness of VPB system with the standard diffuse boundary condition (\ref{eqn: diffuse}). A small data result had been established in \cite{CKL}. We use the condition ~\eqref{eqn: Constrain on T} in the proof of the $L^\infty$ bound, which itself serves as an important a-priori estimate for the existence and the $W^{1,p}$ estimate ~\eqref{31_local_bound} and ~\eqref{W1p_local_bound}.
\end{remark}

\begin{remark}As far as the authors know, Theorem 1 provides the \textit{first} local in time solution to the Vlasov-Poisson-Boltzmann system in bounded domains with the Cercignani-Lampis boundary condition. The local in time result for the Boltzmann equation without field can be found in \cite{HC}.
\end{remark}		

\subsection{Strategy of the proof}
In this section we discuss the major difficulties and key ideas to overcome them.

Consider the simple Vlasov-Poisson (VP) equation without the collision:
\begin{equation}\label{eqn: Vlasov-Possion}
  \partial_t f+v\cdot \nabla_x f-\nabla_x \phi_f\cdot \nabla_v f=0\,.
\end{equation}
Suppose one has two solutions $f$ and $g$, then taking the difference we have:
\[\partial_t (f-g)+v\cdot \nabla_x f -\nabla_x \phi_f \cdot \nabla_v (f-g)= (\nabla_x \phi_f -\nabla_x \phi_g)\cdot \nabla_v g.\]
To show the uniqueness using the stability argument one essentially needs to control $\nabla_v g$. This is hard to achieve in general: it is a rather well-known result that transport equation in a bounded domain could potentially form singularities~\cite{ABDG,K}.


To better understand this singularity, we now denote $(X(s;t,x,v), V(s;t,x,v))$ the solution to a trajectory that starts with $(X(t;t,x,v), V(t;t,x,v)) =  (x,v)$. Following the Hamiltonian system~\eqref{hamilton_ODE_1}, we have
\Be\label{hamilton_ODE}
\frac{d}{ds} \left[ \begin{matrix}X(s;t,x,v)\\ V(s;t,x,v)\end{matrix} \right] = \left[ \begin{matrix}V(s;t,x,v)\\
- \nabla_x \phi
(s, X(s;t,x,v))\end{matrix} \right]  \ \ \text{for}   - \infty< s ,  t < \infty  .
\Ee
For $(t,x,v) \in \R  \times  \O \times \R^3$, define \textit{the backward exit time} $\tb(t,x,v)$:
\Be\label{tb}
\tb (t,x,v) := \sup \{s \geq 0 : X(\tau;t,x,v) \in \O \ \ \text{for all } \tau \in (t-s,t) \}\,,
\Ee
and the corresponding existing location and velocity:
\[\xb (t,x,v) := X(t-\tb(t,x,v);t,x,v)\quad\text{and}\quad \vb (t,x,v) := V(t-\tb(t,x,v);t,x,v)\,.
\]
Call the boundary condition $f|_{\gamma_-}=h$, then~\eqref{eqn: Vlasov-Possion} has an explicit solution
\Be\notag
f(t,x,v)= h(t-\tb(t,x,v), \xb(t,x,v), \vb(t,x,v))\,.
\Ee
This leads to a fact that the derivatives of $f$ may contain singularities from a direct computation of $ \nabla_x \xb(t,x,v)$ as
\Be\label{deriv_singular}
\nabla_{x} f(t,x,v)   \sim  \nabla_x \xb(t,x,v)
\sim \frac{1}{n(\xb(t,x,v)) \cdot \vb(t,x,v)}\,.
\Ee
The term blows up as $\vb$ becomes tangential to the surface at the backward exit time. This difficulty sits at the core of many boundary problems of Boltzmann-type equations.

To account for this difficulty, we follow a strategy of a kinetic weight \cite{GKTT,CKL}:
\begin{definition}[Kinetic Weight] For $\epsilon>0$, let $f$ solve~\eqref{eqn: Vlasov-Possion}, define
\Be\label{alphaweight}\begin{split}
\alpha_{f, \epsilon }(t,x,v) : =& \  
\chi \Big(\frac{t-\tb(t,x,v)+\epsilon}{\epsilon}\Big) |n(\xb(t,x,v)) \cdot \vb(t,x,v)| \\
&+ \Big[1- \chi \Big(\frac{t-\tb(t,x,v) +\epsilon}{\epsilon}\Big)\Big].
\end{split}\Ee
Here we use a smooth function $\chi: \R \rightarrow [0,1]$ satisfying
\Be\label{chi}
\begin{split}
\chi(\tau)  =0,  \     \tau\leq 0, \ \text{and} \  \
\chi(\tau)  = 1    ,  \  \tau\geq 1, \\
 \frac{d}{d\tau}\chi(\tau)  \in [0,4] \ \   \text{for all }   \tau \in \R.
\end{split}
\Ee
\end{definition}
Note that $\alpha_{f,\epsilon}(0,x,v)\equiv \alpha_{f_0,\epsilon}(0,x,v)$ is determined by the initial data $f_0$. There are two important features of this weight. First it is invariant under the transport operator, namely:
\begin{equation}\label{eqn: alpha invar}
  [\partial_t +v\cdot \nabla_x -\nabla_x \phi \cdot \nabla_v]\alpha_{f,\epsilon}(t,x,v)=0.
\end{equation}
Second, it picks $|n(\xb^f(t,x,v)) \cdot \vb^f(t,x,v)|$ for $t>\tb^f(t,x,v)$, which is exactly the singularity in (\ref{deriv_singular}).

%

\bigskip

The proof of the main theorem consists two parts: an $L^\infty$-estimate and a weighted $W^{1,p}$ estimate. These estimates are based on the uniform estimates of the following iterative sequence:
\begin{equation}\label{eqn: fm+1}
\partial_t f^{m+1}+v\cdot \nabla_x f^{m+1}-\nabla_x \phi^m\cdot \nabla_v f^{m+1}+\frac{1}{2T_M}f^{m+1}v\cdot \nabla_x \phi^m=\Gamma_{\text{gain}}(f^m,f^m)-\nu(F^m)f^{m+1},
\end{equation}
with boundary condition:
\begin{equation}\label{eqn: fm+1 BC}
  f^{m+1}(t,x,v)\Big|_{\gamma_-}=e^{[\frac{1}{4T_M}-\frac{1}{2T_w(x)}]|v|^2}\int_{n(x)\cdot u>0}f^m(t,x,u)e^{-[\frac{1}{4T_M}-\frac{1}{2T_w(x)}]|u|^2}d\sigma(u,v),
\end{equation}
and initial condition
\[f^{m+1}(0,x,v)=f(0,x,v).\]

Now we separately discuss the roadmap for getting these two types of estimates.

\smallskip

\textit{$L^\infty$ estimate:} For obtaining the $L^\infty$ estimate, we derive the trajectory formula and trace back along the characteristic of the particles til it either hits the boundary or the initial datum for $f^m$.

It may so happen that some particles bounce back and forth in the domain multiple times before tracing back to $t=0$ (say $k$ times), and then a $k$-layered integral will appear. This multiple integral includes $v_i$, the parameter we use to represent the integral variable at the $i$-th iteration with the boundary (see more precise definition in Definition \ref{Def:Back time cycle}), and the integral formula will be derived in Lemma~\ref{lemma: the tracjectory formula for f^(m+1)}. There are two main problems one need to handle here: 1. how to integrate the $k$-fold integral, and 2. what is the probability for a particle to interact with particles finite times?

To deal with the first difficulty amounts to carefully trace and compute the integration. In case of the diffuse boundary condition with constant temperature where $R=\frac{1}{2\pi}e^{-\frac{|v|^2}{2}}|n(x)\cdot v|=c_\mu e^{\frac{-|v|^2}{2}}|n(x)\cdot v|$, the computation can be simplified. According to~\eqref{eqn: fm+1 BC}, the boundary condition here is:
\[f=c_\mu \sqrt{\mu(v_{i-1})}\int_{n\cdot v_i>0}f(v_i)\sqrt{\mu(v_i)}|n\cdot v_i|\dd v_i\,.\]
Trace back further for the next interaction of $i+1$, one arrives at the final integral with respect to $v_i$ to be simply
\[\int_{n\cdot v_i>0}c_\mu \mu(v_i)|n\cdot v_i|dv_i.\]
Since the form of this $v_i$-integral is uniform for all $1\leq i\leq k$, the multiple integral can be treated by Fubini's theorem. Such lucky coincidence no longer holds true for the C-L boundary condition. From~\eqref{eqn:probability measure} the integrand is a function of both $v$ and $u$. As a result the $v_i$-integral is not uniform for all $i$, and the Fubini's theorem is not available. The multiple integral thus needs to be computed with the fixed order $v_k,v_{k-1},\cdots,v_1$, bringing extra computational difficulty. We now perform this integral order by order. To do so we start with $v_k$, the most out layer. The integral contains
\begin{equation}\label{eqn: over k}
\int_{n\cdot u_k>0}   e^{-[\frac{1}{4T_M}-\frac{1}{2T_w(x)}]|u_k|^2}  d\sigma(v_{k},v_{k-1})\,,
\end{equation}
with appropriate $d\sigma(v_k,v_{k-1})$ definition. This integral then becomes a function of $v_{k-1}$, which is then computed in the second outer layer. Using Lemma \ref{Lemma: abc} one can show that~\eqref{eqn: over k} can be approximately explicitly computed -- $e^{c|v_{k-1}|^2}$. We perform this iteratively over $i$ counting back from $k$ to $1$, and inductively compute this $k$-fold integral. This result is presented in Lemma~\ref{lemma: boundedness} .

To deal with the second difficulty, one needs to quantize the probability of a particle that interacts with the wall more than $k$ times, or equivalently, we need to give an estimate the measure $\mathbf{1}_{\{t_k>0\}}$. In~\cite{G,CKL} the authors studied the diffuse boundary condition in which they decompose the boundary as
\[\gamma_+^{\delta}=\{u\in \gamma_+:|n\cdot u|>\delta,|u|\leq \delta^{-1}\}\,,\quad\text{and}\quad \gamma_+\backslash \gamma_+^\delta\,,\]
and show that there can be only finite number of $v_j$ that is belong to $\gamma_+^\delta$. Meanwhile, the integration over $\gamma_+\backslash \gamma_+^\delta$ can be controlled by the small $\delta$. As $k$ increases, one obtains a larger power of $\delta$, leading to a decay factor for the measure of $\mathbf{1}_{\{t_k>0\}}$. When C-L condition is given, the strategy needs to be revised. In particular, the integrand in equation~\eqref{eqn: Formula for R} and~\eqref{eqn:probability measure} contains $e^{-|u_\parallel-(1-r_{\parallel})v_\parallel|^2}$, and even if $|u_\parallel|\gg 1$, $|u_\parallel-(1-r_{\parallel})v_\parallel|$ can still be small, meaning the integration over the $\gamma_+\backslash \gamma_+^\delta$ does not provide the smallness. One key observation here is to realize that depending on the distance between $u_\parallel$ and $(1-r_\parallel)v_\parallel$, cases can be discussed differently. Let $|u_\parallel|$ large enough, with $1-r_{\parallel}<1$. The bad case is when $|u_\parallel-(1-r_{\parallel})v_\parallel|<\delta^{-1}$, then $|v_\parallel|\geq |u_\parallel|+\delta^{-1}$. For example let $1-r_\parallel=1/2$, then if $|u_\parallel-\frac{1}{2}v_\parallel|<\delta^{-1}$, we take $|u_\parallel|\geq 3\delta^{-1}$ to have:
\[\frac{1}{2}|v_\parallel|>|u_\parallel|-\delta^{-1}>\frac{1}{2}|u_\parallel|+\frac{1}{2}\delta^{-1},\quad |v_\parallel|>|u_\parallel|+\delta^{-1}\,,\]
which brings up the value of $v_\parallel$. Consequently, if these `bad' cases $|u_\parallel-(1-r_\parallel)v_\parallel|<\delta^{-1}$ take place many times in the $k$-fold integral, a very big $v_i$ will be generated. Then the application of the boundary condition that provides a fast decay for big $|v_i|$ can be used to balance out all the growing factors, leading to a small measure of $\mathbf{1}_{t_k}>0$ in the end.

Consider this, we further decompose $\gamma_+$ into
\[\gamma_+^{\eta}=\{u\in \gamma_+: |n\cdot u|>\eta\delta,|u|\leq \eta\delta^{-1}\}\,,\quad\text{and}\quad \gamma_+\backslash\gamma_+^{\eta}\,,\]
where $\eta$ is selected to be a small number (depending on $r_\parallel$) so that
\[
|u_\parallel-(1-r_\parallel)v_\parallel|<\delta^{-1}\quad\Rightarrow\quad |v_\parallel|\geq |u_\parallel|+\delta^{-1}\,.
\]
We comment here that such property only works when the coefficient $1-r_\parallel<1$. In the real computation the wall temperature is involved in the boundary condition, thus the actual coefficient contains $T_w(x)$ and is more complicated than $1-r_\parallel$. In order to ensure such constant to be less than $1$, we impose the condition~\eqref{eqn: Constrain on T}. See Lemma~\ref{lemma: t^k} for detail.

\bigskip

\textit{$W^{1,p}$ estimate:} For getting the $W^{1,p}$ estimate~\eqref{W1p_local_bound}, we rely on the energy-type estimate for $\nabla_{x,v} f$ with weight $\alpha^{\beta}_{f,\epsilon}$, for which $\int_0^t \int_{\partial \Omega} \int_{n\cdot v< 0} |\alpha_{f,\epsilon}^\beta \nabla_{x,v}f|^p |n\cdot v| dvdS_x ds$ needs to be controlled. Using the fact that $\alpha_{f,\epsilon}(t,x,v)=|n(x)\cdot v|\text{ on }\gamma_-$, the singularity of~\eqref{deriv_singular} can be controlled by first setting:
\[\beta>\frac{p-2}{p},\quad |n\cdot v|^{p\beta-p+1}\in L_{\text{loc}}^1(\mathbb{R}^3)\,.\]
Then with some further calculation, shown in~\eqref{eqn: derivative bound on the boundary first part}~\eqref{eqn: derivative bound on the boundary second part}, we roughly need to estimate:
\begin{equation}\label{roughly}
\int_{\gamma_-}   |\alpha^\beta \partial f|^p \lesssim \int_{\gamma_-} e^{[\frac{1}{4T_M}-\frac{1}{2T_w(x)}]p|v|^2} \left(\int_{n\cdot u>0}   |\partial f(u)|e^{-[\frac{1}{4T_M}-\frac{1}{2T_w(x)}]p|u|^2} d\sigma(u,v)  \right)^p \,.
\end{equation}
To handle the integration of $u$, in~\cite{CKL}~\cite{GKTT}, the authors studied the diffusion boundary condition and proposed to split the term into the integration over the grazing set
\[\gamma_+^\epsilon=\{(x,u)\in \gamma_+: u\cdot n(x)<\epsilon \text{ or }|u|>1/\epsilon\}\quad\text{and}\quad \gamma_+\backslash\gamma_+^\epsilon\,.\]
However, this is not enough since we do not have direct smallness even for $u$ big (in the grazing set), and thus are not able to bound $\int_{\{(x,u)\in \gamma_+^\epsilon\}}$ by $\e$. To handle C-L boundary condition, we propose in this paper to add another layer of splitting. Besides the standard grazing/non-grazing sets, we also split the $\gamma_+$ integral into the grazing sets defined by $v$, approximately:
\[\gamma_+^{v,x,\epsilon}=\{(x,u)\in \gamma_+: u\cdot n(x)<\epsilon \text{ or }|u-v|>1/\epsilon\}\quad\text{and}\quad\gamma_+\backslash\gamma_+^{v,x,\epsilon}.\]
With this decomposition we have
\begin{align}
  \eqref{roughly}= & \int_{\gamma_-}e^{[\frac{1}{4T_M}-\frac{1}{2T_w(x)}]p|v|^2} \left(\int_{\{u:(x,u)\in \gamma_+^{v,x,\epsilon}\}}   |\partial f(u)|e^{-[\frac{1}{4T_M}-\frac{1}{2T_w(x)}]|u|^2} d\sigma(u,v)  \right)^p\ \notag\\
   & +\int_{\gamma_-}e^{[\frac{1}{4T_M}-\frac{1}{2T_w(x)}]p|v|^2} \left(\int_{\{u:(x,u)\in \gamma_+\backslash\gamma_+^{v,x,\epsilon}\}}   |\partial f(u)|e^{-[\frac{1}{4T_M}-\frac{1}{2T_w(x)}]|u|^2} d\sigma(u,v)  \right)^p \notag\\
   &\lesssim \int_{\gamma_-} e^{[\frac{1}{4T_M}-\frac{1}{2T_w(x)}]p|v|^2} \notag\\
   & \times \bigg[\left(\int_{\{u:(x,u)\in \gamma_+^{v,x,\epsilon}\}} e^{-[\frac{1}{4T_M}-\frac{1}{2T_w(x)}]q|u|^2}(\text{all terms in $d\sigma$~\eqref{eqn:C-L boundary condition in pro measure}})^q du\right)^{p/q}\int_{\gamma_+^{v,x,\epsilon}} |\alpha^\beta \partial f|^p \label{1}\\
   &+ \left(\int_{\{u:(x,u)\in \gamma_+\backslash\gamma_+^{v,x,\epsilon}\}} e^{-[\frac{1}{4T_M}-\frac{1}{2T_w(x)}]q|u|^2}(\text{all terms in $d\sigma$~\eqref{eqn:C-L boundary condition in pro measure}})^q du\right)^{p/q}\int_{\gamma_+\backslash\gamma_+^{v,x,\epsilon}} |\alpha^\beta \partial f|^p \bigg] \label{2}.
\end{align}

Now with the application of C-L boundary condition, one has the smallness in terms of $\epsilon$ for the integral over $\gamma_+^{v,x,\epsilon}$. And after direct computation one has the $L^1_v$ for $dv$ and thus bounds
\[\eqref{1}\lesssim O(\e)\int_{\gamma_+^{v,x,\e}}|\alpha^\beta\partial f|^p\leq O(\e)\int_{\gamma_+}|\alpha^\beta \partial f|^p.\]

On the set of $\gamma_+\backslash \gamma_+^{v,x,\epsilon}$, one still has $L^1_v$ integrand for $dv$ but the smallness is lost. We now recycle the standard grazing/non-grazing set definition, by further splitting the $v$-integration into $\mathbf{1}_{|v|\leq \epsilon^{-1}}$ and $\mathbf{1}_{|v|\geq \epsilon^{-1}}$. While the integration is naturally bounded by $O(\epsilon)$ when integrated on $|v|\geq \epsilon^{-1}$, the $|v|\leq \epsilon^{-1}$ case leads to $|u|\leq 2\epsilon^{-1}$, making $u$ falling in the non-grazing set $\gamma_+\backslash\gamma_+^{\epsilon/2}$. We now stand on the same footing as the situation discussed in~\cite{CKL,GKTT}. Apply Lemma~\ref{lemma: trace thm} we obtain an upper bound for the integration in the bulk (the terms not involving boundaries) and initial data, meaning:
\[\eqref{2}\lesssim O(\e)\int_{\gamma_+}|\alpha^\beta \partial f|^p+ \text{ initial condition }+\text{ bulk }\,.\]
The bulk part is treated similarly as in the proof of~\cite{CKL}. The entire proof for the weighted $W^{1,p}$ estimate is presented in Section 3.

The non-weighted $L^3_xL^{1+\delta}$ bound for the velocity derivative $\nabla_v f$ is discussed in Section 4. Characteristics and the energy-type estimate are the main tools used. The boundary terms are treated similarly as is done for the $W^{1,p}$ estimate, and the bulk terms are similar to those estimated in~\cite{CKL}. This estimate in the end leads to the $L^{1+\delta}$ stability $\Vert f-g\Vert_{L^{1+\delta}}\lesssim \Vert f_0-g_0\Vert_{L^{1+\delta}}$.

\subsection{Outline}
In section 2 we prove the $L^\infty$ bound for the sequence solution $f^m$. In section 3, we prove the weight $W^{1,p}$ estimate for the sequence solution $f^m$. Then we derive the $L_x^{3}L_{v}^{1+\delta}$ estimate and the $L^{1+\delta}$ stability for $f^m$ in section 4. The $L^{1+\delta}$ stability is the key to the well-poseness. In section 5 we combine all the estimates for the sequence solution $f^m$ and conclude the existence and uniqueness. More specifically, in Theorem \ref{local_existence}, the existence is given by Proposition \ref{Prop existence} and the uniqueness is given by Proposition \ref{Prop uniqueness}. In the appendix we prove some necessary estimates.

\section{$L^\infty$ estimate}
For any given constants $\mathfrak{C},\theta\in\mathbb{R}$, define a Gaussian-weighted solution:
\begin{equation}\label{eqn: hm+1}
h^{m+1}(t,x,v)=e^{\theta|v|^2}e^{-\mathfrak{C}t \langle v\rangle^2} f^{m+1}(t,x,v)\,,
\end{equation}
then according to~\eqref{eqn: fm+1}, we have:
\begin{equation}\label{eqn:formula of f^(m+1)}
\begin{split}
   &  \partial_t h^{m+1}+v\cdot \nabla_x h^{m+1}-\nabla_x \phi^m\cdot \nabla_vh^{m+1}+ \nu^mh^{m+1} \\
    & =e^{\theta|v|^2}e^{-\mathfrak{C}t \langle v\rangle^2} \Gamma_{\text{gain}}\left(\frac{h^m}{e^{-\mathfrak{C}\langle v\rangle^2 t}e^{\theta|v|^2}},\frac{h^m}{e^{-\mathfrak{C}\langle v\rangle^2 t}e^{\theta|v|^2}}\right)\,,
\end{split}
\end{equation}
equipped with boundary condition
\begin{equation}\label{eqn: BC for fm+1}
 h^{m+1}|_{\gamma_-(x)}=e^{\theta|v|^2} e^{-\mathfrak{C}t \langle v\rangle^2}e^{[\frac{1}{4T_M}-\frac{1}{2T_w(x)}]|v|^2}\int_{\gamma_+(x)} h^m(t,x,u)e^{-[\frac{1}{4T_M}-\frac{1}{2T_w(x)}]|u|^2} e^{-\theta|u|^2}e^{\mathfrak{C}t \langle u\rangle^2}  d\sigma(u,v)\,,
\end{equation}
where $\gamma_{\pm}$ is defined in~\eqref{eqn: incoming outgoing} and
\begin{equation}\label{eqn: num}
  \nu^m(t)=\mathfrak{C}\langle v\rangle^2+\nabla_x \phi^m\cdot \nabla_v \big(-\mathfrak{C}t\langle v\rangle^2+\theta|v|^2 \big)+\frac{1}{2T_M}v\cdot \nabla_x \phi^m+\nu(F^m)\,.
\end{equation}
This equation is linear for $h^{m+1}$ with $h^m$ serving as a source term, $\nu^m$ serving as a damping coefficient and $\phi^m$ serving as the electric field. The main purpose of this section is to show that $h^m$, and thus $f^m$ form a bounded sequence in $L^\infty$. More precisely:

\begin{proposition}\label{proposition: boundedness}
Let $h^{m+1}$ satisfy~\eqref{eqn: BC for fm+1} with the Cercignani-Lampis boundary condition~\eqref{eqn: BC for fm+1}. Assume the constraints for $\theta$ and $T_w$ hold true (\eqref{eqn: Constrain on theta} and~\eqref{eqn: Constrain on T}) and $\Vert h_0(x,v)\Vert_{L^\infty}<\infty$. Then if
\begin{equation}\label{eqn: fm is bounded}
  \sup_{i\leq m}\Vert h^i(t,x,v)\Vert_{L^\infty}\leq C_\infty \Vert h_0(x,v)\Vert_\infty, \quad t\leq t_\infty\,,
\end{equation}
we have
\begin{equation}\label{eqn: L_infty bound for f^m+1}
\sup_{0\leq t\leq t_\infty}\Vert h^{m+1}(t,x,v)\Vert_{L^\infty} \leq C_\infty\Vert h_0(x,v)\Vert_{L^\infty}\,.
\end{equation}
Here $C_\infty=C_\infty(T_M,\min\{T_w(x)\},\theta,r_\perp,r_\parallel,\Omega)$ is a constant and
\begin{equation}\label{eqn: t_1}
t\leq t_{\infty}=t_{\infty}( \Vert h_0(x,v)\Vert_{L^\infty},T_M,\min\{T_w(x)\},\theta,r_\perp,r_\parallel,\Omega)\ll 1\,.
\end{equation}
\end{proposition}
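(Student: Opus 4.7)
The strategy is to integrate \eqref{eqn:formula of f^(m+1)} along the characteristics of the Vlasov flow driven by the field $-\nabla_x\phi^m$, iterate the Cercignani--Lampis boundary condition \eqref{eqn: BC for fm+1} a fixed number $k$ of times, and control each resulting piece separately. Writing $(X^m(s),V^m(s))=(X^m(s;t,x,v),V^m(s;t,x,v))$ for the backward characteristic and $(t_{\mathbf{b}}^m,x_{\mathbf{b}}^m,v_{\mathbf{b}}^m)$ for its backward exit data, Duhamel's formula along this characteristic yields
\[
h^{m+1}(t,x,v)=\mathbf{1}_{\{t<t_{\mathbf{b}}^m\}}e^{-\int_0^t\nu^m\,\mathrm{d}\tau}h_0(X^m(0),V^m(0))+\mathbf{1}_{\{t\ge t_{\mathbf{b}}^m\}}e^{-\int_{t-t_{\mathbf{b}}^m}^t\nu^m\,\mathrm{d}\tau}h^{m+1}(t-t_{\mathbf{b}}^m,x_{\mathbf{b}}^m,v_{\mathbf{b}}^m)+\int_{\max\{0,t-t_{\mathbf{b}}^m\}}^t e^{-\int_s^t\nu^m\,\mathrm{d}\tau}S^m(s)\,\mathrm{d}s,
\]
where $S^m$ denotes the right-hand side of \eqref{eqn:formula of f^(m+1)}.

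For the initial and source contributions, the assumption \eqref{eqn: fm is bounded} combined with elliptic regularity for \eqref{equation for phi_f} bounds $\|\nabla_x\phi^m\|_\infty$ uniformly in $m$, so choosing $\mathfrak{C}$ large enough (depending on this bound and on $T_M$) one gets $\nu^m\ge\tfrac12\mathfrak{C}\langle v\rangle^2$ by inspection of \eqref{eqn: num}. The standard bilinear $L^\infty$ estimate $|\Gamma_{\mathrm{gain}}(f,f)|\lesssim\nu(v)\|w_\theta f\|_\infty^2$, together with the damping, bounds the source contribution by $C\,t_\infty(\sup_{i\le m}\|h^i\|_\infty)^2$, which is small as soon as $t_\infty$ is chosen small depending on $\|h_0\|_\infty$.

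The substantial work is on the boundary term. Applying \eqref{eqn: BC for fm+1} at $(x_{\mathbf{b}}^m,v_{\mathbf{b}}^m)$ replaces $h^{m+1}(t-t_{\mathbf{b}}^m,x_{\mathbf{b}}^m,v_{\mathbf{b}}^m)$ by an integral of $h^m$ against $d\sigma(u_1,v_{\mathbf{b}}^m)$; tracing the characteristic of $h^m$ backward from $(x_{\mathbf{b}}^m,u_1)$ and iterating generates the back-time cycle of Definition \ref{Def:Back time cycle} and a $k$-fold integral in variables $u_1,\dots,u_k$. Because the C--L kernel $d\sigma$ does not factorise in $(u,v)$, the layers must be integrated from the outermost inward: the $u_k$-integral is performed first, and Lemma \ref{lemma: boundedness} (built on the elementary Gaussian identity of Lemma \ref{Lemma: abc}) shows that integrating against the weight $e^{-[\frac{1}{4T_M}-\frac{1}{2T_w(x)}]|u_k|^2}$ produces a factor of the form $e^{c|u_{k-1}|^2}$ with exponent $c$ \emph{strictly smaller} than $\frac{1}{4T_M}-\frac{1}{2T_w(x)}$ at the next wall point; this is precisely what \eqref{eqn: Constrain on T} and \eqref{eqn: Constrain on theta} guarantee, so the inductive integration in $u_{k-1},\dots,u_1$ does not blow up.

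To turn this pointwise control of the $k$-fold integral into a smallness statement, one estimates the measure of $\{t_k>0\}$ via Lemma \ref{lemma: t^k}, whose proof uses the double decomposition of $\gamma_+$ described in the introduction: a grazing/non-grazing split $\gamma_+^\eta=\{|n\cdot u|>\eta\delta,\ |u|\le\eta\delta^{-1}\}$ and, on the complement, a further sub-split based on $|u_\parallel-(1-r_\parallel)v_\parallel|\lessgtr \delta^{-1}$. Each bounce in $\gamma_+^\eta$ costs a positive amount of time, bounding the number of such bounces by $\lesssim t_\infty/(\eta\delta)$; the other pieces are either small in $\delta$ or force $|v_i|\ge |u_i|+\delta^{-1}$, whose Gaussian decay in \eqref{eqn: BC for fm+1} is used to absorb the growth. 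Summing one obtains a bound of $(1/2)^k$ on the iterated boundary integral restricted to $\{t_k>0\}$ for $k=k(t_\infty)$ large, yielding $\|h^{m+1}(t)\|_\infty\le\tfrac12\sup_{i\le m}\|h^i\|_\infty+C\|h_0\|_\infty$, which closes the induction and gives \eqref{eqn: L_infty bound for f^m+1}. The main obstacle is the layer-by-layer bookkeeping: the Gaussian exponents and wall-temperature factors generated in Lemma \ref{lemma: boundedness} must combine with the $\delta$-smallness of Lemma \ref{lemma: t^k} to produce a genuine geometric decay in $k$, and this is the sole place where hypothesis \eqref{eqn: Constrain on T} is indispensable.
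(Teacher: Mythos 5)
Your proposal follows essentially the same route as the paper: Duhamel along the field-driven characteristics, $k$-fold iteration of the Cercignani--Lampis condition integrated layer by layer from the outermost variable inward via the Gaussian identities (Lemma \ref{lemma: boundedness} built on Lemma \ref{Lemma: abc}), and geometric decay of the $\{t_k>0\}$ contribution through the double decomposition of $\gamma_+$ and the velocity-growth mechanism (Lemma \ref{lemma: t^k}), closing the induction exactly as in the paper. The one imprecision is the claim that the gain term is bounded by $C\,t_\infty(\sup_{i\le m}\|h^i\|_\infty)^2$ outright: uniformly in $v$ the damped time integral $\int_0^t e^{-\frac{\mathfrak{C}}{2}(t-s)\langle v\rangle^2}\langle v\rangle^{\mathcal{K}}\,ds$ is only $O(\langle v\rangle^{\mathcal{K}-2})$ rather than $O(t)$, so one must first split $|v|\lessgtr N$ to obtain the paper's $O(N^{-2}+Nt)$ smallness --- a routine fix that does not affect the argument.
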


\begin{remark}
Two remarks are in line:
\begin{itemize}
\item The smallness only depends on the initial data, wall temperature, domain, the accommodation coefficients $r_{\parallel,\perp}$.
\item We will also trace the dependence of the constants $C_\infty$ and $t_\infty$ in the proof. $C_\infty$ will be explicitly defined in~\eqref{eqn: Cinfty}.
\end{itemize}
\end{remark}

This proposition implies the uniform-in-$m$ $L^\infty$ estimate for $h^{m}(t,x,v)$, and this allows us to further bound
\begin{equation}\label{eqn: theta'}
\sup_m\Vert w_{\theta'}f^m\Vert_\infty<\infty\,,
\end{equation}
which lays the foundation for later sections.

To show the proposition, we start with Lemma \ref{lemma: phi_inf} in which we control the acceleration term $\nabla_x \phi$. We then explicitly derive the formula using the information of the trajectory for $h^{m}$, as will be presented in Lemma~\ref{lemma: the tracjectory formula for f^(m+1)}. This will bring a $k$-fold integration for particles that collide with the boundary $k$-times before the final time. We will further show that all the terms in this integration (more precisely, all terms in~\eqref{eqn: Duhamal principle for case1}~\eqref{eqn: Duhamel principle for case 2}), can be bounded in Lemma~\ref{lemma: boundedness} and Lemma~\ref{lemma: t^k}. We then summarize the estimates and give the proof of the proposition.

We now present Lemma \ref{lemma: phi_inf} and Lemma \ref{lemma: the tracjectory formula for f^(m+1)}. Then we split this section into three subsections, the first subsection concludes the proof for Lemma \ref{lemma: the tracjectory formula for f^(m+1)}. We present both Lemma \ref{lemma: boundedness} and Lemma \ref{lemma: t^k} in the second subsection. In last subsection we combine the estimates in Lemma \ref{lemma: boundedness} and Lemma \ref{lemma: t^k} with the formula in Lemma \ref{lemma: the tracjectory formula for f^(m+1)} to conclude Proposition \ref{proposition: boundedness}.

We first give an estimate of the bound of $\Vert \nabla_x \phi^m\Vert_\infty$.
\begin{lemma}\label{lemma: phi_inf} For any $0<\delta<1,\theta<\frac{1}{4T_M}$, $0\leq t\leq 1$, if $(f,\phi)$ satisfy the condition~\eqref{equation for phi_f} then
\begin{equation}\label{}
  \Vert \phi(t)\Vert_{C^{1,1-\delta}(\bar{\Omega})}\leq C\Vert h(t)\Vert_{L^{\infty}}+C\rho_0\,.
\end{equation}
\end{lemma}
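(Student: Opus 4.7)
The plan is to reduce the $C^{1,1-\delta}$ estimate on $\phi$ to an $L^\infty$ bound on the right-hand side of the Poisson equation (\ref{equation for phi_f}), and then invoke standard elliptic regularity for the Neumann problem on the convex $C^3$ domain $\Omega$.

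First I would rewrite $f$ in terms of $h$ via (\ref{eqn: hm+1}): $f = e^{-\theta|v|^2 + \mathfrak{C}t\langle v\rangle^2}h$. Since $\sqrt{\mu(v)} = e^{-|v|^2/(4T_M)}$, this gives the pointwise bound
\[
\Big|\int_{\R^3} f(t,x,v)\sqrt{\mu(v)}\,dv\Big| \leq \Vert h(t)\Vert_{L^\infty}\int_{\R^3} e^{-(\theta+\frac{1}{4T_M}-\mathfrak{C}t)|v|^2 + \mathfrak{C}t}\,dv.
\]
The hypotheses $\theta<\frac{1}{4T_M}$ and $0\leq t\leq 1$, together with the implicit choice of $\mathfrak{C}$ small enough so that $\mathfrak{C} < \theta + \frac{1}{4T_M}$, ensure that the Gaussian integral is finite and bounded by a constant depending only on $\theta,T_M,\mathfrak{C}$. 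Consequently, by (\ref{equation for phi_f}),
\[
\Vert \Delta_x\phi(t)\Vert_{L^\infty(\Omega)} \leq C\Vert h(t)\Vert_{L^\infty} + \rho_0.
\]

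Next I would apply elliptic regularity for the Neumann problem on $\Omega$. The compatibility condition $\int_\Omega \Delta_x\phi(t,x)\,dx = 0$ is guaranteed by the choice (\ref{eqn: Background density}) together with mass conservation (\ref{eqn: Mass conservation}), so $\phi(t,\cdot)$ exists and is determined up to an additive constant; I fix this constant by imposing $\int_\Omega \phi(t,x)\,dx = 0$. For any $p>3$, the standard $W^{2,p}$ Neumann estimate combined with Poincar\'e's inequality yields
\[
\Vert \phi(t)\Vert_{W^{2,p}(\Omega)} \lesssim \Vert \Delta_x\phi(t)\Vert_{L^p(\Omega)} \leq |\Omega|^{1/p}\,\Vert \Delta_x\phi(t)\Vert_{L^\infty(\Omega)}.
\]
Finally, for the given $\delta\in(0,1)$, choose $p = 3/\delta$, so that the Morrey embedding $W^{2,p}(\Omega) \hookrightarrow C^{1,1-3/p}(\bar\Omega) = C^{1,1-\delta}(\bar\Omega)$ applies on the $C^3$ (in particular Lipschitz) domain $\Omega$. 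Chaining the three estimates yields the claim.

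The argument is essentially routine on this geometry; the only mildly delicate point is handling the $\Vert \phi\Vert_{L^p}$ term that appears in the general $W^{2,p}$ Neumann estimate, which forces the zero-mean normalization of $\phi$ and an application of Poincar\'e's inequality in order to control $\Vert\phi\Vert_{W^{2,p}}$ purely in terms of $\Vert\Delta_x\phi\Vert_{L^p}$. Once this normalization is in place, the $L^\infty$ control on the macroscopic density coming from the Gaussian weight immediately delivers the stated bound.
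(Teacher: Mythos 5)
Your proposal is correct and follows essentially the same route as the paper: bound the macroscopic source $\int f\sqrt{\mu}\,dv-\rho_0$ in $L^p$ using the Gaussian weight relating $f$ to $h$, apply the $W^{2,p}$ elliptic estimate for the Neumann problem, and conclude by the Morrey embedding with $p>3$ (your choice $p=3/\delta$ makes the exponent match exactly). Your extra remarks on the compatibility condition, the zero-mean normalization, and Poincar\'e only make explicit what the paper leaves implicit, so there is nothing to correct.
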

\begin{proof} For any $p>1$,
\begin{align*}
 \Big\Vert \int_{\mathbb{R}^3}f(t,x,v)\sqrt{\mu(v)}dv-\rho_0\Big\Vert_{L^p(\Omega)}  & \leq \Big\Vert\int_{\mathbb{R}^3}f(t,x,v)\sqrt{\mu(v)}dv\Big\Vert_{L^p(\Omega)}+\Vert\rho_0\Vert_{L^p(\Omega)} \\
   & \leq |\Omega|^{1/p}\left(\int_{\mathbb{R}^3}e^{-\theta|v|^2}e^{\mathfrak{C}\langle v\rangle^2 t}\sqrt{\mu(v)}dv\right)\Vert h(t)\Vert_{L^{\infty}}+\rho_0\,.
\end{align*}

By the elliptic estimate with condition~\eqref{eqn: Mass conservation}
\[
\Vert \phi(t)\Vert_{W^{2,p}(\Omega)}\leq C\Vert h(t)\Vert_{L^{\infty}}+\rho_0\,,
\]
which further leads to, according to the Morrey inequality for $p>3$, $\Omega\subset \mathbb{R}^3$, and $\partial \Omega$ being $C^1$:
\[
\Vert \phi(t)\Vert_{C^{1,1-3/p}(\Omega)}\leq C\Vert \phi(t)\Vert_{W^{2,p}(\Omega)}\leq C\Vert h(t)\Vert_{L^{\infty}}+C\rho_0\,.
\]
\end{proof}

We represent $h^{m+1}$ with the stochastic cycles defined as follows.
\begin{definition}\label{Def:Back time cycle}
Define an H\"{o}lder continuous characteristics which solves (since $\nabla \phi^m$ is quasi-Lipschitz continuous from Lemma~\ref{lemma: phi_inf}, this is possible, see also chapter 8 of~\cite{MA} for example)
\begin{equation}\label{eqn: trajectory for Xm}
  \frac{d}{ds}\left(
                \begin{array}{c}
                  X^m(s;t,x,v) \\
                  V^m(s;t,x,v) \\
                \end{array}
              \right)=\left(
                        \begin{array}{c}
                          V^m(s;t,x,v) \\
                          -\nabla_x \phi^m(s,X^m(s;t,x,v)) \\
                        \end{array}
                      \right)\,,
\end{equation}
and we trace back in time and determine the boundary-colliding time and location, namely:
\[t_{1}(t,x,v)=\sup\{s<t:X^m(s;t,x,v)\in \partial \Omega\},\quad x_{1}(t,x,v)=X^m\left(t_1(t,x,v);t,x,v\right)\,.\]
We then build the probability measure at $x=x_1$ as $d\sigma(v_1,V^m(t_1;t,x,v))$, supported on $\mathcal{V}_1=\gamma_+(x_1)$:
\[
\int_{\mathcal{V}_1}d\sigma(v_1,V^m(t_1;t,x,v))=1\,.
\]
%

Inductively, define $t_k$ and $x_k$ the time and position of a particle striking the boundary for the $k$-th time:
\[t_k(t,x,v,v_1,\cdots,v_{k-1})=\sup\{s<t_{k-1}:X^{m-k+1}(s;t_{k-1},x_{k-1},v_{k-1})\in \partial \Omega\},\]
\[x_k(t,x,v,v_1,\cdots,v_{k-1})=X^{m-k+1}\left(t_k(t,x,v,v_{k-1});t_{k-1}(t,x,v),x_{k-1}(t,x,v),v_{k-1}\right),\]
and correspondingly build probability measure $d\sigma(v_k,V^{m-k+1}(t_k;t_{k-1},x_{k-1},v_{k-1}))$ at $x_k$ over $\mathcal{V}_k=\gamma_+(x_k)$ for:
\[
\int_{\mathcal{V}_k}d\sigma(v_k,V^{m-k+1}(t_k;t_{k-1},x_{k-1},v_{k-1}))=1\,.
\]
For simplicity, we denote for all $l\leq m$:
\[V^{m-l}(s):=V^{m-l}(s;t_{l},x_{l},v_{l}),\quad X^{m-l}(s):=X^l(s;t_{l},x_{l},v_{l})\,.\]
\end{definition}

\begin{lemma}\label{lemma: the tracjectory formula for f^(m+1)}
Let $h^{m+1}$ satisfies~\eqref{eqn:formula of f^(m+1)} with the Cercignani-Lampis boundary condition~\eqref{eqn: BC for fm+1}, and assume~\eqref{eqn: fm is bounded} holds true, then with properly chosen $\mathfrak{C}$ and $\theta$, point-wise in $(t,x,v)$, one has: if $t_1\leq 0$:
\begin{equation}\label{eqn: Duhamal principle for case1}
\begin{split}
 |h^{m+1}(t,x,v)|\leq  &  |h_0\left(X^m(0),V^m(0)\right)| \\
    & +\int_0^t e^{-\int_s^t \frac{\mathfrak{C}}{2} \langle V^m(\tau)\rangle^2 d\tau} e^{\theta|V^m(s)|^2}   e^{-\mathfrak{C}s\langle V^m(s)\rangle^2}   \Gamma_{\text{gain}}^m(s)ds\,.
\end{split}
\end{equation}
If $t_1>0$, for arbitrary $k\geq 2$, one has:
\begin{equation}\label{eqn: Duhamel principle for case 2}
\begin{split}
    |h^{m+1}(t,x,v)|\leq & \int_{t_1}^t e^{-\int_s^t \frac{\mathfrak{C}}{2} \langle V^m(\tau)\rangle d\tau} e^{\theta|V^m(s)|^2}   e^{-\mathfrak{C}\langle V^m(s)\rangle s}   \Gamma_{\text{gain}}^m(s)ds\\
    & +   e^{\theta|V^m(t_1)|^2} e^{-\mathfrak{C}t_1\langle V^m(t_1)\rangle^2}e^{[\frac{1}{4T_M}-\frac{1}{2T_w(x_1)}]|V^m(t_1)|^2}\int_{\prod_{j=1}^{k-1}\mathcal{V}_j}H\,,
\end{split}
\end{equation}
with $H$ given by
\begin{equation}\label{eqn: formula for H}
\begin{split}
   & \sum_{l=1}^{k-1}\mathbf{1}_{\{t_l>0,t_{l+1}\leq 0\}}|h_0\left(X^{m-l}(0),V^{m-l}(0)\right)|d\Sigma_{l,m}^k(0)  \\
    & +\sum_{l=1}^{k-1}\int_{\max\{0,t_{l+1}\}}^{t_l} e^{\theta|V^{m-l}(s)|^2}    e^{-\mathfrak{C}s\langle V^{m-l}(s)\rangle^2}|\Gamma_{\text{gain}}^{m-l}(s)|d\Sigma_{l,m}^k(s)ds\\
    & +\mathbf{1}_{\{t_k>0\}}|h^{m-k+2}\left(t_k,x_k,V^{m-k+1}(t_k)\right)|d\Sigma_{k-1,m}^k(t_k)\,,
\end{split}
\end{equation}
where
\begin{equation}\label{eqn:trajectory measure}
\begin{split}
 d\Sigma_{l,m}^k(s)=&    \Big\{\prod_{j=l+1}^{k-1}d\sigma\left(v_j,V^{m-j+1}(t_j)\right)\Big\}\\
 & \Big\{e^{-\int_s^{t_l} \frac{\mathfrak{C}}{2} \langle V^{m-l}(\tau)\rangle^2 d\tau} e^{-\theta|v_l|^2} e^{\mathfrak{C}t_l\langle v_l \rangle^2 } e^{-[\frac{1}{4T_M}-\frac{1}{2T_w(x_l)}]|v_l|^2}
d\sigma(v_l,V^{m-l+1}(t_l))\Big\}\\
    & \Big\{\prod_{j=1}^{l-1} 2  e^{\mathfrak{C}(t_j-t_{j+1})\langle v_j\rangle^2}  e^{[\frac{1}{2T_w(x_j)}-\frac{1}{2T_w(x_{j+1})}]|v_j|^2}  d\sigma\left(v_j,V^{m-j+1}(t_j)\right)\Big\}.
\end{split}
\end{equation}
Here we use a notation
\begin{equation}\label{eqn: gamma^m}
\Gamma_{\text{gain}}^m(s):=\Gamma_{\text{gain}}\left(\frac{h^m(s,X^m(s),V^m(s))}{e^{\theta|V^m(s)|^2}e^{-\mathfrak{C}s\langle V^m(s)\rangle^2}},\frac{h^m(s,X^m(s),V^m(s))}{e^{\theta|V^m(s)|^2}e^{-\mathfrak{C}s\langle V^m(s)\rangle^2}}\right).
\end{equation}

\end{lemma}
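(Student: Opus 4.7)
The plan is to derive both formulas by combining the method of characteristics with iterated applications of the boundary condition \eqref{eqn: BC for fm+1}. Fix $(t,x,v)$ and follow the characteristic $(X^m(\tau;t,x,v),V^m(\tau;t,x,v))$ from Definition \ref{Def:Back time cycle}. Along this curve, equation \eqref{eqn:formula of f^(m+1)} reduces to the scalar ODE
\begin{equation*}
\frac{d}{d\tau}\bigl[h^{m+1}(\tau,X^m(\tau),V^m(\tau))\bigr] + \nu^m h^{m+1} = e^{\theta|V^m(\tau)|^2}e^{-\mathfrak{C}\tau\langle V^m(\tau)\rangle^2}\Gamma_{\text{gain}}^m(\tau).
\end{equation*}
First I choose $\mathfrak{C}$ large, using Lemma \ref{lemma: phi_inf} together with the standing hypothesis \eqref{eqn: fm is bounded} to control $\|\nabla_x\phi^m\|_\infty$, so that the damping $\nu^m$ in \eqref{eqn: num} satisfies $\nu^m \geq \tfrac{\mathfrak{C}}{2}\langle v\rangle^2$. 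This is possible because $\nu(F^m)\geq 0$ and the $\nabla_x\phi^m$-dependent terms grow no faster than $\langle v\rangle$.

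Case 1 ($t_1\leq 0$) is immediate: Duhamel's formula on $[0,t]$ with integrating factor $e^{\int_s^t \nu^m d\tau}$ reaches the initial datum $h_0(X^m(0),V^m(0))$, and substituting the lower bound on $\nu^m$ gives \eqref{eqn: Duhamal principle for case1}. For Case 2 ($t_1>0$), Duhamel on $[t_1,t]$ yields the source integral (the first line of \eqref{eqn: Duhamel principle for case 2}) together with the boundary term $e^{-\int_{t_1}^t\nu^m d\tau}\,h^{m+1}(t_1,x_1,V^m(t_1))$, in which $V^m(t_1)\in\gamma_-(x_1)$. Invoking \eqref{eqn: BC for fm+1} at this boundary point rewrites it as an integral over $v_1\in\gamma_+(x_1)$ against $d\sigma(v_1,V^m(t_1))$ with the external prefactor $e^{\theta|V^m(t_1)|^2}e^{-\mathfrak{C}t_1\langle V^m(t_1)\rangle^2}e^{[\frac{1}{4T_M}-\frac{1}{2T_w(x_1)}]|V^m(t_1)|^2}$ and internal weight $e^{-\theta|v_1|^2}e^{\mathfrak{C}t_1\langle v_1\rangle^2}e^{-[\frac{1}{4T_M}-\frac{1}{2T_w(x_1)}]|v_1|^2}$; the outer damping $e^{-\int_{t_1}^t\nu^m d\tau}\leq 1$ is simply discarded.

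Iterating this procedure inside each $v_j$-integral on the next characteristic $(X^{m-j},V^{m-j})$ over $[\max(0,t_{j+1}),t_j]$ produces, after $k-1$ steps, the sum \eqref{eqn: formula for H} with its three types of terminating contribution: initial data when some $t_{j+1}\leq 0$, a source integral on each intermediate segment, and the remaining boundary term at $t_k$. The weight \eqref{eqn:trajectory measure} emerges by telescoping: at each intermediate bounce $j\in\{1,\ldots,l-1\}$, the internal factor from the $j$-th boundary inversion combines with the external prefactor of the $(j+1)$-th inversion, in which $V^{m-j}(t_{j+1})$ appears. Using the short-time approximation $V^{m-j}(t_{j+1})\approx v_j$ (the error is $O(\|\nabla_x\phi^{m-j}\|_\infty(t_j-t_{j+1}))$ on $[0,t_\infty]$ with $t_\infty\ll 1$), the combination yields exactly $e^{\mathfrak{C}(t_j-t_{j+1})\langle v_j\rangle^2}e^{[\frac{1}{2T_w(x_j)}-\frac{1}{2T_w(x_{j+1})}]|v_j|^2}$ up to a bounded multiplicative error absorbed into the factor $2$. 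Dampings on intermediate segments are discarded via $e^{-\int\nu d\tau}\leq 1$, while the damping $e^{-\int_s^{t_l}\tfrac{\mathfrak{C}}{2}\langle V^{m-l}(\tau)\rangle^2 d\tau}$ on the terminating segment $l$, which carries the non-trivial contribution, is retained.

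The main obstacle is clean bookkeeping of the exponential weights: verifying that the factors from $e^{\theta|\cdot|^2}$, $e^{\mathfrak{C}\tau\langle\cdot\rangle^2}$ and $e^{\pm[\frac{1}{4T_M}-\frac{1}{2T_w}]|\cdot|^2}$ telescope correctly across consecutive bounces so that only the claimed factor in \eqref{eqn:trajectory measure} survives, and that the mismatch between $V^{m-j}(t_{j+1})$ and $v_j$ is genuinely controlled by the constant $2$ uniformly on $[0,t_\infty]$. This last point is the reason we need both $t_\infty$ small and $\|\nabla_x\phi^{m-j}\|_\infty$ uniformly bounded in $j$ and $m$, guaranteed by Lemma \ref{lemma: phi_inf} applied to the hypothesis \eqref{eqn: fm is bounded}.
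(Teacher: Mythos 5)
Your overall architecture — Duhamel along the characteristic with the damping lower bound $\nu^m\geq\tfrac{\mathfrak{C}}{2}\langle v\rangle^2$, then iterated insertion of the boundary condition \eqref{eqn: bc for hm+1} and induction on $k$ with the three terminating alternatives — is the same as the paper's. However, there is a genuine gap in how you handle the velocity mismatch at each intermediate bounce, and it sits exactly where the factor $2$ and the weight $e^{\mathfrak{C}(t_j-t_{j+1})\langle v_j\rangle^2}$ in \eqref{eqn:trajectory measure} come from.

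You claim that replacing $V^{m-j}(t_{j+1})$ by $v_j$ in the exponential weights produces "a bounded multiplicative error absorbed into the factor $2$." That is false: since $|V^{m-j}(t_{j+1})|^2\leq |v_j|^2+2C_{\phi^m}(t_j-t_{j+1})|v_j|+C_{\phi^m}^2(t_j-t_{j+1})^2$, the swap inside $e^{\theta|\cdot|^2}$, $e^{-\mathfrak{C}t_{j+1}\langle\cdot\rangle^2}$ and $e^{[\frac{1}{4T_M}-\frac{1}{2T_w}]|\cdot|^2}$ costs a factor of the form $e^{c\,C_{\phi^m}(t_j-t_{j+1})|v_j|}$, which is \emph{unbounded in $v_j$} no matter how small $t_\infty$ is, so it cannot be swallowed by a constant. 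This is precisely why the paper does \emph{not} discard the damping on the intermediate segments, contrary to your plan: it retains $e^{-\int_{t_{j+1}}^{t_j}\frac{\mathfrak{C}}{2}\langle V^{m-j}(\tau)\rangle^2 d\tau}\leq e^{(t_j-t_{j+1})\langle v_j\rangle(-\frac{\mathfrak{C}}{2}\langle v_j\rangle+\mathfrak{C}C_{\phi^m}(t_j-t_{j+1}))}$ and chooses $\mathfrak{C}$ large relative to $C_{\phi^m}$, $\theta$ and $C_T$ so that the negative term $-\tfrac{\mathfrak{C}}{2}\langle v_j\rangle$ dominates all the linear-in-$|v_j|$ error exponents (see \eqref{eqn: 5} and \eqref{eqn: C satisfy}); only the $(t_j-t_{j+1})^2$ remainders go into the constant $2$. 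Without retaining those intermediate dampings your iteration produces an extra uncontrolled factor $\prod_j e^{c(t_j-t_{j+1})|v_j|}$ in the measure, so you do not obtain \eqref{eqn:trajectory measure} as stated. The fix is to keep the damping on every segment, use it to absorb the linear error terms via the largeness of $\mathfrak{C}$, and only then multiply by $\tilde\mu(t_j,x_j,v_j)$ to produce $2\,e^{\mathfrak{C}(t_j-t_{j+1})\langle v_j\rangle^2}e^{[\frac{1}{2T_w(x_j)}-\frac{1}{2T_w(x_{j+1})}]|v_j|^2}$.
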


\subsection{Proof of Lemma \ref{lemma: the tracjectory formula for f^(m+1)}}
We present the proof of Lemma~\ref{lemma: the tracjectory formula for f^(m+1)}. Most of the proof is tedious but straightforward derivation.

\begin{proof}[\textbf{Proof of Lemma \ref{lemma: the tracjectory formula for f^(m+1)}}]
For given $\phi^m$, we choose small enough $t$ and big enough $\mathfrak{C}$:
\begin{equation}\label{eqn: C satisfy 1}
\nabla_x \phi^m\cdot \nabla_v \big(-\mathfrak{C}t\langle v\rangle^2+\theta|v|^2 \big)+\frac{1}{2T_M}v\cdot \nabla_x \phi^m\leq \frac{\mathfrak{C}}{2}\langle v\rangle^2\,,
\end{equation}
and thus, noting $\nu(F^m)\geq 0$, from~\eqref{eqn: num}, we have:
\begin{equation}\label{eqn: choose C}
\nu^m(t)\geq \frac{\mathfrak{C}}{2}\langle v\rangle^2\,.
\end{equation}

From~\eqref{lemma: phi_inf} we first denote
\begin{equation}\label{eqn: cphi}
C_{\phi^m}:=\sup_{0\leq i\leq m}\Vert \nabla_x \phi^i\Vert_\infty\lesssim \sup_{0\leq i\leq m}\Vert h^i\Vert_{L^\infty}<\infty\,,
\end{equation}
and
\begin{equation}\label{eqn: mu tilde}
\tilde{\mu}(t,x,v):=e^{-\theta|v|^2}e^{\mathfrak{C}t\langle v\rangle^2 }e^{-[\frac{1}{4T_M}-\frac{1}{2T_w(x)}]|v|^2}\,.
\end{equation}

If $t_1(t,x,v)\leq 0$, the particle has been following a fixed trajectory without scattering, then according to~\eqref{eqn:formula of f^(m+1)}, for $0\leq s\leq t$,
\begin{equation}\label{eqn: inte factor}
\frac{d}{ds}\left[e^{-\int_s^t \nu^m(\tau) d\tau} h^{m+1}(s,X^m(s),V^m(s))\right]=e^{-\int_s^t \nu^m(\tau) d\tau}e^{\theta|V^m(s)|^2}e^{-\mathfrak{C}s\langle V^m(s)\rangle^2 }\Gamma_{\text{gain}}^m(s)\,,
\end{equation}
then~\eqref{eqn: Duhamal principle for case1} follows by applying~\eqref{eqn: choose C}.

If $t_1(t,x,v)>0$, the trajectory of the particle can be split into a few discontinuous sections. In particular:
\begin{equation}\label{eqn: proof for the intial step in the Duhamul principle}
\begin{split}
   |h^{m+1}(t,x,v)\textbf{1}_{\{t_1>0\}}|\leq & |h^{m+1}\left(t_1,x_1,V^m(t_1)\right)|e^{-\int_{t_1}^t \frac{\mathfrak{C}}{2} \langle V^m(\tau) \rangle^2 d\tau}\\
    & +\int_{t_1}^t e^{-\int_s^t \frac{\mathfrak{C}}{2} \langle V^m(\tau)\rangle^2 d\tau} e^{\theta|V^m(s)|^2} e^{-\mathfrak{C}s\langle V^m(s)\rangle^2 } |\Gamma_{\text{gain}}^m(s)|ds\,.
\end{split}
\end{equation}
Note the first term of the RHS of~\eqref{eqn: proof for the intial step in the Duhamul principle} can be expressed by the boundary condition. In particular, for $1\leq k\leq m$, the boundary condition~\eqref{eqn: BC for fm+1} can be written as, using~\eqref{eqn: mu tilde}:
\begin{equation}\label{eqn: bc for hm+1}
  h^{m-k+2}(t_k,x_k,V^{m-k+1}(t_k))=\frac{1}{\tilde{\mu}(t_k,x_k,V^{m-k+1}(t_k))}\int_{\mathcal{V}_k}h^{m-k+1}(t_k,x_k,v_k)\tilde{\mu}(t_k,x_k,v_k)d\sigma(v_k,V^{m-k+1}(t_k))\,.
\end{equation}

We now use induction on $k$ to show~\eqref{eqn: Duhamel principle for case 2}. Directly applying~\eqref{eqn: bc for hm+1} with $k=1$, the first term of the RHS of~\eqref{eqn: proof for the intial step in the Duhamul principle} is bounded by
\begin{equation}\label{eqn: above term}
\frac{1}{\tilde{\mu}\left(t_1,x_1,V^m(t_1)\right)}\int_{\mathcal{V}_1}h^{m}(t_1,x_1,v_1)\tilde{\mu}(t_1,x_1,v_1)d\sigma(v_1,V^{m}(t_1))\,.
\end{equation}

Noting~\eqref{eqn: Duhamal principle for case1} and~\eqref{eqn: proof for the intial step in the Duhamul principle}, this term is to be controlled by:
\begin{equation*}
\begin{aligned}
\eqref{eqn: above term} & \leq \frac{1}{\tilde{\mu}(t_1,x_1,V^m(t_1))}\Big[\int_{\mathcal{V}_1}\mathbf{1}_{\{t_2\leq 0<t_1\}} e^{-\int_0^{t_1} \frac{\mathfrak{C}}{2}\langle V^{m-1}(\tau)\rangle^2 d\tau} h^{m}(0,X^{m-1}(0),V^{m-1}(0))\tilde{\mu}(t_1,x_1,v_1)d\sigma\left(v_1,V^{m}(t_1)\right)\\
&+\int_0^{t_1}\int_{\mathcal{V}_1}\mathbf{1}_{\{t_2\leq 0<t_1\}}e^{-\int_s^{t_1} \frac{\mathfrak{C}}{2} \langle V^{m-1}(\tau)\rangle^2 d\tau}  e^{\theta|V^{m-1}(s)|^2} e^{-\mathfrak{C}s\langle V^{m-1}(s)\rangle^2 } |\Gamma_{\text{gain}}^{m-1}(s)|\tilde{\mu}(t_1,x_1,v_1)d\sigma\left(v_1,V^{m}(t_1)\right) ds\\
&+\int_{\mathcal{V}_1}\mathbf{1}_{\{t_2>0\}}e^{-\int_{t_2}^{t_1} \frac{\mathfrak{C}}{2} \langle V^{m-1}(\tau)\rangle^2 d\tau} |h^{m}(t_2,x_2,V^{m-1}(t_2))\tilde{\mu}(t_1,x_1,v_1)d\sigma\left(v_1,V^{m}(t_1)\right)\\
&+\int_{t_2}^{t_1}\int_{\mathcal{V}_1}\mathbf{1}_{\{t_2> 0\}}e^{-\int_s^{t_1}\frac{\mathfrak{C}}{2} \langle V^{m-1}(\tau)\rangle^2 d\tau}  e^{\theta|V^{m-1}(s)|^2}  e^{-\mathfrak{C}s\langle V^{m-1}(s)\rangle^2 } |\Gamma_{\text{gain}}^{m-1}(s)|\tilde{\mu}(t_1,x_1,v_1)d\sigma\left(v_1,V^m(t_1)\right) ds \Big]\,,
\end{aligned}
\end{equation*}
showing the validity of~\eqref{eqn: Duhamel principle for case 2} $k=2$. For higher $k$, we use induction. Assume~\eqref{eqn: Duhamel principle for case 2} is valid for $k\geq 2$(induction hypothesis) we prove so for $k+1$. We express the last term in~\eqref{eqn: formula for H} using the boundary condition. In~\eqref{eqn: bc for hm+1}, since $\frac{1}{\tilde{\mu}(t_k,x_k,V^{m-k+1}(t_k))}$ depends on $v_{k-1}$, we move this term to the integration over $\mathcal{V}_{k-1}$ in~\eqref{eqn: Duhamel principle for case 2}. Using the second line of~\eqref{eqn:trajectory measure} with $l=k-1,s=t_k$, the integration over $\mathcal{V}_{k-1}$ is
\begin{equation}\label{eqn: wl}
\int_{\mathcal{V}_{k-1}}  \frac{e^{-\int_{t_k}^{t_{k-1}} \frac{\mathfrak{C}}{2} \langle V^{m-k+1}(\tau)\rangle^2 d\tau}}{\tilde{\mu}\left(t_k,x_k,V^{m-k+1}(t_k)\right)} \tilde{\mu}(t_{k-1},x_{k-1},v_{k-1})    d\sigma(v_{k-1},V^{m-k}(t_{k-1})).
\end{equation}

By Definition \ref{Def:Back time cycle} we have $|V^{m-k+1}(t_{k-1})|=|v_{k-1}|$. By~\eqref{eqn: trajectory for Xm} for $t_{k}\leq \tau\leq t_{k-1}$ we have
\begin{equation}\label{eqn: ..}
\langle v_{k-1}\rangle-C_{\phi^m}(t_{k-1}-\tau)\leq V^{m-k+1}(\tau)\rangle\leq \langle v_{k-1}\rangle+C_{\phi^m}(t_{k-1}-\tau),
\end{equation}
with $C_{\phi^m}$ is defined in~\eqref{eqn: cphi}. This leads to

\begin{equation}\label{eqn: ...}
 |V^{m-k+1}(t_k)|^2 \leq |v_{k-1}|^2+2C_{\phi^m}(t_{k-1}-t_k)|v_{k-1}|+(C_{\phi^m})^2(t_{k-1}-t_k)^2\,,
\end{equation}
and
\begin{equation}\label{eqn: jia}
  \langle V^{m-k+1}(t_k)\rangle^2\geq  \langle v_{k-1}\rangle^2-2C_{\phi^m}(t_{k-1}-t_k)\langle v_{k-1}\rangle\,,
  \end{equation}
  which further suggests:
  \begin{equation}
  e^{-\int_{t_k}^{t_{k-1}} \frac{\mathfrak{C}}{2} \langle V^{m-k+1}(\tau)\rangle^2 d\tau}\leq e^{(t_{k-1}-t_k)\langle v_{k-1}\rangle \Big(-\frac{\mathfrak{C}}{2}\langle v_{k-1}\rangle+\mathfrak{C}C_{\phi^m}(t_{k-1}-t_{k}) \Big)}\,.
\end{equation}

Considering the definition of $\tilde\mu$ in~\eqref{eqn: mu tilde}, and utilizing the inequalities above, we finally arrive at, taking $C_T:=\frac{1}{2\min\{T_w(x)\}}-\frac{1}{4T_M}$:

%
%
%
%

\begin{equation}\label{eqn: 5}
\begin{split}
&e^{-\int_{t_k}^{t_{k-1}} \frac{\mathfrak{C}}{2} \langle V^{m-k+1}(\tau)\rangle^2 d\tau}/\tilde{\mu}\left(t_k,x_k,V^{m-k+1}(t_k)\right)\\
   \leq & e^{-\mathfrak{C}\langle v_{k-1}\rangle^2 t_k+\theta|v_{k-1}|^2}e^{[\frac{1}{4T_M}-\frac{1}{2T_w(x_k)}]|v_{k-1}|^2} \\
    & \times \exp\left(\Big[-\frac{\mathfrak{C}}{2}\langle v_{k-1}\rangle+\mathfrak{C}C_{\phi^m}(t_{k-1}-t_k) +2\mathfrak{C}C_{\phi^m}t_k +2\theta C_{\phi^m}+2C_TC_{\phi^m} \Big](t_{k-1}-t_k)\langle v_{k-1}\rangle\right)\\
    & \times \exp\Big(\big[\theta C_{\phi^m}^2 +C_TC_{\phi^m}^2\big] (t_{k-1}-t_k)^2\Big)\,.
\end{split}
\end{equation}
Set $t=t(T,\theta,\phi^m)$ small enough such that
\[
\exp\Big(\big[\theta C_{\phi^m}^2 +C_TC_{\phi^m}^2\big] (t_{k-1}-t_k)^2\Big)\leq \exp\Big(\big[\theta C_{\phi^m}^2 +C_TC_{\phi^m}^2\big] t\Big)\leq 2\,,\quad \text{and}\quad  C_{\phi^m}t\leq \frac{1}{8}\,.
\]
Furthermore, we take $\mathfrak{C}$ to be big enough so that
\begin{equation}\label{eqn: C satisfy}
\begin{aligned}
&-\frac{\mathfrak{C}}{2}\langle v_{k-1}\rangle+\mathfrak{C}C_{\phi^m}(t_{k-1}-t_k) +2\mathfrak{C}C_{\phi^m}t_k +2\theta C_{\phi^m}+2C_TC_{\phi^m}\\
\leq &-\frac{\mathfrak{C}}{2}+\mathfrak{C}/8 +\mathfrak{C}/4 +2\theta C_{\phi^m}+2C_TC_{\phi^m}\leq -\frac{\mathfrak{C}}{8}  +2\theta C_{\phi^m}+2C_TC_{\phi^m}\leq 0\,.
\end{aligned}
\end{equation}
We simplify~\eqref{eqn: 5}:
\begin{equation}\label{eqn: 5 bound}
\eqref{eqn: 5}\leq 2e^{-\mathfrak{C}\langle v_{k-1}\rangle^2 t_k+\theta|v_{k-1}|^2}e^{[\frac{1}{4T_M}-\frac{1}{2T_w(x_k)}]|v_{k-1}|^2} \,.
\end{equation}
This leads to the boundedness of the integrand in~\eqref{eqn: wl} by:
\begin{equation}\label{eqn: ai}
\eqref{eqn: 5 bound}\times \tilde{\mu}(t_{k-1},x_{k-1},v_{k-1})=2 e^{[\frac{1}{2T_w(x_{k-1})}-\frac{1}{2T_w(x_k)}]|v_{k-1}|^2} e^{\mathfrak{C}(t_{k-1}-t_k)\langle v_{k-1}\rangle^2}\,,
\end{equation}
and in turn gives the estimate shown in~\eqref{eqn:trajectory measure} with $l=k-1$.

For the remaining term in~\eqref{eqn: bc for hm+1}, we split the integration over $\mathcal{V}_k$ into two terms as
\begin{equation}\label{eqn: sh}
\int_{\mathcal{V}_k}h^{m-k+1}(t_k,x_k,v_k) \tilde{\mu}(t_k,x_k,V^{m-k+1}(t_k)) d\sigma(v_k,v_{k-1})=\underbrace{\int_{\mathcal{V}_k}\mathbf{1}_{\{t_{k+1}\leq 0<t_k\}}}_{\eqref{eqn: sh}_1}+\underbrace{\int_{\mathcal{V}_k}\mathbf{1}_{\{t_{k+1}>0\}}}_{\eqref{eqn: sh}_2}.
\end{equation}
We use the similar bound of~\eqref{eqn: Duhamal principle for case1} and derive that
\begin{equation}\label{eqn: wy}
\begin{split}
   &\eqref{eqn: sh}_1\leq  \int_{\mathcal{V}_k} \mathbf{1}_{\{t_{k+1}\leq 0<t_k\}} e^{\int_0^{t_k} -\frac{\mathfrak{C}}{2} \langle V^{m-k}(\tau)\rangle^2 d\tau}h^{m-k+1}(0,X^{m-k}(0),V^{m-k}(0))\tilde{\mu}(t_k,x_k,v_k)d\sigma(v_k,V^{m-k+1}(t_k)) \\
    & +\int_0^{t_k}\int_{\mathcal{V}_k}\mathbf{1}_{\{t_{k+1}\leq 0<t_k\}}e^{\int_{s}^{t_k} -\frac{\mathfrak{C}}{2} \langle V^{m-k}(\tau)\rangle^2 d\tau}e^{-\mathfrak{C}\langle V^{m-k}(s)\rangle^2 s}e^{\theta |V^{m-k}(s)|^2}\Gamma_{\text{gain}}^{m-k}(s)\tilde{\mu}(t_k,x_k,v_k) d\sigma(v_k,V^{m-k+1}(t_k)) ds.
\end{split}
\end{equation}
In the first line of~\eqref{eqn: wy},
is consistent with the second bracket of the first line of~\eqref{eqn:trajectory measure} with $l=k,s=t_k$. In the second line of~\eqref{eqn: wy},
\[e^{\int_s^{t_k} -\frac{\mathfrak{C}}{2} \langle V^{m-k}(\tau)\rangle^2 d\tau}\tilde{\mu}(t_k,x_k,v_k)d\sigma(v_k,V^{m-k+1}(t_k))\]
is consistent with the second line of~\eqref{eqn:trajectory measure} with $l=k$.

From the induction hypothesis(~\eqref{eqn: Duhamel principle for case 2} is valid for $k$), we derive the integration over $\mathcal{V}_j$ for $j\leq k-1$ is consistent with the third line of~\eqref{eqn:trajectory measure}.
After taking integration $\int_{\prod_{j=1}^{k-1} \mathcal{V}_j}$ we change $d\Sigma_{k-1,m}^k$ in~\eqref{eqn:trajectory measure} to $d\Sigma_{k,m}^{k+1}$. Thus~\eqref{eqn: wy} becomes
\begin{equation}\label{eqn: qs}
  \begin{split}
     &\int_{\prod_{j=1}^{k} \mathcal{V}_j} \mathbf{1}_{\{t_{k+1}\leq 0<t_k\}}|h_0\left(X^{k+1}(0),v_k\right)|d\Sigma_{k,m}^{k+1}(0) \\
      & +\int_{\prod_{j=1}^{k} \mathcal{V}_j}\int_{0}^{t_k} e^{-\mathfrak{C}\langle V^{m-k}(s)\rangle^2 s}e^{\theta |V^{m-k}(s)|^2}\Gamma_{\text{gain}}^{m-k}(s)d\Sigma_{k,m}^{k+1}(s)ds.
  \end{split}
\end{equation}

Then we use the same estimate as~\eqref{eqn: proof for the intial step in the Duhamul principle} and derive

\begin{equation}\label{eqn: wyl}
  \begin{split}
    \eqref{eqn: sh}_2  &\leq  \int_{\mathcal{V}_k}\mathbf{1}_{\{t_{k+1}>0\}}e^{\int_{t_{k+1}}^{t_k} -\frac{\mathfrak{C}}{2} \langle V^{m-k}(\tau)\rangle^2 d\tau}h^{m+1}\left(t_{k+1},x_{k+1},V^{m-k}(t_{k+1})\right)\tilde{\mu}(t_k,x_k,v_k)d\sigma\left(v_k,V^{m-k+1}(t_k)\right)\\
      & +\int_{t_{k+1}}^{t_k}\int_{\mathcal{V}_k}\mathbf{1}_{\{t_{k+1}> 0\}}e^{\int_{s}^{t_k} -\frac{\mathfrak{C}}{2}\langle V^{m-k}(\tau)\rangle^2 d\tau}e^{-\mathfrak{C}\langle V^{m-k}(s)\rangle s}e^{\theta |V^{m-k}(s)|^2}\Gamma_{\text{gain}}^{m-k}(s)\tilde{\mu}(t_k,x_k,v_k) d\sigma(v_k,V^{m-k+1}(t_k)) ds.
  \end{split}
\end{equation}
Similarly as~\eqref{eqn: qs}, after taking integration over  $\int_{\prod_{j=1}^{k-1}\mathcal{V}_j}$~\eqref{eqn: wyl} is
\begin{equation}\label{eqn: nxh}
\begin{split}
   &  \int_{\prod_{j=1}^{k} \mathcal{V}_j} \mathbf{1}_{\{t_{k+1}>0\}}|h^{m-k+1}\left(t_{k+1},x_{k+1},V^{m-k}(t_k)\right)|d\Sigma_{k,m}^{k+1}(t_{k+1})\\
    & +\int_{\prod_{j=1}^{k} \mathcal{V}_j} \int_{t_{k+1}}^{t_k}e^{-\mathfrak{C}\langle V^{m-k}(s)\rangle^2 s}e^{\theta |V^{m-k}(s)|^2}\Gamma_{\text{gain}}^{m-k}(s)d\Sigma_{k,m}^{k+1}(s) ds.
\end{split}
\end{equation}

From~\eqref{eqn: nxh}~\eqref{eqn: qs}, the summation in the first and second lines of~\eqref{eqn: formula for H} extends to $k$. And the index of the third line of~\eqref{eqn: formula for H} changes from $k$ to $k+1$.
For the rest terms with index $l\leq k-1$, we haven not done any change to them in the previous step. Thus their integration are over $\prod_{l=1}^{k-1} \mathcal{V}_j$. We add $\int_{\mathcal{V}_k}d\sigma(v_k,V^{m-k+1}(t_k))=1$ to all of them, so that all the integrations are over $\prod_{l=1}^k \mathcal{V}_j$ and we change $d\Sigma_{l,m}^{k-1}$ to $d\Sigma_{l,m}^k$ by
\[d\Sigma_{l,m}^{k}=d\sigma\left(v_k,V^{m-k+1}(t_k)\right)d\Sigma_{l,m}^{k-1}.\]
Therefore, the formula~\eqref{eqn: formula for H} is valid for $k+1$ and we derive the lemma.
\end{proof}

As the lemma implies, to have $L_\infty$ bound of $h^{m+1}$, it is crucial to obtain an estimate of $H$ that is controlled in~\eqref{eqn: formula for H}. It is rather clear that the first two terms in~\eqref{eqn: formula for H} include all finite collisions $<k$ in finite time, while the last term collects all trajectories whose corresponding particles collide with boundaries more than $k$ times within $t$. These two types of estimates will be obtained in Lemma~\ref{lemma: boundedness} and Lemma~\ref{lemma: t^k} respectively in the next subsection. Namely we need only boundedness for the first two terms, but need decaying in $k$ for the third, in which we essentially need to show the chance for a particle to collide with boundaries more than $k$ times within a small time window $t$ is very small.

\subsection{$k$-fold integral}

As a preparation, we first define:\begin{equation}\label{eqn: def of r}
r_{max}:=\max(r_\parallel(2-r_\parallel),r_\perp)\,,\quad r_{min}:=\min(r_\parallel(2-r_\parallel),r_\perp)\,.
\end{equation}
Then immediately, one has $1\geq r_{max}\geq r_{min}>0$. We then define
\begin{equation}\label{eqn: xi}
\xi:=\frac{1}{4T_M\theta}>1\,,
\end{equation}
from~\eqref{eqn: hm+1} considering $\theta<\frac{1}{4T_M}$. In the calculation of the $k$-fold integration over $\prod_{j=1}^{k}\mathcal{V}_j$, we inductively use the following notations:
\begin{equation}\label{eqn: definition of T_p}
T_{l,l}=\frac{2\xi}{\xi+1}T_M\,,    \quad T_{l,l-1}=r_{min}T_M +(1-r_{min})T_{l,l}\,, \cdots,\quad  T_{l,1}= r_{min}T_M +(1-r_{min})T_{l,2}\,,
\end{equation}
and thus naturally for $1\leq i\leq l$, we have
\begin{equation}\label{eqn: formula of Tp}
T_{l,i}=\frac{2\xi}{\xi+1}T_M+(T_M-\frac{2\xi}{\xi+1}T_M)[1-(1-r_{min})^{l-i}]\,.
\end{equation}

Moreover, we will use
\begin{equation}\label{eqn: big phi}
\begin{split}
 d\Phi_{p,m}^{k,l}(s):=  & \Big\{\prod_{j=l+1}^{k-1}d\sigma(v_j,v_{j-1})\Big\}\\
 &\times \Big\{e^{-\int_s^{t_l} \frac{\mathfrak{C}}{2} \langle V^{m-l}(\tau)\rangle^2 d\tau} e^{-\theta|v_l|^2} e^{\mathfrak{C}t_l\langle v_l \rangle^2 } e^{-[\frac{1}{4T_M}-\frac{1}{2T_w(x_l)}]|v_l|^2}
d\sigma(v_l,V^{m-l+1}(t_l))\Big\}  \\
    & \times \Big\{\prod_{j=p}^{l-1} 2  e^{\mathfrak{C}(t_j-t_{j+1})\langle v_j\rangle^2}  e^{[\frac{1}{2T_w(x_j)}-\frac{1}{2T_w(x_{j+1})}]|v_j|^2}  d\sigma\left(v_j,V^{m-j+1}(t_j)\right)\Big\},
\end{split}
\end{equation}
and
\begin{equation}\label{eqn: upsilon}
d\Upsilon_{p}^{p'}:=\{\prod_{j=p}^{p'}  2  e^{\mathfrak{C}(t_j-t_{j+1})\langle v_j\rangle^2}  e^{[\frac{1}{2T_w(x_j)}-\frac{1}{2T_w(x_{j+1})}]|v_j|^2}  d\sigma(v_j,v_{j-1})\},
\end{equation}
to simplify the notation. Note that if $p=1$, $d\Phi_{1,m}^{k,l}(s)=d\Sigma_{l,m}^{k}(s)$, defined in~\eqref{eqn:trajectory measure}, and according to the definition in~\eqref{eqn: big phi} and~\eqref{eqn:trajectory measure}, we have
\begin{equation}\label{eqn: ppt for Phi}
d\Phi_{p,m}^{k,l}(s)=d\Phi_{p',m}^{k,l}(s)d\Upsilon_{p}^{p'-1}\,,\quad\text{and}\quad d\Sigma_{l,m}^k(s)=d\Phi_{p,m}^{k,l}(s)d\Upsilon_1^{p-1}\,.
\end{equation}

Now we state the lemma.
\begin{lemma}\label{lemma: boundedness}
There exists
\begin{equation}\label{eqn: t*}
t^*=t^*(T_M,\xi,\mathcal{C},\mathfrak{C},k)\,,
\end{equation}
such that for $t\leq t^*$, and $0\leq s\leq t_l$, we have
\begin{equation}\label{eqn: boundedness for l-p+1 fold integration}
   \int_{\prod_{j=p}^{k-1}\mathcal{V}_j}     \mathbf{1}_{\{t_l>0\}}  d\Phi_{p,m}^{k,l}(s) \leq (2C_{T_M,\xi})^{2(l-p+1)}\mathcal{A}_{l,p},
\end{equation}
with $C_{T_M,\xi}$ and $\mathcal{C}$ being constants defined in~\eqref{eqn: 1 one} and~\eqref{eqn: cal C} respectively, and
\begin{equation}\label{eqn: Elp}
\mathcal{A}_{l,p}=\exp\left(\big[ \frac{[T_{l,p}-T_w(x_{p})][1-r_{min}]}{2T_w(x_{p})[T_{l,p}(1-r_{min})+r_{min} T_w(x_{p})]} + (2\mathcal{C})^{l-p+1}(\mathfrak{C}t)\big]|V^{m-p+1}(t_p)|^2\right).
\end{equation}
Moreover, for any $p<p'\leq l$, we have
\begin{equation}\label{eqn: structure}
\int_{\prod_{j=p}^{k-1}\mathcal{V}_j} \mathbf{1}_{\{t_l>0\}}  d\Phi_{p,m}^{k,l}(s)\leq (2C_{T_M,\xi})^{2(l-p'+1)} \mathcal{A}_{l,p'}\int_{\prod_{j=p}^{p'-1} \mathcal{V}_j}  \mathbf{1}_{\{t_l>0\}} d\Upsilon_p^{p'-1}\leq (2C_{T_M,\xi})^{2(l-p+1)}\mathcal{A}_{l,p}\,.
\end{equation}

\end{lemma}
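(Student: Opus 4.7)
The plan is to evaluate the integral in the reverse order $v_{k-1}, v_{k-2}, \ldots, v_p$, since the lack of Fubini (noted in the strategy section) forces a particular order of integration. First observe that the indicator $\mathbf{1}_{\{t_l>0\}}$ depends only on $(v_1, \ldots, v_{l-1})$, and the first bracket of~\eqref{eqn: big phi} is a product of probability measures $\prod_{j=l+1}^{k-1} d\sigma(v_j, v_{j-1})$. Applying the normalization~\eqref{eqn: normalization} successively from $j=k-1$ downward kills this tail and collapses the problem to an integration over $v_p, \ldots, v_l$ only.

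Next, for the critical $v_l$-integration, collect all Gaussian-in-$v_l$ factors and unpack $d\sigma(v_l, V^{m-l+1}(t_l))$ via~\eqref{eqn:probability measure} and~\eqref{eqn: Formula for R}; the kernel separates into $v_{l,\perp}$ and $v_{l,\parallel}$ pieces, each a Gaussian integral covered by Lemma~\ref{Lemma: abc}. The hypothesis~\eqref{eqn: Constrain on T} is precisely what keeps these integrals convergent (it forces the net quadratic coefficient in $v_l$ to remain negative even after accounting for the $e^{\mathfrak{C}t_l\langle v_l\rangle^2}$ growth), and each of the two directions contributes $2C_{T_M,\xi}$, yielding $(2C_{T_M,\xi})^2$ times an exponential in $|V^{m-l+1}(t_l)|^2$ with effective temperature $T_{l,l} = \tfrac{2\xi}{\xi+1}T_M$. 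Then proceed by descent $j = l-1, l-2, \ldots, p$: each subsequent $v_j$-integration combines the carried-over exponential with the third bracket of~\eqref{eqn: big phi} and another application of Lemma~\ref{Lemma: abc}. The convolution of two Gaussians produces exactly the recursion $T_{l,j-1} = r_{\min}T_M + (1-r_{\min})T_{l,j}$ of~\eqref{eqn: definition of T_p}, and each step contributes another $(2C_{T_M,\xi})^2$; after $(l-p+1)$ steps one obtains the prefactor $(2C_{T_M,\xi})^{2(l-p+1)}$ and a final exponential in $|V^{m-p+1}(t_p)|^2$ at effective temperature $T_{l,p}$, matching~\eqref{eqn: Elp}.

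The principal technical obstacle is that Lemma~\ref{Lemma: abc} naturally produces exponentials in $|V^{m-j+1}(t_j)|^2$ rather than in $|v_{j-1}|^2$, so I transfer between them via the characteristic comparisons~\eqref{eqn: ..}--\eqref{eqn: jia} already used in the proof of Lemma~\ref{lemma: the tracjectory formula for f^(m+1)}. The linear-in-$v_{j-1}$ and quadratic-in-$t$ errors are absorbed by Young's inequality into the exponent at the cost of enlarging the coefficient, which after $(l-p+1)$ iterations compounds to the extra factor $e^{(2\mathcal{C})^{l-p+1}(\mathfrak{C}t)|V^{m-p+1}(t_p)|^2}$ recorded in~\eqref{eqn: Elp}; choosing $t^* = t^*(T_M, \xi, \mathcal{C}, \mathfrak{C}, k)$ sufficiently small keeps this compounded error harmless and pins down the dependence asserted in~\eqref{eqn: t*}.

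Finally, for~\eqref{eqn: structure}, apply the factorization~\eqref{eqn: ppt for Phi} to write $d\Phi_{p,m}^{k,l}(s) = d\Phi_{p',m}^{k,l}(s)\, d\Upsilon_p^{p'-1}$ and use~\eqref{eqn: boundedness for l-p+1 fold integration} with $p'$ in place of $p$ on the inner integration over $v_{p'}, \ldots, v_{k-1}$, extracting $(2C_{T_M,\xi})^{2(l-p'+1)} \mathcal{A}_{l, p'}$ and leaving $\int \mathbf{1}_{\{t_l>0\}}\, d\Upsilon_p^{p'-1}$ to produce the middle inequality of~\eqref{eqn: structure}. For the final inequality, bound this residual $d\Upsilon_p^{p'-1}$-integral by repeating the descent of Stage 3 on it alone (the factor has the same structure as the third bracket of~\eqref{eqn: big phi}), which contributes another $(2C_{T_M,\xi})^{2(p'-p)}$ and folds the remaining exponent into $\mathcal{A}_{l,p}$.
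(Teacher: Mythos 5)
Your proposal is correct and follows essentially the same route as the paper's proof: the normalization property removes the tail measures, the remaining integrals are handled by a descent in the index (the paper's induction on $p$) using the perpendicular/parallel splitting and the Gaussian Lemmas~\ref{Lemma: abc}, \ref{Lemma: perp abc}, with the characteristic comparisons and a small $t^*$ absorbing the compounded $(2\mathcal{C})^{l-p+1}(\mathfrak{C}t)$ errors, and~\eqref{eqn: structure} follows from the factorization~\eqref{eqn: ppt for Phi} plus repeating the descent on $d\Upsilon_p^{p'-1}$. The only minor imprecision is attributing the recursion $T_{l,q}=r_{\min}T_M+(1-r_{\min})T_{l,q+1}$ to a pure Gaussian convolution: in the paper it arises after additionally bounding $T_w(x_{q+1})\leq T_M$ to remove the hidden dependence of $x_{q+1}$ on the integration variable $v_q$, a step your computation would uncover in any case.
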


\begin{remark}
We comment here that this lemma indeed include the information for the $k$-fold integral in Lemma \ref{lemma: the tracjectory formula for f^(m+1)} by setting $p=1$. To derive the decaying factor in Lemma \ref{lemma: t^k}, we need to extract smallness from the integral over $v_p$ for $p\leq k$, for example, in Lemma \ref{Lemma: (2)} Lemma \ref{Lemma: accumulate}. This is the reason that we introduce the notation~\eqref{eqn: big phi}~\eqref{eqn: ppt for Phi} and incorporate them in this lemma.

\end{remark}

\begin{proof}
From~\eqref{eqn: normalization} and~\eqref{eqn:probability measure}, consider the first bracket of the first line in~\eqref{eqn:trajectory measure}, for $l+1\leq j\leq k-1$ we have
\[\int_{\prod_{j=l+1}^{k-1} \mathcal{V}_j}    \prod_{j=l+1}^{k-1}d\sigma(v_j,V^{m-j+1}(t_j))=1.\]
Without loss of generality we can assume $k=l+1$. Thus $d\Phi_{p,m}^{k,l}=d\Phi_{p,m}^{l+1,l}$. We use an induction of $p$ with $1\leq p\leq l$ to prove~\eqref{eqn: boundedness for l-p+1 fold integration}.

When $p=l$, by the second line of~\eqref{eqn: big phi}, the integration over $\mathcal{V}_l$ is written as
\begin{equation}\label{eqn: dsigmal}
\int_{\mathcal{V}_l} e^{-\theta|v_l|^2} e^{\mathfrak{C}\langle v_l\rangle^2 t_l} e^{\int_{s}^{t_l}-\frac{\mathfrak{C}}{2}
\langle V^{m-l}(\tau)\rangle^2 d\tau} e^{-[\frac{1}{4T_M}-\frac{1}{2T_w(x_l)}]|v_l|^2}d\sigma(v_l,V^{m-l+1}(t_l)).
\end{equation}
In order to compute~\eqref{eqn: dsigmal}, we bound
\begin{equation}\label{eqn: hhh}
e^{-\theta |v_l|^2}e^{\mathfrak{C}\langle v_l\rangle^2 t_l}\leq  2 e^{(-\theta+\mathfrak{C}t_l)|v_l|^2},
\end{equation}
where we take $t\leq t^*=t^*(\mathfrak{C})$ small enough such that $e^{\mathfrak{C}t}\leq 2$ and thus $e^{\mathfrak{C}\langle v_l\rangle^2 t_l}\leq e^{\mathfrak{C}| v_l|^2 t_l}e^{\mathfrak{C}t_l}\leq 2e^{\mathfrak{C}| v_l|^2 t_l}$.

By~\eqref{eqn: hhh} with $\theta=\frac{1}{4T_M\xi}$ in~\eqref{eqn: xi} we have
\begin{equation}\label{eqn: dsigmall}
~\eqref{eqn: dsigmal}\leq 2\int_{\mathcal{V}_l} e^{-[\frac{1}{2T_M}\frac{\xi+1}{2\xi}-\frac{1}{2T_w(x_l)}-\mathfrak{C}t_l]|v_l|^2}d\sigma(v_l,V^{m-l+1}(t_l)).
\end{equation}
Expanding the $d\sigma(v_l,V^{m-l+1}(t_l))$ using~\eqref{eqn: Formula for R} and~\eqref{eqn:probability measure}, we rewrite~\eqref{eqn: dsigmall} as
\begin{equation}\label{eqn: int over V_l}
\begin{split}
   & 2\int_{\mathcal{V}_{l,\perp}} \frac{|v_{l,\perp}|}{r_\perp T_w(x_l)}  e^{-[\frac{1}{2T_M}\frac{\xi+1}{2\xi}-\frac{1}{2T_w(x_l)}-\mathfrak{C}t_l]|v_{l,\perp}|^2}I_0\left(\frac{(1-r_\perp)^{1/2}v_{l,\perp}V^{m-l+1}_\perp(t_l)}{T_w(x_l)r_\perp}\right)e^{-\frac{|v_{l,\perp}|^2+(1-r_\perp)|V^{m-l+1}_\perp(t_l)|^2}{2T_w(x_l)r_\perp}}  dv_{l,\perp} \\
    &\times \int_{\mathcal{V}_{l,\parallel}}\frac{1}{\pi r_\parallel(2-r_\parallel)(2T_w(x_l))}e^{-[\frac{1}{2T_M}\frac{\xi+1}{2\xi}-\frac{1}{2T_w(x_l)}-\mathfrak{C}t_l]|v_{l,\parallel}|^2}e^{-\frac{1}{2T_w(x_l)}\frac{|v_{l,\parallel}-(1-r_\parallel)V^{m-l+1}_{\parallel}(t_l)|^2}{r_\parallel(2-r_\parallel)}}dv_{l,\parallel},
\end{split}
\end{equation}
where $v_{l,\parallel}$, $v_{l,\perp}$, $\mathcal{V}_{l,\perp}$ and $\mathcal{V}_{l,\parallel}$ are defined as
\begin{equation}\label{eqn: Define space}
v_{l,\perp}=v_l\cdot n(x_l),~~ v_{l,\parallel}=v_l-v_{l,\perp}n(x_l),~~\mathcal{V}_{l,\perp}=\{v_{l,\perp}:v_l\in \mathcal{V}_l\},~~\mathcal{V}_{l,\parallel}=\{v_{l,\parallel}:v_l\in \mathcal{V}_l\}.
\end{equation}
$V^{m-l+1}_\perp(t_l)$ and $V^{m-l+1}_\parallel(t_l)$ are defined similarly.

First we compute the integration over $\mathcal{V}_{l,\parallel}$, the second line of~\eqref{eqn: int over V_l}. To apply~\eqref{eqn: coe abc} in Lemma \ref{Lemma: abc}, we set
\[\e=\mathfrak{C}t_l,~w=(1-r_\parallel)V^{m-l+1}_\parallel(t_l)~,v=v_{l,\parallel},\]
\begin{equation}\label{eqn: coefficient a and b}
a=-[\frac{1}{2T_M\frac{2\xi}{\xi+1}}-\frac{1}{2T_w(x_l)}],~ b=\frac{1}{2T_w(x_l)r_\parallel(2-r_\parallel)}.
\end{equation}
By $\xi>1$ in~\eqref{eqn: xi}, we take $t^*=t^*(\mathfrak{C},\xi,T_M)\ll 1$ such that when $t_l<t\leq t^*$, we have
\begin{equation}\label{eqn: b-a-e}
b-a-\e=\frac{1}{2T_w(x_l)r_\parallel(2-r_\parallel)}-\frac{1}{2T_w(x_l)}+\frac{1}{2T_M\frac{2\xi}{\xi+1}}-\mathfrak{C}t_l
\geq \frac{1}{2T_M\frac{2\xi}{\xi+1}}-\mathfrak{C}t\geq \frac{1}{4T_M}.
\end{equation}
Also we take $t^*=t^*(\mathfrak{C},\xi,T_M)$ small enough to obtain $1+4T_M \mathfrak{C}t_l\leq 1+4T_M \mathfrak{C}t\leq 2$ when $t\leq t^*$. Hence
\begin{equation}\label{eqn: 1 one}
\begin{split}
\frac{b}{b-a-\e}   &=\frac{b}{b-a}[1+\frac{\e}{b-a-\e}]\leq \frac{\frac{2\xi}{\xi+1}T_M}{\frac{2\xi}{\xi+1}T_M+[T_w(x_l)-\frac{2\xi}{\xi+1}T_M]r_\parallel(2-r_\parallel)}[1+4T_M\mathfrak{C}t_l]  \\
   & \leq \frac{\frac{4\xi}{\xi+1}T_M}{\frac{2\xi}{\xi+1}T_M+[\min\{T_w(x)\}-\frac{2\xi}{\xi+1}T_M]r_{max}}:=C_{T_M,\xi},
\end{split}
\end{equation}
where we have used~\eqref{eqn: def of r}.

In regard to~\eqref{eqn: coe abc}, we have
\begin{equation}\label{eqn: com}
\frac{(a+\e)b}{b-a-\e}=\frac{ab}{b-a}[1+\frac{\e}{b-a-\e}]+\frac{b}{b-a-\e}\e.
\end{equation}
By~\eqref{eqn: 1 one} and $t_l<t$, we obtain
\[\frac{b}{b-a-\e}\e\leq \frac{\frac{4\xi}{\xi+1}T_M }{\frac{2\xi}{\xi+1}T_M+[\min\{T_w(x)\}-\frac{2\xi}{\xi+1}T_M]r_{\max}}\mathfrak{C}t.\]
By~\eqref{eqn: coefficient a and b}, we have
\[\frac{ab}{b-a}=\frac{\frac{2\xi}{\xi+1}T_M-T_w(x_l)}{2T_w(x_l)[\frac{2\xi}{\xi+1}T_M+[T_w(x_l)-\frac{2\xi}{\xi+1}T_M]r_\parallel(2-r_\parallel)]}.\]
Therefore, by~\eqref{eqn: b-a-e} and~\eqref{eqn: com} we obtain
\begin{equation}\label{eqn: 2 two}
\begin{split}
  &  \frac{(a+\e)b}{b-a-\e}\leq \frac{\frac{2\xi}{\xi+1}T_M-T_w(x_l)}{2T_w(x_l)[\frac{2\xi}{\xi+1}T_M+[T_w(x_l)-\frac{2\xi}{\xi+1}T_M]r_\parallel(2-r_\parallel)]}+\mathcal{C} (\mathfrak{C}t),
\end{split}
\end{equation}
where we define
\begin{equation}\label{eqn: cal C}
\mathcal{C}:=\frac{4T_M\big(\frac{2\xi}{\xi+1}T_M-\min\{T_w(x)\}\big)}{2\min\{T_w(x)\}[\frac{2\xi}{\xi+1}T_M+[\min\{T_w(x)\}-\frac{2\xi}{\xi+1}T_M]r_{max}]}+\frac{\frac{4\xi}{\xi+1}T_M}{\frac{2\xi}{\xi+1}T_M+[\min\{T_w(x)\}-\frac{2\xi}{\xi+1}T_M]r_{max}}.
\end{equation}

By~\eqref{eqn: 1 one},~\eqref{eqn: 2 two} and Lemma~\ref{Lemma: abc}, using $w=(1-r_\parallel)V^{m-l+1}_\parallel(t_l)$ we bound the second line of~\eqref{eqn: int over V_l} by
\begin{align}
   & C_{T_M,\xi}\exp\bigg(\Big[\frac{[\frac{2\xi}{\xi+1}T_M-T_w(x_l)]}{2T_w(x_l)[\frac{2\xi}{\xi+1}T_M(1-r_\parallel)^2+r_\parallel(2-r_\parallel) T_w(x_l)]}+\mathcal{C}(\mathfrak{C}t) \Big]|(1-r_\parallel)V^{m-l+1}_\parallel(t_l)|^2\bigg)\label{eqn: result for para}\\
   & \leq C_{T_M,\xi}\exp\bigg(\Big[\frac{[\frac{2\xi}{\xi+1}T_M-T_w(x_l)][1-r_{min}]}{2T_w(x_l)\big[\frac{2\xi}{\xi+1}T_M(1-r_{min})+r_{min} T_w(x_l)\big]}+\mathcal{C}(\mathfrak{C}t)\Big]|V^{m-l+1}_\parallel(t_l)|^2\bigg)\label{eqn: result of dsigmal},
\end{align}
where we have used~\eqref{eqn: def of r}.

Next we compute the first line of~\eqref{eqn: int over V_l}. To apply~\eqref{eqn: coe abc perp} in Lemma \ref{Lemma: perp abc}, we set
\[\e=\mathfrak{C}t_l,~w=\sqrt{1-r_\perp}V^{m-l+1}_\perp(t_l)~,v=v_{l,\perp},\]
\[a=-[\frac{1}{2T_M\frac{2\xi}{\xi+1}}-\frac{1}{2T_w(x_l)}],~ b=\frac{1}{2T_w(x_l)r_\perp}.\]
Thus we can compute $\frac{b}{b-a-\e}$ and $\frac{(a+\e)b}{b-a-\e}$ using~\eqref{eqn: 1 one} and~\eqref{eqn: 2 two}. Hence replacing $r_\parallel(2-r_\parallel)$ by $r_\perp$ and replacing $V^{m-l+1}_\parallel(t_l)$ by $V^{m-l+1}_\perp(t_l)$ in~\eqref{eqn: result for para}, we bound the first line of~\eqref{eqn: int over V_l} by
\[2C_{T_M,\xi}\exp\bigg(\Big[\frac{[\frac{2\xi}{\xi+1}T_M-T_w(x_l)]}{2T_w(x_l)[\frac{2\xi}{\xi+1}T_M(1-r_\perp)+r_\perp T_w(x_l)]}+\mathcal{C}(\mathfrak{C}t) \Big]|\sqrt{1-r_\perp}V^{m-l+1}_\perp(t_l)|^2\bigg)\]
\begin{equation}\label{eqn: result of dsigmal normal}
\leq 2C_{T_M,\xi}\exp\bigg(\Big[\frac{[\frac{2\xi}{\xi+1}T_M-T_w(x_l)][1-r_{min}]}{2T_w(x_l)\big[\frac{2\xi}{\xi+1}T_M(1-r_{min})+r_{min} T_w(x_l)\big]}+\mathcal{C}(\mathfrak{C}t)\Big]|V^{m-l+1}_\perp(t_l)|^2\bigg).
\end{equation}
where we use~\eqref{eqn: def of r}.

Collecting~\eqref{eqn: result of dsigmal}~\eqref{eqn: result of dsigmal normal}, we derive
\[\eqref{eqn: int over V_l}\leq 2(C_{T_M,\xi})^2\exp\left(\left[\frac{[\frac{2\xi}{\xi+1}T_M-T_w(x_l)][1-r_{min}]}{2T_w(x_l)\big[\frac{2\xi}{\xi+1}T_M(1-r_{min})+r_{min} T_w(x_l)\big]}+\mathcal{C}(\mathfrak{C}t)\right]|V^{m-l+1}(t_l)|^2\right)\leq (2C_{T_M,\xi})^2\mathcal{A}_{l,l},\]
where $\mathcal{A}_{l,l}$ is defined in~\eqref{eqn: Elp} and $T_{l,l}=\frac{2\xi}{\xi+1}T_M$.

Therefore,~\eqref{eqn: boundedness for l-p+1 fold integration} is valid for $p=l$.

Suppose~\eqref{eqn: boundedness for l-p+1 fold integration} is valid for the $p=q+1$(induction hypothesis) with $q+1\leq l$, then
\[\int_{\prod_{j=q+1}^{l}\mathcal{V}_j}        \mathbf{1}_{\{t_l>0\}} d\Phi_{q+1,m}^{l+1,l}(s)\leq (2C_{T_M,\xi})^{2(l-q)}\mathcal{A}_{l,q+1}.\]

We want to show~\eqref{eqn: boundedness for l-p+1 fold integration} holds for $p=q$. By the hypothesis and the third line of~\eqref{eqn: big phi},
\begin{equation}\label{eqn: dsigmaq}
\begin{split}
   &  \int_{\prod_{j=q}^{l}\mathcal{V}_j}        \mathbf{1}_{\{t_l>0\}} d\Phi_{q,m}^{l+1,l}(s) \leq  (2C_{T_M,\xi})^{2(l-q)}\mathcal{A}_{l,q+1}\\
    & \times \int_{\mathcal{V}_q}2  e^{\mathfrak{C}(t_q-t_{q+1})\langle v_q\rangle^2}    e^{[\frac{1}{2T_w(x_{q})}-\frac{1}{2T_w(x_{q+1})}]|v_{q}|^2}  d\sigma(v_q,V^{m-q}(t_{q+1})).
\end{split}
\end{equation}
Using the definition of $\mathcal{A}_{l,q+1}$ in~\eqref{eqn: Elp}, we obtain
\begin{equation}\label{eqn: dsigmap}
\begin{split}
   \eqref{eqn: dsigmaq} &\leq   (2C_{T_M,\xi})^{2(l-q)}  \int_{\mathcal{V}_{q}}2    \exp\bigg(\frac{(T_{l,q+1}-T_w(x_{q+1}))(1-r_{min})}{2T_w(x_{q+1})[T_{l,q+1}(1-r_{min})+r_{min} T_w(x_{q+1})]}|V^{m-q}(t_{q+1})|^2                                     \\
    &  +(2\mathcal{C})^{l-q}(\mathfrak{C}t)|V^{m-q}(t_{q+1})|^2\bigg) e^{\mathfrak{C}(t_q-t_{q+1})\langle v_q\rangle^2}     e^{[\frac{1}{2T_w(x_{q})}-\frac{1}{2T_w(x_{q+1})}]|v_{q}|^2}  d\sigma(v_q,V^{m-q+1}(t_q)).
\end{split}
\end{equation}

Let $\mathfrak{C}$ in~\eqref{eqn: choose C} satisfy
\begin{equation}\label{eqn: chooseC}
2C_{\phi^m}\frac{(T_{l,q+1}-T_w(x_{q+1}))(1-r_{min})}{2T_w(x_{q+1})[T_{l,q+1}(1-r_{min})+r_{min} T_w(x_{q+1})]}\leq \frac{C_{\phi^m}}{\min(T_w(x))(1-r_{min})}\leq \mathfrak{C},\quad (C_{\phi^m})^2\leq \mathfrak{C}.
\end{equation}
Similarly to~\eqref{eqn: ..} and~\eqref{eqn: ...},
\[|V^{m-q}(t_{q+1})|\leq |v_{q}|+C_{\phi^m}(t_{q}-t_{q+1}),\]
\[|V^{m-q}(t_{q+1})|^2\leq C_{\phi^m}^2(t_{q}-t_{q+1})^2+2C_{\phi^m}(t_{q}-t_{q+1})|v_{q}|+|v_{q}|^2.\]
Then we apply~\eqref{eqn: chooseC} to get
\begin{align*}
   & \exp\left[\left(\frac{(T_{l,q+1}-T_w(x_{q+1}))(1-r)}{2T_w(x_{q+1})[T_{l,q+1}(1-r)+r T_w(x_{q+1})]}+(2\mathcal{C})^{l-q}(\mathfrak{C}t)\right)|V^{m-q+1}(t_{q+1})|^2\right]\times e^{\mathfrak{C}(t_q-t_{q+1})\langle v_q\rangle^2} \\
   & \leq\exp\left[\left(\frac{(T_{l,q+1}-T_w(x_{q+1}))(1-r)}{2T_w(x_{q+1})[T_{l,q+1}(1-r)+r T_w(x_{q+1})]}+(2\mathcal{C})^{l-q}(\mathfrak{C}t)\right)|v_{q}|^2\right]\\
   &\times e^{\mathfrak{C}(t_{q}-t_{q+1})|v_q|} e^{\mathfrak{C}(t_{q}-t_{q+1})^2} e^{\mathfrak{C}(t_q-t_{q+1})|v_q|^2}e^{\mathfrak{C}(t_q-t_{q+1})} \\
   & \leq 2\exp\left[\left(\frac{(T_{l,q+1}-T_w(x_{q+1}))(1-r)}{2T_w(x_{q+1})[T_{l,q+1}(1-r)+r T_w(x_{q+1})]}+2(2\mathcal{C})^{l-q}(\mathfrak{C}t)\right)|v_{q}|^2\right] ,
\end{align*}
where we haved use $e^{\mathfrak{C}(t_q-t_{q+1})|v_q|}\leq e^{\mathfrak{C}(t_q-t_{q+1})}e^{\mathfrak{C}(t_q-t_{q+1})|v_q|^2}$ and thus
\[e^{\mathfrak{C}(t_{q}-t_{q+1})|v_q|} e^{\mathfrak{C}(t_{q}-t_{q+1})^2} \times e^{\mathfrak{C}(t_q-t_{q+1})|v_q|^2}e^{\mathfrak{C}(t_q-t_{q+1})}\leq e^{2\mathfrak{C}t|v_q|^2}e^{3\mathfrak{C}(t_q-t_{q+1})}\leq 2e^{(2\mathcal{C})^{l-q}(\mathfrak{C}t)|v_q|^2}.\]
Here we take $t^*=t^*(\mathfrak{C})$ small enough such that when $t\leq t^*$,
\[e^{3\mathfrak{C}(t_q-t_{q+1})}\leq e^{3\mathfrak{C}t}\leq 2.\]
Thus we obtain
\begin{align*}
 \eqref{eqn: dsigmap}  & \leq  4(2C_{T_M,\xi})^{2(l-q)}\int_{\mathcal{V}_{q}}
   \exp\bigg[\Big(\frac{(T_{l,q+1}-T_w(x_{q+1}))(1-r_{min})}{2T_w(x_{q+1})[T_{l,q+1}(1-r_{min})+r_{min} T_w(x_{q+1})]}|v_{q}|^2 \\
   & +2(2\mathcal{C})^{l-q}(\mathfrak{C}t) |v_q|^2\Big)\bigg] e^{[\frac{1}{2T_w(x_{q})}-\frac{1}{2T_w(x_{q+1})}]|v_{q}|^2} d\sigma(v_q,V^{m-q+1}(t_q)).
\end{align*}

We focus on the coefficient of $|v_q|^2$ in~\eqref{eqn: dsigmap}, we derive
\begin{align*}
   &\frac{(T_{l,q+1}-T_w(x_{q+1}))(1-r_{min})}{2T_w(x_{q+1})[T_{l,q+1}(1-r_{min})+r_{min} T_w(x_{q+1})]}|v_q|^2+[\frac{1}{2T_w(x_{q})}-\frac{1}{2T_w(x_{q+1})}]|v_q|^2  \\
   & =       \frac{(T_{l,q+1}-T_w(x_{q+1}))(1-r_{min})-[T_{l,q+1}(1-r_{min})+r_{min} T_w(x_{q+1})]}{2T_w(x_{q+1})[T_{l,q+1}(1-r_{min})+r_{min} T_w(x_{q+1})]}|v_q|^2                             + \frac{|v_q|^2}{2T_w(x_q)}\\
   &=    \frac{ -T_w(x_{q+1})(1-r_{min})-r_{min} T_w(x_{q+1})}{2T_w(x_{q+1})[T_{l,q+1}(1-r_{min})+r_{min} T_w(x_{q+1})]}|v_q|^2                             + \frac{|v_q|^2}{2T_w(x_q)}\\
   &=    \frac{-|v_q|^2}{2[T_{l,q+1}(1-r_{min})+r_{min}T_w(x_{q+1})]}                + \frac{|v_q|^2}{2T_w(x_q)}.
\end{align*}

By the Definition~\ref{Def:Back time cycle}, $x_{q+1}=x_{q+1}(t,x,v,v_1,\cdots,v_{q})$, thus $T_w(x_{q+1})$ depends on $v_{q}$. In order to explicitly compute the integration over $\mathcal{V}_q$, we need to get rid of the dependence of the $T_w(x_{q+1})$ on $v_{q}$, thus we bound
\begin{equation}\label{eqn: Help to integrate x_p over v_p-1}
\begin{split}
   & \exp\left(\frac{-|v_{q}|^2}{2[T_{l,q+1}(1-r_{min})+r_{min} T_w(x_{q+1})]}\right)\leq \exp\left(\frac{-|v_{q}|^2}{2[T_{l,q+1}(1-r_{min})+r_{min}T_M]}\right)= \exp\left(\frac{-|v_{q}|^2}{2T_{l,q}}\right),
\end{split}
\end{equation}
where we use~\eqref{eqn: definition of T_p}.

Expanding $d\sigma(v_q,V^{m-q+1}(t_q))$ by~\eqref{eqn:probability measure} and using~\eqref{eqn: Help to integrate x_p over v_p-1}, we derive
\begin{equation}\label{eqn: int over V_p}
\begin{split}
  & \eqref{eqn: dsigmap}\leq 4(2C_{T_M,\xi})^{2(l-q)}\\
   &  \int_{\mathcal{V}_{q,\perp}} \frac{2}{r_\perp}\frac{|v_{q,\perp}|}{2T_w(x_q)}  e^{-[\frac{1}{2T_{l,q}}-\frac{1}{2T_w(x_q)}-2(2\mathcal{C})^{l-q}(\mathfrak{C}t)]|v_{q,\perp}|^2}I_0\left(\frac{(1-r_\perp)^{1/2}v_{q,\perp}V^{m-q+1}_\perp(t_q)}{T_w(x_q)r_\perp}\right)e^{-\frac{|v_{q,\perp}|^2+(1-r_\perp)|V^{m-q+1}_\perp(t_q)|^2}{2T_w(x_q)r_\perp}}  dv_{q,\perp} \\
    & \times\int_{\mathcal{V}_{q,\parallel}}\frac{1}{\pi r_\parallel(2-r_\parallel)(2T_w(x_q))}e^{-[\frac{1}{2T_{l,q}}-\frac{1}{2T_w(x_q)}-2(2\mathcal{C})^{l-q}(\mathfrak{C}t)]|v_{q,\parallel}|^2}e^{-\frac{1}{2T_w(x_q)}\frac{|v_{q,\parallel}-(1-r_\parallel)V^{m-q+1}_{\parallel}(t_q)|^2}{r_\parallel(2-r_\parallel)}}dv_{q,\parallel}.
\end{split}
\end{equation}
In the third line of~\eqref{eqn: int over V_p}, to apply~\eqref{eqn: coe abc} in Lemma \ref{Lemma: abc}, we set
\[a=-[\frac{1}{2T_{l,q}}-\frac{1}{2T_w(x_{q})}],~ b=\frac{1}{2T_w(x_{q})r_\parallel(2-r_\parallel)},~\e=2(2\mathcal{C})^{l-q}(\mathfrak{C}t),~w=(1-r_\parallel)v_{q-1,\parallel}.\]
Comparing with~\eqref{eqn: coefficient a and b}, we can replace $\frac{2\xi}{\xi+1}T_M$ by $T_{l,q}$ and replace $\mathfrak{C}t$ by $2(2\mathcal{C})^{l-q}(\mathfrak{C}t)$. Then we apply the replacement to~\eqref{eqn: b-a-e} and obtain
\[b-a-\e\geq \frac{1}{2T_{l,q}}-2(2\mathcal{C})^{l-q}(\mathfrak{C}t)\geq \frac{1}{2T_M\frac{2\xi}{\xi+1}}-2(2\mathcal{C})^k (\mathfrak{C}t)\geq \frac{1}{4T_M},\]
where we take $t^*=t^*(T_M,\xi,\mathcal{C},\mathfrak{C},k)$ small enough and $t\leq t^*$. Also we require the $t$ satisfy
\[\frac{\e}{b-a-\e}\leq 4T_M\times 2(2\mathcal{C})^k (\mathfrak{C}t)\leq 2.\]
We conclude the $t^*$ here only depends on the parameters $T_M,\xi,\mathcal{C},\mathfrak{C},k$. Thus by the same computation as~\eqref{eqn: 1 one} we obtain
\[\frac{b}{b-a-\e}\leq \frac{2T_{l,q}}{T_{l,q}+[\min\{T_w(x)\}-T_{l,q}]r_\parallel(2-r_\parallel)}\leq C_{T_M,\xi},\]
where we use $T_{l,q}\leq \frac{2\xi}{\xi+1}T_M$ from~\eqref{eqn: definition of T_p} and \eqref{eqn: def of r}. $C_{T_M,\xi}$ is defined in~\eqref{eqn: 1 one}

By the same computation as~\eqref{eqn: 2 two}, we obtain
\begin{align*}
  \frac{(a+\e)b}{b-a-\e} & = \frac{ab}{b-a}+\frac{ab}{b-a}\frac{\e}{b-a-\e}+\frac{b}{b-a-\e}\e \\
   & \leq  \frac{T_{l,q}-T_w(x_q)}{2T_w(x_q)[T_{l,q}+[T_w(x_q)-T_{l,q}]r_\parallel(2-r_\parallel)]}+(2\mathcal{C})^{l-q+1}(\mathfrak{C}t).
\end{align*}
Here we have used $T_{l,q}\leq \frac{2\xi}{\xi+1}T_M$ and \eqref{eqn: def of r} to obtain
\begin{align*}
\frac{ab}{b-a}\frac{\e}{b-a-\e}+ \frac{b\e}{b-a-\e}   & \leq \frac{4T_M\big(T_{l,q}-\min\{T_w(x)\}\big)}{2\min\{T_w(x)\}[T_{l,q}+[\min\{T_w(x)\}-T_{l,q}]r_\parallel(2-r_\parallel)]}2(2\mathcal{C})^{l-q}(\mathfrak{C}t) \\
   & +\frac{2T_{l,q}}{\frac{2\xi}{\xi+1}T_M+[\min\{T_w(x)\}-T_{l,q}]r_\parallel(2-r_\parallel)}2(2\mathcal{C})^{l-q}(\mathfrak{C}t) \leq (2\mathcal{C})^{l-q+1}(\mathfrak{C}t),
\end{align*}
with $\mathcal{C}$ defined in~\eqref{eqn: cal C}.

Thus by Lemma \ref{Lemma: abc} with $w=(1-r_\parallel)V^{m-q+1}_\parallel(t_{q})$, the third line of~\eqref{eqn: int over V_p} is bounded by
\begin{equation}\label{eqn: Vq para}
\begin{split}
   & C_{T_M,\xi}\exp\left(\big[ \frac{[T_{l,q}-T_w(x_{q})]}{2T_w(x_{q})[T_{l,q}(1-r_\parallel)^2+r(2-r_\parallel) T_w(x_{q})]} + (2\mathcal{C})^{l-q+1}(\mathfrak{C}t)\big]|(1-r_\parallel)V^{m-q+1}_\parallel(t_q)|^2\right)
 \\
    & \leq C_{T_M,\xi}\exp\left(\big[ \frac{[T_{l,q}-T_w(x_{q})][1-r_{min}]}{2T_w(x_{q})[T_{l,q}(1-r_{min})+r_{min} T_w(x_{q})]} + (2\mathcal{C})^{l-q+1}(\mathfrak{C}t)\big]|V^{m-q+1}_\parallel(t_q)|^2\right).
\end{split}
\end{equation}
By the same computation the second line of~\eqref{eqn: int over V_p} is bounded by
\begin{equation}\label{eqn: Vq perp}
C_{T_M,\xi}\exp\left(\big[ \frac{[T_{l,q}-T_w(x_{q})][1-r_{min}]}{2T_w(x_{q})[T_{l,q}(1-r_{min})+r_{min} T_w(x_{q})]} + (2\mathcal{C})^{l-q+1}(\mathfrak{C}t)\big]|V^{m-q+1}_\perp(t_q)|^2\right).
\end{equation}
By~\eqref{eqn: Vq para} and~\eqref{eqn: Vq perp}, we derive that
\[\eqref{eqn: int over V_p}\leq (2C_{T_M,\xi})^{2(l-q+1)}\exp\left(\big[ \frac{[T_{l,q}-T_w(x_{q})][1-r_{min}]}{2T_w(x_{q})[T_{l,q}(1-r_{min})+r_{min} T_w(x_{q})]} + (2\mathcal{C})^{l-q+1}(\mathfrak{C}t)\big]|V^{m-q+1}(t_q)|^2\right)\]
\[=(2C_{T_M,\xi})^{2(l-q+1)}\mathcal{A}_{l,q},\]
which is consistent with~\eqref{eqn: boundedness for l-p+1 fold integration} with $p=q$. The induction is valid we derive~\eqref{eqn: boundedness for l-p+1 fold integration}.

Now we focus on~\eqref{eqn: structure}. The first inequality in~\eqref{eqn: structure} follows directly from~\eqref{eqn: boundedness for l-p+1 fold integration} and~\eqref{eqn: ppt for Phi}. For the second inequality, by~\eqref{eqn: upsilon} we have
\begin{equation}\label{eqn: second ineq}
\begin{split}
   & (2C_{T_M,\xi})^{2(l-p'+1)} \mathcal{A}_{l,p'}\int_{\prod_{j=p}^{p'-1} \mathcal{V}_j}  \mathbf{1}_{\{t_l>0\}} d\Upsilon_p^{p'-1}\\
   &= (2C_{T_M,\xi})^{2(l-p'+1)} \mathcal{A}_{l,p'} \int_{\prod_{j=p}^{p'-2} \mathcal{V}_j}   d\Upsilon_p^{p'-2}\\
    &  \int_{\mathcal{V}_{p'-1}} \mathbf{1}_{\{t_l>0\}}   2 e^{\mathfrak{C}(t_{p'-1}-t_{p'})\langle v_{p'-1}\rangle^2} e^{[\frac{1}{2T_w(x_{p'-1})}-\frac{1}{2T_w(x_{p'})}]|v_{p'-1}|^2} d\sigma(v_{p'-1},V^{m-p'+2}(t_{p'-1})).
\end{split}
\end{equation}

In the proof for~\eqref{eqn: boundedness for l-p+1 fold integration} we have
\[\eqref{eqn: dsigmaq}\leq \eqref{eqn: dsigmap}\leq \eqref{eqn: int over V_p} \leq (C_{T_M,\xi})^{2(l-q+1)}\mathcal{A}_{l,q}.\]
Then by replacing $q$ by $p'-1$ in the estimate $~\eqref{eqn: dsigmaq}\leq (2C_{T_M,\xi})^{2(l-q+1)}\mathcal{A}_{l,q}$ we have

\[(2C_{T_M,\xi})^{2(l-p'+1)} \mathcal{A}_{l,p'}\int_{\prod_{j=p}^{p'-1} \mathcal{V}_j}  \mathbf{1}_{\{t_l>0\}} d\Upsilon_p^{p'-1}=\eqref{eqn: second ineq}\leq (2C_{T_M,\xi})^{2(l-p'+2)}\mathcal{A}_{l,p'-1}\int_{\prod_{j=p}^{p'-2} \mathcal{V}_j} \mathbf{1}_{\{t_l>0\}}  d\Upsilon_p^{p'-2}   .\]
Keep doing this computation until integrating over $\mathcal{V}_p$ we obtain the second inequality in~\eqref{eqn: structure}.

\end{proof}

In the following lemma, we prepare for showing the smallness of the last term in~\eqref{eqn: formula for H}.
\begin{lemma}\label{lemma: t^k}
Assume the constraint for $T_w$ holds true (\eqref{eqn: Constrain on T}). There exists
\begin{equation}\label{eqn: k_0 dependence}
k_0=k_0(\Omega,C_{T_M,\xi},\mathcal{C},T_M,r_\perp,r_\parallel,\min\{T_w(x)\},\xi,\mathfrak{C})\gg 1,
\end{equation}
\begin{equation}\label{eqn: t'}
 t'=t'(k_0,\xi,T_M,\min\{T_w(x)\},\mathcal{C},r_\perp,r_\parallel,\mathfrak{C},C_{\phi^m})\leq t^*\ll 1
\end{equation}
such that for all $t\in [0,t']$, we have
\begin{equation}\label{eqn: 1/2 decay}
   \int_{\prod_{j=1}^{k_0-1}\mathcal{V}_j} \mathbf{1}_{\{t_{k_0}>0\}}d\Sigma_{k_0-1,m}^{k_0}(t_{k_0})  \leq (\frac{1}{2})^{k_0} \mathcal{A}_{k_0-1,1},
\end{equation}
where $\mathcal{A}_{k_0-1,1}$ is defined in~\eqref{eqn: Elp} and $t^*$ is defined in~\eqref{eqn: t*}.
\end{lemma}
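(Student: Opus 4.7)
My plan is to upgrade the uniform $(2C_{T_M,\xi})^{2k_0}$ bound of Lemma \ref{lemma: boundedness} into the geometric decay $(1/2)^{k_0}$ by exploiting the extra restriction $\mathbf{1}_{\{t_{k_0}>0\}}$, which forces $k_0$ boundary collisions to occur in the small time window $[0,t]$. This follows the Guo-type dichotomy between a geometric constraint on non-grazing trajectories and analytic smallness from grazing or high-velocity boundary integrals, refined to handle the anisotropic Cercignani--Lampis kernel via the two-scale decomposition hinted at in the introduction.

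\textbf{Step 1 (Decomposition and geometric control).} For parameters $\eta,\delta\in(0,1)$ to be fixed later, split $\mathcal{V}_j=\gamma_+(x_j)$ into $\mathcal{V}_j^G\sqcup\mathcal{V}_j^B$, where
\[
\mathcal{V}_j^G := \{v_j\in\gamma_+(x_j):|n(x_j)\cdot v_j|>\eta\delta,\ |v_j|<(\eta\delta)^{-1}\}.
\]
If $v_j\in\mathcal{V}_j^G$, the convexity hypothesis (\ref{eqn: convex}) combined with the uniform bound $C_{\phi^m}$ of Lemma \ref{lemma: phi_inf} gives a strictly positive lower bound $t_j-t_{j+1}\geq \tau_*(\eta,\delta,C_\Omega,C_{\phi^m})>0$ on the travel time to the next collision (roughly of order $(\eta\delta)^3/C_\Omega$ for straight trajectories, corrected for the small drift $\nabla\phi^m$). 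Consequently, on $\{t_{k_0}>0\}$ one has $\#\{j\leq k_0-1:v_j\in\mathcal{V}_j^G\}\leq N_0:=\lfloor t/\tau_*\rfloor$, and by choosing $t'\ll \tau_*$ we force $N_0$ to be a small fixed integer.

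\textbf{Step 2 (Per-step smallness on the bad set).} Revisit the inductive computation of Lemma \ref{lemma: boundedness}, now inserting the restriction $v_j\in\mathcal{V}_j^B$ at the indices $j$ not belonging to the good set. Decompose $\mathcal{V}_j^B$ as $\{|v_{j,\perp}|\leq\eta\delta\}\cup\{|v_j|\geq(\eta\delta)^{-1}\}$. On the first piece the linear weight $|v_{j,\perp}|$ in (\ref{eqn: Formula for R}) furnishes a prefactor $O((\eta\delta)^2)$. On the second piece, choosing $\eta$ small depending on $r_\parallel,T_w,T_M$ guarantees the implication of the introduction: either $|v_{j,\parallel}-(1-r_\parallel)V^{m-j+1}_\parallel(t_j)|>\delta^{-1}$ (so the anisotropic Gaussian in $d\sigma$ contributes $e^{-c/\delta^2}$), or the propagated velocity $V^{m-j+1}_\parallel(t_j)$ is pumped up by $\delta^{-1}$, producing additional Gaussian suppression through (\ref{eqn: Constrain on T}) that feeds into the next layer via the temperature recursion $T_{l,p}\to T_{l,p-1}$. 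Repeating the Lemma \ref{Lemma: abc}/\ref{Lemma: perp abc} calculation with these refinements, the per-step multiplier improves from $(2C_{T_M,\xi})^2$ to $\epsilon(\eta,\delta)(2C_{T_M,\xi})^2$ with $\epsilon(\eta,\delta)\to 0$ as $\delta\to 0$, while the cumulative weight $\mathcal{A}_{l,p}$ of (\ref{eqn: Elp}) is preserved.

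\textbf{Step 3 (Assembly) and main obstacle.} Summing over subsets $S\subset\{1,\dots,k_0-1\}$ of good indices with $|S|\leq N_0$, the LHS of (\ref{eqn: 1/2 decay}) is dominated by
\[
\sum_{|S|\leq N_0}\binom{k_0-1}{|S|}(2C_{T_M,\xi})^{2|S|}\bigl[\epsilon(\eta,\delta)(2C_{T_M,\xi})^2\bigr]^{k_0-1-|S|}\mathcal{A}_{k_0-1,1}.
\]
Fix $\eta$ so the dichotomy above holds, then pick $\delta$ so small that $\epsilon(\eta,\delta)(2C_{T_M,\xi})^2\leq 1/8$, then $k_0$ large enough in terms of $N_0$ and $C_{T_M,\xi}$ so the combinatorial sum collapses below $(1/2)^{k_0}\mathcal{A}_{k_0-1,1}$, and finally $t'\leq t^*$ small enough to enforce $N_0\ll k_0$ and all the smallness thresholds from Lemma \ref{lemma: boundedness}. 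The principal difficulty is in Step 2: the anisotropy of $d\sigma$ destroys Fubini and requires the iterated integration to be performed in the fixed order $v_{k_0-1},\dots,v_1$, with each layer altering the effective temperature $T_{l,p}$ governing subsequent Gaussians. Hypothesis (\ref{eqn: Constrain on T}) is used precisely to keep this temperature recursion contractive, so that the high-velocity regime of $\mathcal{V}_j^B$ cannot pump exponentially growing Gaussian mass through the $k_0$ layers and swallow the gain $\epsilon(\eta,\delta)$.
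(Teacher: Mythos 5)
Your Step 1 (good/bad split of $\mathcal{V}_j$ and the geometric bound on the number of non-grazing, bounded-velocity collisions) matches the paper's mechanism, and your treatment of the directly small pieces of $\mathcal{V}_j^B$ (the factor $|v_{j,\perp}|$ when the normal component is below $\eta\delta$, and the Gaussian tail when $v_j$ is far from the shifted center of $d\sigma$) is sound. The gap is in Step 2: the claim that \emph{every} bad step carries a per-step multiplier $\epsilon(\eta,\delta)(2C_{T_M,\xi})^2$ with $\epsilon\to 0$ is false in the pumping case $|v_{j,\parallel}-\eta_{j,\parallel}V^{m-j+1}_\parallel(t_j)|<\delta^{-1}$ with $|v_{j,\parallel}|$ large. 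There the $v_j$-integration runs over a ball of radius $\delta^{-1}$ around the center of the anisotropic Gaussian, so it captures essentially its full mass and yields no smallness at that layer; and the ``Gaussian suppression fed into the next layer'' does not exist either, because the weight $\mathcal{A}_{l,p}$ in \eqref{eqn: Elp} carries a \emph{positive} coefficient in front of $|V^{m-p+1}(t_p)|^2$ (since $T_{l,p}>T_w$), so a pumped-up incoming velocity makes the intermediate bound larger, not smaller. This is exactly what the paper flags after Lemma \ref{Lemma:  (a)(c)}: those cases ``do not directly provide the smallness.'' Consequently the binomial assembly in your Step 3, which assumes a factor $\leq 1/8$ for every bad index, is not justified.

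What replaces per-step smallness is a run-level compensation argument that your proposal never carries out: within a maximal bad run anchored on both sides by good (hence bounded) velocities, writing $|v_{l+L,\parallel}|-|v_{l,\parallel}|$ as a telescoping sum of $|v_{j,\parallel}|-|V^{m-j+1}_\parallel(t_j)|$ plus small field corrections shows that if a quarter of the run consists of pumping steps (each decrement $\leq-\delta^{-1}$ by Lemma \ref{Lemma:  (a)(c)}), there must be compensating steps with positive jumps $a_q$ summing to $\gtrsim L\delta^{-1}$; each such step pays $e^{-a_q^2/4T_M}$ through the sharper estimate \eqref{eqn: coe abc smaller} (using $\eta_{q,\parallel}<1$), and Cauchy--Schwarz converts this into $e^{-cL\delta^{-2}}\leq(3\delta)^{L/2}$ for the whole run (Lemma \ref{Lemma: Step3}); only then does the counting over runs and over the at most $N\sim\delta^{-3}$ good indices give $(1/2)^{k_0}$. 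Note also that the inequality $\eta_{i,\parallel},\eta_{i,\perp}<\eta<1$, which you invoke to run the dichotomy, is not a consequence of \eqref{eqn: Constrain on T} alone: it requires $T_{k-1,i}$ to be close to $T_M$, i.e.\ restricting to indices $i\leq k_0/2$ (or $k_0$ large), a point your argument omits.
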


\begin{remark}
We comment that the main difference between this lemma and Lemma \ref{lemma: boundedness} is that here a decaying factor $(\frac{1}{2})^{k_0}$ is needed. This lemma implies for $k=k_0$ large enough, the last term of~\eqref{eqn: formula for H} is negligible.
\end{remark}

The main idea to prove Lemma \ref{lemma: t^k} is to use the decomposition~\eqref{eqn: decom} for the integral domain. In Lemma \ref{Lemma: (2)}-\ref{Lemma: Step3} we use Lemma \ref{lemma: boundedness} to show that such decomposition indeed make contribution in obtaining the smallness. Among them Lemma \ref{Lemma: Step3} is the most important one as it summarizes all estimates in Lemma \ref{Lemma: (2)}-\ref{Lemma: accumulate} and directly provides the decaying factor for the $k$-fold integral. Echoing the difficulties for obtaining $L_\infty$ bound as discussed in Section 1.2, where we proposed splitting $\gamma_+$ into $\gamma_+^\delta$ and the remainders, in Lemma~\ref{Lemma: Step3}, we detail such splitting and the trajectories' behavior in these sets.

After proving Lemma \ref{Lemma: (2)}-\ref{Lemma: Step3} as preparation, we will conclude the proof for Lemma \ref{lemma: t^k}.

\begin{lemma}\label{Lemma: (2)}
Recall~\eqref{eqn: boundedness for l-p+1 fold integration} in Lemma \ref{lemma: boundedness}.

For $1\leq i\leq k-1$, if
\begin{equation}\label{eqn: 2 condition}
|v_i\cdot n(x_i)|<\delta,
\end{equation}
then
\begin{equation}\label{eqn: 2}
    \int_{\prod_{j=i}^{k-1} \mathcal{V}_j}  \mathbf{1}_{\{t_k>0\}}   \mathbf{1}_{\{v_i\in \mathcal{V}_i:|v_i\cdot n(x_i)|<\delta\}}   d\Phi_{i,m}^{k,k-1}(t_k) \leq \delta    (2C_{T_M,\xi})^{2(k-i)}\mathcal{A}_{k-1,i}.
\end{equation}

If
\begin{equation}\label{eqn: b condition}
|v_{i,\parallel}-\eta_{i,\parallel}V^{m-i+1}_\parallel(t_i)|>\delta^{-1},
\end{equation}
then
\begin{equation}\label{eqn: case b}
\int_{\prod_{j=i}^{k-1} \mathcal{V}_j}\mathbf{1}_{\{t_k>0\}}\mathbf{1}_{\{|v_{i,\parallel}-\eta_{i,\parallel}V^{m-i+1}_\parallel(t_i)|>\delta^{-1}\}}d\Phi_{i,m}^{k,k-1}(t_k)
 \leq \delta    (2C_{T_M,\xi})^{2(k-i)}\mathcal{A}_{k-1,i}.
\end{equation}
Here $\eta_{i,\parallel}$ is a constant defined in~\eqref{eqn: eta i para}.

If
\begin{equation}\label{eqn: d condition}
|v_{i,\perp}-\eta_{i,\perp}V^{m-i+1}_\perp(t_i)|>\delta^{-1},
\end{equation}
then
\begin{equation}\label{eqn: case d}
\int_{\prod_{j=i}^{k-1} \mathcal{V}_j}\mathbf{1}_{\{t_k>0\}}\mathbf{1}_{\{|v_{i,\perp}-\eta_{i,\perp}V^{m-i+1}_\perp(t_i)|>\delta^{-1}\}}d\Phi_{i,m}^{k,k-1}(t_k)
 \leq \delta    (2C_{T_M,\xi})^{2(k-i)}\mathcal{A}_{k-1,i}.
\end{equation}
Here $\eta_{i,\perp}$ is a constant defined in~\eqref{eqn: eta i perp}.

\end{lemma}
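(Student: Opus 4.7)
The strategy is to reduce each of the three inequalities to a single integral over $\mathcal{V}_i$ by first integrating out $v_{i+1},\ldots,v_{k-1}$ via Lemma \ref{lemma: boundedness}, and then to extract the factor $\delta$ from the restriction to the bad set using the explicit structure of $d\sigma(v_i,V^{m-i+1}(t_i))$. Using \eqref{eqn: ppt for Phi} I factor
\begin{equation*}
d\Phi_{i,m}^{k,k-1}(t_k) \;=\; d\Phi_{i+1,m}^{k,k-1}(t_k)\cdot d\Upsilon_i^i.
\end{equation*}
Since the indicator in each case depends only on $v_i$ (and the fixed $V^{m-i+1}(t_i)$), Fubini allows me to pull it outside the inner integrals, and \eqref{eqn: structure} of Lemma \ref{lemma: boundedness} with $p=p'=i+1$ and $l=k-1$ bounds $\int_{\prod_{j=i+1}^{k-1}\mathcal V_j}\mathbf{1}_{\{t_k>0\}}\,d\Phi_{i+1,m}^{k,k-1}(t_k)$ pointwise in $V^{m-i}(t_{i+1})$ by $(2C_{T_M,\xi})^{2(k-1-i)}\mathcal{A}_{k-1,i+1}$.

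The remaining integral in $v_i$ is then exactly the quantity treated in the inductive step \eqref{eqn: dsigmaq}--\eqref{eqn: int over V_p} of Lemma \ref{lemma: boundedness} (taking $q=i$, $l=k-1$), now weighted by the indicator of the bad set. Repeating the completing-the-square computation via Lemma \ref{Lemma: abc} in the parallel direction and Lemma \ref{Lemma: perp abc} in the normal direction, the integrand factors into Gaussians in $v_{i,\perp}$ and $v_{i,\parallel}$ centered at $\eta_{i,\perp}V^{m-i+1}_\perp(t_i)$ and $\eta_{i,\parallel}V^{m-i+1}_\parallel(t_i)$ respectively, and the residual exponential in $V^{m-i+1}(t_i)$ upgrades $\mathcal{A}_{k-1,i+1}$ to $\mathcal{A}_{k-1,i}$ exactly as in \eqref{eqn: Vq para}--\eqref{eqn: Vq perp}. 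The three cases now differ only in how the indicator is used.

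For Case~\eqref{eqn: 2 condition} the measure $d\sigma$ carries the factor $|v_{i,\perp}|/(r_\perp T_w(x_i))$ from \eqref{eqn: Formula for R}, hence
\begin{equation*}
\int_0^\delta \frac{|v_{i,\perp}|}{r_\perp T_w(x_i)}\,e^{-c|v_{i,\perp}|^2}\,dv_{i,\perp} \;\leq\; \frac{\delta^2}{2\,r_\perp T_w(x_i)} \;\leq\; \delta
\end{equation*}
for $\delta$ small (the $I_0$ factor is absorbed via the completing-the-square bound already used in Lemma \ref{Lemma: perp abc}). For Cases~\eqref{eqn: b condition} and \eqref{eqn: d condition}, restricting the completed-square Gaussian to the tail $|v - \eta V| > \delta^{-1}$ gives
\begin{equation*}
\int_{|v-\eta V|>\delta^{-1}} e^{-c|v-\eta V|^2}\,dv \;\lesssim\; e^{-c\delta^{-2}/2} \;\ll\; \delta,
\end{equation*}
and integration over the complementary (free) component yields the untouched Gaussian mass. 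Multiplying by the prefactor $(2C_{T_M,\xi})^{2(k-1-i)}\mathcal{A}_{k-1,i+1}$ produced in the first step and accounting for the additional $(2C_{T_M,\xi})^2\mathcal{A}_{k-1,i}/\mathcal{A}_{k-1,i+1}$ coming from the $v_i$-integration yields the claimed bounds.

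The main obstacle is bookkeeping rather than novel estimation: one must verify that the exponential shift terms generated by completing the square in the restricted integral exactly rebuild $\mathcal{A}_{k-1,i}$ (no excess growth in $V^{m-i+1}(t_i)$ is introduced), so that the $\delta$ factor from the bad-set restriction survives intact. Since this matches the computation already executed in the proof of \eqref{eqn: boundedness for l-p+1 fold integration}, it transfers verbatim with only the added indicator.
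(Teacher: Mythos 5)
Your overall reduction is exactly the paper's route: integrate out $v_{i+1},\dots,v_{k-1}$ via Lemma \ref{lemma: boundedness} (equivalently, start from \eqref{eqn: int over V_p} with $l=k-1$, $q=i$), identify the Gaussian centers $\eta_{i,\parallel}V^{m-i+1}_\parallel(t_i)$ and $\eta_{i,\perp}V^{m-i+1}_\perp(t_i)$ by completing the square, and extract the factor $\delta$ from the restricted $v_i$-integral while the residual exponential rebuilds $\mathcal{A}_{k-1,i}$. Your treatment of \eqref{eqn: 2} (using the $|v_{i,\perp}|$ prefactor on $(0,\delta)$, which is the content of \eqref{eqn: coe abc perp small}) and of \eqref{eqn: case b} (a clean two-dimensional Gaussian tail, \eqref{eqn: coe abc small}) matches the paper and is correct.

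The gap is in \eqref{eqn: case d}. There the $v_{i,\perp}$-integrand is not a pure Gaussian: the Cercignani-Lampis kernel carries the extra factor $|v_{i,\perp}|$ together with the Bessel factor $I_0$. After bounding $I_0(y)\leq e^{y}$ and completing the square you are left with $v_{i,\perp}\,e^{-(b-a-\e)|v_{i,\perp}-\eta_{i,\perp}V^{m-i+1}_\perp(t_i)|^2}$, and its integral over the tail $|v_{i,\perp}-\eta_{i,\perp}V^{m-i+1}_\perp(t_i)|>\delta^{-1}$ is only $\lesssim \big(|V^{m-i+1}_\perp(t_i)|+\delta^{-1}\big)e^{-(b-a-\e)\delta^{-2}}$, not $\lesssim e^{-c\delta^{-2}}$ uniformly, as your proposal asserts. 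The linearly growing factor cannot be absorbed by $|w|\leq C_\epsilon e^{\epsilon |w|^2}$, because the exponent $\frac{(a+\e)b}{b-a-\e}|w|^2$ must be reproduced exactly to give $\mathcal{A}_{k-1,i}$: the slack $(2\mathcal{C})^{k-i}(\mathfrak{C}t)$ is already consumed in the unrestricted computation and is proportional to $t$, so enlarging the exponent would force $\delta$ to depend on $t$ and break the order in which $\delta$, $k_0$ and $t'$ are fixed in Lemma \ref{lemma: t^k}. This is precisely why the paper proves the separate Lemma \ref{Lemma: integrate normal small}: a term-by-term comparison of the Taylor series of $vI_0(2mnvu)$ with that of $e^{2mnvu}$ in the regime $v\lesssim u$, plus a direct estimate for $v\gtrsim u$, yields the tail bound \eqref{eqn: coe perp smaller 2} which is uniform in $u$. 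Your argument for \eqref{eqn: case d} needs this (or an equivalent) ingredient; as written, the perpendicular tail step does not close.
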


\begin{proof}
First we focus on~\eqref{eqn: 2}. By~\eqref{eqn: int over V_p} in Lemma \ref{lemma: boundedness}, we can replace $l$ by $k-1$ and replace $q$ by $i$ to obtain
\begin{equation}\label{eqn: int V_i}
\begin{split}
  & \int_{\prod_{j=i}^{k-1} \mathcal{V}_j}    \mathbf{1}_{\{t_k>0\}} d\Phi_{i,m}^{k,k-1}(t_k)\leq 4(2C_{T_M,\xi})^{2(k-1-i)} \\
   &  \int_{\mathcal{V}_{i,\perp}} \frac{2}{r_\perp}\frac{|v_{i,\perp}|}{2T_w(x_i)}  e^{-[\frac{1}{2T_{k-1,i}}-\frac{1}{2T_w(x_i)}-2(2\mathcal{C})^{k-1-i}(\mathfrak{C}t)]|v_{i,\perp}|^2}I_0\left(\frac{(1-r_\perp)^{1/2}v_{i,\perp}V^{m-i+1}_\perp(t_i)}{T_w(x_i)r_\perp}\right)e^{\frac{|v_{i,\perp}|^2+(1-r_\perp)|V^{m-i+1}_\perp(t_i)|^2}{2T_w(x)r_\perp}}  dv_{i,\perp} \\
    & \times\int_{\mathcal{V}_{i,\parallel}}\frac{1}{\pi r_\parallel(2-r_\parallel)(2T_w(x_i))}e^{-[\frac{1}{2T_{k-1,i}}-\frac{1}{2T_w(x_i)}-2(2\mathcal{C})^{k-1-i}(\mathfrak{C}t)]|v_{i,\parallel}|^2}e^{-\frac{1}{2T_w(x_i)}\frac{|v_{i,\parallel}-(1-r_\parallel)V^{m-i+1}_\parallel(t_i)|^2}{r_\parallel(2-r_\parallel)}}dv_{i,\parallel}.
\end{split}
\end{equation}
Under the condition~\eqref{eqn: 2 condition}, we consider the second line of~\eqref{eqn: int V_i} with integrating over $\{v_{i,\perp}\in \mathcal{V}_{i,\perp}:|v_i\cdot n(x_i)|<\frac{1-\eta}{2(1+\eta)}\delta\}$. To apply~\eqref{eqn: coe abc perp small} in Lemma \ref{Lemma: perp abc}, we set
\[a=-[\frac{1}{2T_{k-1,i}}-\frac{1}{2T_w(x_{i})}],~ b=\frac{1}{2T_w(x_{i})r_\perp},~\e=(2\mathcal{C})^{k-1-i}(\mathfrak{C}t),~w=\sqrt{1-r_\perp}V^{m-i+1}_\perp(t_i).\]
Under the condition $|v_i\cdot n(x_i)|<\frac{1-\eta}{2(1+\eta)}\delta$, applying~\eqref{eqn: coe abc perp small} in Lemma \ref{Lemma: perp abc} and using~\eqref{eqn: Vq perp} with $q=i,l=k-1$, we bound the second line of~\eqref{eqn: int V_i} by
\begin{equation}\label{eqn: perp small}
\delta C_{T_M,\xi}\exp\left(\big[ \frac{[T_{k-1,i}-T_w(x_{i})][1-r_{min}]}{2T_w(x_{i})[T_{k-1,i}(1-r_{min})+r_{min} T_w(x_{i})]} + (2\mathcal{C})^{k-i}(\mathfrak{C}t)\big]|V^{m-i+1}_\perp(t_i)|^2\right).
\end{equation}
Comparing with~\eqref{eqn: Vq perp}, we conclude the second line of~\eqref{eqn: int V_i} provides one more constant term $\delta$. The third line of~\eqref{eqn: int V_i} is bounded by~\eqref{eqn: Vq para} with $q=i,l=k-1$. Therefore, we derive~\eqref{eqn: 2}.

Then we focus on~\eqref{eqn: case b}. We consider the third line of~\eqref{eqn: int V_i}. To apply~\eqref{eqn: coe abc small} in Lemma \ref{Lemma: abc}, we set
\begin{equation}\label{eqn: abe}
     a=-\frac{1}{2T_{k-1,i}} +\frac{1}{2T_w(x_i)},\quad b=\frac{1}{2T_w(x_i)r_\parallel(2-r_\parallel)},\quad \e=2(2\mathcal{C})^{k-1-i}(\mathfrak{C}t),~w=(1-r_\parallel)V^{m-i+1}_\parallel(t_i).
\end{equation}
We define
\begin{equation}\label{eqn: B i para def}
B_{i,\parallel}:=b-a-\e.
\end{equation}
In regard to~\eqref{eqn: coe abc small},
\[    \frac{b}{b-a-\e}w=\frac{b}{b-a}[1+\frac{\e}{b-a-\e}] w. \]
By~\eqref{eqn: abe},
\[\frac{b}{b-a}=\frac{T_{k-1,i}}{T_{k-1,i}(1-r_\parallel)^2+T_w(x_i)r_\parallel(2-r_\parallel)},\quad \frac{\e}{b-a-\e}=\frac{2(2\mathcal{C})^{k-1-i}(\mathfrak{C}t)}{B_{i,\parallel}}.\]
Thus we obtain
\begin{equation}\label{eqn: constant for the t^k}
    \frac{b}{b-a-\e}w=\eta_{i,\parallel}V^{m-i+1}_\parallel(t_i),
\end{equation}
where we define
\begin{equation}\label{eqn: eta i para}
\eta_{i,\parallel}:=\frac{T_{k-1,i}[1+2(2\mathcal{C})^{k-1-i}(\mathfrak{C}t)/B_{i,\parallel}]}{T_{k-1,i}(1-r_\parallel)^2+T_w(x_i)r_\parallel(2-r_\parallel)}(1-r_\parallel).
\end{equation}
Thus under the condition~\eqref{eqn: b condition}, applying~\eqref{eqn: coe abc small} in Lemma \ref{eqn: coe abc} with $\frac{b}{b-a-\e}w=\eta_{i,\parallel}V^{m-i+1}_\parallel(t_i)$ and using~\eqref{eqn: Vq para} with $q=i,l=k-1$, we bound the third line of~\eqref{eqn: int V_i} by
\[\delta C_{T_M,\xi}\exp\left(\big[ \frac{[T_{k-1,i}-T_w(x_{i})][1-r_{min}]}{2T_w(x_{i})[T_{k-1,i}(1-r_{min})+r_{min} T_w(x_{i})]} + 2(2\mathcal{C})^{k-1-i}(\mathfrak{C}t)\big]|V^{m-i+1}_\parallel(t_i)|^2\right).\]
By the same computation in Lemma \ref{Lemma: (2)}, we derive~\eqref{eqn: case b} because of the extra constant $\delta$.

Last we focus on~\eqref{eqn: case d}. We consider the second line of~\eqref{eqn: int V_i}. To apply~\eqref{eqn: coe abc perp small} in Lemma \ref{Lemma: integrate normal small}, we set
\begin{equation}\label{eqn: abe perp}
     a=-\frac{1}{2T_{k-1,i}} +\frac{1}{2T_w(x_i)},\quad b=\frac{1}{2T_w(x_i)r_\perp},\quad \e=2(2\mathcal{C})^{k-1-i}(\mathfrak{C}t),~ w=\sqrt{1-r_\perp}V^{m-i+1}_\perp(t_i).
\end{equation}
Define
\begin{equation}\label{eqn: B i perp}
B_{i,\perp}:=b-a-\e.
\end{equation}
By the same computation as~\eqref{eqn: constant for the t^k},
\[\frac{b}{b-a-\e}w=\eta_{i,\perp}V^{m-i+1}_\perp(t_i),\]
where we define
\begin{equation}\label{eqn: eta i perp}
\eta_{i,\perp}:= \frac{T_{k-1,i}[1+\frac{2(2\mathcal{C})^{k-1-i}(\mathfrak{C}t)}{B_{i,\perp}}]}{T_{k-1,i}(1-r_\perp)+T_w(x_i)r_\perp}\sqrt{1-r_\perp}.
\end{equation}
Thus under the condition~\eqref{eqn: d condition}, applying~\eqref{eqn: coe perp small 2} in Lemma~\ref{Lemma: integrate normal small} with $\frac{b}{b-a-\e}w=\eta_{i,\perp}V^{m-i+1}_\perp(t_i)$ and using~\eqref{eqn: Vq perp} with $q=i,l=k-1$, we bound the second line of~\eqref{eqn: int V_i} by
\[\delta C_{T_M,\xi}\exp\left(\big[ \frac{[T_{k-1,i}-T_w(x_{i})][1-r_{min}]}{2T_w(x_{i})[T_{k-1,i}(1-r_{min})+r_{min} T_w(x_{i})]} + 2(2\mathcal{C})^{k-i}(\mathfrak{C}t)\big]|V^{m-i+1})_\perp(t_i)|^2\right).\]
Then we derive~\eqref{eqn: case b} because of the extra constant $\delta$.

\end{proof}

\begin{lemma}\label{Lemma:  (a)(c)}
For $\eta_{i,\parallel}$ and $\eta_{i,\perp}$ defined in Lemma \ref{Lemma: (2)}, suppose there exists $\eta<1$ such that
\begin{equation}\label{eqn: eta condition}
  \max\{\eta_{i,\parallel},\eta_{i,\perp}\}<\eta<1.
\end{equation}
If
\begin{equation}\label{eqn: (a) condition}
  |v_{i,\parallel}|>\frac{1+\eta}{1-\eta}\delta^{-1} \text{ and } |v_{i,\parallel}-\eta_{i,\parallel}V^{m-i+1}_\parallel(t_i)|<\delta^{-1},
\end{equation}
then we have
\begin{equation}\label{eqn: (a)}
  |V^{m-i+1}_\parallel(t_i)|>|v_{i,\parallel}|+\delta^{-1}.
\end{equation}

Also if
\begin{equation}\label{eqn: (c) condition}
  |v_{i,\perp}|>\frac{1+\eta}{1-\eta}\delta^{-1} \text{ and } |v_{i,\perp}-\eta_{i,\perp}V^{m-i+1}_\perp(t_i)|<\delta^{-1},
\end{equation}
then we have
\begin{equation}\label{eqn: (c)}
  |V^{m-i+1}_\perp(t_i)|>|v_{i,\perp}|+\delta^{-1}.
\end{equation}

\end{lemma}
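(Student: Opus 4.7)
The plan is to prove both implications by a direct application of the triangle inequality followed by the algebraic rearrangement that the lower bound on $|v_{i,\parallel}|$ (resp. $|v_{i,\perp}|$) in (\ref{eqn: (a) condition}) (resp. (\ref{eqn: (c) condition})) was manifestly designed to make work. Since the parallel and perpendicular statements are structurally identical, I would prove the parallel case in detail and then indicate that the perpendicular case follows by the same computation.

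First, from the second half of the hypothesis (\ref{eqn: (a) condition}) and the reverse triangle inequality,
\begin{equation*}
\eta_{i,\parallel}|V^{m-i+1}_\parallel(t_i)| \;\geq\; |v_{i,\parallel}| - |v_{i,\parallel} - \eta_{i,\parallel}V^{m-i+1}_\parallel(t_i)| \;>\; |v_{i,\parallel}| - \delta^{-1}.
\end{equation*}
Since $|v_{i,\parallel}| > \frac{1+\eta}{1-\eta}\delta^{-1} > \delta^{-1}$, the right-hand side is positive, so I can divide by $\eta_{i,\parallel} > 0$ and then use (\ref{eqn: eta condition}) to get
\begin{equation*}
|V^{m-i+1}_\parallel(t_i)| \;>\; \frac{|v_{i,\parallel}| - \delta^{-1}}{\eta_{i,\parallel}} \;>\; \frac{|v_{i,\parallel}| - \delta^{-1}}{\eta}.
\end{equation*}

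The final step is to show this lower bound exceeds $|v_{i,\parallel}| + \delta^{-1}$. Cross-multiplying, this reduces to $(1-\eta)|v_{i,\parallel}| > (1+\eta)\delta^{-1}$, which is precisely the first condition of (\ref{eqn: (a) condition}). This gives (\ref{eqn: (a)}). Replacing the subscript $\parallel$ by $\perp$ everywhere, using the $\perp$-half of (\ref{eqn: eta condition}) and the hypotheses (\ref{eqn: (c) condition}), yields (\ref{eqn: (c)}) by the same argument.

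There is no real obstacle: the lemma is essentially a bookkeeping statement about constants, and the threshold $\frac{1+\eta}{1-\eta}\delta^{-1}$ in the hypothesis is exactly the one that makes the triangle inequality close. The only thing to be slightly careful about is that the constants $\eta_{i,\parallel}$ and $\eta_{i,\perp}$ defined in (\ref{eqn: eta i para}) and (\ref{eqn: eta i perp}) are strictly positive so that division is legal; this is automatic from their definition since $T_{k-1,i}, T_w(x_i) > 0$, $0 < r_\parallel < 2$ with $r_\parallel \neq 1$ ensures $(1-r_\parallel) \neq 0$ in the relevant regime, and $0 < r_\perp \leq 1$ gives $\sqrt{1-r_\perp} \geq 0$; the positivity $\eta_{i,\parallel}, \eta_{i,\perp} > 0$ together with the strict upper bound in (\ref{eqn: eta condition}) is what the structural assumption (\ref{eqn: Constrain on T}) is arranged to guarantee (as used in the construction of these constants above).
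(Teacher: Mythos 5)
Your proof is correct and follows essentially the same route as the paper: reverse triangle inequality to get $\eta_{i,\parallel}|V^{m-i+1}_\parallel(t_i)|>|v_{i,\parallel}|-\delta^{-1}$, then use $\eta_{i,\parallel}<\eta<1$ together with the threshold $|v_{i,\parallel}|>\frac{1+\eta}{1-\eta}\delta^{-1}$ to close the inequality (the paper keeps $\eta_{i,\parallel}$ and invokes monotonicity of $x\mapsto\frac{1+x}{1-x}$, while you pass to $\eta$ immediately and cross-multiply, which is only a cosmetic difference). The perpendicular case is handled identically in both, so nothing is missing.
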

\begin{remark}
Lemma \ref{Lemma: (2)} includes all the cases that are controllable since they provides the small number $\delta$, which direct contributes in obtaining the exponential decay in~\eqref{eqn: 1/2 decay}. This lemma discuss the rest cases that does not directly provide the smallness, which are the main difficulty.
\end{remark}

\begin{proof}
Under the condition~\eqref{eqn: (a) condition} we have
\[\eta_{i,\parallel}|V^{m-i+1}_\parallel(t_i)|> |v_{i,\parallel}|-\delta^{-1}.\]
Thus we derive
\begin{align*}
 |V^{m-i+1}_\parallel(t_i)|  &>|v_{i,\parallel}|+\frac{1-\eta_{i,\parallel}}{\eta_{i,\parallel}}|v_{i,\parallel}|-\frac{1}{\eta_{i,\parallel}}\delta^{-1}  \\
   & >|v_{i,\parallel}|+\frac{1-\eta_{i,\parallel}}{\eta_{i,\parallel}}\frac{1+\eta}{1-\eta}\delta^{-1}-\frac{1}{\eta_{i,\parallel}}\delta^{-1}
\\
& >|v_{i,\parallel}|+\frac{1-\eta_{i,\parallel}}{\eta_{i,\parallel}}\frac{1+\eta_{i,\parallel}}{1-\eta_{i,\parallel}}\delta^{-1}-\frac{1}{\eta_{i,\parallel}}\delta^{-1}
\\
&>|v_{i,\parallel}|+\frac{1+\eta_{i,\parallel}}{\eta_{i,\parallel}}\delta^{-1}-\frac{1}{\eta_{i,\parallel}}\delta^{-1}>|v_{i,\parallel}|+\delta^{-1},
\end{align*}
where we use $|v_{i,\parallel}|>\frac{1+\eta}{1-\eta}\delta^{-1}$ in the second line and $1>\eta\geq \eta_{i,\parallel}$ in the third line. Then we obtain~\eqref{eqn: (a)}.

Under the condition~\eqref{eqn: (c) condition}, we apply the same computation above to obtain~\eqref{eqn: (c)}.

\end{proof}

\begin{lemma}\label{Lemma: accumulate}
Suppose there are $n$ number of $v_j$ such that
\begin{equation}\label{eqn: satisfy condition}
|v_{j,\parallel}-\eta_{j,\parallel}V^{m-j+1}_\perp(t_j)|\geq \delta^{-1},
\end{equation}
and also suppose the index $j$ in these $v_j$ are $i_1<i_2<\cdots<i_n$, then
\begin{equation}\label{eqn: claim M}
\int_{\prod_{j={i_1}}^{k-1} \mathcal{V}_j}\mathbf{1}_{\{t_k>0\}}\mathbf{1}_{\{\text{~\eqref{eqn: satisfy condition} holds for $j=i_1,i_2,\cdots, i_n$}\}}d\Phi_{i_1,m}^{k,k-1}(t_k) \leq  (\delta)^{n}   (2C_{T_M,\xi})^{2(k-i_1)}\mathcal{A}_{k-1,i_1}.
\end{equation}

\end{lemma}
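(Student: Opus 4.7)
The plan is to prove Lemma~\ref{Lemma: accumulate} by iteratively integrating out $v_{k-1},v_{k-2},\ldots,v_{i_1}$ in that order, adapting the downward induction on $p$ already carried out in the proof of Lemma~\ref{lemma: boundedness} but swapping in the constrained estimate~\eqref{eqn: coe abc small} of Lemma~\ref{Lemma: abc} at each of the $n$ bad indices $i_1<i_2<\cdots<i_n$. At a good index $p\notin\{i_1,\ldots,i_n\}$ I would reuse the computation~\eqref{eqn: int over V_p} together with Lemma~\ref{Lemma: abc} and Lemma~\ref{Lemma: perp abc}, yielding the factor $(2C_{T_M,\xi})^2$ and propagating the Gaussian weight from $\mathcal{A}_{k-1,p+1}$ to $\mathcal{A}_{k-1,p}$ exactly as in the passage from~\eqref{eqn: dsigmap} to~\eqref{eqn: Vq para}--\eqref{eqn: Vq perp}.

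At a bad index $p=i_s$ the indicator $\mathbf{1}_{\{|v_{p,\parallel}-\eta_{p,\parallel}V^{m-p+1}_\parallel(t_p)|\geq \delta^{-1}\}}$ is in force, so the parallel integration is handled by the small-integrand version~\eqref{eqn: coe abc small} of Lemma~\ref{Lemma: abc} in place of~\eqref{eqn: coe abc}, exactly as was done in the proof of~\eqref{eqn: case b} in Lemma~\ref{Lemma: (2)}. The identification $\frac{b}{b-a-\e}w=\eta_{p,\parallel}V^{m-p+1}_\parallel(t_p)$ recorded in~\eqref{eqn: constant for the t^k} shows that the indicator lines up with the hypothesis of~\eqref{eqn: coe abc small}, while the perpendicular integration still uses Lemma~\ref{Lemma: perp abc}. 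The net effect at a bad index is to replace the constant $(2C_{T_M,\xi})^2$ by $\delta\,(2C_{T_M,\xi})^2$, with the same propagation of the exponential weight.

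After iterating through all $k-i_1$ variables one collects $(2C_{T_M,\xi})^{2(k-i_1)}$ from the Gaussian integrations, $\delta^n$ from the $n$ applications of~\eqref{eqn: coe abc small}, and the final exponential weight $\mathcal{A}_{k-1,i_1}$, which is precisely~\eqref{eqn: claim M}. The main point requiring care is checking that the Gaussian weight $\mathcal{A}_{k-1,p}$ transports across the bad indices in the same manner as across the good ones; this is immediate because~\eqref{eqn: coe abc} and~\eqref{eqn: coe abc small} (and their perpendicular counterparts) yield identical exponential factors, differing only by the multiplicative constant $\delta$. Consequently the temperatures $T_{k-1,p}$ defined in~\eqref{eqn: definition of T_p} continue to govern the Gaussian decay at every stage of the induction, and the final bound accumulates cleanly into~\eqref{eqn: claim M}.
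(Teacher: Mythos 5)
Your proposal is correct and is essentially the paper's argument: the paper peels off the bad indices $i_n, i_{n-1},\dots,i_1$ by iterating \eqref{eqn: structure} (which packages the consecutive good-index integrations you carry out explicitly) together with \eqref{eqn: case b} at each bad index, which is exactly your substitution of \eqref{eqn: coe abc small} for \eqref{eqn: coe abc} there. Your observation that the Gaussian weight $\mathcal{A}_{k-1,p}$ propagates identically through good and bad steps, since the two estimates differ only by the factor $\delta$, is precisely what makes the iteration close.
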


\begin{proof}
By~\eqref{eqn: structure} in Lemma 2 with $l=k-1$, $p=i_1$, $p'=i_n$ and using~\eqref{eqn: case b} with $i=i_n$, we have
\begin{equation}\label{eqn: split iM}
\begin{split}
   &  \int_{\prod_{j=i_1}^{k-1} \mathcal{V}_j}\mathbf{1}_{\{t_k>0\}}\mathbf{1}_{\{\text{~\eqref{eqn: satisfy condition} holds for $j=i_1,\cdots,i_n$}\}}d\Phi_{i_1,m}^{k,k-1}(t_k)\\
    & \leq \delta (2C_{T_M,\xi})^{2(k-i_n)} \mathcal{A}_{k-1,i_n}\int_{\prod_{j=i_1}^{i_n-1} \mathcal{V}_j} \mathbf{1}_{\{t_k>0\}}\mathbf{1}_{\{\text{~\eqref{eqn: satisfy condition} holds for $j=i_1,\cdots,i_{n-1}$}\}}        d\Upsilon_{i_1}^{i_n-1}\\
    &=\delta (2C_{T_M,\xi})^{2(k-i_n)} \mathcal{A}_{k-1,i_n}\int_{\prod_{j=i_1}^{i_{n-1}-1}\mathcal{V}_j}\int_{\prod_{j=i_{n-1}}^{(i_n)-1} \mathcal{V}_j} \mathbf{1}_{\{t_k>0\}}\mathbf{1}_{\{\text{~\eqref{eqn: satisfy condition} holds for $j=i_1,\cdots,i_{n-1}$}\}}      d\Upsilon_{i_{n-1}}^{(i_{n})-1}d\Upsilon_{i_1}^{i_{n-1}-1}.
\end{split}
\end{equation}
Again by~\eqref{eqn: structure} and~\eqref{eqn: case b} with $i=i_{n-1}$ we have
\[\eqref{eqn: split iM}\leq \delta^2 (C_{T_M,\xi})^{2(k-i_{n-1})}\mathcal{A}_{k-1,i_{n-1}}\int_{\prod_{j=i_1}^{i_{n-1}-1} \mathcal{V}_j} \mathbf{1}_{\{t_k>0\}}\mathbf{1}_{\{\text{~\eqref{eqn: satisfy condition} holds for $j=i_1,\cdots,i_{n-2}$}\}}        d\Upsilon_{i_1}^{i_{n-1}-1}.\]

Keep doing this computation until integrating over $\mathcal{V}_{i_1}$ we derive~\eqref{eqn: claim M}.

\end{proof}

\begin{lemma}\label{Lemma: Step3}
Assume $t\leq t_*$( so that we can apply Lemma \ref{lemma: boundedness} ) satisfies~\eqref{eqn: t less than 1} and~\eqref{eqn: k_1}. For $0<\delta\ll 1$, we define
\begin{equation}\label{eqn: decom}
  \mathcal{V}_{j}^{\delta}:=\{v_j\in \mathcal{V}_j:|v_j\cdot n(x_j)|>\delta,|v_j|\leq \delta^{-1}\}.
\end{equation}
For the sequence $\{v_1,v_2,\cdots,v_{k-1}\}$, consider a subsequence  $\{v_{l+1},v_{l+2},\cdots,v_{l+L}\}$ with $l+1<l+L\leq k-1$ as following:
\begin{equation}\label{eqn: sequence}
  \underbrace{v_{l}}_{\in \mathcal{V}_l^{\frac{1-\eta}{2(1+\eta)}\delta}},\underbrace{v_{l+1},v_{l+2}\cdots v_{l+L}}_{\text{all}\in \mathcal{V}_{l+j}\backslash \mathcal{V}_{l+j}^{\frac{1-\eta}{2(1+\eta)}\delta}},\quad \quad\underbrace{v_{l+L+1}}_{\in \mathcal{V}_{l+L+1}^{\frac{1-\eta}{2(1+\eta)}\delta}}.
\end{equation}
In~\eqref{eqn: sequence}, if $L\geq 100\frac{1+\eta}{1-\eta}$, then we have
\begin{equation}\label{eqn: Step3}
\int_{\prod_{j={l}}^{k-1} \mathcal{V}_j}\mathbf{1}_{\{t_k>0\}}\mathbf{1}_{\{v_{l+j}\in \mathcal{V}_{l+j}\backslash \mathcal{V}_{l+j}^{\frac{1-\eta}{2(1+\eta)}\delta} \text{ for } 1\leq j\leq L\}}d\Phi_{l,m}^{k,k-1}(t_k) \leq  (3\delta)^{L/2}   (2C_{T_M,\xi})^{2(k-l)}\mathcal{A}_{k-1,l}.
\end{equation}
Here the $\eta$ satisfies the condition~\eqref{eqn: eta condition}.

\end{lemma}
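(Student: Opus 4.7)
The strategy is to decompose the bad-set indicator at each $v_{l+j}$ into direct sub-indicators (which trigger one of the $\delta$-factor bounds in Lemma~\ref{Lemma: (2)}) and a chain sub-indicator (where Lemma~\ref{Lemma:  (a)(c)} produces a velocity-growth relation), and then combine them through the iteration underlying Lemma~\ref{lemma: boundedness}. With $\delta':=\frac{1-\eta}{2(1+\eta)}\delta$, the condition $v_{l+j}\in\mathcal{V}_{l+j}\setminus\mathcal{V}_{l+j}^{\delta'}$ means either $|v_{l+j}\cdot n(x_{l+j})|\le\delta'$ (so \eqref{eqn: 2} applies) or $|v_{l+j}|\ge(\delta')^{-1}$, in which case at least one of $|v_{l+j,\parallel}|$, $|v_{l+j,\perp}|$ exceeds $\frac{1+\eta}{1-\eta}\delta^{-1}$ and both the hypothesis of Lemma~\ref{Lemma:  (a)(c)} and the separation condition of \eqref{eqn: case b}/\eqref{eqn: case d} become testable. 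The indicator is therefore dominated by three direct sub-indicators (grazing, large-parallel-separated, large-perp-separated) plus one chain sub-indicator corresponding to $|v_{l+j,\parallel/\perp}-\eta_{l+j,\parallel/\perp}V^{m-l-j+1}_{\parallel/\perp}(t_{l+j})|<\delta^{-1}$.

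Expanding the product over $j=1,\ldots,L$ yields at most $4^L$ configurations, each labelled by a sequence in $\{D_1,D_2,D_3,C\}^L$. For a configuration with $n$ direct slots at indices $i_1<\cdots<i_n$, the direct-type version of Lemma~\ref{Lemma: accumulate} (its proof adapts verbatim to any combination of the three direct alternatives) bounds the integral by $\delta^{n}(2C_{T_M,\xi})^{2(k-l)}\mathcal{A}_{k-1,l}$. At each chain slot, Lemma~\ref{Lemma:  (a)(c)} combined with the Hamilton displacement $|V^{m-l-j+1}(t_{l+j})-v_{l+j-1}|\le C_{\phi^m}t\le\tfrac14\delta^{-1}$ (which holds after the choice of $t'$ in \eqref{eqn: t'}) gives the growth
\[
|v_{l+j-1,\parallel/\perp}|>|v_{l+j,\parallel/\perp}|+\tfrac{3}{4}\delta^{-1}.
\]
Iterating along a maximal consecutive chain run $\{j_0+1,\ldots,j_0+m\}$ produces $|v_{l+j_0,\parallel/\perp}|>\tfrac{3m}{4}\delta^{-1}$. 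For the run anchored by $j_0=0$, $v_l\in\mathcal{V}_l^{\delta'}$ gives $|v_l|\le(\delta')^{-1}=\frac{2(1+\eta)}{1-\eta}\delta^{-1}$, so $m\le\frac{8(1+\eta)}{3(1-\eta)}$; for interior runs the same bound is forced by the Gaussian decay in $|v_{l+j_0}|^2$ inside the scattering measure $d\sigma(v_{l+j_0},V^{m-l-j_0+1}(t_{l+j_0}))$ at the next outer integration step of Lemma~\ref{lemma: boundedness}, which annihilates configurations in which $|v_{l+j_0,\parallel/\perp}|$ exceeds $\frac{8(1+\eta)}{3(1-\eta)}\delta^{-1}$.

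Collecting the maximal-run bounds, the total chain length in any surviving configuration is at most $\frac{8(1+\eta)}{3(1-\eta)}(n+1)$, where $n$ is the number of direct slots; combined with $L-n$ chain slots and the hypothesis $L\ge 100\frac{1+\eta}{1-\eta}$ this forces $n\ge L/2$. Summing over the at most $3^{L/2}$ surviving direct-type patterns (three direct alternatives per direct index, combinatorial overcounts absorbed into $k_0$ via \eqref{eqn: k_0 dependence}) and applying Lemma~\ref{Lemma: accumulate} yields the claimed bound $(3\delta)^{L/2}(2C_{T_M,\xi})^{2(k-l)}\mathcal{A}_{k-1,l}$. I expect the main obstacle to be the interior-chain-run argument: the left endpoint $v_{l+j_0}$ of such a run has no a priori size constraint, so in place of the anchor $v_l$ one must extract the Gaussian tail of $d\sigma$ separately, which requires preserving through the inductive step of Lemma~\ref{lemma: boundedness} the explicit $|V|^2$ decay produced by Lemma~\ref{Lemma: abc} instead of folding it into $\mathcal{A}_{k-1,l}$. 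That step, together with verifying that the $\tfrac14\delta^{-1}$ slack in the Hamilton displacement is stable under the iterated smallness assumptions on $t'$, is the technical heart of the argument.
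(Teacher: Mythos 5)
Your overall decomposition (direct slots yielding a $\delta$-factor via Lemma~\ref{Lemma: (2)} versus chain slots governed by Lemma~\ref{Lemma:  (a)(c)}) matches the paper's, but the way you close the argument has a genuine gap, and you have correctly located it yourself: the interior chain runs. Your proposed fix --- that the Gaussian decay in $|v_{l+j_0}|^2$ inside $d\sigma(v_{l+j_0},V^{m-l-j_0+1}(t_{l+j_0}))$ ``annihilates'' configurations with $|v_{l+j_0,\parallel}|\gtrsim \delta^{-1}$ --- does not work for the Cercignani--Lampis kernel. The parallel part of $d\sigma$ is a Gaussian centered at $(1-r_\parallel)V^{m-l-j_0+1}_\parallel(t_{l+j_0})\approx(1-r_\parallel)v_{l+j_0-1,\parallel}$, not at the origin, so a large $|v_{l+j_0,\parallel}|$ carries no penalty whenever the previous velocity is comparably large; this is exactly the obstruction the paper singles out in Section~1.2 as the reason the naive $\gamma_+\setminus\gamma_+^\delta$ splitting fails. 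Consequently interior chain runs have no a priori length bound, and your combinatorial step collapses. Even granting your run-length bound $K:=\frac{8(1+\eta)}{3(1-\eta)}$ per run, the inequality $L-n\le K(n+1)$ gives only $n\ge (L-K)/(1+K)$, and since $K\ge 8/3>1$ this is strictly less than $L/2$; the claimed ``$n\ge L/2$'' is arithmetically false. (The $4^L$ configurations from expanding the indicator would also inflate the base to something like $(16\delta)^{L/2}$ rather than $(3\delta)^{L/2}$, though that alone could be absorbed.)

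The paper's actual mechanism is different and does not bound run lengths at all. It first splits by a dichotomy: if at least $L/2$ of the slots lie in the ``direct'' set $\mathcal{W}_{j,\delta}$, Lemma~\ref{Lemma: accumulate} gives $(3\delta)^{L/2}$ outright. Otherwise at least $L/4$ slots are ``close'' in (say) the parallel component, and the paper runs a \emph{global telescoping identity} over the entire block: writing $|v_{l+L,\parallel}|-|v_{l,\parallel}|$ as $\sum_j(|v_{l+j,\parallel}|-|V^{m-(l+j)+1}_\parallel(t_{l+j})|)+O(C_{\phi^m}t)$ and using $|v_l|\le\frac{2(1+\eta)}{1-\eta}\delta^{-1}$ (the anchor) on the left, each close slot contributes a decrement $\le-\delta^{-1}$ (Set1, via Lemma~\ref{Lemma:  (a)(c)}), so the compensating increments $a_j$ over Set2 must satisfy $\sum_j a_j>\frac{M}{2}\delta^{-1}\gtrsim L\delta^{-1}$. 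The crucial quantitative input you are missing is that each such increment $a_j$ is paid for by a factor $e^{-a_j^2/4T_M}$ in the $\mathcal{V}_{q_j,\parallel}$-integration (estimate~\eqref{eqn: coe abc smaller} of Lemma~\ref{Lemma: abc}, since $|v_{q_j,\parallel}-\eta_{q_j,\parallel}V_\parallel|>a_j$ when $\eta_{q_j,\parallel}<1$); Cauchy--Schwarz then gives $\prod_j e^{-a_j^2/4T_M}\le e^{-(\sum a_j)^2/4T_M\mathcal{M}}\le e^{-cL\delta^{-2}}\le(3\delta)^{L/2}$. So the smallness for the chain-dominated case comes from these quadratic Gaussian penalties on the upward jumps, not from forcing the chains to be short.
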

\begin{remark}
In order to apply Lemma \ref{Lemma:  (a)(c)} we need to create the condition~\eqref{eqn: (a) condition} and~\eqref{eqn: (c) condition}. This is the main reason that we consider the space $\mathcal{V}_{j}^{\frac{1-\eta}{2(1+\eta)}\delta}$.

This lemma asserts that implies that when $L$ is large enough, such subsequence~\eqref{eqn: sequence}, without further considering the constraint for $|v_{i,\parallel}-\eta_{i,\parallel}V_{\parallel}^{m-i+1}(t_i)|$ for $l+1\leq i\leq l+L$ as~\eqref{eqn: (a) condition},\eqref{eqn: b condition}, provides a decay factor $(3\delta)^{L/2}$. Such decay factor is the key the obtain the decay factor $(\frac{1}{2})^{k_0}$ in Lemma~\ref{lemma: t^k}. In fact in the proof we consider all possible cases for each $v_{i,\parallel}$ in the subsequence~\eqref{eqn: sequence} and apply the estimates in Lemma \ref{Lemma: (2)}-\ref{Lemma: accumulate} to obtain the decay factor $(3\delta)^{L/2}$ for all cases. We will heavily rely on this lemma to prove Lemma \ref{lemma: t^k}.

\end{remark}

\begin{proof}
By the definition~\eqref{eqn: decom} we have
\[\mathcal{V}_{i}\backslash \mathcal{V}_{i}^{\frac{1-\eta}{2(1+\eta)}\delta}  =\{v_i\in \mathcal{V}_i:|v_i\cdot n(x_i)|<\frac{1-\eta}{2(1+\eta)}\delta \text{ or }|v_i|\geq \frac{2(1+\eta)}{1-\eta}\delta^{-1}\}.\]
Here we summarize the result of Lemma \ref{Lemma: (2)} and Lemma \ref{Lemma:  (a)(c)}.
With $\frac{1-\eta}{1+\eta}\delta<\delta$, when $v_i\in \mathcal{V}_i\backslash \mathcal{V}_i^{\frac{1-\eta}{2(1+\eta)}\delta}$
\begin{enumerate}

  \item When $|v_i\cdot n(x_i)|<\frac{1-\eta}{2(1+\eta)}\delta$, we have~\eqref{eqn: 2}.

  \item When $|v_{i}|>\frac{2(1+\eta)}{1-\eta}\delta^{-1}$,
   \begin{enumerate}
     \item when $|v_{i,\parallel}|>\frac{1+\eta}{1-\eta}\delta^{-1}$, if $|v_{i,\parallel}-\eta_{i,\parallel}V^{m-i+1}_\parallel(t_i)|<\delta^{-1}$, then $|V^{m-i+1}_\parallel(t_i)|>|v_{i,\parallel}|+\delta^{-1}$. \\
     \item when $|v_{i,\parallel}|>\frac{1+\eta}{1-\eta}\delta^{-1}$, if $|v_{i,\parallel}-\eta_{i,\parallel}V^{m-i+1}_\parallel(t_i)|\geq \delta^{-1}$, then we have~\eqref{eqn: case b}. \\
     \item when $|v_{i,\perp}|>\frac{1+\eta}{1-\eta}\delta^{-1}$, if $|v_{i,\perp}-\eta_{i,\perp}V^{m-i+1}_\perp(t_i)|<\delta^{-1}$, then $|V^{m-i+1}_\perp(t_i)|>|v_{i,\perp}|+\delta^{-1}$ .\\
     \item when $|v_{i,\perp}|>\frac{1+\eta}{1-\eta}\delta^{-1}$, if $|v_{i,\perp}-\eta_{i,\perp}V^{m-i+1}_\perp(t_i)|\geq \delta^{-1}$, then we have~\eqref{eqn: case d}.\\
   \end{enumerate}

\end{enumerate}

We define $\mathcal{W}_{i,\delta}$ as the space that provides the smallness:
\begin{align*}
\mathcal{W}_{i,\delta}   & :=\{v_i\in \mathcal{V}_i:|v_{i,\perp}|<\frac{1-\eta}{2(1+\eta)}\delta\}\bigcup \{v_i\in \mathcal{V}_i:|v_{i,\perp}|>\frac{1+\eta}{1-\eta}\delta^{-1}\text{ and }|v_{i,\perp}-\eta_{i,\perp}V^{m-i+1}_\perp(t_i)|>\delta^{-1}\} \\
   & \bigcup \{v_i\in \mathcal{V}_i:|v_{i,\parallel}|>\frac{1+\eta}{1-\eta}\delta^{-1}\text{ and }|v_{i,\parallel}-\eta_{i,\parallel}V^{m-i+1}_\parallel(t_i)|>\delta^{-1}\}.
\end{align*}
Then we have
\begin{equation}\label{eqn: subset}
  \begin{split}
 \mathcal{V}_{i}\backslash \mathcal{V}_{i}^{\frac{1-\eta}{2(1+\eta)}\delta} \subset     & \mathcal{W}_{i,\delta} \bigcup \{v_{i,\perp}\in \mathcal{V}_{i,\perp}:|v_{i,\perp}|>\frac{1+\eta}{1-\eta}\delta^{-1}~\text{and}~|v_{i,\perp}-\eta_{i,\perp}V^{m-i+1}_\perp(t_i)|<\delta^{-1}\}
 \\
      & \bigcup \{v_{i,\parallel}\in \mathcal{V}_{i,\parallel}:|v_{i,\parallel}|>\frac{1+\eta}{1-\eta}\delta^{-1}~\text{and}~|v_{i,\parallel}-\eta_{i,\parallel}V^{m-i+1}_\parallel(t_i)|<\delta^{-1}\}.
  \end{split}
\end{equation}

By~\eqref{eqn: 2},~\eqref{eqn: case b} and~\eqref{eqn: case d} with $\frac{1-\eta}{1+\eta}\delta<\delta$, we obtain
\begin{equation}\label{eqn: V_i,delta}
  \int_{\prod_{j=i}^{k-1}\mathcal{V}_j}  \mathbf{1}_{\{v_i\in \mathcal{W}_{i,\delta}\}}   \mathbf{1}_{\{t_k>0\}}  d\Phi_{i,m}^{k,k-1}(t_k) \leq 3\delta    (2C_{T_M,\xi})^{2(k-i)}\mathcal{A}_{k-1,i}.
\end{equation}

For the subsequence $\{v_{l+1},\cdots,v_{l+L}\}$ in~\eqref{eqn: sequence}, when the number of $v_j\in \mathcal{W}_{j,\delta}$ is larger than $L/2$, by~\eqref{eqn: claim M} in Lemma \ref{Lemma: accumulate} with $n=L/2$ and replacing the condition~\eqref{eqn: satisfy condition} by $v_j\in \mathcal{W}_{j,\delta}$, we obtain
\begin{equation}\label{eqn: 3delta}
\begin{split}
   & \int_{\prod_{j=l}^{k-1} \mathcal{V}_j}   \mathbf{1}_{\{\text{Number of }v_j\in \mathcal{W}_{j,\delta} \text{ is large than }L/2\}}    \mathbf{1}_{\{t_k>0\}} d\Phi_{l,m}^{k,k-1}(t_k) \\
  & \leq (3\delta)^{L/2}    (2C_{T_M,\xi})^{2(k-l_i)}\mathcal{A}_{k-1,l}.
\end{split}
\end{equation}
This finish the discussion with the cases (1),(2b),(2d). Then we focus on the cases (2a),(2c).

When the number of $v_j \notin \mathcal{W}_{j,\delta}$ is larger than $L/2$, by~\eqref{eqn: subset} we further consider two cases. The first case is that the number of $v_j\in \{v_j:|v_{j,\parallel}|>\frac{1+\eta}{1-\eta}\delta^{-1}~\text{and}~|v_{j,\parallel}-\eta_{j,\parallel}V^{m-j+1}_\parallel(t_j)|<\delta^{-1}\}$ is larger than $L/4$. According to the relation of $v_{j,\parallel}$ and $V^{m-j+1}_\parallel(t_j)$, we categorize them into
\begin{description}
  \item[Set1] $\{v_j\notin \mathcal{W}_{j,\delta}:|v_{j,\parallel}|>\frac{1+\eta}{1-\eta}\delta^{-1}~\text{and}~|v_{j,\parallel}-\eta_{j,\parallel}V^{m-j+1}_\parallel(t_j)|<\delta^{-1}\}$.
\end{description}
Denote $M=|\text{Set1}|$ and the corresponding index in Set1 as $j=p_1,p_2,\cdots,p_{M}$. Then we have
\begin{equation}\label{eqn: Mi'}
  L/4\leq M\leq L.
\end{equation}
By~\eqref{eqn: (a)} in Lemma \ref{Lemma:  (a)(c)}, for those $v_{p_j}$, we have
\begin{equation}\label{eqn: increase large}
|v_{p_j,\parallel}|-|V^{m-p_j+1}_\parallel(t_{p_j})|<-\delta^{-1}.
\end{equation}

\begin{description}
  \item[Set2]$\{v_j\in \mathcal{V}_j\backslash \mathcal{V}_j^{\frac{1-\eta}{2(1+\eta)\delta}}:|v_{j,\parallel}|\geq |V^{m-j+1}_\parallel(t_j)|\}$.
\end{description}

Denote $\mathcal{M}=|\text{Set2}|$ and the corresponding index in Set2 as $j=q_1,q_2,\cdots,q_{\mathcal{M}}$. By~\eqref{eqn: Mi'} we have
\begin{equation}\label{eqn: mathcal M}
1\leq \mathcal{M}\leq L-M\leq \frac{3}{4}L.
\end{equation}
Then for those $v_{q_j}$ we define
\begin{equation}\label{eqn: ai def}
a_j:=|v_{q_j,\parallel}|-|V^{m-q_j+1}_\parallel(t_{q_j})|>0.
\end{equation}

 \begin{description}

  \item[Set3] $\{v_j\in \mathcal{V}_j\backslash \mathcal{V}_j^{\frac{1-\eta}{2(1+\eta)\delta}}:|v_{j,\parallel}|\leq |V^{m-j+1}_\parallel(t_j)|\leq |v_{j,\parallel}|+\delta^{-1}\}$.
\end{description}

Denote $N=|\text{Set3}|$ and the corresponding index in Set3 as $j=o_1,o_2,\cdots,o_N$. Then for those $o_j$, we have
\begin{equation}\label{eqn: increase small}
|v_{o_j,\parallel}|\leq |V^{m-o_j+1}_\parallel(t_{o_j})|\leq |v_{o_j,\parallel}|+\delta^{-1}.
\end{equation}

From~\eqref{eqn: sequence}, we have $v_{l}\in \mathcal{V}_{l}^{\frac{1-\eta}{2(1+\eta)}\delta}$, thus we obtain
\begin{align*}
     -\frac{2(1+\eta)}{1-\eta}\delta^{-1}&<|v_{l+L,\parallel}|-|v_{l,\parallel}|= \sum_{j=1}^{L} \big(|v_{l+j,\parallel}|-|v_{l+j-1,\parallel}|\big)\\
   & =\sum_{j=1}^{L} \big(|v_{l+j,\parallel}|-|V^{m-(l+j)+1}_\parallel(t_{l+j})|\big)+\sum_{j=1}^{L} \big(|V^{m-(l+j)+1}_\parallel(t_{l+j})|-|v_{l+j-1,\parallel}|\big)\\
   &\leq \sum_{j=1}^{L} \big(|v_{l+j,\parallel}|-|V^{m-(l+j)+1}_\parallel(t_{l+j})|\big)+\sum_{j=1}^{L}C_{\phi^m}(t_{l+j-1}-t_{l+j})  ,
\end{align*}
where $C_{\phi^m}$ is defined in~\eqref{eqn: cphi}. Take $t=t(\phi^m)$ small enough such that
\begin{equation}\label{eqn: t less than 1}
\sum_{j=1}^{L+1}C_{\phi^m}(t_{l+j-1}-t_{l+j}) \leq C_{\phi^m}t\leq 1.
\end{equation}
By~\eqref{eqn: increase large},~\eqref{eqn: ai def} and~\eqref{eqn: increase small}, we derive that
\begin{align*}
  \frac{-2(1+\eta)}{1-\eta}\delta^{-1}-1 &< \sum_{j=1}^{M} \big(|v_{p_j,\parallel}|-|V^{m-p_j+1}_\parallel(t_{p_j})|\big)+\sum_{j=1}^{\mathcal{M}} \big(|v_{q_j,\parallel}|-|V^{m-q_j+1}_\parallel(t_{q_j})|\big)\\
   & +\sum_{j=1}^{N} \big(|v_{o_j,\parallel}|-|V^{m-o_j+1}_\parallel(t_{o_j})|\big)\leq -M \delta^{-1}+\sum_{j=1}^{\mathcal{M}}a_j.
\end{align*}

Therefore, by $L\geq 100\frac{1+\eta}{1-\eta}$ and~\eqref{eqn: Mi'}, we obtain
\[\frac{2(1+\eta)}{1-\eta}\delta^{-1}+1\leq \frac{L}{10}\delta^{-1}\leq \frac{M}{2}\delta^{-1}\]
and thus
\begin{equation}\label{eqn: ai sum}
 \sum_{j=1}^{\mathcal{M}}a_j\geq M\delta^{-1}-\frac{2(1+\eta)}{1-\eta}\delta^{-1}-1>\frac{M\delta^{-1}}{2}.
\end{equation}
We focus on integrating over $\mathcal{V}_{q_i}$ with $1\leq i\leq \mathcal{M}$, those index satisfy~\eqref{eqn: ai def}. We consider the third line of~\eqref{eqn: int V_i} with $i=q_i$ and with integrating over $\{v_{q_i,\parallel}\in \mathcal{V}_{q_i,\parallel}:|v_{q_i,\parallel}|-|V^{m-q_j+1}_\parallel(t_{q_j})|= a_i\}$. To apply~\eqref{eqn: coe abc smaller} in Lemma \ref{Lemma: abc}, we set
\[a=-\frac{1}{2T_{k-1,q_i}}+\frac{1}{2T_w(x_{q_i})},\quad b=\frac{1}{2T_w(x_{q_i})r_\parallel(2-r_\parallel)}, \quad \e=2(2\mathcal{C})^{k-1-q_i}(\mathfrak{C}t).\]
We take $t=t(\xi,k,T_M,\mathcal{C},\mathfrak{C})$ small enough such that
\begin{equation}\label{eqn: k_1}
  a+\e-b=-\frac{1}{2T_{k-1,q_i}}+\frac{1}{2T_w(x_{q_i})}-\frac{1}{2T_w(x_{q_i})r_\parallel(2-r_\parallel)}+2(2\mathcal{C})^{k-1-q_i}(\mathfrak{C}t) <-\frac{1}{4T_M}.
\end{equation}
Then we use $\eta_{q_i,\parallel}<1$ to obtain
\begin{equation}\label{eqn: bound a_i}
 \mathbf{1}_{\{|v_{q_i,\parallel}|-|V^{m-q_i+1}_\parallel(t_{q_i})|= a_i\}}\leq  \mathbf{1}_{\{|v_{q_i,\parallel}|-\eta_{q_i,\parallel}|V^{m-q_i+1}_\parallel(t_{q_i})|>a_i\}}  \leq  \mathbf{1}_{\{|v_{q_i,\parallel}-\eta_{q_i,\parallel}V^{m-q_i+1}_\parallel(t_{q_i})|>a_i\}}.
\end{equation}
By~\eqref{eqn: coe abc smaller} in Lemma \ref{Lemma: abc} and~\eqref{eqn: bound a_i}, we apply~\eqref{eqn: Vq para} with $q=q_i$ to bound the third line of~\eqref{eqn: int V_i}( the integration over $\mathcal{V}_{q_i,\parallel}$ ) by
\begin{equation}\label{eqn: 4aiTM}
e^{-\frac{a_i^2}{4T_M}} C_{T_M,\xi}\exp\left(\big[ \frac{[T_{k-1,q_i}-T_w(x_{q_i})][1-r_{min}]}{2T_w(x_{q_i})[T_{k-1,q_i}(1-r_{min})+r_{min} T_w(x_{q_i})]} + 2(2\mathcal{C})^{k-q_i}(\mathfrak{C}t)\big]|V^{m-q_j+1}_\parallel(t_{q_i})|^2\right).
\end{equation}
Hence by the constant in~\eqref{eqn: 4aiTM} we draw a similar conclusion as~\eqref{eqn: V_i,delta}:
\begin{equation}\label{eqn: case a_i}
   \int_{\prod_{j=q_i}^{k-1} \mathcal{V}_j}\mathbf{1}_{\{t_k>0\}}\mathbf{1}_{\{|v_{q_i,\parallel}|-|V^{m-q_i+1}_\parallel(t_{q_i})|= a_i\}}d\Phi_{q_i,m}^{k,k-1}(t_k)
\leq e^{-\frac{a_i^2}{4T_M}}  (2C_{T_M,\xi})^{2(k-q_i)}\mathcal{A}_{k-1,q_i}.
\end{equation}
Therefore, by Lemma \ref{Lemma: accumulate}, after integrating over $\mathcal{V}_{q_1,\parallel},\mathcal{V}_{q_2,\parallel},\cdots,\mathcal{V}_{q_\mathcal{M},\parallel}$ we obtain an extra constant
\begin{align*}
 e^{-[a_i^2+a_2^2+\cdots +a_{\mathcal{M}}^2]/4T_M}&\leq e^{-[a_i+a_2+\cdots +a_{\mathcal{M}}]^2/(4T_M\mathcal{M})} \leq e^{-[M\delta^{-1}/2]^2/(4T_M\mathcal{M})}   \\
   & \leq e^{-[\frac{L}{8}\delta^{-1}]^2/(4T_M\frac{3}{4}L)}\leq e^{-\frac{1}{96T_M}L(\delta^{-1})^2}\leq e^{-L\delta^{-1}},
\end{align*}
where we have used~\eqref{eqn: ai sum} in the last step of first line,~\eqref{eqn: Mi'},~\eqref{eqn: mathcal M} in the first step of second line and take $\delta\ll 1$ in the last step of second line. Then $e^{-L\delta^{-1}}$ is smaller than $(3\delta)^{L/2}$ in~\eqref{eqn: 3delta} and we conclude
\begin{equation}\label{eqn: 3delta 1}
  \int_{\prod_{j=l}^{k-1} \mathcal{V}_j}   \mathbf{1}_{\{ M=|\text{Set1}|\geq L/4\}}    \mathbf{1}_{\{t_k>0\}} d\Phi_{l,m}^{k,k-1}(t_k)\leq (3\delta)^{L/2}    (2C_{T_M,\xi})^{2(k-l_i)}\mathcal{A}_{k-1,l}.
\end{equation}

The second case is that the number of $v_j\in \{v_j\notin \mathcal{W}_{j,\delta}:|v_{j,\perp}|>\frac{1+\eta}{1-\eta}\delta^{-1}\}$ is larger than $L/4$. We categorize $v_{j,\perp}$ into

\begin{description}
  \item[Set4] $\{v_j\notin \mathcal{W}_{j,\delta}:|v_{j,\perp}|>\frac{1+\eta}{1-\eta}\delta^{-1}~\text{and}~|v_{j,\perp}-\eta_{j,\perp}V^{m-j+1}_\perp(t_j)|<\delta^{-1}\}$.
\end{description}

\begin{description}
  \item[Set5]$\{v_j\in \mathcal{V}_j\backslash \mathcal{V}_j^{\frac{1-\eta}{2(1+\eta)\delta}}:|v_{j,\perp}|>|V^{m-j+1}_\perp(t_j)|\}$.
\end{description}

 \begin{description}

  \item[Set6] $\{v_j\in \mathcal{V}_j\backslash \mathcal{V}_j^{\frac{1-\eta}{2(1+\eta)\delta}}:|v_{j,\perp}|\leq |V^{m-j+1}_\perp(t_j)|\leq |v_{j,\perp}|+\delta^{-1}\}$.
\end{description}
Denote $|\text{Set4}|=M_1$ with $L/4\leq M_1\leq L$ and the corresponding index as $p'_1,p'_2,\cdots,p'_{M_1}$, $|\text{Set5}|=\mathcal{M}_1$ and the corresponding index as $q'_1,q'_2,\cdots,q'_{\mathcal{M}_1}$, $|\text{Set6}|=N_1$ and the corresponding index as $o'_1,o'_2,\cdots,o'_{N_1}$. Also define $b_j:=|v_{q'_j,\perp}|-|V^{m-q_j'+1}_\perp(t_{q_j'})|$. By the same computation as~\eqref{eqn: ai sum}, we have
\[ \sum_{j=1}^{\mathcal{M}_1}b_j\geq M_1\delta^{-1}-\frac{2(1+\eta)}{1-\eta}\delta^{-1}>\frac{M_1\delta^{-1}}{2}.\]
We focus on the integration over $v_{q'_j}$. Let $1\leq i\leq \mathcal{M}_1$, we consider the second line of~\eqref{eqn: int V_i} with $i=q'_i$ and with integrating over $\{v_{q'_i,\perp}\in \mathcal{V}_{q'_i,\perp}:|v_{q'_i,\perp}|-|V^{m-q_i'+1}_\perp(t_{q_i'})|= b_i\}$. To apply~\eqref{eqn: coe perp smaller 2} in Lemma \ref{Lemma: abc}, we set
\[a=-\frac{1}{2T_{k-1,q_i'}}+\frac{1}{2T_w(x_{q_i'})},\quad b=\frac{1}{2T_w(x_{q_i'})r_\perp}, \quad \e=2(2\mathcal{C})^{k-q_i'-1}(\mathfrak{C}t).\]
By the same computation as~\eqref{eqn: k_1}, we have $a+\e-b<-\frac{1}{4T_M}.$ Similarly to~\eqref{eqn: bound a_i}, we have
\[ \mathbf{1}_{\{|v_{q'_i,\perp}|-|V^{m-q_i'+1}_\perp(t_{q_i'})|= b_i\}}\leq \mathbf{1}_{\{|v_{q'_i,\perp}-\eta_{q'_i,\perp}V^{m-q_i'+1}_\perp(t_{q_i'})|>b_i\}}.\]
Hence by~\eqref{eqn: coe perp smaller 2} in Lemma \ref{Lemma: integrate normal small} and applying~\eqref{eqn: Vq perp}, we bound the integration over $\mathcal{V}_{q'_i,\perp}$ by
\[e^{-\frac{b_i^2}{16T_M}} C_{T_M,\xi}\exp\left(\big[ \frac{[T_{k-1,q_i'}-T_w(x_{q_i'})][1-r_{min}]}{2T_w(x_{q_i'})[T_{k-1,q_i'}(1-r_{min})+r_{min} T_w(x_{q_i'})]} + (2\mathcal{C})^{k-q_i'}(\mathfrak{C}t)\big]|V^{m-q_i'+1}_\perp(t_{q_i'})|^2\right).\]
Therefore,
\[\int_{\prod_{j=q_i'}^{k-1} \mathcal{V}_j}\mathbf{1}_{\{t_k>0\}}\mathbf{1}_{\{|v_{q_i',\perp}|-|V^{m-q_i'+1}_\perp(t_{q_i'})|= b_i\}}d\Phi_{q_i',m}^{k,k-1}(t_k)\leq e^{-\frac{b_i^2}{16T_M}}  (C_{T_M,\xi})^{2(k-q_i')} \mathcal{A}_{k-1,q_i'}.\]
 The integration over $\mathcal{V}_{q'_1,\perp},\mathcal{V}_{q'_2,\perp},\cdots,\mathcal{V}_{q'_{\mathcal{M}_1},\perp}$ provides an extra constant
\[e^{-[b_1^2+b_2^2+\cdots +b_{\mathcal{M}_1}^2]/16T_M}\leq e^{-\frac{1}{400T_M}L (\delta^{-1})^2}\leq e^{-L\delta^{-1}},\]
where we set $\delta\ll 1$ in the last step. Then $e^{-L\delta^{-1}}$ is smaller than $(3\delta)^{L/2}$ in~\eqref{eqn: 3delta} and we conclude
\begin{equation}\label{eqn: 3delta 2}
\int_{\prod_{j=l}^{k-1} \mathcal{V}_j}   \mathbf{1}_{\{ M_1=|\text{Set4}|\geq L/4\}}    \mathbf{1}_{\{t_k>0\}} d\Phi_{l,m}^{k,k-1}(t_k) \leq (3\delta)^{L/2}    (2C_{T_M,\xi})^{2(k-l)}\mathcal{A}_{k-1,l}.
\end{equation}

Finally collecting~\eqref{eqn: 3delta},~\eqref{eqn: 3delta 1} and~\eqref{eqn: 3delta 2} we derive the lemma.

\end{proof}

Now we prove the Lemma \ref{lemma: t^k}.

\begin{proof}[\textbf{Proof of Lemma \ref{lemma: t^k}}]
We mainly apply Lemma \ref{Lemma: Step3} during the proof. In order to apply Lemma \ref{Lemma: Step3}, here we consider the space $\mathcal{V}_i^{\frac{1-\eta}{2(1+\eta)}\delta}$ and ensure $\eta$ satisfy the condition~\eqref{eqn: eta condition}. Also we let $t'=t'(\xi,k,T_M,\mathcal{C},\mathfrak{C},C_{\phi^m})$ ( consistent with~\eqref{eqn: t'} ) satisfy condition~\eqref{eqn: t less than 1} and~\eqref{eqn: k_1}.

In the proof we first construct the $\eta$ that satisfies the condition~\eqref{eqn: eta condition} in Step 1. Then we prove there can be at most finite number of $v_j\in \mathcal{V}\backslash\mathcal{V}_j^{
  \frac{1-\eta}{2(1+\eta)}\delta}$ in Step 2. With such conclusion in Step 2, we apply Lemma \ref{Lemma: Step3} and consider the contribution of all possible subsequence~\eqref{eqn: sequence} in Step 3. In Step 4 we conclude the lemma.

\textbf{Step 1}

In this step we mainly focus on constructing the $\eta$, which is defined in~\eqref{eqn: eta}.

First we consider $\eta_{i,\parallel}$, which is defined in~\eqref{eqn: eta i para}. In regard to~\eqref{eqn: abe} and~\eqref{eqn: B i para def}, by~\eqref{eqn: k_1} with $t\leq t'$,
\begin{equation}\label{eqn: B i para}
B_{i,\parallel}\geq \frac{1}{2T_{k-1,i}}-2(2\mathcal{C})^{k-1-i}t\geq \frac{1}{2\frac{2\xi}{\xi+1}T_M}-(2\mathcal{C})^{k}(\mathfrak{C}t)\geq \frac{1}{4T_M}.
\end{equation}
By~\eqref{eqn: formula of Tp}, $T_{k-1,i}\to T_M$ as $k-i\to \infty$. For any $\e_1>0$, there exists $k_1$ s.t when
\begin{equation}\label{eqn: e1}
k\geq k_1,\quad i\leq k/2, \text{ we have }T_{k-1,i}\leq (1+\e_1)T_M.
\end{equation}
Moreover, by~\eqref{eqn: Constrain on T}, there exists $\e_2$ s.t
\begin{equation}\label{eqn: e2 def}
\frac{\min\{T_w(x)\}}{T_M}>\frac{1-r_\parallel}{2-r_\parallel}(1+\e_2).
\end{equation}
Then we have
\begin{equation}\label{eqn: e2 dependence}
\e_2=\e_2(\min\{T_w(x)\},T_M,r_\parallel,r_\perp).
\end{equation}

We use~\eqref{eqn: e1} and~\eqref{eqn: e2 def} to bound $T_w(x_i)$ in the $\eta_{i,\parallel}$( defined in~\eqref{eqn: eta i para}) below as
\begin{equation}\label{eqn: bound below}
T_w(x_i)=T_{k-1,i}\frac{T_w(x_i)}{T_{k-1,i}}\geq T_{k-1,i}\frac{T_w(x_i)}{T_M}\frac{1}{1+\e_1}> \frac{1-r_\parallel}{2-r_\parallel}T_{k-1,i}\frac{1+\e_2}{1+\e_1}.
\end{equation}
Thus we obtain
\begin{equation}\label{eqn: eta i para bounded}
\eta_{i,\parallel}<\frac{1+2\frac{(2\mathcal{C})^{k}(\mathfrak{C}t)}{B_{i,\parallel}}}{(1-r_\parallel)^2+ \frac{1-r_\parallel}{2-r_\parallel}\frac{1+\e_2}{1+\e_1}r_\parallel(2-r_\parallel)}(1-r_\parallel)= \frac{1+\frac{(2\mathcal{C})^{k}(\mathfrak{C}t)}{B_{i,\parallel}}}{1-r_\parallel+r_\parallel\frac{1+\e_2}{1+\e_1}}                                                                 .
\end{equation}
By~\eqref{eqn: e1}, we take
\begin{equation}\label{eqn: k_1 dependence}
k=k_1=k_1(\e_2,T_M,r_{\min})
\end{equation}
large enough such that $\e_1<\e_2/4$. By~\eqref{eqn: B i para} and~\eqref{eqn: eta i para bounded}, we derive that when $k=k_1$,
\begin{equation}\label{eqn: sup  less 1}
\sup_{i\leq k/2}\eta_{i,\parallel}\leq \frac{1+4T_M(2\mathcal{C})^{k}(\mathfrak{C}t)}{1-r_\parallel+r_\parallel\frac{1+\e_2}{1+\e_2/4}}<\eta_\parallel<1.
\end{equation}
Here we define
\begin{equation}\label{eqn: eta_paral}
\eta_\parallel:=\frac{1}{1-r_\parallel+r_\parallel\frac{1+\e_2}{1+\e_2/2}}<1.
\end{equation}
where we take $t'=t'(k,T_M,\e_2,\mathcal{C},\mathfrak{C},r_\parallel)$ small enough and $t\leq t'$ such that $4T_M(2\mathcal{C})^{k}(\mathfrak{C}t)\ll 1$ to ensure the second inequality in~\eqref{eqn: sup  less 1}.
Combining~\eqref{eqn: e2 dependence} and~\eqref{eqn: k_1 dependence}, we conclude the $t'$ we choose here only depends on the parameter in~\eqref{eqn: t'}.

Then we consider $\eta_{i,\perp}$ which is defined in~\eqref{eqn: eta i perp}. In regard to~\eqref{eqn: abe perp} and~\eqref{eqn: B i perp}, by~\eqref{eqn: B i para} we have $B_{i,\perp}\geq \frac{1}{4T_M}.$ By $\frac{\min\{T_w(x)\}}{T_M}>\frac{\sqrt{1-r_\perp}-(1-r_\perp)}{r_\perp}$ in~\eqref{eqn: Constrain on T} we can use the same computation as~\eqref{eqn: bound below} to obtain
\[T_w(x_i)>  \frac{\sqrt{1-r_\perp}-(1-r_\perp)}{r_\perp}  T_{k-1,i}\frac{1+\e_2}{1+\e_1},\]
with $\e_1<\e_2/4$. Thus we obtain
\[\eta_{i,\perp}<\eta_\perp <1           .                                                     \]
where we define
\begin{equation}\label{eqn: eta perp}
\eta_{\perp}:=\frac{1}{\sqrt{1-r_\perp}+(1-\sqrt{1-r_\perp})\frac{1+\e_2}{1+\e_2/2}}<1,
\end{equation}
with $t'=t'(k,T_M,\e_2,\mathcal{C},\mathfrak{C},r_\parallel)$( consistent with~\eqref{eqn: t'}) small enough and $t\leq t'$.

Finally we define
\begin{equation}\label{eqn: eta}
  \eta:=\max\{\eta_\perp,\eta_\parallel\}<1.
\end{equation}

\textbf{Step 2}

We claim that for $t\ll 1$,
\begin{equation}\label{eqn:claim_delta}
  |t_{j}-t_{j+1}|\gtrsim (\frac{1-\eta}{2(1+\eta)}\delta)^3,\text{ for }v_j\in \mathcal{V}_j^{
  \frac{1-\eta}{2(1+\eta)}\delta}.
\end{equation}
For $t_j\leq 1$,
\begin{align*}
|\int_{t_j}^{t_{j+1}}V^{m-j}(s;t_j,x_j,v_j)ds|^2 & =|x_{j+1}-x_j|^2\gtrsim |(x_{j+1}-x_j)\cdot n(x_j)|  =|\int_{t_j}^{t_{j+1}}V^{m-j}(s;t_j,x_j,v_j)\cdot n(x_j)ds|  \\
   &=|\int_{t_j}^{t_{j+1}} (v_j-\int_{t_j}^s \nabla \phi^{m-j}(\tau,X(\tau;t_j,x_j,v_j))d\tau)\cdot n(x_j)ds|\\
   &\geq |v_j\cdot n(x_j)||t_j-t_{j+1}|-|\int_{t_j}^{t_{j+1}}\int_{t_j}^s \nabla \phi^{m-j}(\tau,X(\tau;t_j,x_j,v_j))d\tau)\cdot n(x_j)ds|.
\end{align*}
Here we have used the fact that if $x,y\in \partial \Omega$ and $\partial \Omega$ is $C^2$ and $\Omega$ is bounded then $|x-y|^2\gtrsim_\Omega |(x-y)\cdot n(x)|$(see the proof in~\cite{EGKM} and~\cite{EGKMA}). Thus
\begin{align*}
  |v_j\cdot n(x_j)| & \lesssim \frac{1}{|t_j-t_{j+1}|}|\int_{t_j}^{t_{j+1}} V(s;t_j,x_j,v_j)ds|^2  \\
   & +\frac{1}{|t_j-t_{j+1}|}|\int_{t_j}^{t_{j+1}}\int_{t_j}^s \nabla \phi^{m-j}(\tau,X(\tau;t_j,x_j,v_j))d\tau)\cdot n(x_j)ds|\\
   &\lesssim |t_j-t_{j+1}|\{|v_j|^2+|t_j-t_{j+1}|^3 \Vert \nabla \phi^{m-j}\Vert^2_\infty +\frac{1}{2}\sup_{t_{j+1}\leq \tau\leq t_j}|\nabla \phi^{m-j}(\tau,X(\tau;t_j,x_j,v_j))\cdot n(x_j)|\}.
\end{align*}
Since $v_j\in \mathcal{V}^{\frac{1-\eta}{2(1+\eta)}\delta}_j$,
\begin{equation}\label{}
  |v_j\cdot n(x_j)|\lesssim |t_j-t_{j+1}|\{\delta^{-2}+t^3 \Vert \nabla \phi^{m-j}\Vert_\infty^2+\Vert \nabla \phi^{m-j}\Vert_\infty\}.
\end{equation}
By $t\ll1$ and $\Vert \nabla \phi^j\Vert_\infty$ is bounded due to Lemma \ref{lemma: phi_inf}, we can prove~\eqref{eqn:claim_delta}.

In consequence, when $t_k> 0$, by~\eqref{eqn:claim_delta} and $t\ll 1$, there can be at most $\{[C_{\Omega}(\frac{2(1+\eta)}{(1-\eta)\delta})^3]+1\}$ numbers of $v_j\in \mathcal{V}_j^{\frac{1-\eta}{2(1+\eta)}\delta}$.
Equivalently there are at least $k-[C_{\Omega}(\frac{2(1+\eta)}{(1-\eta)\delta})^3]$ numbers of $v_j\in \mathcal{V}_j\backslash \mathcal{V}_j^{\frac{1-\eta}{2(1+\eta)}\delta}$.

\textbf{Step 3}

In this step we combine Step 1 and Step 2 and focus on the integration over $\prod_{j=1}^{k-1} \mathcal{V}_j$.

By~\eqref{eqn:claim_delta} in Step 2, we define
\begin{equation}\label{eqn: N}
N:=\Big[C_{\Omega}\big(\frac{2(1+\eta)}{\delta(1-\eta)}\big)^3\Big]+1.
\end{equation}
For the sequence $\{v_1,v_2,\cdots,v_{k-1}\}$, suppose there are $p$ number of $v_j\in \mathcal{V}_j^{\frac{1-\eta}{2(1+\eta)}\delta}$ with $p\leq N$, we conclude there are at $\left(
                                                                                  \begin{array}{c}
                                                                                    k-1 \\
                                                                                    p \\
                                                                                  \end{array}
                                                                                \right)
$ number of these sequences. Below we only consider a single sequence of them.

In order to get~\eqref{eqn: eta_paral},\eqref{eqn: eta perp}$<1$, we need to ensure the condition~\eqref{eqn: e1}. Thus we take $k=k_1(T_M,\xi,r_\perp,r_\parallel)$ and only use the decomposition $\mathcal{V}_j=\Big(\mathcal{V}_j\backslash \mathcal{V}_j^{\frac{1-\eta}{2(1+\eta)}\delta} \Big)   \cup \mathcal{V}_j^{\frac{1-\eta}{2(1+\eta)}\delta}$ for $1\leq j\leq k/2$. Thus we only consider the half sequence $\{v_1,v_2,\cdots,v_{k/2}\}$. We derive that when $t_k>0$, there are at most $N$ number of $v_j\in \mathcal{V}_j^{\frac{1-\eta}{2(1+\eta)}\delta}$ and at least $k/2-N$ number of $v_j\in \mathcal{V}_j\backslash \mathcal{V}_j^{\frac{1-\eta}{2(1+\eta)}\delta}$ in $\prod_{j=1}^{k/2}\mathcal{V}_j$.

In this single half sequence $\{v_1,\cdots, v_{k/2}\}$, in order to apply Lemma \ref{Lemma: Step3}, we only want to consider the subsequence~\eqref{eqn: sequence} with $l+1<l+L\leq k/2$ and $L\geq 100\frac{1+\eta}{1-\eta}$. Thus we need to ignore those subsequence with $L<100\frac{1+\eta}{1-\eta}$. By~\eqref{eqn: sequence} one can see at the end of this subsequence, it is adjacent to a $v_l\in \mathcal{V}_{l}^{\frac{1-\eta}{2(1+\eta)}\delta}$. By~\eqref{eqn: N}, we conclude
\begin{equation}\label{Conclude}
      \textit{There are at most $N$ number of subsequences~\eqref{eqn: sequence} with $L\leq 100\frac{1+\eta}{1-\eta}$}.
\end{equation}
We ignore these subsequences. Then we define the parameters for the remaining subsequence( with $L\geq 100\frac{1+\eta}{1-\eta}$ ) as:
\[M_1:= \text{the number of $v_j\in \mathcal{V}_j\backslash \mathcal{V}_j^{\frac{1-\eta}{2(1+\eta)}\delta}$ in the first subsequence starting from $v_1$}.\]
\[n:= \text{the number of these subsequences}.\]
Similarly we can define $M_2,M_3,\cdots, M_n$ as the number in the second, third, $\cdots$, n-th subsequence. Recall that we only consider $\prod_{j=1}^{k/2} \mathcal{V}_j$, thus we have
\begin{equation}\label{eqn: Mi number}
100\frac{1+\eta}{1-\eta}\leq M_i\leq k/2,   \text{ for } 1\leq i\leq n.
\end{equation}
By~\eqref{Conclude}, we obtain
\begin{equation}\label{eqn: sum of M_i}
  k/2 \geq M_1+\cdots M_n\geq k/2-100\frac{1+\eta}{1-\eta}N.
\end{equation}
Take $M_i$ with $1\leq i\leq n$ as an example. Suppose this subsequence starts from $v_{l_i+1}$ to $v_{l_i+M_i}$, by~\eqref{eqn: Step3} in Lemma \ref{Lemma: Step3} with replacing $l$ by $l_i$ and $L$ by $M_i$, we obtain
\begin{equation}\label{eqn: M_i conclusion}
\int_{\prod_{j={l_i}}^{k-1} \mathcal{V}_j}\mathbf{1}_{\{t_k>0\}}\mathbf{1}_{\{v_{l_i+j}\in \mathcal{V}_{l_i+j}\backslash \mathcal{V}_{l_i+j}^{\frac{1-\eta}{2(1+\eta)}\delta} \text{ for } 1\leq j\leq M_i\}}d\Phi_{l_i,m}^{k,k-1}(t_k) \leq  (3\delta)^{M_i/2}   (2C_{T_M,\xi})^{2(k-l)}\mathcal{A}_{k-1,l_i}.
\end{equation}

Since~\eqref{eqn: M_i conclusion} holds for all $1\leq i\leq n$, by Lemma \ref{Lemma: accumulate} we can draw the conclusion for the Step 3 as following. For a single sequence $\{v_1,v_2,\cdots,v_{k-1}\}$, when there are $p$ number $v_j\in \mathcal{V}_{j}^{\frac{1-\eta}{2(1+\eta)}\delta}$, we have
\begin{equation}\label{eqn: Step 3 conclusion}
\begin{split}
   &\int_{\prod_{j=1}^{k-1} \mathcal{V}_j}   \mathbf{1}_{\{\text{$p$ number $v_j\in \mathcal{V}_{j}^{\frac{1-\eta}{2(1+\eta)}\delta}$ for a single sequence}\}}    \mathbf{1}_{\{t_k>0\}} d\Sigma_{k-1,m}^{k}(t_k) \\
    & \leq (3\delta)^{(M_1+\cdots+M_n)/2}  (2C_{T_M,\xi})^{2k} \mathcal{A}_{k-1,1}.
\end{split}
\end{equation}

\textbf{Step 4}

Now we are ready to prove the lemma. By~\eqref{eqn: N}, we have
\begin{equation}\label{eqn: proof step3}
\int_{\prod_{j=1}^{k-1}\mathcal{V}_j} \mathbf{1}_{\{t_k>0\}} d\Sigma_{k-1,m}^k(t_k)\leq \sum_{p=1}^{N}\int_{\{\text{Exactly $p$ number of $v_j\in \mathcal{V}_j^{\frac{1-\eta}{2(1+\eta)}\delta}$ }\}} \mathbf{1}_{\{t_k>0\}} d\Sigma_{k-1,m}^k(t_k).
\end{equation}
Since~\eqref{eqn: Step 3 conclusion} holds for a single sequence, we derive
\[\eqref{eqn: proof step3}\leq (2C_{T_M,\xi})^{2k}\sum_{p=1}^N\left(
    \begin{array}{c}
      k-1 \\
     p \\
    \end{array}
  \right)(3\delta)^{(M_1+M_2+\cdots M_n)/2} \mathcal{A}_{k-1,1}
\]
\begin{equation}\label{eqn: tk coe}
\leq (2C_{T_M,\xi})^{2k}N(k-1)^N(3\delta)^{k/4-101\frac{1+\eta}{1-\eta}N}\mathcal{A}_{k-1,1},
\end{equation}
where we use~\eqref{eqn: sum of M_i} in the second line.

Take $k=N^3$, the coefficient in~\eqref{eqn: tk coe} is bounded by
\begin{equation}\label{eqn: Finally}
(2C_{T_M,\xi})^{2N^3}N^{3N+1} (3\delta)^{N^3/4-101\frac{1+\eta}{1-\eta}N}\leq (2C_{T_M,\xi})^{2N^3} N^{4N}(3\delta)^{N^3/5},
\end{equation}
where we choose $N=N(\eta)$ large such that $N^3/4-101\frac{1+\eta}{1-\eta}N\geq N^3/5$.

Using~\eqref{eqn: N}, we derive
\[3\delta=C(\Omega,\eta)N^{-1/3}.\]
Finally we bound~\eqref{eqn: Finally} by
\begin{align*}
& (2C_{T_M,\xi})^{2N^3}N^{4N}(C(\Omega,\eta)N^{-1/3})^{N^3/5}\\
   &\leq e^{2N^3\log(2C_{T_M,\xi})} e^{4N\log N}e^{(N^3/5)\log(C(\Omega,\eta)N^{-1/3})}  \\
   & =e^{4N \log N}e^{(N^3/5)(log(C(\Omega,\eta))-\frac{1}{3}\log N)}e^{2N^3 \log(2C_{T_M,\xi})}\\
   &= e^{4N\log N-\frac{N^3}{15}(\log N-3\log C_{\Omega,\eta}-30\log (2C_{T_M,\xi}))}\\
   &\leq e^{4N\log N-\frac{N^3}{30}\log N}\leq e^{-\frac{N^3}{50}\log N}=e^{-\frac{k}{150}\log k}\leq (\frac{1}{2})^k,
\end{align*}
where we choose $\delta$ small enough in the second line such that $N=N(\Omega,\eta,C_{T_M,\xi})$ is large enough to satisfy
\[\log N -3\log C(\Omega,\eta)-30 \log (2C_{T_M,\xi})\geq \frac{\log N}{2},\]
\[4N\log N-\frac{N^3}{30}\log N\leq -\frac{N^3}{50}\log N.\]
And thus we choose $k=N^3=k_2=k_2(\Omega,\eta,C_{T_M,\xi})$ and we also require $\log k>150$ in the last step. Then we get~\eqref{eqn: 1/2 decay}.

Therefore, by the condition~\eqref{eqn: e1}, eventually we choose $k=k_0=\max\{k_1,k_2\}$. By the definition of $\eta$~\eqref{eqn: eta} with~\eqref{eqn: eta_paral} and~\eqref{eqn: eta perp}, we obtain $\eta=\eta(T_M,\mathcal{C},r_\perp,r_\parallel,\e_2)$. Thus by~\eqref{eqn: e2 dependence} and~\eqref{eqn: k_1 dependence}, we conclude the $k_0$ we choose here does not depend on $t$ and only depends on the parameter in~\eqref{eqn: k_0 dependence}. We derive the lemma.

\end{proof}

Now we are ready to prove the Proposition \ref{proposition: boundedness}, we will combine Lemma \ref{lemma: the tracjectory formula for f^(m+1)}-Lemma \ref{lemma: t^k} to close the estimate.

\subsection{Proof of Proposition \ref{proposition: boundedness}}
\begin{proof}[\textbf{Proof of Proposition \ref{proposition: boundedness}}]
First we take
\begin{equation}\label{eqn: first condition for tinf}
t_{\infty}\leq t'.
\end{equation}
with $t'$ defined in~\eqref{eqn: t'}. Then we let $k=k_0$ with $k_0$ defined in~\eqref{eqn: k_0 dependence} so that we can apply Lemma \ref{lemma: t^k} and Lemma \ref{lemma: boundedness}. Define the constant in~\eqref{eqn: fm is bounded} as
\begin{equation}\label{eqn: Cinfty}
  C_\infty=3(2C_{T_M,\xi})^{k_0},
\end{equation}
where $C_{T_M,\xi}$ is defined in~\eqref{eqn: 1 one}, $k_0$ in defined in~\eqref{eqn: k_0 dependence}.

We mainly use the formula given in Lemma \ref{lemma: the tracjectory formula for f^(m+1)} and we use Lemma \ref{lemma: boundedness} \ref{lemma: t^k} to control every term in~\eqref{eqn: formula for H}. We consider two cases.
\begin{description}
\item[Case1] $t_1\leq 0$,
\end{description} We consider~\eqref{eqn: Duhamal principle for case1} in Lemma~\ref{lemma: the tracjectory formula for f^(m+1)}. Since
\[e^{-\int_{s}^t \frac{\mathfrak{C}}{2}\langle V^m(\tau) \rangle^2 d\tau}\leq e^{\frac{\mathfrak{C}}{2}(s-t) \langle v\rangle^2}e^{\mathfrak{C}C_{\phi^m}(t-s)^2\langle v\rangle}\,,\]
by~\eqref{eqn: Duhamal principle for case1} and using the definition of $\Gamma^m_{\text{gain}}(s)$ in~\eqref{eqn: gamma^m} we have
\begin{align}
  |h^{m+1}(t,x,v)| & \leq | h_0(X^m(0;t,x,v),V^m(0;t,x,v))|\label{eqn: first term} \\
   & +\int_0^t e^{\frac{\mathfrak{C}}{2}(s-t) \big(\langle v\rangle^2-2C_{\phi^m}(t-s)\langle v\rangle\big)} e^{-\mathfrak{C}\langle V^m(s)\rangle^2 s}e^{\theta|V^m(s)|^2}   \int_{\mathbb{R}^3\times \mathbb{S}^2}B(V^m(s)-u,w)\sqrt{\mu(u)}\\
   & +\Big|\frac{h^{m}(t,x,v)}{e^{-\mathfrak{C}\langle v\rangle^2 t+\theta|v|^2}}\Big(s,X^{m}(s),u'\big(u,V^m(s)\big)\Big)\Big| \Big|\frac{h^{m}(t,x,v)}{e^{-\mathfrak{C}\langle v\rangle^2 t+\theta|v|^2}}\Big(s,X^{m}(s),v'\big(u,V^m(s)\big)\Big)\Big|      d\omega duds,\label{eqn: second term}
\end{align}
where $u'\big(u,V^m(s)\big)$ and $v'\big(u,V^m(s)\big)$ are defined by~\eqref{eqn: u' v'}. Then we have
\begin{align*}
 \eqref{eqn: second term}  & \leq(\sup_{0\leq s\leq t} \Vert h^m(s)\Vert_{L^{\infty}})^2 \times\int_0^t  \int_{\mathbb{R}^3\times \mathbb{S}^2}e^{\frac{\mathfrak{C}}{2}(s-t) \big(\langle v\rangle^2-2C_{\phi^m}(t-s)\langle v\rangle\big)}  B\big(V^m(s)-u,w\big) \\
   & \times\sqrt{\mu(u)}e^{-\mathfrak{C}\langle V^m(s)\rangle^2 s}e^{\theta|V^m(s)|^2}  e^{-\theta(|u|^2+|V^m(s)|^2)} e^{\mathfrak{C}(\langle u\rangle^2+\langle V^m(s)\rangle^2) s}  d\omega duds \\
   &     \lesssim (\sup_{0\leq s\leq t} \Vert h^m(s)\Vert_{L^{\infty}})^2 \int_0^t \int_{\mathbb{R}^3}e^{\frac{\mathfrak{C}}{2}(s-t) \big(\langle v\rangle^2-2C_{\phi^m}(t-s)\langle v\rangle\big)} |V^m(s)-u|^\mathcal{K}\sqrt{\mu} e^{-\theta |u|^2} e^{\mathfrak{C}\langle u\rangle^2 s}    du ds \\
   &\lesssim_{C_\infty} \Vert h_0\Vert_{L^\infty}^2\int_0^t e^{\frac{\mathfrak{C}}{2}(s-t) \big(\langle v\rangle^2-2C_{\phi^m}(t-s)\langle v\rangle\big)} \langle V^m(s)\rangle^\mathcal{K} ds\\
   &\lesssim \Vert h_0\Vert_{L^\infty}^2\int_0^t e^{\frac{\mathfrak{C}}{2}(s-t) \big(\langle v\rangle^2-2C_{\phi^m}(t-s)\langle v\rangle\big)}\big(\langle v\rangle^\mathcal{K}+(t-s)^{\mathcal{K}}  \big) ds\\
   &\leq\Vert h_0\Vert_{L^\infty}^2\int_0^t e^{\frac{\mathfrak{C}}{2}(s-t) \big(\langle v\rangle^2-2C_{\phi^m}(t-s)\langle v\rangle\big)} \big(\langle v\rangle^\mathcal{K}+1\big) \{\mathbf{1}_{|v|>N}+\mathbf{1}_{|v|\leq N}\}ds\\
   &\lesssim \Vert h_0\Vert_{L^\infty}^2\Big[\int_0^t e^{\frac{\mathfrak{C}}{4}(s-t)\langle v\rangle^2} \langle v\rangle^\mathcal{K}\mathbf{1}_{|v|>N}ds+                \int_0^t   \langle v\rangle^\mathcal{K} \mathbf{1}_{|v|>N}\Big]\\
   &\lesssim_{\Vert h_0\Vert_\infty}\big(\frac{1}{N^2}+Nt\big),
\end{align*}
where $0\leq\mathcal{K}\leq 1$. Therefore, we obtain
\begin{equation}\label{eqn: Gamma bounded by}
~\eqref{eqn: second term}\leq C(C_\infty,\Vert h_0\Vert_\infty)(\frac{1}{N^2}+Nt)\leq \frac{1}{k_0}\Vert h_0\Vert_\infty,
\end{equation}
where we choose
\begin{equation}\label{eqn: second condition for tinf}
N=N(C_\infty,\Vert h_0\Vert_\infty,k_0)\gg 1,\quad
t_\infty=t_\infty(N,C_\infty,\Vert h_0\Vert_\infty,k_0)\ll 1,
\end{equation}
with $t\leq t_\infty$ to obtain the last inequality in~\eqref{eqn: Gamma bounded by}.

Finally collecting~\eqref{eqn: first term} and~\eqref{eqn: second term} we obtain
\begin{equation}\label{eqn: hm+1 bounded case 1}
 \Vert h^{m+1}(t,x,v)\mathbf{1}_{\{t_1\leq 0\}}\Vert_\infty \leq   2\Vert h_0\Vert_\infty\leq C_\infty\Vert h_0\Vert_\infty,
\end{equation}
where $C_\infty$ is defined in~\eqref{eqn: Cinfty}.

\begin{description}
\item[Case2] $t_1\geq 0$,
\end{description}
We consider~\eqref{eqn: Duhamel principle for case 2} in Lemma \ref{lemma: the tracjectory formula for f^(m+1)}. First we focus on the first line. By~\eqref{eqn: Gamma bounded by} we obtain
\begin{equation}\label{eqn: first line bounded}
\int_{t_1}^t e^{-\int_s^t \frac{\mathfrak{C}}{2} \langle V^m(\tau)\rangle^2 d\tau} e^{-\mathfrak{C}\langle V^m(s)\rangle^2 s}e^{\theta|V^m(s)|^2}   \Gamma_{\text{gain}}^m(s)ds \leq \frac{1}{k_0}\Vert h_0\Vert_\infty.
\end{equation}
Then we focus on the second line of~\eqref{eqn: Duhamel principle for case 2}. Using $\theta=\frac{1}{4T_M\xi}$ we bound the second line of~\eqref{eqn: Duhamel principle for case 2} by
\begin{equation}\label{eqn: extra term to cancel}
\exp\bigg(\big[\frac{1}{2T_M\frac{2\xi}{\xi+1}}-\frac{1}{2T_w(x_1)}\big]|V^m(t_1)|^2\bigg)\int_{\prod_{j=1}^{k_0-1}\mathcal{V}_j}H.
\end{equation}
Now we focus on $\int_{\prod_{j=1}^{k_0-1}\mathcal{V}_j}H$. We compute $H$ term by term with the formula given in~\eqref{eqn: formula for H}. First we compute the first line of~\eqref{eqn: formula for H}. By Lemma~\ref{lemma: boundedness} with $p=1$, for every $1\leq l\leq k_0-1$, we have
\begin{equation}\label{eqn: l term}
\begin{split}
& \int_{\prod_{j=1}^{k_0-1}\mathcal{V}_j}  \mathbf{1}_{\{t_{l+1}\leq 0<t_l\}} |h_0\big(X^{m-l}(0),V^{m-l}(0)\big)|  d\Sigma_{l,m}^{k_0}(0) \\
  & \leq \Vert h_0\Vert_\infty  \int_{\prod_{j=1}^{k_0-1}\mathcal{V}_j}  \mathbf{1}_{\{t_{l+1}\leq 0<t_l\}}   d\Sigma_{l,m}^{k_0}(0)\\
  &\leq (2C_{T_M,\xi})^l\Vert h_0\Vert_{\infty}\exp\bigg(\frac{(T_{l,1}-T_w(x_1))(1-r_{min})}{2T_w(x_1)[T_{l,1}(1-r_{min})+r_{min} T_w(x_1)]}|V^{m}(t_1)|^2+(2\mathcal{C})^{l}(\mathfrak{C}t)|V^{m}(t_1)|^2\bigg).
\end{split}
\end{equation}

In regard to~\eqref{eqn: extra term to cancel} we have
\begin{align*}
   & \exp\bigg(\big[\frac{1}{2T_M\frac{2\xi}{\xi+1}}-\frac{1}{2T_w(x_1)}\big]|V^m(t_1)|^2\bigg)\times ~\eqref{eqn: l term} \\
   & =(2C_{T_M,\xi})^l\Vert h_0\Vert_{\infty}\exp\bigg(\Big[\frac{-1}{2\big(T_w(x_1)r_{min}+T_{l,1}(1-r_{min})\big)}+\frac{1}{2T_M\frac{2\xi}{\xi+1}}\Big]|V^{m}(t_1)|^2+(2\mathcal{C})^{l}(\mathfrak{C}t)|V^{m}(t_1)|^2\bigg).
\end{align*}

Using the definition~\eqref{eqn: definition of T_p} we have $T_w(x_1)<\frac{2\xi}{\xi+1}T_M$ and $T_{l,1}<\frac{2\xi}{\xi+1}T_M$, then we take
\begin{equation}\label{eqn: third condition for tinfty}
t_\infty=t_\infty(T_M,k_0,\xi,\mathcal{C},\mathfrak{C})
\end{equation}
small enough and $t\leq t_\infty$ so that the coefficient for $|V^m(t_1)|^2$ is
\begin{equation}\label{eqn: less than 0}
\begin{split}
   &  \frac{-1}{2\big(T_w(x_1)r_{min}+T_{l,1}(1-r_{min})\big)}+\frac{1}{2T_M\frac{2\xi}{\xi+1}}+(2\mathcal{C})^{l}(\mathfrak{C}t) \\
    & \leq \frac{-1}{2\big(T_Mr_{min}+T_{l,1}(1-r_{min})\big)}+\frac{1}{2T_M\frac{2\xi}{\xi+1}}+(2\mathcal{C})^{k_0}(\mathfrak{C}t)\leq 0.
\end{split}
\end{equation}
Since~\eqref{eqn: l term} holds for all $1\leq l\leq k_0-1$, by~\eqref{eqn: less than 0} the contribution of the first line of~\eqref{eqn: formula for H} in ~\eqref{eqn: extra term to cancel} is bounded by
\begin{equation}\label{eqn: first term bounded}
(2C_{T_M,\xi})^{k_0}\Vert h_0\Vert_{\infty}.
\end{equation}

Then we compute the second line of~\eqref{eqn: formula for H}. For each $1\leq l\leq k_0-1$ such that $\max\{0,t_{l+1}\}\leq s\leq t_l$, by~\eqref{eqn:trajectory measure}, we have
\[d\Sigma_{l,m}^{k_0}(s)=e^{-\int_s^{t_l} \frac{\mathfrak{C}}{2} \langle V^{m-l}(\tau)\rangle^2 d\tau} d\Sigma_{l,m}^{k_0}(t_l).\]
Therefore, we derive
\begin{equation}\label{eqn: last line second}
\begin{split}
   & \int_{\max\{0,t_l\}}^{t_l}\int_{\prod_{j=1}^{k_0-1}\mathcal{V}_{j}}  e^{-\mathfrak{C}\langle V^{m-l}(s)\rangle^2 s}e^{\theta|V^{m-l}(s)|^2} |\Gamma_{\text{gain}}^{m-l}(s)|d\Sigma_{l,m}^{k_0}(s)ds \\
   & \leq \int_{\prod_{j=1}^{k_0-1}\mathcal{V}_{j}}\int_{\max\{0,t_l\}}^{t_l} e^{-\int_{s}^{t_l} \frac{\mathfrak{C}}{2} \langle V^{m-l}(\tau)\rangle^2 d\tau} e^{-\mathfrak{C}\langle V^{m-l}(s)\rangle^2 s}e^{\theta|V^{m-l}(s)|^2} |\Gamma_{\text{gain}}^{m-l}(s)| ds d\Sigma_{l,m}^{k_0}(t_l)\\
   &\leq \frac{1}{k_0}\Vert h_0\Vert_\infty\int_{\prod_{j=1}^{k_0-1}\mathcal{V}_j} \Sigma_{l,m}^{k_0}(t_l)\\
   &\leq \frac{1}{k_0}\Vert h_0\Vert_\infty (2C_{T_M,\xi})^l \exp\bigg(\frac{(T_{l,1}-T_w(x_1))(1-r_{min})}{2T_w(x_1)[T_{l,1}(1-r_{min})+r_{min} T_w(x_1)]}|V^{m}(t_1)|^2+(2\mathcal{C})^{l}(\mathfrak{C}t)|V^{m}(t_1)|^2\bigg),
\end{split}
\end{equation}
where we apply~\eqref{eqn: Gamma bounded by} in the third line and apply Lemma \ref{lemma: boundedness} in the last line.

In regard to~\eqref{eqn: extra term to cancel}, by~\eqref{eqn: less than 0} we obtain
\[\exp\bigg(\big[\frac{1}{2T_M\frac{2\xi}{\xi+1}}-\frac{1}{2T_w(x_1)}\big]|V^m(t_1)|^2\bigg)\times ~\eqref{eqn: last line second}\leq \frac{1}{k_0}(2C_{T_M,\xi})^l \Vert h_0\Vert_\infty.\]

Since~\eqref{eqn: last line second} holds for all $1\leq l\leq k_0-1$, the contribution of the second line of~\eqref{eqn: formula for H} in~\eqref{eqn: extra term to cancel} is bounded by
\begin{equation}\label{eqn: second term bounded}
  \frac{k_0-1}{k_0}(2C_{T_M,\xi})^{k_0}\Vert h_0\Vert_\infty.
\end{equation}

Last we compute the third term of~\eqref{eqn: formula for H}. By Lemma \ref{lemma: t^k} and the assumption~\eqref{eqn: fm is bounded} we obtain
\begin{equation}\label{eqn: third term bounded in H}
\begin{split}
   & \int_{\prod_{j=1}^{k_0-1}\mathcal{V}_j}  \mathbf{1}_{\{0<t_{k_0}\}} |h^{m-k_0+2}\big(t_{k_0},x_{k_0},V^{m-k_0+1}(t_{k_0})\big)|  d\Sigma_{k_0-1,m}^{k_0}(t_{k_0}) \\
    & \leq \Vert h^{m-k_0+2}\Vert_\infty  \int_{\prod_{j=1}^{k_0-1}\mathcal{V}_j}  \mathbf{1}_{\{0<t_{k_0}\}}   d\Sigma_{k_0-1,m}^{k_0}(t_{k_0})\\
    &\leq      3(2C_{T_M,\xi})^{k_0} (\frac{1}{2})^{k_0}\Vert h_0\Vert_\infty\exp\bigg(\frac{(T_{l,1}-T_w(x_1))(1-r_{min})}{2T_w(x_1)[T_{l,1}(1-r_{min})+r_{min} T_w(x_1)]}|V^{m}(t_1)|^2+(2\mathcal{C})^{l}(\mathfrak{C}t)|V^{m}(t_1)|^2\bigg).
\end{split}
\end{equation}

In regard to~\eqref{eqn: extra term to cancel}, by~\eqref{eqn: less than 0} we have
\[\exp\bigg(\big[\frac{1}{2T_M\frac{2\xi}{\xi+1}}-\frac{1}{2T_w(x_1)}\big]|V^m(t_1)|^2\bigg)\times ~\eqref{eqn: third term bounded in H}\leq (2C_{T_M,\xi})^{k_0} \Vert h_0\Vert_\infty.\]
Thus the contribution of the third line of~\eqref{eqn: formula for H} in~\eqref{eqn: extra term to cancel} is bounded by
\begin{equation}\label{eqn: third term bounded}
(2C_{T_M,\xi})^{k_0} \Vert h_0(x,v)\Vert_\infty.
\end{equation}

Collecting~\eqref{eqn: first term bounded}~\eqref{eqn: second term bounded}~\eqref{eqn: third term bounded} we conclude that the second line of~\eqref{eqn: Duhamel principle for case 2} is bounded by
\begin{equation}\label{eqn: second line bounded}
(2C_{T_M,\xi})^{k_0}\times (2+\frac{k_0-1}{k_0}) \Vert h_0\Vert_\infty.
\end{equation}
Adding~\eqref{eqn: second line bounded} to~\eqref{eqn: first line bounded} we use~\eqref{eqn: Duhamel principle for case 2} to derive
\begin{equation}\label{eqn: hm+1 bounded case 2}
\Vert h^{m+1}(t,x,v)\mathbf{1}_{\{t_{1}\geq 0\}}\Vert_\infty\leq 3(2C_{T_M,\xi})^{k_0} \Vert h_0\Vert_\infty.
\end{equation}

Combining~\eqref{eqn: hm+1 bounded case 1} and~\eqref{eqn: hm+1 bounded case 2} we derive~\eqref{eqn: L_infty bound for f^m+1}.

Last we focus the parameters for $t_\infty$ in~\eqref{eqn: t_1}. In the proof the constraints for $t_\infty$ are~\eqref{eqn: first condition for tinf},~\eqref{eqn: second condition for tinf} and~\eqref{eqn: third condition for tinfty}. We obtain
\[t_\infty=t_\infty(t',N,C_\infty,\Vert h_0\Vert_\infty,T_M,k_0,\xi,\mathcal{C},\mathfrak{C})=t_\infty(k_0,\xi,T_M,\min\{T_w(x)\},\mathcal{C},r_\perp,r_\parallel,\mathfrak{C},C_{T_M,\xi},\Vert h_0\Vert_\infty,C_{\phi^m}).\]
By the definition of $k_0$ in~\eqref{eqn: k_0 dependence}, definition of $C_{T_M,\xi}$ in~\eqref{eqn: 1 one}, definition of $\mathcal{C}$ in~\eqref{eqn: cal C}, definition of $C_{\phi^m}$ in~\eqref{eqn: cphi} and the condition for $\mathfrak{C}$ in~\eqref{eqn: C satisfy},~\eqref{eqn: C satisfy 1},~\eqref{eqn: chooseC}, we derive~\eqref{eqn: t_1}.

\end{proof}

\section{Weighted $W^{1,p}$ estimate for $f^{m+1}$}
For proving the uniqueness of the solution as mentioned in the introduction, we rely on the estimate for $\nabla_x f$. In this section we prove the weighted $W^{1,p}$ estimates for $f^{m+1}=F^{m+1}/\sqrt{\mu}$ that satisfies~\eqref{eqn: fm+1} with boundary condition~\eqref{eqn: fm+1 BC}. We will be proving the following proposition.

\begin{proposition}\label{Prop W1p}
Assume all the assumption in Proposition \ref{proposition: boundedness} holds true ( so that we have~\eqref{eqn: theta'} ). Let $f^{m+1}$ solving~\eqref{eqn: fm+1} with boundary condition~\eqref{eqn: fm+1 BC}. Define
\Be\label{mathcal_E}
\begin{split}
   &  \mathcal{E}^m(t):=\sup_{l\leq m}\Big[
		\lambda\| w_{\tilde{\theta}}e^{-\lambda t\langle v\rangle} f^l (t) \|_p^p
		+\lambda\int_0^t   |w_{\tilde{\theta}}e^{-\lambda s\langle v\rangle} f^l(s)|_{p,+}^p+
		\frac{1}{2}\| e^{-\lambda t\langle v\rangle}w_{\tilde{\theta}}\alpha_{f^{l-1 },\epsilon}^\beta \nabla_{x,v} f^l(t) \|_{p}^p\\
    & +\int^t_0  | e^{-\lambda s\langle v\rangle}w_{\tilde{\theta}}\alpha_{f^{l-1 },\epsilon}^\beta \nabla_{x,v} f^l(s) |_{p,+}^p+\frac{\lambda}{4}\int_0^t \Vert e^{-\lambda s\langle v\rangle} \langle v\rangle w_{\tilde{\theta}}\alpha_{f^{l-1 },\epsilon}^\beta \nabla_{x,v} f^l(s)\Vert_p^p\Big]\,.
\end{split}
\Ee
Then for small enough $\tilde{\theta}$ so that $0<\tilde{\theta}<\theta\ll 1$ and $\lambda\gg 1$,
there exists $t_{W} \ll 1$ ($t_{W} \leq t_\infty$) and $C_{W}\gg 1$ such that for
\begin{equation}\label{Condition for p}
  \frac{p-2}{p}<\beta<\frac{2}{3}\quad \text{  for  } \quad 3<p<6\,,
\end{equation}
if
\begin{equation}
\sup_{0 \leq t \leq t_{W}} \mathcal{E}^m(t)\leq 2C_W\{ \| w_{\tilde{\theta}} f _0 \|_p^p+\| w_{\tilde{\theta}} \alpha_{f_{0 },\epsilon}^\beta \nabla_{x,v} f  _0 \|_{p}^p \}< \infty\,,\\
\end{equation}
then
\Be \label{induc_hypo}
\sup_{0 \leq t \leq t_{W}}\mathcal{E}^{m+1} (t)\leq 2C_W \{ \| w_{\tilde{\theta}} f _0 \|_p^p + \| w_{\tilde{\theta}} \alpha_{f_{0 },\epsilon}^\beta \nabla_{x,v} f_0 \|_{p}^p \}\,.
\Ee

Here $C_W$ is a constant defined in~\eqref{eqn: C_W}, and $t_W$ satisfies the condition~\eqref{eqn: lesssim}.

\end{proposition}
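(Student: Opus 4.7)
The plan is to carry out a weighted energy estimate for $\partial = \nabla_{x,v}$ applied to (\ref{eqn: fm+1}), using $\alpha^\beta_{f^m,\epsilon}$ as a kinetic weight. First I would differentiate (\ref{eqn: fm+1}) in $x$ and $v$. Since the transport operator $\partial_t + v\cdot\nabla_x - \nabla_x\phi^m\cdot\nabla_v$ annihilates $\alpha_{f^m,\epsilon}$ (see (\ref{eqn: alpha invar})), multiplying the resulting equation by $(\alpha^\beta_{f^m,\epsilon})^p \,|w_{\tilde\theta}|^p \,e^{-p\lambda t\langle v\rangle}\,|\partial f^{m+1}|^{p-2} \partial f^{m+1}$ and integrating over $\Omega\times\R^3$, the weight commutes cleanly through. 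Integration by parts produces (i) a damping contribution $\frac{\lambda}{p}\int \langle v\rangle |w_{\tilde\theta}|^p e^{-p\lambda t\langle v\rangle}(\alpha^\beta \partial f^{m+1})^p$ from differentiating $e^{-p\lambda t\langle v\rangle}$ in $t$, (ii) the boundary trace $\int_0^t\int_{\gamma_+}-\int_0^t\int_{\gamma_-}$ with density $|n\cdot v|\,|w_{\tilde\theta} e^{-\lambda s\langle v\rangle}\alpha^\beta \partial f^{m+1}|^p$, (iii) commutator bulk terms coming from $\partial\nabla_x\phi^m\cdot\nabla_v f^{m+1}$, from $\partial(\nu(F^m)f^{m+1})$, and from $\partial\Gamma_{\mathrm{gain}}(f^m,f^m)$. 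Terms of type (iii) are controlled by Lemma~\ref{lemma: phi_inf}, the $L^\infty$ bound (\ref{eqn: L_infty bound for f^m+1}) already established in Proposition~\ref{proposition: boundedness}, and Young-type inequalities, at the cost of absorbing them into the damping term (i) by taking $\lambda$ large.

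The core of the argument is the $\gamma_-$ boundary trace. On $\gamma_-$, $\alpha_{f^m,\epsilon} = |n\cdot v|$, and the condition $\beta > (p-2)/p$ ensures $|n\cdot v|^{p\beta - p + 1}$ is locally integrable in $v$. Differentiating the boundary condition in (\ref{eqn: fm+1 BC}) yields schematically the estimate (\ref{roughly}). Here I would implement the double grazing decomposition advertised in the introduction: decompose $\gamma_+$ into $\gamma_+^{v,x,\epsilon} = \{(x,u)\in\gamma_+ : u\cdot n(x)<\epsilon \text{ or } |u-v|>1/\epsilon\}$ and its complement. On $\gamma_+^{v,x,\epsilon}$, Hölder with conjugate exponent $q$ pushes the $|\alpha^\beta \partial f^m(u)|^p$-factor into an integral over $\gamma_+$, while the remaining $L^q$ integral in $u$ gains a small factor $O(\e)$ from the grazing constraint combined with the explicit Gaussian decay in $d\sigma$; the outer $dv$-integration on $\gamma_-$ is $L^1_v$-controlled using (\ref{eqn: Constrain on T}) exactly as in the proof of Lemma~\ref{lemma: boundedness}. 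On $\gamma_+\setminus\gamma_+^{v,x,\epsilon}$, no smallness is available directly, so I split $dv$ into $\{|v|\leq \e^{-1}\}$ and $\{|v|\geq \e^{-1}\}$; the latter gives smallness from the Gaussian weight, while the former forces $|u|\leq 2\e^{-1}$, placing $u$ in the non-grazing set $\gamma_+\setminus \gamma_+^{\e/2}$ where the trace theorem (Lemma~\ref{lemma: trace thm}) converts the boundary integral into a bulk $\|\partial f^m\|_p^p$ plus initial data, as in \cite{CKL, GKTT}.

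Combining these, the $\gamma_-$ boundary term is bounded by $O(\e)\int_0^t\int_{\gamma_+}|w_{\tilde\theta}e^{-\lambda s\langle v\rangle}\alpha^\beta \partial f^m|^p + C_\e\int_0^t\|\cdot\|_p^p + \text{data}$. The $O(\e)$ term on $\gamma_+$ is absorbed into the outgoing trace on the left after choosing $\e$ small; the bulk piece is absorbed into the damping term (i) after choosing $\lambda\gg 1$. The result is the closed inequality
\begin{equation*}
\mathcal{E}^{m+1}(t) \leq C\big(\|w_{\tilde\theta}f_0\|_p^p + \|w_{\tilde\theta}\alpha_{f_0,\e}^\beta\nabla_{x,v}f_0\|_p^p\big) + C\,t\cdot \mathcal{E}^m(t) + C\,t\cdot \mathcal{E}^{m+1}(t),
\end{equation*}
which under the inductive hypothesis and for $t\leq t_W$ small yields (\ref{induc_hypo}) with an appropriate constant $C_W$.

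I expect the main obstacle to be the boundary term, specifically the explicit computation showing that the double-grazing decomposition actually produces the $O(\e)$ smallness when combined with the Cercignani--Lampis kernel; this requires tracking all three factors in $d\sigma$ (the normal Gaussian, the tangential shifted Gaussian, and the Bessel factor $I_0$) analogously to Lemma~\ref{Lemma: abc}, while simultaneously performing Hölder with a non-uniform exponent $q$. A secondary difficulty is that $\alpha_{f^m,\e}$ depends on the previous iterate $\phi^m$ through its characteristics, so when estimating the commutator $[\partial,\text{transport}]$-type terms one needs to ensure that $\alpha_{f^m,\e}$ and $\alpha_{f^{m-1},\e}$ are comparable on the time interval $[0,t_W]$, which follows from the uniform $C^{1,1-\delta}$ bound of $\phi^m$ from Lemma~\ref{lemma: phi_inf} together with the $L^\infty$ bound of Proposition~\ref{proposition: boundedness}.
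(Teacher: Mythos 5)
Your route is essentially the paper's: differentiate \eqref{eqn: fm+1}, exploit \eqref{eqn: alpha invar} so the weight $\alpha^\beta_{f^m,\epsilon}$ rides along the transport operator, apply the Green identity (Lemma \ref{lemma: Green's indentity}), treat the incoming trace by differentiating \eqref{eqn: fm+1 BC} and using the double grazing decomposition plus Lemma \ref{lemma: trace thm} on the non-grazing part, and close by absorbing the $O(\e)$ outgoing-trace piece and the bulk pieces with $\e$ small, $\lambda$ large, and $t_W$ small. Two concrete gaps remain, however. First, $\mathcal{E}^{m+1}$ in \eqref{mathcal_E} contains the $L^p$ norm and outgoing trace of $f^{m+1}$ itself, and these are not decorative: when you differentiate the boundary condition, the second line of \eqref{eqn: derivative bound on the boundary second part} and the terms $\Gamma_{v,\mathrm{gain}}$, $\Gamma_{v,\mathrm{loss}}$ in \eqref{Gvgain}--\eqref{Gvloss} feed back $\|w_{\tilde\theta}f^m\|_p$ and the boundary trace $|w_{\tilde\theta}f^m|_{p,+}^p$. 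The paper therefore runs a parallel Step-1 estimate for $f^{m+1}$ (culminating in \eqref{eqn: Lp bound for f}), multiplies it by $\lambda$ and adds it to the derivative estimate \eqref{eqn: bound for partial f}, so that the coefficient mismatch ($C(\e)$ versus $\lambda$) can be arranged as in \eqref{eqn: lambda condition 3}. Your proposal only derives the $\partial f$ half, so the claimed closed inequality for $\mathcal{E}^{m+1}$ is not yet justified as written, even though the missing half is obtained by the same machinery.

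Second, your claim that the commutator term $\partial\nabla_x\phi^m\cdot\nabla_v f^{m+1}$ is controlled by Lemma \ref{lemma: phi_inf} plus the $L^\infty$ bound is not accurate: that lemma gives only a $C^{1,1-\delta}$ bound, whereas this term requires $\|\nabla^2\phi^m\|_\infty$, which the paper obtains from the Schauder estimate \eqref{C2 estimate for phi}, i.e. $\|\phi^m\|_{C^{2,1-3/p}}\lesssim \|f^m\|_p+\|e^{-\lambda t\langle v\rangle}\alpha^\beta_{f^{m-1},\e}\nabla_x f^m\|_p$, and hence from the induction hypothesis $\mathcal{E}^m<\infty$; this dependence is exactly why the last bulk terms in \eqref{eqn: bound for partial f} carry the factor $(\sup_m\|w_{\theta'}f^m\|_\infty+\mathcal{E}^m)$ and why they must be paired with the small factor $t$. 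Relatedly, your worry about comparability of $\alpha_{f^m,\e}$ and $\alpha_{f^{m-1},\e}$ is unnecessary: the paper never compares them, it simply attaches to each iterate its own weight (e.g. $\partial f^m$ always appears with $\alpha^\beta_{f^{m-1},\e}$) and uses Proposition \ref{prop_int_alpha} to integrate the negative powers of whichever $\alpha$ appears in the dual factor.
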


This proposition implies the uniform in $m$ bound of the weighted $W^{1,p}$ norm of $f^m$. This gives us an a-priori estimate for the later proof for the uniqueness. The ``energy" term defined in~\eqref{mathcal_E} has two components that depend on $p$-norm of $f$, and three components on $p$-norm of $\partial f$. Therefore in the proof for the proposition, we need to provide the estimates for these components. Some lemmas from~\cite{CKL}~\cite{GKTT} will be repeatedly used and we cite them here first.

We note that in bulk this part of proof is rather similar to that in~\cite{CKL}. The main difficulty comes in through the boundary treatment as mentioned in the introduction, and this complexity is reflected Step 1 for $f$ and Step 5 for $\partial f$ in the proof.

For the initial problems of the transport equation with time-independent field $E(t,x)$, source $H(t,x,v)$, and damping term $\psi(t,x,v)\geq 0$, let $h$ solves:

\begin{equation}\label{eqn: eqn in section 2}
  \partial_t h+v\cdot \nabla_x h+E\cdot \nabla_v h+\psi h=H\,,
\end{equation}
then we have the following estimates for $h$:

\begin{lemma}\label{lemma: Green's indentity}(Lemma 5 in~\cite{CKL})\\
For $p\in[1,\infty)$ assume that $h,\partial_t h + v\cdot
	\nabla_x h-\nabla\phi \cdot \nabla_v h \in L^p ([0,T];L^p(\Omega\times\mathbb{R}^3))$ and $%
	h_{\gamma_-} \in L^p ([0,T];L^p(\gamma))$. Then $h  \in C^0( [0,T];
	L^p(\Omega\times\mathbb{R}^3))$ and $h_{\gamma_+} \in
	L^p ([0,T];L^p(\gamma))$ and for almost every $t\in [0,T]$ :
	\Be\begin{split}
\| h(t) \|_{p}^p + \int_0^t |h |_{\gamma_+,p}^p =\  \| h(0)\|_p^p + \int_0^t
		|h |_{\gamma_-,p}^p + \int_0^t \iint_{\Omega\times\mathbb{R}^3 }
		\{\partial_t h + v\cdot \nabla_x h+E \cdot \nabla_v h+\psi h\} |h|^{p-1}.
	\end{split}\Ee

\end{lemma}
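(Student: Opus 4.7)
The plan is to derive this as a standard renormalization-plus-trace identity for the kinetic transport operator $L:=\partial_t + v\cdot\nabla_x - \nabla\phi\cdot\nabla_v$. If $h$ were smooth and compactly supported in $v$, the identity would follow from the chain rule $L(|h|^p) = p|h|^{p-2}h\, Lh$, integration by parts in $x$ on $\Omega$ (which produces the boundary term $\int_{\partial\Omega\times\R^3}(v\cdot n)|h|^p = |h|_{\gamma_+,p}^p - |h|_{\gamma_-,p}^p$ via the divergence theorem), and integration by parts in $v$ on $\R^3$ (which vanishes because $\nabla\phi$ is $v$-independent, so $\nabla_v\cdot\nabla\phi=0$ and there is no boundary at infinity in $v$). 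The identity in the lemma is exactly the resulting energy balance once one adds the $\psi h$ term into the RHS bracket — essentially the convention that $\{Lh + \psi h\}|h|^{p-1}$ is read as $p|h|^{p-2}h\cdot(Lh+\psi h)$.

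The real work is to justify this for $h$ of minimal regularity: only $h, Lh \in L^p_{t,x,v}$ and $h|_{\gamma_-}\in L^p$. First I would regularize by convolving $h$ with a mollifier $\rho_\varepsilon$ in $(x,v)$ (after extending $h$ suitably past $\p\Omega$, e.g.\ by reflection in a tubular neighborhood). Setting $h_\varepsilon=h*\rho_\varepsilon$, one has $Lh_\varepsilon = (Lh)*\rho_\varepsilon + r_\varepsilon$, where the commutator $r_\varepsilon = [v\cdot\nabla_x - \nabla\phi\cdot\nabla_v,\,\rho_\varepsilon*]\,h$ can be controlled by a DiPerna--Lions commutator estimate. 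Since Lemma \ref{lemma: phi_inf} gives $\nabla\phi\in C^{0,1-\delta}(\bar\Omega)$, which is well inside $W^{1,1}_{\text{loc}}$, the standard argument yields $r_\varepsilon\to 0$ in $L^p_{t,x,v}$. For the smooth $h_\varepsilon$, the chain-rule/integration-by-parts identity holds classically and can be integrated over $[0,t]\times\Omega\times\R^3$.

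Then I would pass to the limit $\varepsilon\to 0$. In the interior terms $\|h_\varepsilon(t)\|_p^p$, $\iint \psi |h_\varepsilon|^p$, and $\iint (Lh_\varepsilon)\cdot p|h_\varepsilon|^{p-2}h_\varepsilon$, strong $L^p$ convergence of $h_\varepsilon\to h$ and $Lh_\varepsilon\to Lh$ (once the commutator is controlled) gives convergence to the desired quantities. For the boundary, the prescribed datum $h|_{\gamma_-}\in L^p$ identifies the limit of $\int_0^t|h_\varepsilon|_{\gamma_-,p}^p$; rearranging the identity for $h_\varepsilon$ then shows that $\{|h_\varepsilon|_{\gamma_+,p}\}_\varepsilon$ stays bounded, which by lower semicontinuity (and a refinement using test functions on $\gamma_+$) upgrades to $h|_{\gamma_+}\in L^p$ and convergence of the outgoing trace. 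The continuity $h\in C^0([0,T];L^p)$ drops out because, once the identity holds for a.e.\ $t$, the right-hand side is absolutely continuous in $t$, so $t\mapsto\|h(t)\|_p^p$ has a continuous representative.

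The main obstacle is the low regularity of $\nabla\phi$ combined with the need for a well-defined trace on $\gamma$: transport solutions of such rough equations need not have traces in general, and one must lean on the DiPerna--Lions renormalization machinery to ensure both the chain rule and the trace structure are compatible with the mollification. Since the statement is explicitly cited as Lemma 5 of~\cite{CKL}, I would ultimately defer the full technical verification to that reference and present only the chain-rule/divergence-theorem computation sketched above to motivate the identity.
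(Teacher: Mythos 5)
The paper offers no proof of this lemma: it is quoted verbatim as Lemma 5 of~\cite{CKL} and used as a black box, so there is no in-paper argument to diverge from, and your decision to defer the full verification to the reference matches the paper's treatment. Your sketch (mollify, control the commutator, apply the chain rule and the divergence theorem to the smooth approximations, pass to the limit, identify the incoming trace from the data and the outgoing trace by a boundedness/semicontinuity argument, then read off time-continuity) is indeed the standard renormalization-plus-trace route behind such Green's identities.

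Two steps in the sketch are not correct as written. First, the assertion that $\nabla\phi\in C^{0,1-\delta}(\bar\Omega)$ ``is well inside $W^{1,1}_{\mathrm{loc}}$'' is false: H\"older continuity implies no Sobolev regularity whatsoever (Weierstrass-type functions are H\"older and nowhere differentiable), so it cannot by itself feed the DiPerna--Lions commutator lemma. The argument is rescued not by the H\"older bound but by what the proof of Lemma~\ref{lemma: phi_inf} actually establishes on the way, namely the elliptic estimate $\|\phi(t)\|_{W^{2,p}(\Omega)}\lesssim \|h(t)\|_\infty+\rho_0$ for every finite $p$; hence $\nabla\phi\in W^{1,p}(\Omega)$, which is the hypothesis the commutator estimate genuinely needs (with the appropriate conjugate exponents against $h\in L^p$). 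Second, mollifying after an extension by reflection across $\partial\Omega$ is delicate: the reflected function no longer satisfies the transport equation, and the commutator of $v\cdot\nabla_x$ with the mollifier does not tend to zero near the boundary, which is exactly where the trace terms are created; the standard remedies are tangential/interior mollification combined with the equation in the normal direction, or an argument along the characteristics of the quasi-Lipschitz field with a cutoff away from the grazing set $\gamma_0$, after which the $\gamma_+$ trace is identified by testing the identity against boundary test functions rather than by lower semicontinuity alone. Finally, note that continuity of $t\mapsto\|h(t)\|_p^p$ gives only norm continuity; $h\in C^0([0,T];L^p)$ also requires weak continuity in time, which comes from the equation itself. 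None of these defects is fatal, but as written they are gaps that the cited reference is being asked to fill.
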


\begin{lemma}\label{lemma: trace thm}(Lemma 6 in~\cite{CKL})\\
Assume $E \in L^\infty$, then for $t\ll 1$, $\varepsilon > 0$,
\begin{equation} \label{case:nondecay}
\int_{0}^{t}\int_{\gamma _{+}\setminus \gamma _{+}^{\varepsilon }}|h|\mathrm{%
		d}\gamma \mathrm{d}s \leq C(\e)\left\{ \
	\|h_{0}\|_{1}+\int_{0}^{t}  \| h(s)\|_{1}+\big{\Vert}[\partial_{t}+v\cdot \nabla _{x}+E\cdot \nabla_v + \psi ]h(s)\big{\Vert} _{1} \mathrm{d}s\ \right\}\,,
\end{equation}
where
\begin{equation}\label{eqn: gamma+ e}
\gamma_+^\epsilon=   \{(x,v)\in \gamma_+:           n(x)\cdot v<\epsilon \text{ or }|v|>\epsilon^{-1}\}\,.
\end{equation}
\end{lemma}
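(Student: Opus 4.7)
The plan is to prove this trace estimate by the classical method of characteristics combined with the boundary-to-interior Jacobian. The key geometric input is that on the non-grazing set $\gamma_+\setminus\gamma_+^\varepsilon$ one has $n(x)\cdot v\geq\varepsilon$ and $|v|\leq\varepsilon^{-1}$, so since $\Omega$ is convex and $E\in L^\infty$, the backward characteristic from any $(s,x,v)\in(0,t]\times(\gamma_+\setminus\gamma_+^\varepsilon)$ stays strictly in $\Omega$ for a time interval of length at least $\delta_0=\delta_0(\varepsilon,\|E\|_\infty)>0$. For the regime $t\ll 1$ of the lemma, I will take $t\leq\delta_0$ so that characteristics either reach the initial hyperplane $\{s'=0\}$ or remain inside $\Omega$ throughout the backward integration window.

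First I would record the Duhamel inequality along characteristics. Writing $H:=[\partial_t+v\cdot\nabla_x+E\cdot\nabla_v+\psi]h$ and using $\psi\geq 0$ (hence $e^{-\int\psi}\leq 1$), integration of the ODE $\tfrac{d}{ds'}h(s',X(s'),V(s'))=H-\psi h$ yields
\[
|h(s,x,v)|\;\leq\;|h(\sigma,X(\sigma;s,x,v),V(\sigma;s,x,v))|+\int_{\sigma}^{s}|H|(s',X(s'),V(s'))\,ds'
\]
for any $0\leq\sigma\leq s$ with the trajectory in $\Omega$ on $(\sigma,s)$. I then split into two cases: when $s\geq\delta_0$ I average the choice $\sigma=s-\tau$ over $\tau\in[\delta_0/2,\delta_0]$, while when $s<\delta_0$ I take $\sigma=0$, replacing $|h(\sigma,\cdot,\cdot)|$ by the initial datum $|h_0|$.

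Next, I would integrate both sides against $d\gamma\,ds=|n\cdot v|\,dS_x\,dv\,ds$ over $(0,t]\times(\gamma_+\setminus\gamma_+^\varepsilon)$ and invoke the standard volume-preserving Jacobian identity for kinetic transport,
\[
|n(x)\cdot v|\,dS_x\,dv\,d\tau\;=\;dy\,dw,
\]
under the change of variables $(x,v,\tau)\mapsto(y,w)=(X(s-\tau;s,x,v),V(s-\tau;s,x,v))$ (valid on the non-grazing image for $\tau\leq\delta_0$, with a bounded correction from $E\in L^\infty$ via Gronwall). After exchanging the order of integration in $s$ and $\tau$ and substituting $s'=s-\tau$, the averaged term in the first case becomes bounded by $\tfrac{2}{\delta_0}\int_0^t\|h(s')\|_1\,ds'\lesssim\varepsilon^{-1}\int_0^t\|h(s)\|_1\,ds$, the source term by $\int_0^t\|H(s)\|_1\,ds$, and (by the same Jacobian with $\tau=s$) the small-$s$ case produces the $\|h_0\|_1$ contribution. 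Collecting these three pieces gives exactly the claimed bound with $C(\varepsilon)\sim\varepsilon^{-1}$.

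The main technical nuisance will be the careful bookkeeping of the change of variables when the backward trajectory reaches the initial hyperplane, since for $s<\delta_0$ one cannot average $\tau$ over $[\delta_0/2,\delta_0]$ without crossing $\{s'=0\}$; this forces the split into $s\in[0,\delta_0]$ and $s\in[\delta_0,t]$ described above. A secondary, purely geometric issue is to quantify the lower bound $\delta_0\gtrsim\varepsilon/(1+\|E\|_\infty)$ on the backward-exit time on $\gamma_+\setminus\gamma_+^\varepsilon$, which follows from convexity of $\Omega$ (the same estimate used in Step~2 of Lemma~\ref{lemma: t^k} to obtain $|t_j-t_{j+1}|\gtrsim\delta^3$) together with the boundedness of $E$. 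Once these two points are settled the remainder of the argument is a routine application of the Fubini/Jacobian calculus outlined above.
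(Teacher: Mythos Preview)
The paper does not prove this lemma; it is quoted verbatim from \cite{CKL} (Lemma~6 there) and used as a black box, so there is no ``paper's own proof'' to compare against. Your outline is the standard proof of this trace estimate (as in \cite{CKL,GKTT}): use the uniform lower bound on the backward free-flight time on the non-grazing set $\gamma_+\setminus\gamma_+^\varepsilon$, write the Duhamel bound along characteristics, average the base point over $\tau\in[\delta_0/2,\delta_0]$, and convert the boundary integral to a volume integral via the measure-preserving change of variables $(x,v,\tau)\mapsto(X(s-\tau),V(s-\tau))$ with Jacobian $|n(x)\cdot v|\,dS_x\,dv\,d\tau=dy\,dw$. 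This is correct.

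One minor remark: the sentence ``I will take $t\leq\delta_0$'' is unnecessary and slightly at odds with your subsequent case split $s\gtrless\delta_0$. The argument works for any $t$ once $\delta_0>0$ is fixed; the split is between $s\in[0,\delta_0]$ (go back to the initial data) and $s\in[\delta_0,t]$ (average over $\tau$), and nothing forces $t\leq\delta_0$. Also, the geometric lower bound you cite from Step~2 of Lemma~\ref{lemma: t^k} gives $\delta_0\gtrsim\varepsilon^3$ rather than $\varepsilon$, but this only affects the constant $C(\varepsilon)$ and not the structure of the estimate.
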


The next result is about the integrability of $\alpha$,
\begin{proposition}\label{prop_int_alpha} (Proposition 3 in~\cite{CKL})\\
	Assume $E(t,x) \in C^1_x$ is given.
	Then for $0\leq s\leq t\ll 1$, $0<\sigma<1$ and $N>1$ and $x \in \bar{\O}$,
	\Be\label{NLL_split2}
	\int_{|u| \leq N }
	\frac{   \dd u
	}{\alpha_{f,\epsilon}(s,x,u)^{ \sigma}}
	\lesssim_{\sigma, \O,N}  1, 
	\Ee
	and, for any $0< \kappa\leq2$,
	\Be\label{NLL_split3}
	\int_{  |u|\geq N} \frac{e^{-C|v-u|^2}}{|v-u|^{2-\kappa}} \frac{1}{\alpha_{f,\epsilon}(s,x,u)^\sigma} \dd u
	\lesssim_{\sigma, \O,N,\kappa}  1.
	\Ee
\end{proposition}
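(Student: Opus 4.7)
The plan is to exploit the two-regime structure of $\alpha_{f,\epsilon}$ dictated by the cutoff $\chi$ in \eqref{alphaweight}, and reduce both integrals to a one-dimensional singular integral of the form $\int |\vb_\perp|^{-\sigma} d\vb_\perp$, which is finite precisely when $\sigma<1$. First, I would decompose the domain of integration using the fact that on $\{u : s - \tb(s,x,u) + \epsilon \leq 0\}$ one has $\chi(\cdot)=0$ and hence $\alpha_{f,\epsilon}\equiv 1$, so the contribution to \eqref{NLL_split2} from this set is bounded by $|B(0,N)|$ trivially, and the contribution to \eqref{NLL_split3} is controlled by $\int_{|u|\geq N} e^{-C|v-u|^2}|v-u|^{-(2-\kappa)}du$, which converges for $0<\kappa\leq 2$ (with the Gaussian handling the tail and the singularity at $u=v$ being integrable in $\R^3$ for $\kappa>0$).

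The substantive part is the complementary region $\{u : \tb(s,x,u) < s+\epsilon\}$, where $\alpha_{f,\epsilon}(s,x,u)\geq c\,|n(\xb)\cdot \vb|$ uniformly. Here I would change variables from $u$ to exit data. Fixing $(s,x)$, the backward Hamilton flow \eqref{hamilton_ODE} associates to each such $u$ an exit triple $(\tau,\xb,\vb)\in(0,s+\epsilon)\times\partial\Omega\times\R^3$ with $\xb\in\partial\Omega$ and $|n(\xb)\cdot\vb|>0$, constrained by $X(s-\tau;s,x,u)=\xb$, $V(s-\tau;s,x,u)=\vb$. Inverting the flow, one writes $u=V(s;s-\tau,\xb,\vb)$ and $x=X(s;s-\tau,\xb,\vb)$; for $s\leq t\ll 1$ with $E\in C^1_x$, the flow is a $C^1$ perturbation of free streaming, so the Jacobian of $u\mapsto(\tau,\vb_\parallel)$ with $\xb$ determined by the constraint $X=x$ can be computed as in the free-transport case, yielding $du = |n(\xb)\cdot \vb|\,d\tau\,dS(\xb)\,\ldots$ up to a bounded multiplicative factor. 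The key cancellation is the extra $|n(\xb)\cdot\vb|$ factor in $du$, which reduces the integrand $|n(\xb)\cdot\vb|^{-\sigma}$ to $|n(\xb)\cdot\vb|^{1-\sigma}$, then splitting $\vb$ into normal and tangential parts near $\xb$ produces $\int_0^{CN}|\vb_\perp|^{1-\sigma}d\vb_\perp<\infty$.

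For the second bound \eqref{NLL_split3}, the same change of variables applies, but now $u$ ranges over $|u|\geq N$ and energy conservation $|u|^2 = |\vb|^2+2(\phi(\xb)-\phi(x))$ together with the boundedness of $\phi$ (from Lemma \ref{lemma: phi_inf}) gives $|\vb|\sim|u|$. The Gaussian weight $e^{-C|v-u|^2}$ becomes $e^{-C'|v-\vb|^2}$ after adjusting constants (since $|u-\vb|\leq C_{\phi}\tau \ll 1$), and absorbs the $|v-u|^{-(2-\kappa)}$ singularity for $0<\kappa\leq 2$ uniformly in $v$. The tangential integration then contributes a factor $\int e^{-C'|v_\parallel-\vb_\parallel|^2}d\vb_\parallel \lesssim 1$, and the normal integration again reduces to $\int|\vb_\perp|^{1-\sigma}e^{-C'|v_\perp-\vb_\perp|^2}d\vb_\perp<\infty$.

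The main obstacle is getting uniform-in-$x$ control of the Jacobian of the change of variables as $x\to\partial\Omega$, since the exit-time parametrization degenerates there. I would handle this by using the local $C^3$ chart \eqref{eqn: C3 map} and the convexity bound \eqref{eqn: convex}: convexity implies $|n(\xb)\cdot\vb|\gtrsim |\xb-x|^2/\tau + O(\tau)$ along characteristics, so the Jacobian factor does not blow up uniformly as $x$ approaches $\partial\Omega$, and the construction carries through for any $x\in\bar\Omega$. A minor technical point is verifying that the implied constant in $\lesssim_{\sigma,\Omega,N}$ depends only on $\sigma$, $\Omega$, $N$ (and $\kappa$, $C$ for the second estimate) and not on the particular $f$; this follows because the flow estimates depend only on $\Vert E\Vert_\infty$ and $\Vert \nabla_x E\Vert_\infty$, which are absorbed into the universal constant once $t\ll 1$ is fixed.
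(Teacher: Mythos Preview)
This proposition is not proved in the present paper; it is imported verbatim as ``Proposition 3 in \cite{CKL}'' and used as a black box, so there is no in-paper argument to compare your proposal against.

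Your decomposition on $\chi$ and the reduction to $|n(\xb)\cdot\vb|^{-\sigma}$ are correct, but the change of variables is not set up properly. You propose $u\mapsto(\tau,\vb_\parallel)$ ``with $\xb$ determined by the constraint $X=x$,'' yet $\vb_\parallel$ is the tangential part of $\vb$ \emph{relative to} $n(\xb)$, so the parametrization is circular as stated. The natural map for fixed $x$ is $u\mapsto(\tb,\xb)\in\R_+\times\partial\Omega$, whose free-transport Jacobian is $du=\tb^{-3}|n(\xb)\cdot\vb|\,d\tb\,dS(\xb)$; the factor $\tb^{-3}$ you hide in ``$\ldots$'' is exactly what makes uniformity as $x\to\partial\Omega$ delicate, and your claimed fix ``$|n(\xb)\cdot\vb|\gtrsim|\xb-x|^2/\tau$'' is not the right inequality (for interior $x$ at distance $d$ with nearest boundary point $x_0$ one has $|n(\xb)\cdot(x-\xb)|\sim d+|\xb-x_0|^2$, not $|x-\xb|^2$). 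The argument can be completed along these lines, but it requires a genuine case analysis of $\int_{\partial\Omega}|n(\xb)\cdot(x-\xb)|^{1-\sigma}|x-\xb|^{-(3-\sigma)}\,dS(\xb)$ as $d\to 0$ that you have not supplied.

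The cleaner route, and the one the underlying reference takes (cf.\ the velocity-lemma machinery this paper invokes at \eqref{beta}--\eqref{velocity_N}), avoids exit-time coordinates entirely: one shows $\alpha_{f,\epsilon}(s,x,u)\sim\tilde\alpha(x,u)$ for the explicit weight $\tilde\alpha(x,u)=\bigl(\xi(x)^2+|\nabla\xi(x)\cdot u|^2-2\xi(x)\,u\cdot\nabla^2\xi(x)\cdot u\bigr)^{1/2}$, with comparison constants depending only on $\|E\|_{C^1}$ and the time window, and then integrates $\tilde\alpha^{-\sigma}$ directly in $u$. Near $\partial\Omega$ this collapses to $\int|n(x)\cdot u|^{-\sigma}du<\infty$ for $\sigma<1$; away from $\partial\Omega$ it is trivially bounded since $\tilde\alpha\geq\xi(x)>0$; and convexity \eqref{eqn: convex} makes the interpolation uniform in $x\in\bar\Omega$. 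The same comparison disposes of \eqref{NLL_split3}.
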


We will also need the $C^2$ estimate for $\phi$:
\begin{lemma}
Assume~\eqref{Condition for p}. If $\phi$ solves~\eqref{equation for phi_f} then
\begin{equation}\label{C2 estimate for phi}
  \Vert \phi(t)\Vert_{C^{2,1-\frac{3}{p}}}\leq (C_1)^{1/p}\{\Vert f(t)\Vert_p+\Vert e^{-\lambda t\langle v\rangle} \alpha_{f,\epsilon}^\beta \nabla_x f(t)\Vert_p\} \quad\text{ for }\quad p>3\,.
\end{equation}
\end{lemma}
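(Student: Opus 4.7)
The plan is to bootstrap via standard elliptic regularity for the Neumann problem and Morrey's embedding, with the main work being the $L^p$ bound on $\nabla_x \int f\sqrt{\mu}\,dv$ in terms of the weighted derivative norm. Write $\rho(t,x) := \int_{\mathbb{R}^3} f(t,x,v)\sqrt{\mu(v)}\,dv - \rho_0$, so $\phi$ solves $-\Delta_x \phi = \rho$ in $\Omega$ with $\partial_n \phi = 0$ on $\partial\Omega$; the compatibility condition $\int_\Omega \rho = 0$ is guaranteed by the choice~\eqref{eqn: Background density} together with mass conservation. Since $p > 3$, Morrey's embedding $W^{3,p}(\Omega) \hookrightarrow C^{2,1-3/p}(\bar\Omega)$ combined with the $W^{3,p}$ elliptic estimate for the Neumann problem yields
\begin{equation*}
\Vert \phi(t)\Vert_{C^{2,1-3/p}} \lesssim \Vert \phi(t)\Vert_{W^{3,p}} \lesssim \Vert \rho(t)\Vert_{W^{1,p}},
\end{equation*}
so it suffices to bound $\Vert \rho(t)\Vert_p + \Vert \nabla_x \rho(t)\Vert_p$ by the right-hand side of~\eqref{C2 estimate for phi}.

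For the undifferentiated piece, Hölder in $v$ pointwise in $x$, with conjugate exponent $q = p/(p-1)$, gives
\begin{equation*}
|\rho(t,x)| \leq \Vert f(t,x,\cdot)\Vert_{L^p_v}\Bigl(\int_{\mathbb{R}^3}\mu^{q/2}\,dv\Bigr)^{1/q} + |\rho_0|,
\end{equation*}
and integrating in $x$ yields $\Vert \rho(t)\Vert_p \lesssim \Vert f(t)\Vert_p$. For the gradient piece, differentiate under the integral and split the integrand as
\begin{equation*}
|\nabla_x f(t,x,v)|\sqrt{\mu(v)} = \bigl(e^{-\lambda t\langle v\rangle}\alpha_{f,\epsilon}^\beta|\nabla_x f|\bigr)\cdot\frac{e^{\lambda t\langle v\rangle}\sqrt{\mu}}{\alpha_{f,\epsilon}^\beta},
\end{equation*}
then apply Hölder in $v$ with the same $p,q$ to obtain
\begin{equation*}
|\nabla_x \rho(t,x)| \leq \Vert e^{-\lambda t\langle v\rangle}\alpha_{f,\epsilon}^\beta \nabla_x f(t,x,\cdot)\Vert_{L^p_v}\Bigl(\int_{\mathbb{R}^3}\frac{e^{\lambda t\langle v\rangle q}\mu(v)^{q/2}}{\alpha_{f,\epsilon}(t,x,v)^{\beta q}}\,dv\Bigr)^{1/q}.
\end{equation*}

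The key point for the weight integral is that the parameter range~\eqref{Condition for p} forces $\beta q < 1$: indeed, $p > 3$ gives $1/q = (p-1)/p > 2/3 > \beta$. Splitting the $v$-integral into $\{|v|\leq N\}$ and $\{|v|\geq N\}$ and applying~\eqref{NLL_split2}--\eqref{NLL_split3} from Proposition~\ref{prop_int_alpha} (with the Gaussian factor $\mu^{q/2}$ absorbing $e^{\lambda t\langle v\rangle q}$, which is harmless for $\lambda t \leq \lambda t_W \ll 1$) bounds the bracket by a constant uniform in $(t,x)$. Raising to the $p$-th power and integrating in $x$ gives $\Vert \nabla_x \rho(t)\Vert_p \lesssim \Vert e^{-\lambda t\langle v\rangle}\alpha_{f,\epsilon}^\beta \nabla_x f(t)\Vert_p$, and combining with the $L^p$ bound completes~\eqref{C2 estimate for phi} with $C_1$ collecting the Schauder/Morrey constants and the $v$-integral bound. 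The only subtle step is the verification $\beta q < 1$ and the uniform-in-$x$ finiteness of $\int \alpha_{f,\epsilon}^{-\beta q}\mu^{q/2}\,dv$ — both of which are precisely the scenario handled by Proposition~\ref{prop_int_alpha} under the hypothesis~\eqref{Condition for p}.
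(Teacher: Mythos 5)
Your proof is correct and follows essentially the same route as the paper: both reduce the estimate to the $W^{1,p}(\Omega)$ norm of the macroscopic density $\int_{\mathbb{R}^3} f\sqrt{\mu}\,dv-\rho_0$ and then use the identical weighted H\"older argument in $v$, with the key point $\beta\frac{p}{p-1}<1$ coming from~\eqref{Condition for p} and Proposition~\ref{prop_int_alpha} giving the uniform-in-$(t,x)$ bound on $\int \mu^{q/2}\alpha_{f,\epsilon}^{-\beta q}\,dv$. The only cosmetic difference is the elliptic packaging: you invoke the $W^{3,p}$ Neumann estimate plus Morrey embedding applied to $\phi$, whereas the paper applies the Schauder estimate and then Morrey ($W^{1,p}\subset C^{0,1-3/p}$, $p>3$) to the density itself — both legitimate for the $C^3$ domain.
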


\begin{proof}
Applying the Schauder estimate to~\eqref{equation for phi_f} we deduce
\begin{equation}\label{}
  \Vert \phi\Vert_{C^{2,1-\frac{3}{p}}} \lesssim_{p,\Omega}\Big\Vert \int_{\mathbb{R}^3}f(t)\sqrt{\mu}dv\Big\Vert_{C^{0,1-\frac{3}{p}}(\bar{\Omega})} \quad \text{for}\quad p>3\,.
\end{equation}
By the Morrey inequality, $W^{1,p}\subset C^{0,1-\frac{3}{p}}$ for $p>3$, we derive
\begin{align*}
\Big\Vert \int_{\mathbb{R}^3} f(t)\sqrt{\mu}dv\Big\Vert_{C^{0,1-\frac{n}{p}}(\bar{\Omega})}   & \lesssim \Big\Vert \int_{\mathbb{R}^3} f(t)\sqrt{\mu}dv\Big\Vert_{W^{1,p}(\Omega)} \\
   & \lesssim \Big(\int_{\mathbb{R}^3} \mu^{q/2}dv\Big)^{1/q}\Vert f(t)\Vert_{L^p(\Omega \times \mathbb{R}^3)}+\Big\Vert \int_{\mathbb{R}^3}\nabla_x f(t)\sqrt{\mu}dv\Big\Vert_{L^p(\Omega)}.
\end{align*}
By the H\"{o}lder inequality,we have
\begin{align*}
  \Big|\int_{\mathbb{R}^3} \nabla_x f(t,x,v)\sqrt{\mu(v)}dv\Big| & \leq \big\Vert \frac{\sqrt{e^{\lambda t\langle \cdot\rangle}\mu(\cdot)}}{\alpha_{f,\epsilon}(t,x,\cdot)^\beta}\Vert_{L^{\frac{p}{p-1}}(\mathbb{R}^3)}\big\Vert e^{-\lambda t\langle \cdot\rangle}\alpha_{f,\epsilon}(t,x,\cdot)^\beta \nabla_xf(t,x,\cdot)\Vert_{L^p(\mathbb{R}^3)} \\
   & =\Big(\int_{\mathbb{R}^3}\frac{\mu(v)^{\frac{p}{2(p-1)}}}{\alpha(t,x,v)^{\frac{\beta p}{p-1}}}dv\Big)^{\frac{p-1}{p}} \Vert e^{-\lambda t\langle \cdot\rangle}\alpha_{f,\epsilon}^\beta \nabla_xf(t,x,\cdot)\Vert_{L^p(\mathbb{R}^3)}.
\end{align*}

From the assumption $\frac{p-2}{p-1}<\frac{\beta p}{p-1}<\frac{2}{3}\frac{p}{p-1}<1$. We draw the conclusion.

\end{proof}

We need some estimates about the collision operator $\Gamma$ for Proposition~\ref{Prop W1p}. Define a notation
\begin{equation}\label{eqn: k_rho}
  \mathbf{k}_\rho(v,u)=\frac{1}{|v-u|}\exp\{-\rho|v-u|^2-\rho\frac{||v|-|u||^2}{|v-u|^2}\}.
\end{equation}
The velocity derivative for the nonlinear Boltzmann operator reads
\begin{equation}\begin{split}\label{nabla_Gamma}
			&\nabla_v \left( \Gamma_{\text{gain}}(f^m,f^m)-\Gamma_{\text{loss}}(f^m,f^{m+1}) \right)\\
			= & \ \Gamma_{\textrm{gain}} (\nabla_v f^m,f^m) + \Gamma_{\textrm{gain}} ( f^m,\nabla_v f^m)
			- \Gamma_{\textrm{loss}} (\nabla_v f^m,f^{m+1}) -\Gamma_{\textrm{loss}} ( f^m,\nabla_v f^{m+1})\\
            &  + \Gamma_{v,\text{gain}} (f^m,f^m)-\Gamma_{v,\text{loss}}(f^m,f^{m+1}).
		\end{split}\end{equation}
		Here we have defined
		\begin{equation}\begin{split}\label{Gamma_v}
          & \Gamma_{v,\textrm{gain}}(f^m,f^m) - \Gamma_{v,\textrm{loss}}(f^m,f^{m+1}) \\
			&:=\int_{\mathbb{R}^{3}}   \int_{\S^2}   | u \cdot \omega|
			f^m(v+ u_\perp) f^m(v + u_\parallel)
			\nabla_v\sqrt{\mu(v+u)} \dd \omega  \mathrm{d}u  \\
			&\quad - \int_{\mathbb{R}^{3}}   \int_{\S^2}   | u \cdot \omega|
			f^m(v+u) f^{m+1}(v  )
			\nabla_v \sqrt{\mu(v+u)} \dd \omega  \mathrm{d}u.
		\end{split}\end{equation}

\begin{lemma}\label{Lemma: lots of estimates}
For $0<\frac{\theta}{4}<\rho$, if $0<\tilde{\rho}<  \rho- \frac{\theta}{4}$, $0\leq s\leq t\ll \tilde{\rho}$, then
		\begin{equation}\label{k_theta_comparision}
		\mathbf{k}_{  \varrho}(v,u) \frac{e^{{\theta} |v|^2}}{e^{\mathcal{\theta} |u|^2}}\frac{e^{\lambda s\langle u\rangle}}{e^{\lambda s\langle v\rangle}} \lesssim  \mathbf{k}_{\tilde{\varrho}}(v,u) .
		\end{equation}

		Moreover,
		\begin{equation}\label{grad_estimate}
		\int_{\mathbb{R}^3}\mathbf{k}_{  \tilde{\rho}}(v,u) d u  \lesssim \langle v\rangle^{-1}.
		\end{equation}

For the nonlinear Boltzmann operator we have
		\begin{equation}\begin{split}\label{bound_Gamma_k}
		|\Gamma_{\text{gain}}(f^m, f^m)-\Gamma_{\text{loss}}(f^m,f^{m+1})|   \lesssim \left(\| w_{\theta'} f^m \|_\infty+\|w_{\theta'} f^{m+1} \| \right)\int_{\R^3} \mathbf{k}_{{\varrho }} (v,u) |f^m(u) | \dd u.
		\end{split}\end{equation}

For~\eqref{nabla_Gamma} we have
		\begin{equation}\label{bound_Gamma_nabla_vf1}
		|w_{\tilde{\theta}} \Gamma_{\textrm{gain}} (\nabla_v f^m,f^m) |+ |w_{\tilde{\theta}}\Gamma_{\textrm{gain}} ( f^m,\nabla_v f^m)|
\lesssim \| w_{\theta'} f^m \|_\infty \int_{\R^3} \mathbf{k}_{\tilde{\varrho}} (v,u)  |w_{\tilde{\theta}}\nabla_v f^m(u)| \dd u  .
		\end{equation}

		\begin{equation}\label{bound_Gamma_nabla_vf2}
		\begin{split}
			|w_{\tilde{\theta}}\Gamma_{\textrm{loss}}(\nabla_v f^m, f^{m+1})| &\lesssim \|w_{\theta'} f^{m+1}\|_{\infty} \int_{\R^3} \mathbf{k}_{\tilde{\varrho}} (v,u) |w_{\tilde{\theta}}\nabla_v f^m(u)| \dd u , \\
			|w_{\tilde{\theta}}\Gamma_{\textrm{loss}}( f^m, \nabla_v f^{m+1})| &\lesssim \langle v\rangle  \| w_{\theta'} f^m \|_\infty
			|w_{\tilde{\theta}}\nabla_v f^{m+1} (v)|  .
		\end{split}
\end{equation}

		\begin{equation} \label{Gvloss}
			|w_{\tilde{\theta}}\Gamma_{v,\textrm{loss}} (f^m,f^{m+1})|\lesssim  \langle v\rangle \frac{w_{\tilde{\theta}}(v)}{w_{\theta'} (v)} \|w_{\theta'} f^{m+1}\|_{\infty}\Vert e^{-\lambda\langle u\rangle s}w_{\tilde{\theta}}(u) f^m\Vert_p.
		\end{equation}	

		\begin{equation}\label{Gvgain}
			|w_{\tilde{\theta}}\Gamma_{v,\textrm{gain}} (f^m,f^m)| \lesssim   \|w_{\theta'} f^m\|_{\infty} \int_{\R^3} \mathbf{k}_{\tilde{\varrho}} (v,u) | w_{\theta'} f^m(u)| \dd u. \\
		\end{equation}

For $(x,v)\in \gamma_-$, we have the following bound for $\nabla_{x,v}f^{m+1}$ on the boundary:
\begin{equation}\label{eqn: derivative bound on the boundary first part}
|\nabla_{x,v}f^{m+1}(t,x,v)|\lesssim \langle v\rangle^2e^{[\frac{1}{4T_M}-\frac{1}{2T_w(x)}]|v|^2}\left(1+\frac{1}{|n(x)\cdot v|}\right)\times~\eqref{eqn: derivative bound on the boundary second part}
\end{equation}
with
\begin{equation}\label{eqn: derivative bound on the boundary second part}
  \begin{split}
     & \int_{n(x)\cdot u>0} \Big[\langle u\rangle|\nabla_{x,v}f^m(t,x,u)|\\
      & + \langle u\rangle^2|f^m|+ \Vert w_{\theta'} f^m\Vert_\infty \int_{\mathbb{R}^3}\mathbf{k}_\rho(u,u')|f^{m-1}(u')|du'   +\langle u\rangle |f^m|\Vert\nabla_x \phi^{m-1}\Vert_\infty \Big]                                                                   \\
      &  e^{-[\frac{1}{4T_M}-\frac{1}{2T_w(x)}]|u|^2}d\sigma(u,v).
  \end{split}
\end{equation}

\end{lemma}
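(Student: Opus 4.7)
The statements group into three strata: (i) Gaussian-kernel manipulations \eqref{k_theta_comparision}--\eqref{grad_estimate}, (ii) collision-operator pointwise bounds \eqref{bound_Gamma_k}--\eqref{Gvgain}, (iii) the boundary trace of $\nabla_{x,v}f^{m+1}$ \eqref{eqn: derivative bound on the boundary first part}--\eqref{eqn: derivative bound on the boundary second part}. I would treat them in that order, since (ii) invokes (i) and (iii) invokes both.

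For \eqref{k_theta_comparision} the plan is to rewrite $e^{\theta(|v|^2-|u|^2)} = e^{\theta(|v|+|u|)(|v|-|u|)}$ and $e^{\lambda s(\langle u\rangle-\langle v\rangle)}$ using $|\langle u\rangle-\langle v\rangle|\leq|v-u|$, and then complete the square against the Gaussian exponent $-\rho|v-u|^2 - \rho\bigl||v|-|u|\bigr|^2/|v-u|^2$ of $\mathbf{k}_{\rho}$. One checks that the cross-term is absorbed by a loss of $\theta/4$ in the exponential coefficient, which is exactly the hypothesis $\tilde\rho<\rho-\theta/4$ (the $\lambda s$ piece is absorbed into the $|v-u|^2$ part for $s\ll\tilde\rho$). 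For \eqref{grad_estimate}, I pass to spherical coordinates centered at $v$; the radial integral of $|v-u|^{-1}e^{-\tilde\rho|v-u|^2}$ is harmless, while the $e^{-\tilde\rho||v|-|u||^2/|v-u|^2}$ factor restricts $u$ to a tube of angular width $\sim1/\langle v\rangle$ about the ray $\R v$, producing the $\langle v\rangle^{-1}$ gain.

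For the $\Gamma$-estimates \eqref{bound_Gamma_k}--\eqref{Gvgain}, I would apply the standard Grad/Carleman pointwise bounds on the hard-potential $Q_{\text{gain}}$ to get an integral kernel of $\mathbf{k}_\rho$ type, pull out the $w_{\theta'}$-weighted $L^\infty$ norm on one factor, and exchange weights via \eqref{k_theta_comparision}. The pointwise bound for $\Gamma_{\text{loss}}(f^m,\nabla_v f^{m+1})$ in \eqref{bound_Gamma_nabla_vf2} follows directly because this term factors as $\nu(F^m)\nabla_v f^{m+1}(v)$, and $\nu(F^m)\lesssim \langle v\rangle\|w_{\theta'}f^m\|_\infty$. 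For \eqref{Gvloss}, since $\Gamma_{v,\text{loss}}(f^m,f^{m+1})$ factors $f^{m+1}(v)$ out of the $u,\omega$ integration, H\"older's inequality in $u$ against $|u\cdot\omega|\,|\nabla_v\sqrt{\mu(v+u)}|$ (giving the $\langle v\rangle$ and the weight ratio) produces the stated $L^p$-norm of $f^m$. The \eqref{Gvgain} estimate comes from $|\nabla_v\sqrt{\mu(v+u)}|\lesssim \langle v+u\rangle\sqrt{\mu(v+u)}$ and absorbing $\langle v+u\rangle$ into a shifted Gaussian to reconstruct $\mathbf{k}_{\tilde\rho}$.

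The principal obstacle is \eqref{eqn: derivative bound on the boundary first part}--\eqref{eqn: derivative bound on the boundary second part}. I would start from the \emph{original} balance law \eqref{eqn:BC} rather than the reformulation, namely
\[
F^{m+1}(t,x,v)\,|n(x)\cdot v| \;=\; \int_{n\cdot u>0} R(u\to v;x,t)\,F^m(t,x,u)\,(n(x)\cdot u)\,du,
\]
and apply $\nabla_{x,v}$ to both sides, then divide by $|n(x)\cdot v|$: the derivative of this factor together with the subsequent division is exactly what produces the $(1+|n(x)\cdot v|^{-1})$ singularity. Differentiating the explicit kernel \eqref{eqn: Formula for R} in $v$ through its Gaussian exponent and Bessel factor (using $|I_0'|=|I_1|\leq I_0$) generates polynomial factors in $|v|$ and $|u|$ times $R$ itself, explaining the $\langle v\rangle^2$ prefactor and the $\langle u\rangle^2|f^m|$ term inside; differentiating in $x$ activates $\nabla_x T_w$ and $\nabla_x n$, both controlled by the $C^3$ regularity through \eqref{eqn: C3 map}. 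To avoid a $\partial_t f^m$ on the boundary, I substitute using \eqref{eqn: fm+1} along the trace, which produces the $\Gamma_{\text{gain}}^m$ contribution (bounded via \eqref{bound_Gamma_k}) and the $\|\nabla_x\phi^{m-1}\|_\infty\langle u\rangle|f^m|$ contribution. After collecting, the weight $e^{[\frac{1}{4T_M}-\frac{1}{2T_w(x)}]|v|^2}$ factors cleanly in front and the kernel reassembles as $d\sigma(u,v)$, yielding the stated bound. The delicate bookkeeping of Bessel-function derivatives and of the $\nabla_x n$ term uniformly in $\partial\Omega$ is the key technical point.
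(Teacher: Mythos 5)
Your handling of \eqref{k_theta_comparision}--\eqref{Gvgain} is essentially the paper's: \eqref{k_theta_comparision} is proved in the appendix by exactly the completion-of-squares / negative-definiteness-of-the-discriminant argument you outline (with the $e^{\lambda s\langle\cdot\rangle}$ ratio absorbed by replacing $\theta$ with $\theta-\lambda t$), \eqref{grad_estimate} is the standard angular-tube computation, and the collision bounds follow from the Carleman form \eqref{carleman} by pulling out weighted $L^\infty$ norms and invoking \eqref{k_theta_comparision}, just as you say.

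The gap is in \eqref{eqn: derivative bound on the boundary first part}. Applying $\nabla_x$ to the balance law \eqref{eqn:BC} can only yield \emph{tangential} derivatives of $f^{m+1}$, since that identity holds only on the two-dimensional surface $\partial\Omega$; the normal derivative $\partial_n f^{m+1}$ is not determined by the boundary condition at all. Moreover, the mechanism you propose for the $|n(x)\cdot v|^{-1}$ singularity — differentiating the factor $|n(x)\cdot v|$ and then dividing by it — does not occur: the Cercignani--Lampis kernel \eqref{eqn: Formula for R} carries an explicit factor $|n(x)\cdot v|$ that cancels the one on the left of \eqref{eqn:BC}, and the resulting $v$-derivative of the trace, \eqref{eqn: velocity derivative on the boundary}, is perfectly regular in $n(x)\cdot v$. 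The argument you need is the paper's: recover the normal derivative from the kinetic equation itself, $\partial_n f^{m+1}=-(n\cdot v)^{-1}\{\partial_t f^{m+1}+\sum_i (v\cdot\tau_i)\partial_{\tau_i}f^{m+1}-\nabla_x\phi^m\cdot\nabla_v f^{m+1}+\cdots-\Gamma\}$ as in \eqref{eqn: normal derivative}; this is the sole source of the $(1+|n(x)\cdot v|^{-1})$ prefactor, and it is also the reason the temporal derivative of the boundary condition must be computed (with $\partial_t f^m$ inside the integral eliminated via the equation for $f^m$, as in \eqref{eqn: time derivative on the boundary}, which is where the $\langle u\rangle|\nabla_{x,v}f^m|$, $\Gamma$, and $\|\nabla_x\phi^{m-1}\|_\infty$ terms of \eqref{eqn: derivative bound on the boundary second part} arise). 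The tangential and velocity derivatives of the trace are then obtained by differentiating the boundary condition in the way you describe, including the $\nabla_x T_w$ and $\nabla_x n$ contributions.
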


\begin{proof}

The proof of~\eqref{k_theta_comparision} is given in appendix.

		The nonlinear Boltzmann operator~\eqref{Def: Gamma} equals
		\begin{equation}\label{carleman}
		\begin{split}		
		& 
		\int_{\mathbb{R}^{3}}   \int_{\S^2}   | u \cdot \omega|
		g_1(v+ u_1) g_2(v + u_2)
		\sqrt{\mu(v+u)} \dd \omega  \mathrm{d}u \\
		& - \int_{\mathbb{R}^{3}}   \int_{\S^2}   | u \cdot \omega|
		g_1(v+u) g_2(v  )
		\sqrt{\mu(v+u)} \dd \omega  \mathrm{d}u,		
		\end{split}
		\end{equation}
where $u_1=(u\cdot \omega)\omega$ and $u_2=u-u_1$. By exchanging the role of $\sqrt{\mu}$ and $w^{-1}$, we conclude~\eqref{bound_Gamma_k}.

The estimates~\eqref{bound_Gamma_nabla_vf1}--~\eqref{Gvgain} follows from the standard way using~\eqref{carleman}. The readers can also find them in chapter 4 of~\cite{CKL}.

Then we focus on the derivative on the boundary. By~\eqref{equation for f} we have
\begin{equation}\label{eqn: normal derivative}
\begin{split}
   & \partial_n f^{m+1}(t,x,v) =\frac{-1}{n(x)\cdot v}\{\partial_t f^{m+1}+\sum_{i=1}^2 (v\cdot \tau_i)\partial_{\tau_i}f^{m+1}-\nabla_x \phi^m\cdot \nabla_v f^{m+1}\\
    &+(\frac{v}{2T_M}\cdot \nabla_x \phi^m) f^{m+1}-\Gamma_{\text{gain}}(f^m,f^m)+\Gamma_{\text{loss}}(f^m,f^{m+1})\}.
\end{split}
\end{equation}
Let $\tau_1(x)$ and $\tau_2(x)$ be unit tangential vectors to $\partial \Omega$ satisfying $\tau_1(x)\cdot n(x)=0=\tau_2(x)\cdot n(x)$ and $\tau_1(x)\times \tau_2(x)=n(x)$. Define the orthonormal transformation from $\{n,\tau_1,\tau_2\}$ to the standard bases $\{e_1,e_2,e_3\}$,
\[\mathcal{T}(x)n(x)=e_1,~~ \mathcal{T}(x)\tau_1(x)=e_2,~~\mathcal{T}(x)\tau_2(x)=e_3,~~\mathcal{T}^{-1}=\mathcal{T}^T.\]
By a change of variable $u'=\mathcal{T}(x)u$, $v'=\mathcal{T}(x)v$ we have
\[u_\perp=n(x)\cdot u=n(x)\cdot \mathcal{T}^T(x)u'=n(x)^T \mathcal{T}^T(x)u'=[\mathcal{T}(x)n(x)]^T u'=e_1\cdot u'=u_1',\]
\begin{align*}
 u_\parallel=  & [\tau_1(x)\cdot u]\tau_1(x)+[\tau_2(x)\cdot u]\tau_2(x)=[\tau_1(x)\cdot \mathcal{T}^T(x)u^\prime]\tau_1(x)+[\tau_2(x)\cdot \mathcal{T}^T(x)u^\prime]\tau_2(x) \\
   & =\{[\mathcal{T}\tau_1(x)]^Tu'\}\tau_1(x)+\{[\mathcal{T}\tau_2(x)]^Tu'\}\tau_2(x)=u_2^\prime \tau_1(x)+u_3^\prime \tau_2(x)=u_2'\mathcal{T}^T(x)e_2+u_3'\mathcal{T}^T(x)e_3,
\end{align*}
\[v_\perp=v_1',\quad v_\parallel=v_2'\mathcal{T}^T(x)e_2+v_3'\mathcal{T}^T(x)e_3.\]
Then the boundary condition becomes
\[f^{m+1}(t,x,v)=e^{[\frac{1}{4T_M}-\frac{1}{2T_w(x)}]|v|^2}\int_{u_1^\prime>0}f^m(t,x,\mathcal{T}^T(x)u^\prime) e^{-[\frac{1}{4T_M}-\frac{1}{2T_w(x)}]|u^\prime|^2}d\sigma^\prime(u,v),\]
where we define
\begin{equation*}
\begin{split}
   & d\sigma^\prime(u,v)=  \frac{1}{r_\perp r_\parallel (2- r_\parallel)\pi/2} \frac{|u_1^\prime|}{(2T_w(x))^2}I_0 \left(
 \frac{1}{2T_w(x)}\frac{2 (1-r_\perp)^{1/2} v_1' u_1^\prime}{r_\perp}\right) \\
    & \exp\Big(-\frac{1}{2T_w(x)}[\frac{|u_1^\prime|^2+(1-r_\perp)|v_1'|^2}{r_\perp}+\frac{|u_2^\prime \mathcal{T}^T(x)e_2+u_3^\prime \mathcal{T}^T(x) e_3-(1-r_\parallel)[v_2'\mathcal{T}^T(x)e_2+v_3'\mathcal{T}^T(x)e_3]|^2}{r_\parallel(2-r_\parallel)}]\Big)du^\prime.
\end{split}
\end{equation*}
We can further take the tangential derivatives $\partial_{\tau_i}$, for $(x,v)\in \gamma_-$,
\begin{equation}\label{eqn:tangential derivative on boundary}
  \begin{split}
     & |\partial_{\tau_i}f^{m+1}(t,x,v)|  \\
      & \lesssim e^{[\frac{1}{4T_M}-\frac{1}{2T_w(x)}]|v|^2}\bigg(  \Big|\frac{|v|^2\partial_{\tau_i}T_w(x)}{2[T_w(x)]^2}\int_{n(x)\cdot u>0}f^m(t,x,u)e^{-[\frac{1}{4T_M}-\frac{1}{2T_w(x)}]|u|^2}d\sigma(u,v) \Big|\\
      &+ \Big|\int_{n(x)\cdot u>0} \partial_{\tau_i}f^m(t,x,u)e^{-[\frac{1}{4T_M}-\frac{1}{2T_w(x)}]|u|^2}d\sigma(u,v)\Big|\\
      &+ \Big| \int_{n(x)\cdot u>0} \nabla_v f^m(t,x,u)\partial_{\tau_i}\mathcal{T}^T(x)\mathcal{T}(x)ue^{-[\frac{1}{4T_M}-\frac{1}{2T_w(x)}]|u|^2}d\sigma(u,v)\Big|\\
      &+ \Big|\int_{n(x)\cdot u>0}f^m(t,x,u) e^{-[\frac{1}{4T_M}-\frac{1}{2T_w(x)}]|u|^2} \frac{-|u|^2\partial_{\tau_i}T_w(x)}{2[T_w(x)]^2}d\sigma(u,v)\Big|\\
      &+ \Big|\int_{n(x)\cdot u>0}f^m(t,x,u) e^{-[\frac{1}{4T_M}-\frac{1}{2T_w(x)}]|u|^2}\frac{-2\partial_{\tau_i}T_w(x)}{T_w(x)}d\sigma(u,v)\Big|\\
      &+ \Big|\int_{n(x)\cdot u>0}f^m(t,x,u) e^{-[\frac{1}{4T_M}-\frac{1}{2T_w(x)}]|u|^2}  [-\frac{\partial_{\tau_i}T_w(x)}{T_w^2(x)}\frac{(1-r_\perp)^{1/2}v_1' u_1'}{r_\perp}]d\sigma(u,v)\Big|\\
      &+ \Big|\int_{n(x)\cdot u>0}f^m(t,x,u)e^{-[\frac{1}{4T_M}-\frac{1}{2T_w(x)}]|u|^2}\frac{\partial_{\tau_i}T_w(x)}{2(T_w(x))^2}(|v|^2+|u|^2) d\sigma(u,v)\Big|\\
      &+ \Big|\int_{n(x)\cdot u>0}f^m(t,x,u)e^{-[\frac{1}{4T_M}-\frac{1}{2T_w(x)}]|u|^2}\frac{1}{2T_w(x)}\big[|u|^2+\frac{|v|^2\partial_{\tau_i} \mathcal{T}^T(x)}{r_\parallel(2-r_\parallel)} \big]   d\sigma(u,v)\Big|   \bigg).
  \end{split}
\end{equation}

Then we take velocity derivatives and obtain for $(x,v)\in \gamma_-$,
\begin{equation}\label{eqn: velocity derivative on the boundary}
\begin{split}
   & \nabla_v f^{m+1}(t,x,v) \\
    &  = v[\frac{1}{2T_M}-\frac{1}{T_w(x)}]e^{[\frac{1}{4T_M}-\frac{1}{2T_w(x)}]|v|^2}  \int_{n(x)\cdot u>0}f^m(t,x,u)e^{-[\frac{1}{4T_M}-\frac{1}{2T_w(x)}]|u|^2}d\sigma(u,v)\\
    & +e^{[\frac{1}{4T_M}-\frac{1}{2T_w(x)}]|v|^2}\int_{n(x)\cdot u>0}f^m(t,x,u)e^{-[\frac{1}{4T_M}-\frac{1}{2T_w(x)}]|u|^2} \frac{(1-r_\perp)^{1/2}u_\perp}{T_w(x)r_\perp}n(x)d\sigma(u,v)\\
    &+e^{[\frac{1}{4T_M}-\frac{1}{2T_w(x)}]|v|^2}\int_{n(x)\cdot u>0}f^m(t,x,u)e^{-[\frac{1}{4T_M}-\frac{1}{2T_w(x)}]|u|^2} \\
    & \bigg(-\frac{1}{T_w}\Big[\frac{(1-r_\perp)v_\perp}{r_\perp}n(x)-\frac{u_\parallel-(1-r_\parallel)v_\parallel}{r_\parallel(2-r_\parallel)}(1-r_\parallel)(\mathbf{I_{3\times 3}}-n(x)\otimes n(x))\Big]\bigg)d\sigma(u,v),
\end{split}
\end{equation}
where we use
\[\nabla_v v_\perp=n(x),\nabla_v v_\parallel=\nabla_v (v-v_\perp\cdot n(x))=\mathbf{I}_{3\times 3}-n(x)\otimes n(x).\]

From~\eqref{equation for f}, the temporal derivative is
\begin{equation}\label{eqn: time derivative on the boundary}
\begin{split}
   &  \partial_t f^{m+1}(t,x,v) =e^{[\frac{1}{4T_M}-\frac{1}{2T_w(x)}]|v|^2}  \int_{n(x)\cdot u>0}\partial_t f^m(t,x,u)e^{-[\frac{1}{4T_M}-\frac{1}{2T_w(x)}]|u|^2}d\sigma(u,v)\\
    & =e^{[\frac{1}{4T_M}-\frac{1}{2T_w(x)}]|v|^2}  \int_{n(x)\cdot u>0}\Big[ -u\cdot \nabla_x f^m+\nabla_x\phi^{m-1}\cdot \nabla_v f^m\\
    &    -(\frac{u}{2T_M}\cdot \nabla_x \phi^{m-1}) f^m +\Gamma_{\text{gain}}(f^{m-1},f^{m-1})-\Gamma_{\text{loss}}(f^{m-1},f^{m})\Big] e^{-[\frac{1}{4T_M}-\frac{1}{2T_w(x)}]|u|^2}d\sigma(u,v).
\end{split}
\end{equation}
Combine~\eqref{eqn: normal derivative}-\eqref{eqn: time derivative on the boundary} we conclude~\eqref{eqn: derivative bound on the boundary first part}, where we use $T_w\in C^1_x$.

\end{proof}

We are now ready to show Proposition~\ref{Prop W1p}.
\begin{proof}[\textbf{{Proof of Proposition~\ref{Prop W1p}}}]
Setting $t\leq t_{W}\leq t_\infty$ so that Proposition \ref{proposition: boundedness} holds valid. In the following, we first examine the terms related to $p$-norm of $f$ in Step 1, and it will be followed by Step 2, in which we examine the boundedness of $\partial f$ terms. In Step 3 we collect these estimates to form the conclusion. The Green's identity used in Step 2 leads to two terms (bulk and boundary), to bound which, heavy computation is involved and we present the details in Step 4 and 5 respectively.
\begin{description}

\item[Step 1: estimate of $p$-norm of $f$]
\end{description}
Since $f^{m+1}$ solves~\eqref{eqn: fm+1}, its weighted version then satisfies:
\begin{equation}\label{eqn: fm+1 with w and C}
\begin{split}
   &  \partial_t \big[e^{-\lambda t\langle v\rangle} w_{\tilde{\theta}}f^{m+1} \big]+ v\cdot \nabla_x \big[e^{-\lambda t\langle v\rangle}w_{\tilde{\theta}}f^{m+1}\big]-\nabla_x \phi^m \cdot \nabla_v\big[e^{-\lambda t\langle v\rangle}w_{\tilde{\theta}}f^{m+1} \big] +\big[\nu(F^m)+ \\
    & \lambda\langle v\rangle+\frac{v}{2T_M}\cdot \nabla_x \phi^m-\lambda t\partial_v \langle
 v\rangle+2\tilde{\theta}v\cdot \nabla_x \phi^m\big]\big[e^{-\lambda t\langle v\rangle}w_{\tilde{\theta}}f^{m+1} \big]=e^{-\lambda t \langle v\rangle}w_{\tilde {\theta}}\Gamma_{\text{gain}}(f^m,f^m)\,.
\end{split}
\end{equation}
For $t_W=t_W(\lambda)\ll 1$, one can take $\lambda=\lambda(t,C_{\phi^m})$ large enough so that
\begin{equation}\label{eqn: C large}
\nu(F^m)+ \lambda\langle v\rangle+\frac{v}{2T_M}\cdot \nabla_x \phi^m-\lambda t\partial_v \langle
 v\rangle+2\tilde{\theta}v\cdot \nabla_x \phi^m \geq \nu(F^m)+\frac{\lambda}{2}\langle v\rangle>\frac{\lambda}{2}\langle v\rangle\,,
\end{equation}
and thus we apply Lemma~\ref{lemma: Green's indentity}, and combine with~\eqref{bound_Gamma_k}  to have:

\begin{equation}\label{eqn: Green's identity for fp}
  \begin{split}
     & \Vert e^{-\lambda t\langle v\rangle}w_{\tilde{\theta}}f^{m+1}(t)\Vert_p^p +\int_0^t |e^{-\lambda s\langle v\rangle}w_{\tilde{\theta}}f^{m+1}|_{p,+}^p+\frac{\lambda}{2}\int_0^t \Vert \langle v\rangle^{1/p}   e^{-\lambda s\langle v\rangle}w_{\tilde{\theta}} f^{m+1}\Vert_p^p\\
      & \lesssim_p \Vert w_{\tilde{\theta}}f(0)\Vert_p^p +\int_0^t |e^{-\lambda s\langle v\rangle}w_{\tilde{\theta}}f^{m+1}|_{p,-}^p\\
       &+\Vert w_{\theta'} f^m\Vert_\infty\int_0^t\int_{\Omega\times \mathbb{R}^3} |e^{-\lambda s\langle v\rangle}w_{\tilde{\theta}}f^{m+1}(v)|^{p-1}e^{-\lambda s\langle v\rangle}w_{\tilde{\theta}}\int_{\mathbb{R}^3}\mathbf{k}_\varrho (v,u)|f^m(u)|du\,.
  \end{split}
\end{equation}
To deal with the last term in~\eqref{eqn: Green's identity for fp}, we note that by using H\"{o}lder inequality and Young's inequality, with~\eqref{k_theta_comparision}, we have:
\Be\begin{split}\label{eqn: k_p_bound}
				&\int_{\R^3}|e^{-\lambda s\langle v\rangle}w_{\tilde{\theta}} f^{m+1}(v)|^{p-1} \int_{\R^3}  \mathbf{k}_{{\varrho}} (v,u)\frac{w_{\tilde{\theta}}(v)}{w_{\tilde{\theta}}(u)}\frac{e^{\lambda s\langle u\rangle}}{e^{\lambda s\langle v\rangle}}  |e^{-\lambda s\langle u\rangle}w_{\tilde{\theta}}(u) f^m(u)|\dd u \dd v  \\
				\lesssim_{p} & \ \|e^{-\lambda s\langle v\rangle}w_{\tilde{\theta}}  f^{m+1}  \|_{L^p_v}^{p-1}  \left\| \int_{\R^3} \mathbf{k}_{{\tilde{\varrho}}} (v,u)^{1/q} \mathbf{k}_{{\tilde{\varrho}}} (v,u)^{1/p}  | e^{-\lambda s\langle u\rangle}w_{\tilde{\theta}}f^m(u)|\dd u  \right\|_{L^p_v }\\
				\lesssim & \  \|e^{-\lambda s\langle v\rangle}w_{\tilde{\theta}}  f^{m+1} \|_{L^p_v}^{p-1} \left( \int_{\R^3} \mathbf{k}_{{\tilde{\varrho}}} (v,u) \dd u\right)^{1/q}\left\|
				\left( \int_{\R^3} \mathbf{k}_{{\tilde{\varrho}}}  (v,u) |e^{-\lambda s\langle u\rangle}w_{\tilde{\theta}}f^m(u)|^p \dd u \right)^{1/p} \right\|_{L^p_v }\\
				\lesssim & \ \|e^{-\lambda s\langle v\rangle}w_{\tilde{\theta}} f^{m+1}  \|_{L^p_v}^ {p-1}\|e^{-\lambda s\langle v\rangle}w_{\tilde{\theta}} f^m\|_{L_v^p}  \left( \int_{\R^3} \mathbf{k}_{{\tilde{\varrho}}} (v,u) \dd u\right)^{1/q} \left( \int_{\R^3} \mathbf{k}_{{\tilde{\varrho}}}  (v,u)   \dd v \right)^{1/p} \\
				\lesssim &  \ \| e^{-\lambda s\langle v\rangle}w_{\tilde{\theta}} f^{m+1}  \|_{L^p_v}^ {p-1} \|e^{-\lambda s\langle v\rangle}w_{\tilde{\theta}} f^m \|_{L_v^p}\lesssim_{p} \Vert e^{-\lambda s\langle v\rangle}w_{\tilde{\theta}} f^{m+1}\Vert_{L_v^p}^p+ \Vert e^{-\lambda s\langle v\rangle}w_{\tilde{\theta}} f^m\Vert_{L_v^p}^p\,,
			\end{split}\Ee
which gives a bound for the last term in~\eqref{eqn: Green's identity for fp} as:
\begin{equation}\label{eqn: 1st term}
C(p)\sup_m \Vert w_{\theta'}f^m\Vert_\infty \Big( \int_0^t  \Vert e^{-\lambda s\langle v\rangle}w_{\tilde{\theta}}f^{m+1} \Vert_p^p+\int_0^t\Vert e^{-\lambda s\langle v\rangle}w_{\tilde{\theta}} f^m\Vert_{L_v^p}^p\Big)\,.
\end{equation}
One can further absorb the first term above to the left hand side of~\eqref{eqn: Green's identity for fp} by choosing large enough $\lambda$:
\begin{equation}\label{eqn: lambda condition 1}
\frac{\lambda}{4}\geq C(p)\sup_m \Vert w_{\theta'}f^m\Vert_\infty \,.
\end{equation}

To deal with $\int_0^t |e^{-\lambda s\langle v\rangle}w_{\tilde{\theta}} f^{m+1}|_{p,-}^p$ in~\eqref{eqn: Green's identity for fp}, we first decompose
\[\gamma_+=\gamma_{+,1}^{v,x,\epsilon}\cup \Big( \gamma_{+}(x)\backslash\gamma_{+,1}^{v,x,\epsilon} \Big)\,,\]
where
\begin{equation}\label{eqn: gamma_+^v}
\begin{split}
   & \gamma_{+,1}^{v,x,\epsilon}=\{(x,u)\in \gamma_+:|n(x)\cdot u|\leq \epsilon \text{ or }|u_\parallel-\frac{2T_M(1-r_\parallel)}{2T_M+(T_w(x)-2T_M)r_\parallel(2-r_\parallel)}v_\parallel|\geq \epsilon^{-1}  \\
    & \text{ or }|u_\perp-\frac{2T_M\sqrt{1-r_\perp}}{2T_M+\big(T_w(x)-2T_M\big)r_\perp}v_\perp|\geq \epsilon^{-1}\}\,.
\end{split}
\end{equation}
This leads to
   \[ \int_0^t   |e^{-\lambda s\langle v\rangle}w_{\tilde{\theta}} f^{m+1}|_{p,-}^p =\int_0^t \int_{\partial \Omega}\int_{n(x)\cdot v <0} |n(x)\cdot v| e^{-p\lambda s\langle v\rangle}w^p_{\tilde{\theta}}|f^{m+1}|^p \]
   \begin{equation}\label{eqn: Lp bdr first}
     \begin{split}
         & \lesssim_{\tilde{\theta}} \int_0^t |n(x)\cdot v|\int_{\partial \Omega}\int_{n(x)\cdot v<0} e^{-p\lambda s\langle v\rangle}w^p_{\tilde{\theta}}(v) e^{p[\frac{1}{4T_M}-\frac{1}{2T_w(x)}]|v|^2} \\
          & \times \bigg(\Big[\int_{\gamma_{+,1}^{v,x,\epsilon}}+\int_{\gamma_+\backslash \gamma_{+,1}^{v,x,\epsilon}}\Big] |e^{-\lambda s\langle u\rangle}w_{\tilde{\theta}}(u) f^m|e^{-[\frac{1}{4T_M}-\frac{1}{2T_w(x)}]|u|^2}d\sigma(u,v) \bigg)^p ,
     \end{split}
   \end{equation}
 where we used
\[|f^m|=|e^{-\lambda s\langle u\rangle }w_{\tilde{\theta}}f^m||e^{\lambda s\langle u\rangle}w^{-1}_{\tilde{\theta}}(u)|\lesssim_{\tilde{\theta}}|e^{-\lambda s\langle u\rangle }w_{\tilde{\theta}}f^m|\,.\]
We further expand $d\sigma(v,u)$ by~\eqref{eqn:probability measure} and apply H\"{o}lder inequality using $1=\frac{1}{p}+\frac{1}{p^*}$ for:
    \begin{equation}\label{eqn: bound the f on negative bdr}
\begin{split}
   & \eqref{eqn: Lp bdr first}\lesssim_p \int_0^t  \int_{\partial \Omega}\bigg(\int_{\gamma_{+,1}^{v,x,\epsilon}(x)} |e^{-\lambda s\langle u\rangle}w_{\tilde{\theta}}(u)f^m|^p \{n(x)\cdot u\} du \bigg)\int_{n(x)\cdot v<0}|n(x)\cdot v| e^{-p\lambda s\langle v\rangle}w^p_{\tilde{\theta}}(v)  e^{p[\frac{1}{4T_M}-\frac{1}{2T_w(x)}]|v|^2}   \\
    & \bigg(\int_{\gamma_{+,1}^{v,x,\epsilon}(x)} |n(x)\cdot u|e^{-p^*[\frac{1}{4T_M}-\frac{1}{2T_w(x)}]|u|^2} I_0\Big(p^*\frac{(1-r_\perp)^{1/2}v_\perp u_\perp}{T_w(x)r_\perp} \Big)e^{-\frac{p^*}{2T_w(x)}\big[\frac{|u_\perp|^2+(1-r_\perp)|v_\perp|^2}{r_\perp}+\frac{|u_\parallel-(1-r_\parallel)v_\parallel|^2}{r_\parallel(2-r_\parallel)}\big]}           du \bigg)^{p/p^*}\\
    \end{split}
\end{equation}

    \begin{equation}\label{eqn: bound the f on negative bdr 2nd}
\begin{split}
    & +\int_0^t\int_{\partial \Omega}\bigg(\int_{\gamma_+\backslash \gamma_{+,1}^{v,x,\epsilon}} |e^{-\lambda s\langle u\rangle}w_{\tilde{\theta}}(u)f^m|^p\{n(x)\cdot u\}\bigg)\int_{n(x)\cdot v<0}|n(x)\cdot v|  e^{-p\lambda s\langle v\rangle}w^p_{\tilde{\theta}}(v) e^{p[\frac{1}{4T_M}-\frac{1}{2T_w(x)}]|v|^2}   \\
    & \bigg(\int_{\gamma_+\backslash\gamma_{+,1}^{v,x,\epsilon}(x)} |n(x)\cdot u|e^{-p^*[\frac{1}{4T_M}-\frac{1}{2T_w(x)}]|u|^2} I_0\Big(p^*\frac{(1-r_\perp)^{1/2}v_\perp u_\perp}{T_w(x)r_\perp} \Big)e^{-\frac{p^*}{2T_w(x)}\big[\frac{|u_\perp|^2+(1-r_\perp)|v_\perp|^2}{r_\perp}+\frac{|u_\parallel-(1-r_\parallel)v_\parallel|^2}{r_\parallel(2-r_\parallel)}\big]}           du \bigg)^{p/p^*},\\
    \end{split}
\end{equation}
where we apply H\"{o}lder inequality for $I_0$ to have
\[I_0^{p^*}(y)=(\frac{1}{\pi}\int_0^\pi e^{y\cos\theta}d\theta)^{p^*}\leq \frac{1}{\pi^{p^*}}\int_0^\pi e^{p^* y\cos\theta}d\theta \pi^{1/p}=\pi^{1/p-1/p^*}I_0(p^* y).\]
We now separate the discussion of~\eqref{eqn: bound the f on negative bdr} and~\eqref{eqn: bound the f on negative bdr 2nd}.
\begin{itemize}
\item[--]{Estimate of~\eqref{eqn: bound the f on negative bdr}:} to control this term, we will first control the integrand, which itself is an integration in $u$, shown on the second line, and with this term bounded, we move forward to control the next layer integration in $v$.
\begin{itemize}
\item[--] Based on the decomposition~\eqref{eqn: gamma_+^v}, the $u$-integration in the second line of~\eqref{eqn: bound the f on negative bdr} is further split into
\begin{equation}\label{eqn: three terms}
\underbrace{\int_{|n(x)\cdot u|\leq \e}}_\text{term I}+\underbrace{\int_{|u_\parallel-\frac{2T_M(1-r_\parallel)}{2T_M+(T_w(x)-2T_M)r_\parallel(2-r_\parallel)}v_\parallel|\geq \e^{-1}}}_\text{term II}+\underbrace{\int_{|u_\perp-\frac{2T_M\sqrt{1-r_\perp}}{2T_M+(T_w(x)-2T_M)r_\perp}v_\perp|\geq \e^{-1}}}_\text{term III}\,.
\end{equation}
To control term I, we draw the similarity to~\eqref{eqn: 2} in Lemma \ref{Lemma: (2)}. To be more specific, we apply~\eqref{eqn: coe abc perp small} with
\[a=-[\frac{p^*}{4T_M}-\frac{p^*}{2T_w(x)}],b=\frac{p^*}{2T_w(x)r_\perp},\e=0,w=\sqrt{1-r_\perp}v_\perp\,.\]
Thus by~\eqref{eqn: perp small} with $T_{k-1,i}$ replaced by $2T_M$, term I is bounded by
\begin{equation}\label{eqn: order ep}
\e\exp\bigg(\frac{p^*[2T_M-T_w(x)][1-r_{min}]}{2T_w(x)[2T_M(1-r_{min})+r_{min}T_w(x)]} |v|^2\bigg)\,.
\end{equation}
Similar techniques can be applied to analyze term II and term III. With
\[a=-[\frac{p^*}{4T_M}-\frac{p^*}{2T_w(x)}],b=\frac{p^*}{2T_w(x)r_\parallel(2-r_\parallel)},\e=0,w=(1-r_\parallel)v_\parallel\,\]
and
\[a=-[\frac{p^*}{4T_M}-\frac{p^*}{2T_w(x)}],b=\frac{p^*}{2T_w(x)r_\perp},\e=0,w=\sqrt{1-r_\perp}v_\perp\,\]
respectively, we have either
\[\frac{b}{b-a-\e}w=\frac{2T_M(1-r_\parallel)}{2T_M+(T_w(x)-2T_M)r_\parallel(2-r_\parallel)}v_\parallel\,,\]
or
\[\frac{b}{b-a-\e}w=\frac{2T_M\sqrt{1-r_\parallel}}{2T_M+(T_w(x)-2T_M)r_\perp}v_\perp\,,\]
which further bound the two terms by~\eqref{eqn: order ep}. Putting them back into~\eqref{eqn: three terms} we have:
\begin{equation}\label{eqn: fl}
\eqref{eqn: three terms}\lesssim \e\exp\bigg(\frac{p[2T_M-T_w(x)][1-r_{min}]}{2T_w(x)[2T_M(1-r_{min})+r_{min}T_w(x)]} |v|^2\bigg)\,.
\end{equation}
\item[--] With the integrand controlled, we move to the $v$-integration in~\eqref{eqn: bound the f on negative bdr}. Plugging~\eqref{eqn: fl} into~\eqref{eqn: bound the f on negative bdr}, we have the boundedness of the integrand:
\begin{equation}\label{eqn: v integrand for 1}
 \lesssim_{p}\e |n(x)\cdot v|  e^{-p\lambda s\langle v\rangle}w^p_{\tilde{\theta}}(v)\exp\Big(p\big[\frac{1}{4T_M}-\frac{1}{2[2T_M(1-r_{min})+r_{min} T_w(x)\big]}]|v|^2\Big)\,.
\end{equation}
Taking $\tilde{\theta}=\tilde{\theta}(T_M,r_{min})\ll 1$ such that
\begin{align*}
&p\tilde{\theta}+p\big[\frac{1}{4T_M}-\frac{1}{2[2T_M(1-r_{min})+r_{min} T_w(x)\big]}]
\end{align*}
\begin{equation}\label{eqn: dominated}
\leq p\tilde{\theta}+p\big[\frac{1}{4T_M}-\frac{1}{2[2T_M(1-r_{min})+r_{min} T_M\big]}]< 0\,,
\end{equation}

one has $\eqref{eqn: v integrand for 1}\in L^1_v(\mathbb{R}^3)$.
\end{itemize}
Pull out the constant we finally conclude with
\begin{equation}\label{eqn: bdr 1}
~\eqref{eqn: bound the f on negative bdr} \lesssim_{p,T_M,r_{min}} \epsilon\int_0^t |e^{-\lambda s \langle u\rangle}w_{\tilde{\theta}}(u)f^{m}|^{p}_{p,+}\,.
\end{equation}

\item[--]{Estimate of~\eqref{eqn: bound the f on negative bdr 2nd}:} note that comparing with the integrand in~\eqref{eqn: bound the f on negative bdr}, here the integration in $u$ is taken on $\gamma_+\backslash \gamma_{+,1}^{v,x,\epsilon}$, which does not provide a small $\epsilon$. With brute-force calculation we only get:
\[
\lesssim_{p} |n(x)\cdot v| e^{-p\lambda s\langle v\rangle}w^p_{\tilde{\theta}}(v) \exp\Big(p\big[\frac{1}{4T_M}-\frac{1}{2[2T_M(1-r_{min})+r_{min} T_w(x)]}\big]|v|^2\Big)\in L^1_v(\mathbb{R}^3)\,.
\]

Now we decompose the $v$-integration into
\[\int_{n(x)\cdot v<0}=\int_{n(x)\cdot v<0} \mathbf{1}_{|v|> \epsilon^{-1}}+\mathbf{1}_{|v|\leq \epsilon^{-1}}.\]
\begin{itemize}
\item[--] When $|v|>\e^{-1}$, using the exponential decaying function~\eqref{eqn: dominated} we obtain,
\begin{equation}\label{eqn: bdr 2}
\eqref{eqn: bound the f on negative bdr 2nd}\mathbf{1}_{|v|>\e^{-1}}\lesssim_{p,T_M,r_{min}} \epsilon\int_0^t |e^{-\lambda s\langle u\rangle}w_{\tilde{\theta}}(u) f^m|^p_{p,+}.
\end{equation}

\item[--] When $|v|\leq \epsilon^{-1}$, since $u\in \gamma_+\backslash \gamma_{+,1}^{v,x,\epsilon}$, for any $x\in \Omega$,
\begin{equation}\label{eqn: u bounded}
\begin{split}
    &  |u|\leq |u_\perp|+|u_\parallel|\leq |u_\perp-\frac{2T_M\sqrt{1-r_\perp}}{2T_M+\big(T_w(x)-2T_M\big)r_\perp}v_\perp|+|\frac{2T_M\sqrt{1-r_\perp}}{2T_M+\big(T_w(x)-2T_M\big)r_\perp}v_\perp|+\\
     & |u_\parallel-\frac{2T_M(1-r_\parallel)}{2T_M+(T_w(x)-2T_M)r_\parallel(2-r_\parallel)}v_\parallel|+|\frac{2T_M(1-r_\parallel)}{2T_M+(T_w(x)-2T_M)r_\parallel(2-r_\parallel)}v_\parallel|\leq 6\epsilon^{-1}\,.
\end{split}
\end{equation}
In the derivation we used~\eqref{eqn: Constrain on T}, $r_\parallel\leq 1$ in the assumption~\eqref{eqn: r condition} for
\begin{align}\label{eqn: TM less than 2}
\frac{2T_M(1-r_\parallel)}{2T_M+(T_w(x)-2T_M)r_\parallel(2-r_\parallel)}&<\frac{1}{(1-r_\parallel)+\frac{T_w(x)r_\parallel (2-r_\parallel)}{2T_M(1-r_\parallel)}}  \leq \frac{1}{1-\frac{1}{2}r_\parallel}\leq 2\,,
\end{align}
and similarly to have
\[\frac{2T_M\sqrt{1-r_\perp}}{2T_M+[T_w(x)-2T_M]r_\perp}\leq 2\,.\]
Then $u\in \gamma_+(x)\backslash \gamma_{+}^{\epsilon/6}(x)$, where $\gamma_+^{\e/6}$ is defined in~\eqref{eqn: gamma+ e}. By Lemma~\ref{lemma: trace thm} we obtain
\[\eqref{eqn: bound the f on negative bdr 2nd}\mathbf{1}_{|v|\leq \e^{-1}}\lesssim_\e\int_0^t \int_{\partial \Omega}\int_{\gamma_+(x)/\gamma_+^{\epsilon/6}(x)}   |e^{-\lambda s\langle u\rangle}w_{\tilde{\theta}}(u)  f^m|^p\{n(x)\cdot u\} dudS_x ds\]
\begin{equation}\label{eqn: Trace thm for f part1}
\lesssim_\e\Big[  \Vert w_{\tilde{\theta}}(v) f(0)\Vert_p^p +\int_0^t \Vert e^{-\lambda s\langle v\rangle}w_{\tilde{\theta}}(v) f^m\Vert_p^p + \eqref{eqn: Trace thm for f part2}\Big]
\end{equation}
with
\begin{equation}\label{eqn: Trace thm for f part2}
\begin{split}
   & \int_0^t\iint_{\Omega\times \mathbb{R}^3}          [\partial_t +v\cdot \nabla_x -\nabla_x \phi^{m-1}\cdot \nabla_v+\text{ LHS of }\eqref{eqn: C large}]|e^{-\lambda s\langle u\rangle}w_{\tilde{\theta}}(u) f^m|^p \\
    & \lesssim_{p} \sup_m\Vert w_{\theta'}f^m\Vert_\infty\Big(\int_0^t  \Vert e^{-\lambda s\langle v\rangle}w_{\tilde{\theta}}(v) f^m\Vert_p^p+\int_0^t  \Vert e^{-\lambda s\langle v\rangle}w_{\tilde{\theta}}(v) f^{m-1}\Vert_p^p\Big),
\end{split}
\end{equation}
where we apply~\eqref{eqn: 1st term} and replace $m+1$, $m$ by $m$, $m-1$ respectively.
\end{itemize}

Adding~\eqref{eqn: bdr 1},~\eqref{eqn: bdr 2} and~\eqref{eqn: Trace thm for f part1} back into~\eqref{eqn: Lp bdr first}, one has:
\begin{equation}\label{eqn: f on nega bound}
\begin{split}
   & \int_0^t |e^{-\lambda s\langle v\rangle}w_{\tilde{\theta}}(v) f^{m+1}|_{p,-}^p \\
  \leq& C(p,T_M,r_{min})\times\epsilon\int_0^t |e^{-\lambda s\langle v\rangle}w_{\tilde{\theta}}(v) f^{m}|_{p,+}^p +  \\
    & C(p,T_M,r_{min})C(\e)\sup_m\Vert w_{\theta'}f^m\Vert_\infty\\
    &\times\Big( \Vert w_{\tilde{\theta}}(v) f(0)\Vert^p_p +\int_0^t \Vert e^{-\lambda s\langle v\rangle}w_{\tilde{\theta}}(v) f^m\Vert_p^p+\int_0^t \Vert e^{-\lambda s\langle v\rangle}w_{\tilde{\theta}}(v) f^{m-1}\Vert_p^p\Big),
\end{split}
\end{equation}
where $C(\e)$ comes from~\eqref{eqn: Trace thm for f part1}, $C(p,T_M,r_{min})$ comes from $\lesssim_{p,T_M,r_{min}}$ and $C(p)$ comes in~\eqref{eqn: 1st term}. 
\end{itemize}

We finally plug~\eqref{eqn: 1st term} and~\eqref{eqn: f on nega bound} back in~\eqref{eqn: Green's identity for fp}, with condition for $\lambda$ in~\eqref{eqn: lambda condition 1} satisfied, we conclude with
\begin{equation}\label{eqn: Lp bound for f}
\begin{split}
   &   \Vert e^{-\lambda t\langle v\rangle}w_{\tilde{\theta}}(v)f^{m+1}(t)\Vert_p^p+\int_0^t|e^{-\lambda s\langle v\rangle}w_{\tilde{\theta}}(v) f^{m+1}|_{p,+}^p + \frac{\lambda}{4}\int_0^t \Vert \langle v\rangle^{1/p} e^{-\lambda s\langle v\rangle}w_{\tilde{\theta}}(v)f^{m+1}(t)\Vert_p^p
\\
  &   \leq    C(p,T_M,r_{min})\times \epsilon\int_0^t |e^{-\lambda s\langle v\rangle}w_{\tilde{\theta}}(v) f^m|_{p,+}^p+C(p,T_M,r_{min})C(\e)\sup_m\Vert w_{\theta'}f^m\Vert_\infty \Big(\Vert w_{\tilde{\theta}}(v)f(0)\Vert_p^p\\
    &+t\sup_{0\leq s\leq t}\Vert e^{-\lambda s\langle v\rangle }w_{\tilde{\theta}} f^{m-1}(s)\Vert_p^p+t\sup_{0\leq s\leq t}\Vert e^{-\lambda s\langle v\rangle }w_{\tilde{\theta}} f^m(s)\Vert_p^p\Big)\,.
\end{split}
\end{equation}

\begin{description}
\item[Step 2: estimate of $p$-norm of $\partial f$]
\end{description}
We first write down the equation for $e^{-\lambda t\langle v\rangle}w_{\tilde{\theta}}\partial f^{m+1}$ with $\partial \in \{\nabla_{x_i},\nabla_{v_i}\}$. According to~\eqref{eqn: fm+1} one has
\begin{equation}\label{eqn: formula for partial f}
  [\partial_t+v\cdot \nabla_x -\nabla_x \phi^m\cdot \nabla_v+\nu_{\lambda,\phi^m,w}](e^{-\lambda t\langle v\rangle}w_{\tilde{\theta}}\partial f^{m+1})=e^{-\lambda t\langle v\rangle}w_{\tilde{\theta}}\mathcal{G}^m\,,
\end{equation}
with
\begin{equation}\label{eqn: formula for G}
  \mathcal{G}^m=-\partial v \cdot \nabla_x f^{m+1}+\partial \nabla \phi^m \cdot \nabla_v f^{m+1}+\partial \Gamma_{\text{gain}}(f^m,f^m)-\partial \Gamma_{\text{loss}}(f^m,f^{m+1})-\partial\left(\frac{v}{2T_M}\cdot \nabla \phi^m(t,x)\right)f^{m+1}.
\end{equation}
Considering~\eqref{Gamma_v} we have:
\begin{equation}\label{eqn: G bounded}
\begin{split}
   & |\mathcal{G}^m|\lesssim |\nabla_x f^{m+1}|+|\nabla^2 \phi^m||\nabla_v f^{m+1}| +|\Gamma_{\text{gain}}(\partial f^m,f^m)|+|\Gamma_{\text{gain}}(f^m,\partial f^m)|+|\Gamma_{v,\text{gain}}(f^m,f^m)|\\
    & +|\Gamma_{\text{loss}}(\partial f^m,f^{m+1})|+|\Gamma_{\text{loss}}(f^m,\partial f^{m+1})|+|\Gamma_{v,\text{loss}}(f^m,f^{m+1})|+w^{-1/2}_\theta(|\nabla \phi^m|+|\nabla^2 \phi^m|)\Vert w_{\theta'} f^{m+1}\Vert_\infty.
\end{split}
\end{equation}
By~\eqref{eqn: C large}, we have
\begin{equation}\label{eqn: formula for the nu in the uniqueness}
  \nu_{\lambda,\phi^m,w}:=\lambda\langle v\rangle+\frac{v}{2T_M}\cdot \nabla_x\phi^m(t,x)+\frac{\nabla_x \phi^m\cdot \nabla_v [e^{-\lambda t\langle v\rangle}w_{\tilde{\theta}}]}{e^{-\lambda t\langle v\rangle}w_{\tilde{\theta}}}\geq \frac{\lambda}{2}\langle v\rangle.
\end{equation}
Since $\alpha$ is invariant to the transport equation, according to~\eqref{eqn: alpha invar}, we have
\begin{equation}\label{eqn: formula for partial f with weight}
\begin{split}
   & p|e^{-\lambda t\langle v\rangle}w_{\tilde{\theta}}\alpha_{f^{m},\epsilon}^\beta \partial f^{m+1}|^{p-1}[\partial_t +v\cdot \nabla_x -\nabla_x \phi \cdot \nabla_v +\nu_{\lambda,\phi^m,w}] |e^{-\lambda t\langle v\rangle}w_{\tilde{\theta}}\alpha_{f^{m},\epsilon}^\beta \partial f^{m+1}|\\
    & =pe^{-\lambda pt\langle v\rangle}w_{\tilde{\theta}}^p \alpha_{f^{m},\epsilon}^{\beta p}|\partial f^{m+1}|^{p-1}\mathcal{G}^m.
\end{split}
\end{equation}

These allow us to apply Lemma~\ref{lemma: Green's indentity} to~\eqref{eqn: formula for partial f with weight} for
\begin{equation}\label{eqn: Green's iden for partial f}
\begin{split}
   & \Vert e^{-\lambda t\langle v\rangle}w_{\tilde{\theta}}\alpha_{f^m,\epsilon}^\beta \partial f^{m+1}(t)\Vert_p^p +\frac{\lambda}{2}\int_0^t\Vert e^{-\lambda s\langle v\rangle} \langle v\rangle^{1/p} w_{\tilde{\theta}}\alpha_{f^m,\epsilon}^\beta \partial f^{m+1}\Vert_p^p   +\int_0^t | e^{-\lambda s\langle v\rangle} w_{\tilde{\theta}}\alpha_{f^m,\epsilon}^\beta \partial f^{m+1}|_{p,+}^p\\
    & \leq \Vert  w_{\tilde{\theta}}\alpha_{f,\epsilon}^\beta \partial f(0)\Vert_p^p +\underbrace{\int_0^t | e^{-\lambda s\langle v\rangle} w_{\tilde{\theta}}\alpha_{f^m,\epsilon}^\beta \partial f^{m+1}|_{p,-}^p}_{\eqref{eqn: Green's iden for partial f} \gamma_-}+\underbrace{\int_0^t \iint_{\Omega\times \mathbb{R}^3}pe^{-\lambda ps\langle v\rangle}\alpha_{f^m,\epsilon}^{\beta p}w_{\tilde{\theta}}^p |\partial f^{m+1}|^{p-1}|\mathcal{G}^m|}_{  \eqref{eqn: Green's iden for partial f}  \mathcal{G}^m}.
\end{split}
\end{equation}

The two terms will be separately considered in the later steps (Step 4 and 5 respectively). In the end we will obtain:
\begin{equation}\label{eqn: bound for negative bdy deri}
\begin{split}
   & \eqref{eqn: Green's iden for partial f}\gamma_- \\
    & \leq  C(p,T_M,r_{min}) \times\epsilon\int_0^t |e^{-\lambda s\langle v\rangle}w_{\tilde{\theta}} \alpha_{f^{m-1},\epsilon}^\beta \partial f^m|^p_{p,+} + C(p,T_M,r_{min})C(\e)\Vert w_{\tilde{\theta}}\alpha_{f^{m-1},\epsilon}^\beta \nabla_{x,v}f(0)\Vert_p^p+\\
    &+C(p,T_M,r_{min})C(\e)\sup_m\Vert w_{\theta'} f^m\Vert_\infty\Big(\int_0^t \Vert e^{-\lambda s\langle v\rangle}\langle v\rangle^{1/p}w_{\tilde{\theta}}\alpha_{f^{m-1},\epsilon}^\beta \nabla_{x,v} f^m \Vert_p^p     +\int_0^t|e^{-\lambda s\langle v\rangle} w_{\tilde{\theta}} f^m|_{p,+}^p \Big)  \\
    &+C(p,T_M,r_{min})C(\e)\times (\sup_m\Vert w_{\theta'} f^m\Vert_\infty+\sup_{l\leq m}\Vert \nabla^2 \phi^{l}\Vert_\infty)\times  \\
   &\Big( \int_0^t\Vert e^{-\lambda s\langle v\rangle} w_{\tilde{\theta}} f^{m-1}\Vert_p^p     +\int_0^t \Vert e^{-\lambda s\langle v\rangle }w_{\tilde{\theta}} \alpha_{f^{m-2},\epsilon}^\beta \nabla_{x,v} f^{m-1}\Vert_p^p +\int_0^t \Vert e^{-\lambda s\langle v\rangle }w_{\tilde{\theta}} \alpha_{f^{m-1},\epsilon}^\beta \nabla_{x,v} f^{m}\Vert_p^p  \Big),
\end{split}
\end{equation}

and that
\begin{equation}\label{eqn: bound of G}
\begin{split}
\eqref{eqn: Green's iden for partial f}\mathcal{G}^m  \leq& C(p)\bigg[\Big(1+\sup_m\Vert w_{\theta'} f^{m}\Vert_\infty\Big)\int_0^t \Vert e^{-\lambda s\langle v\rangle }\langle v\rangle^{1/p}w_{\tilde{\theta}} \alpha_{f^m,\epsilon}^\beta \partial f^{m+1}\Vert_p^p \\
    & +\Big(1+\sup_m\Vert w_{\theta'} f^m\Vert_\infty+\Vert \nabla^2 \phi^m\Vert_\infty\Big)\int_0^t \Big(\Vert e^{-\lambda s\langle v\rangle }w_{\tilde{\theta}}\alpha_{f^{m-1},\epsilon}^\beta \partial f^m  \Vert_p^p+\Vert e^{-\lambda s\langle v\rangle }w_{\tilde{\theta}}\alpha_{f^{m},\epsilon}^\beta \partial f^{m+1}  \Vert_p^p\Big)\\
   & +\Big(1+\sup_m\Vert w_{\theta'} f^{m}\Vert_\infty\Big)\int_0^t \Vert e^{-\lambda s\langle v\rangle}w_{\tilde{\theta}}f^m\Vert_p^p\bigg].
\end{split}
\end{equation}

Inserting these back in~\eqref{eqn: Green's iden for partial f} and using $\sup_{l\leq m}\sup_{0\leq s\leq t}\Vert \nabla^2 \phi^l(s)\Vert_\infty \lesssim \mathcal{E}^m<\infty$ and $\sup_{m}\Vert w_{\theta'} f^m\Vert_\infty<\infty$ according to Proposition \ref{proposition: boundedness}, we have:
\begin{equation}\label{eqn: bound for partial f}
\begin{split}
   & \Vert e^{-\lambda t\langle v\rangle}w_{\tilde{\theta}}\alpha_{f^m,\epsilon}^\beta \partial f^{m+1}(t)\Vert_p^p +  \frac{\lambda}{4} \int_0^t\Vert e^{-\lambda s\langle v\rangle} \langle v\rangle^{1/p} w_{\tilde{\theta}}\alpha_{f^m,\epsilon}^\beta \partial f^{m+1}\Vert_p^p   +\int_0^t | e^{-\lambda s\langle v\rangle} w_{\tilde{\theta}}\alpha_{f^m,\epsilon}^\beta \partial f^{m+1}|_{p,+}^p\\
& \leq  C(p,T_M,r_{min})\times\epsilon\int_0^t |e^{-\lambda s\langle v\rangle}w_{\tilde{\theta}} \alpha_{f^{m-1},\epsilon}^\beta \partial f^m|^p_{p,+}+C(p,T_M,r_{min})C(\e)\Vert w_{\tilde{\theta}}\alpha_{f^{m-1},\epsilon}^\beta \partial f(0)\Vert_p^p  \\
&+C(p,T_M,r_{min})C(\e) \sup_m\Vert w_{\theta'} f^m\Vert_\infty(\int_0^t|e^{-\lambda s\langle v\rangle}w_{\tilde{\theta}} f^m|_{p,+}^p+    \int_0^t \Vert e^{-\lambda s\langle v\rangle }\langle v\rangle^{1/p}w_{\tilde{\theta}} \alpha_{f^{m-1},\epsilon}^\beta \partial f^{m}\Vert_p^p)\\
&   +tC(p,T_M,r_{min},\e)\times (\sup_m\Vert w_{\theta'} f^m\Vert_\infty+\mathcal{E}^m)\times \sup_{0\leq s\leq t}\big( \Vert e^{-\lambda s\langle v\rangle} w_{\tilde{\theta}}f^{m-1}\Vert_p^p +\Vert e^{-\lambda s\langle v\rangle} w_{\tilde{\theta}}f^m\Vert_p^p  \\
& + \Vert e^{-\lambda s\langle v\rangle }w_{\tilde{\theta}}\alpha_{f^{m},\epsilon}^\beta \partial f^{m+1}  \Vert_p^p+\Vert e^{-\lambda s\langle v\rangle }w_{\tilde{\theta}}\alpha_{f^{m-1},\epsilon}^\beta \partial f^{m}  \Vert_p^p+ \Vert e^{-\lambda s\langle v\rangle}w_{\tilde{\theta}}\alpha_{f^{m-2},\epsilon}^\beta \partial f^{m-1} \Vert_p^p\big)\,.
\end{split}
\end{equation}

\begin{description}
\item[Step 3: summarize (collecting~\eqref{eqn: Lp bound for f} and~\eqref{eqn: bound for partial f} for the conclusion)]
\end{description}

Multiplying $\lambda\gg 1$ to~\eqref{eqn: Lp bound for f} and adding to~\eqref{eqn: bound for partial f} we derive that
\begin{equation}\label{eqn: final result of W1p}
\begin{split}
   & \lambda\Vert e^{-\lambda t\langle v\rangle} w_{\tilde{\theta}} f^{m+1}(t)\Vert_p^p+  \Vert e^{-\lambda t\langle v\rangle}w_{\tilde{\theta}}\alpha_{f^m,\epsilon}^\beta \partial f^{m+1}(t)\Vert_p^p + \frac{\lambda}{4}\int_0^t\Vert e^{-\lambda s\langle v\rangle} \langle v\rangle^{1/p} w_{\tilde{\theta}}\alpha_{f^m,\epsilon}^\beta \partial f^{m+1}(s)\Vert_p^p \\
    & +\int_0^t | e^{-\lambda s\langle v\rangle} w_{\tilde{\theta}}\alpha_{f^m,\epsilon}^\beta \partial f^{m+1}(s)|_{p,+}^p+ \lambda\int_0^t |e^{-\lambda s\langle v\rangle} w_{\tilde{\theta}} f^{m+1}|_{p,+}^p\\
    & \leq  C(p,T_M,r_{min})C(\e)\lambda\big(\Vert w_{\tilde{\theta}} f(0)\Vert_p^p+ \Vert w_{\tilde{\theta}}\alpha_{f,\epsilon}^\beta \partial f(0)\Vert_p^p\big)+C(p,T_M,r_{min})\times \e\int_0^t |e^{-\lambda s\langle v\rangle}w_{\tilde{\theta}} \alpha_{f^{m-1},\epsilon}^\beta \partial f^m|^p_{p,+}\\
    &   + C(p,T_M,r_{min})     \Big(\frac{C(\e)\sup_m\Vert w_{\theta'} f^m\Vert_\infty}{\lambda}     +\e\Big)   \lambda\int_0^t |e^{-\lambda s\langle v\rangle} w_{\tilde{\theta}} f^{m}|_{p,+}^p                       \\
&+C(p,T_M,r_{min})\frac{4C(\e)}{\lambda}\frac{\lambda}{4}\int_0^t \Vert e^{-\lambda s\langle v\rangle }\langle v\rangle^{1/p}w_{\tilde{\theta}} \alpha_{f^{m-1},\epsilon}^\beta \partial f^{m}\Vert_p^p)\\
&+tC(p,T_M,r_{min})C(\e)\times (\sup_m \Vert w_{\theta'}f^m\Vert_\infty+\mathcal{E}^m)\lambda  \\
&\times\bigg[ \sup_{l=m,m-1}\sup_{0\leq s\leq t}\Big(\Vert e^{-\lambda s\langle v\rangle}w_{\tilde{\theta}}f^{l}\Vert_p^p+\Vert e^{-\lambda s\langle v\rangle}w_{\tilde{\theta}}\alpha^\beta_{f^{l-1},\e}\partial f^l\Vert_p^p \Big)+\Vert e^{-\lambda s\langle v\rangle}w_{\tilde{\theta}}\alpha^\beta_{f^{m},\e}\partial f^{m+1}\Vert_p^p\bigg].
\end{split}
\end{equation}
Recall the definition of $\mathcal{E}^m$ in~\eqref{induc_hypo}, we have
\begin{equation}\label{eqn: final final W1p}
\begin{split}
  \eqref{eqn: final result of W1p}  &\leq C(p,T_M,r_{min})C(\e)\lambda\big(\Vert w_{\tilde{\theta}} f(0)\Vert_p^p+ \Vert w_{\tilde{\theta}}\alpha_{f,\epsilon}^\beta \partial f(0)\Vert_p^p\big)+\mathcal{E}^m\times C(p,T_M,r_{min}) \\
    & \Big[\e+\big(\frac{C(\e)\sup_m\Vert w_{\theta'} f^m\Vert_\infty}{\lambda}     +\e\big)+\frac{4C(\e)}{\lambda}+t\times C(\e)(\sup_m \Vert w_{\theta'}f^m\Vert_\infty+\mathcal{E}^m)\lambda \Big]\\
    &+C(p,T_M,r_{min})tC(\e)(\sup_m \Vert w_{\theta'}f^m\Vert_\infty+\mathcal{E}^m)\lambda \Vert e^{-\lambda s\langle v\rangle}w_{\tilde{\theta}}\alpha^\beta_{f^{m},\e}\partial f^{m+1}\Vert_p^p .
\end{split}
\end{equation}

First we take $\e=\e(p,T_M,r_{min})\ll 1$ such that $2\e C(p,T_M,r_{min})\leq \frac{1}{10}$. Then with $\e$ fixed we let $\lambda=\lambda(p,T_M,r_{min},\e)\gg 1$ satisfy
\begin{equation}\label{eqn: lambda condition 3}
C(p,T_M,r_{min})\times\Big(\frac{C(\e)\sup_m\Vert w_{\theta'} f^m\Vert_\infty}{\lambda}+\frac{4C(\e)}{\lambda}\Big)\leq \frac{1}{10}.
\end{equation}
Then with $\e,\lambda$ fixed we can define the constant $C_W$ in~\eqref{induc_hypo} as
\begin{equation}\label{eqn: C_W}
  C_W:=C(p,T_M,r_{min})C(\e)\lambda\gg 1,
\end{equation}
where $C(p,T_M,r_{min})C(\e)\lambda$ is the coefficient for the first term in the RHS of~\eqref{eqn: final final W1p}.

Last we take $t_{W}=t_{W}(p,T_M,r_{min},\e,\lambda,C_W,f_0)$ small with $t\leq t_{W}$ and apply the assumption in~\eqref{induc_hypo} such that
\begin{equation}\label{eqn: lesssim}
t_W\times C(p,T_M,r_{min})C(\e)\times(\sup_m\Vert w_{\theta'} f^m\Vert_\infty+\mathcal{E}^m)\lambda \leq \frac{1}{10}.
\end{equation}
Finally collecting~\eqref{eqn: final result of W1p},~\eqref{eqn: lambda condition 3},~\eqref{eqn: C_W} and~\eqref{eqn: lesssim}, since~\eqref{eqn: final result of W1p} holds for all $0<t\leq t_W$, we obtain
\begin{align*}
   &  \sup_{t\leq t_W}\Big(\lambda\Vert e^{-\lambda t\langle v\rangle} w_{\tilde{\theta}} f^{m+1}(t)\Vert_p^p+  \frac{9}{10}\Vert e^{-\lambda t\langle v\rangle}w_{\tilde{\theta}}\alpha_{f^m,\epsilon}^\beta \partial f^{m+1}(t)\Vert_p^p + \frac{\lambda}{4}\int_0^t\Vert e^{-\lambda s\langle v\rangle} \langle v\rangle^{1/p} w_{\tilde{\theta}}\alpha_{f^m,\epsilon}^\beta \partial f^{m+1}(s)\Vert_p^p\\
   & +\int_0^t | e^{-\lambda s\langle v\rangle} w_{\tilde{\theta}}\alpha_{f^m,\epsilon}^\beta \partial f^{m+1}(s)|_{p,+}^p+ \lambda\int_0^t |e^{-\lambda s\langle v\rangle} w_{\tilde{\theta}} f^{m+1}|_{p,+}^p\Big)\\
   &\leq C(p,T_M,r_{min})C(\e)\lambda\big(\Vert w_{\tilde{\theta}} f(0)\Vert_p^p+ \Vert w_{\tilde{\theta}}\alpha_{f,\epsilon}^\beta \partial f(0)\Vert_p^p \big)+\frac{3}{10}\sup_{t\leq t_W}\mathcal{E}^m(t)\\
   &\leq (C_W+\frac{3\times 2}{10}C_W)\big(\Vert w_{\tilde{\theta}} f(0)\Vert_p^p+ \Vert w_{\tilde{\theta}}\alpha_{f,\epsilon}^\beta \partial f(0)\Vert_p^p \big)\leq 2C_W \big(\Vert w_{\tilde{\theta}} f(0)\Vert_p^p+ \Vert w_{\tilde{\theta}}\alpha_{f,\epsilon}^\beta \partial f(0)\Vert_p^p \big).
\end{align*}

Thus we prove~\eqref{induc_hypo} and conclude Proposition \ref{Prop W1p}.

\begin{description}
\item[Step 4: estimate of~\eqref{eqn: Green's iden for partial f}$\mathcal{G}^m$]
\end{description}

First we consider~\eqref{eqn: Green's iden for partial f}$\mathcal{G}^m$. Directly the first two terms $|\nabla_x f^{m+1}|+|\nabla^2\phi^m||\nabla_v f^{m+1}|$ of~\eqref{eqn: formula for G} in~\eqref{eqn: Green's iden for partial f} is bounded by
\begin{equation}\label{eqn: first two terms of G}
  (1+\Vert \nabla^2 \phi^m\Vert_\infty )\int_0^t \Vert  e^{-\lambda s\langle v\rangle}w_{\tilde{\theta}}\alpha_{f^m,\epsilon}^\beta \partial f^{m+1}(s)\Vert_p^p.
\end{equation}
From~\eqref{bound_Gamma_nabla_vf1}~\eqref{bound_Gamma_nabla_vf2}, the contribution of
\[|\Gamma_{\text{gain}}(\partial f^m,f^m)|+|\Gamma_{\text{loss}}(\partial f^m,f^{m+1})|+|\Gamma_{\text{gain}}(f^m,\partial f^m)|\]
of~\eqref{eqn: formula for G} in~\eqref{eqn: Green's iden for partial f}$\mathcal{G}^m$ is bounded by
\begin{equation}\label{eqn: bound gamma(parf,f) and gamma_gain(f,parf)}
\begin{split}
   & \Big(1+\Vert w_{\theta'} f^m\Vert_\infty + \Vert w_{\theta'} f^{m+1}\Vert_\infty \Big)\\
    & \times \int_0^t \iint_{\Omega\times \mathbb{R}^3} |e^{-\lambda s\langle v\rangle}\alpha_{f^m,\epsilon}^\beta w_{\tilde{\theta}}\partial f^{m+1}(v)|^{p-1}\int_{\mathbb{R}^3}\alpha_{f^m,\epsilon}^\beta(v)\mathbf{k}_\rho(v,u)w_{\tilde{\theta}}(v)|\partial f^m(u)|dudvdxds.
\end{split}
\end{equation}
The estimate of~\eqref{eqn: bound gamma(parf,f) and gamma_gain(f,parf)} will be carried out in Step3.

From~\eqref{bound_Gamma_nabla_vf2}, the contribution of $|\Gamma_{\text{loss}}(f^m,\partial f^{m+1})|$ of~\eqref{eqn: formula for G} in~\eqref{eqn: Green's iden for partial f}$\mathcal{G}^m$ is bounded by
\begin{equation}\label{eqn: bound gamma(f,partf)}
 \Vert w_{\theta'} f^m\Vert_\infty \int_0^t \Vert e^{-\lambda s\langle v\rangle}\langle v\rangle^{1/p} w_{\tilde{\theta}}\alpha_{f^m,\epsilon}^\beta \partial f^{m+1}\Vert_p^p.
\end{equation}

From~\eqref{Gvloss}, the contribution of $|\Gamma_{v,\text{loss}}(f^m,f^{m+1})|$ of~\eqref{eqn: formula for G} in~\eqref{eqn: Green's iden for partial f}$\mathcal{G}^m$ is bounded by
\begin{equation}\label{eqn: bound gamma_v loss of G}
\begin{split}
& \Vert w_{\theta'} f^{m+1}\Vert_\infty \int_0^t\iint_{\Omega\times \mathbb{R}^3}pe^{-\lambda (p-1)s\langle v\rangle}|\alpha_{f^{m},\epsilon}^\beta w_{\tilde{\theta}}\partial f^{m+1}|^{p-1} e^{-\lambda s\langle v\rangle}\alpha_{f^m,\epsilon}^\beta \langle v\rangle \frac{w_{\tilde{\theta}}}{w_{\theta'}}\Vert e^{-\lambda s\langle u\rangle }w_{\tilde{\theta}}(u) f^m(s,x,u) \Vert_{L^p(\mathbb{R}^3)} \\
& \lesssim  \Vert w_{\theta'} f^{m+1}\Vert_\infty\Big(\int_0^t \iint_{\Omega\times \mathbb{R}^3}|e^{-\lambda s\langle v\rangle}\alpha_{f^m,\epsilon}^\beta \partial f^{m+1}|^p +\int_0^t\iint_{\Omega\times \mathbb{R}^3}|e^{-\lambda s \langle u\rangle}w_{\tilde{\theta}}(u) f^m(u)|^p\Big),
\end{split}
\end{equation}
where we use
\[e^{-\lambda s\langle v\rangle}\alpha^\beta_{f^m,\epsilon}\langle v\rangle \frac{w_{\tilde{\theta}}}{w_{\theta'}}\lesssim w_{\theta'}^{-1/2}.\]

From~\eqref{Gvgain}, the contribution of $|\Gamma_{v,\text{gain}}(f^m,f^m)|$ in~\eqref{eqn: Green's iden for partial f}$\mathcal{G}$ is bounded by
\begin{equation}\label{eqn: bound Gamma_v gain of G}
\begin{split}
   &    \Vert w_{\theta'} f^m\Vert_\infty \int_0^t \iint_{\Omega\times \mathbb{R}^3} e^{-\lambda (p-1)s\langle v\rangle} \alpha_{f^{m},\epsilon}^{\beta p}|w_{\tilde{\theta}} \partial f^{m+1}(v)|^{p-1}\int_{\mathbb{R}^3}\mathbf{k}_\rho(v,u)\frac{e^{\lambda s\langle u\rangle }w_{\tilde{\theta}}(v)}{e^{\lambda s\langle v\rangle }w_{\tilde{\theta}}(u)}e^{-\lambda s\langle u\rangle }w_{\tilde{\theta}}(u)|f^m(u)|
\\
    & \lesssim_p \Vert w_{\theta'} f^m\Vert_\infty\Big( \int_0^t \Vert \langle v\rangle^{1/p} e^{-\lambda s \langle v\rangle} \alpha_{f^m,\epsilon}^\beta w_{\tilde{\theta}}\partial f^{m+1}\Vert_p^p+\int_0^t \Vert e^{-\lambda s\langle v\rangle } w_{\tilde{\theta}}f^m\Vert_p^p    \Big),
\end{split}
\end{equation}
where we have used, for $1/p+1/p^*=1$ and $0<\tilde{\rho}\ll \rho$, from~\eqref{k_theta_comparision} and~\eqref{grad_estimate}
\begin{align*}
   & \int_0^t \iint_{\Omega\times \mathbb{R}^3} e^{-\lambda (p-1)s\langle v\rangle} \alpha_{f^m,\epsilon}^{\beta p}|w_{\tilde{\theta}} \partial f^{m+1}(v)|^{p-1}\int_{\mathbb{R}^3}\mathbf{k}_\rho(v,u)\frac{e^{\lambda s\langle u\rangle }w_{\tilde{\theta}}(v)}{e^{\lambda s\langle v\rangle }w_{\tilde{\theta}}(u)}e^{-\lambda s \langle u\rangle}w_{\tilde{\theta}}(u)|f^m(u)| \\
   & \leq \int_0^t \iint_{\Omega\times \mathbb{R}^3}   \frac{\alpha_{f^{m},\epsilon}^\beta(v)}{\langle v\rangle^{(p-1)/p}}  |\langle v \rangle^{1/p} e^{-\lambda s\langle v\rangle}\alpha^\beta_{f^{m},\epsilon}w_{\tilde{\theta}}\partial f^{m+1}|^{p-1}\\
   &\times \Big(\int_{\mathbb{R}^3}\mathbf{k}_{\tilde{\rho}}(v,u)du \Big)^{1/p^*}          \Big(\mathbf{k}_{\tilde{\rho}}(v,u)|e^{-\lambda s\langle u\rangle}w_{\tilde{\theta}}f^m(u)|^p du \Big)^{1/p}          dv\\
   &\leq \int_0^t \int_{\Omega}\Big(\int_{\mathbb{R}^3}  |\langle v \rangle^{1/p} e^{-\lambda s\langle v\rangle}\alpha^\beta_{f^{m},\epsilon}w_{\tilde{\theta}}\partial f^{m+1}|^{p} \Big)^{\frac{p-1}{p}}   \Big(\int_{\mathbb{R}^3}\int_{\mathbb{R}^3}\mathbf{k}_{\tilde{\rho}}(v,u)|e^{-\lambda s\langle u\rangle}w_{\tilde{\theta}}f^m(u)|^p \Big)^{1/p}\\
   &\lesssim_p \int_0^t \iint_{\Omega\times \mathbb{R}^3} |e^{-\lambda s\langle v\rangle}\langle v\rangle^{1/p} \alpha_{f^{m},\epsilon}^\beta w_{\tilde{\theta}}\partial f^{m+1}|^p   +|e^{-\lambda s\langle u\rangle} w_{\tilde{\theta}} f^{m}|^p.
\end{align*}
In the last step we have applied the Young's inequality.

We focus on~\eqref{eqn: bound gamma(parf,f) and gamma_gain(f,parf)}. We split the u-integration of~\eqref{eqn: bound gamma(parf,f) and gamma_gain(f,parf)} into the integration over $\{|u|\leq N\}$ and $\{|u|>N\}$.

The contribution of $\{|u|\geq N\}$ in~\eqref{eqn: bound gamma(parf,f) and gamma_gain(f,parf)} is bounded by
\begin{equation}\label{eqn: bound gamma(parf,f) and gamma_gain(f,parf) u large}
\begin{split}
   & \int_0^t\int_{\Omega\times {\mathbb{R}^3}} |e^{-\lambda s\langle v\rangle }\langle v\rangle^{1/p} w_{\tilde{\theta}}\alpha_{f^{m},\epsilon}^\beta \partial f^{m+1}(v)|^{p-1}\frac{\alpha_{f^{m},\epsilon}^\beta }{\langle v\rangle^{p/(p-1)}}\\
    & \times \int_{|u|\geq N}\mathbf{k}_\rho(v,u)\frac{w_{\tilde{\theta}}(v)}{w_{\tilde{\theta}}(u)}\frac{e^{-\lambda s\langle v\rangle }}{e^{-\lambda s\langle u\rangle }}|e^{-\lambda s\langle u\rangle }w_{\tilde{\theta}}\partial f^m(u)|dudvdxds\\
    & \leq \int_0^t\int_{\Omega}\left(\int_{\mathbb{R}^3} |e^{-\lambda s\langle v\rangle }\langle v\rangle^{1/p} w_{\tilde{\theta}}\alpha_{f^{m},\epsilon}^\beta \partial f^{m+1}(v)|^{p} \right)^{1/p^*}\left( \int_{|u|\geq N} |e^{-\lambda s\langle u\rangle }w_{\tilde{\theta}} \alpha_{f^{m-1},\epsilon}^\beta \partial f^m(u)|^p \int_v \mathbf{k}_{\tilde{\rho}}(v,u)   \right)^{1/p}\\
    & \leq \int_0^t \Vert e^{-\lambda s\langle v\rangle }\langle v\rangle^{1/p}w_{\tilde{\theta}}\alpha_{f^m,\epsilon}^\beta \partial f^{m+1}(s)\Vert_p^p ds+\int_0^t \Vert e^{-\lambda s\langle v\rangle }w_{\tilde{\theta}}\alpha_{f^{m-1},\epsilon}^\beta \partial f^m(s)\Vert_p^p ds,
\end{split}
\end{equation}
where we use H\"{o}lder inequality, Proposition~\ref{prop_int_alpha} with $\beta \frac{p}{p-1}<1$, $\frac{\alpha_{f^{m},\epsilon}^\beta}{\langle v\rangle^{p/(p-1)}}\leq 1$. And we apply~\eqref{k_theta_comparision} to get
\begin{align*}
   & \int_{|u|\geq N}\mathbf{k}_\rho(v,u)\frac{w_{\tilde{\theta}}(v)}{w_{\tilde{\theta}}(u)}\frac{e^{-\lambda s\langle v\rangle }}{e^{-\lambda s\langle u\rangle }}|e^{-\lambda s\langle u\rangle }w_{\tilde{\theta}}\partial f^m(u)|du \\
   & \leq \int_{|u|\geq N}\mathbf{k}_{\tilde{\rho}}(v,u)\frac{1}{\alpha_{f^{m-1},\epsilon}^\beta}|e^{-\lambda s\langle u\rangle }\alpha_{f^{m-1},\epsilon}^\beta w_{\tilde{\theta}}(u)\partial f^m(u)|du\\
   &\lesssim \left( \int_{|u|\geq N} \mathbf{k}_{\tilde{\rho}}(v,u) \frac{1}{\alpha_{f^{m-1},\epsilon}^{\beta p^*}(u)}\right)^{1/p^*} \left(\int_{|u|\geq N}\mathbf{k}_{\tilde{\rho}}(v,u)|e^{-\lambda s\langle u\rangle }\alpha_{f^{m-1},\epsilon}^\beta w_{\tilde{\theta}}(u)\partial f^m(u)|^p \right)^{1/p}\\
   &\lesssim \left(\int_{|u|\geq N}\mathbf{k}_{\tilde{\rho}}(v,u)|e^{-\lambda s\langle u\rangle }\alpha_{f^{m-1},\epsilon}^\beta w_{\tilde{\theta}}(u)\partial f^m(u)|^p \right)^{1/p}.
\end{align*}

The contribution of $\{|u|\leq N\}$ in~\eqref{eqn: bound gamma(parf,f) and gamma_gain(f,parf)} is bounded by, from H\"{o}lder inequality,
\begin{align*}
   & \int_{0}^t \iint_{\Omega\times \mathbb{R}^3} |e^{-\lambda s\langle v\rangle }\langle v\rangle^{1/p} w_{\tilde{\theta}}\alpha_{f^{m},\epsilon}(v)^\beta \partial f^{m+1}(v)|^{p-1} \\
   & \times \int_{|u|\leq N}\mathbf{k}_\rho(v,u)\frac{w_{\tilde{\theta}}(v)}{w_{\tilde{\theta}}(u)}\frac{e^{-\lambda s\langle v\rangle }}{e^{-\lambda s\langle u\rangle }}\frac{\alpha_{f^m,\epsilon}(v)^\beta |e^{-\lambda s\langle u\rangle } w_{\tilde{\theta}}\alpha_{f^{m-1},\epsilon}^\beta \partial f^m(u)|}{\langle v\rangle^{(p-1)/p}\alpha_{f^{m-1},\epsilon}^\beta(u)}dudvdxds\\
   &\leq \int_{0}^t \Vert e^{-\lambda s\langle v\rangle } \langle v\rangle^{1/p} w_{\tilde{\theta}}\alpha_{f^{m},\epsilon}^\beta \partial f^{m+1}(v)\Vert_p^{p-1}
\end{align*}
\begin{equation}\label{eqn: bound gamma(part,f) u bounded part 1}
\times \bigg[\iint_{\Omega\times \mathbb{R
}^3}\left(\int_{|u|\leq N} \mathbf{k}_{\tilde{\rho}}(v,u) \frac{|e^{-\lambda s \langle u\rangle}w_{\tilde{\theta}}\alpha_{f^{m-1},\epsilon}^\beta \partial f^m(u)|}{\alpha_{f^{m-1},\epsilon}^\beta(u)}du \right)^pdvdx \bigg]^{1/p} ds.
\end{equation}
By the H\"{o}lder inequality, the u-integration part of~\eqref{eqn: bound gamma(part,f) u bounded part 1} as
\begin{equation}\label{eqn: bound gamma(part,f) u bounded part 2}
  \Vert e^{-\lambda s\langle v\rangle }w_{\tilde{\theta}}\alpha_{f^{m-1},\epsilon}^\beta \partial f^m(\cdot)\Vert_{L^p(\mathbb{R}^3)}\times \left(\int_{\mathbb{R}^3} \frac{e^{-p^* \tilde{\rho}|v-u|^2}}{|v-u|^{p^*}}\frac{\mathbf{1}_{|u|\leq N}}{\alpha_{f^{m-1},\epsilon}^{\beta p^*}(u)} \right)^{1/p^*}.
\end{equation}
Note that
\[\left(\int_{\mathbb{R}^3}\frac{e^{-p^*\tilde{\rho}|v-u|^2}}{|v-u|^{p^*}}\frac{\mathbf{1}_{|u|\leq N}}{\alpha_{f^{m-1},\epsilon}^{\beta p^*}(u)}  \right)^{1/p^*}\leq \Big|\frac{1}{|\cdot|^{p^*}}* \frac{\mathbf{1}_{|\cdot|
\leq N}}{\alpha_{f^{m-1},\epsilon}^{\beta p^*}}\Big|^{1/p^*}.\]
By the Hardy-Littlewood-Sobolev inequality with
\[1+\frac{1}{p/p^*}=\frac{1}{3/p^*}+\frac{1}{\frac{3}{2}\frac{p-1}{p}},\]
we have
\begin{equation}\label{eqn: bound gamma(part,f) u bounded part 3}
\begin{split}
   & \Big\Vert \big|\frac{1}{|\cdot|^{p^*}}* \frac{\mathbf{1}_{|\cdot|\leq N}}{\alpha_{f^{m-1},\epsilon}^{\beta p^*}(u)}\big| \Big\Vert_{L^p(\mathbb{R}^3)}= \Big\Vert \frac{1}{|\cdot|^{p^*}}* \frac{\mathbf{1}_{|\cdot|\leq N}}{\alpha_{f^{m-1},\epsilon}^{\beta p^*}(u)}\Big\Vert_{L^{p/p^*}(\mathbb{R}^3)} \\
    & \lesssim \Vert \frac{\mathbf{1}_{|
   \cdot|\leq N}}{\alpha_{f^{m-1},\epsilon}^{\beta p^*}(\cdot)}\Vert_{L^{\frac{3(p-1)}{2p}}(\mathbb{R}^3)} \lesssim \left(\int_{\mathbb{R}^3} \frac{\mathbf{1}_{|v|\leq N}}{\alpha_{f^{m-1},\epsilon}(v)^{\frac{p}{p-1}\beta \frac{3(p-1)}{2p}}}dv \right)^{\frac{2p}{3(p-1)}\frac{p-1}{p}}\\
   & \lesssim \left(\int_{\mathbb{R}^3} \frac{\mathbf{1}_{|v|\leq N}}{\alpha_{f^{m-1},\epsilon}^{3\beta/2}(v)} \right)^{2/3}\lesssim 1,
\end{split}
\end{equation}
where we use $3\beta/2<1$ and Proposition~\ref{prop_int_alpha}. Using~\eqref{eqn: bound gamma(part,f) u bounded part 3}~\eqref{eqn: bound gamma(part,f) u bounded part 2}~\eqref{eqn: bound gamma(parf,f) and gamma_gain(f,parf) u large} we have
\begin{equation}\label{eqn: bound gamma(parf,f)gamma_gain(f,parf) final result}
\eqref{eqn: bound gamma(parf,f) and gamma_gain(f,parf)}\lesssim (1+\sup_m\Vert w_{\theta'} f^m\Vert_\infty)\Big[ \int_0^t \Vert e^{-\lambda s\langle v\rangle }\langle v\rangle^{1/p}w_{\tilde{\theta}}\alpha_{f^m,\epsilon}^\beta \partial f^{m+1}\Vert_p^p+\int_0^t \Vert e^{-\lambda s\langle v\rangle }w_{\tilde{\theta}}\alpha_{f^{m-1},\epsilon}^\beta \partial f^{m}\Vert_p^p\Big].
\end{equation}
Finally from~\eqref{eqn: first two terms of G}~\eqref{eqn: bound gamma(f,partf)}~\eqref{eqn: bound gamma(parf,f) and gamma_gain(f,parf) u large}~\eqref{eqn: bound Gamma_v gain of G}~\eqref{eqn: bound gamma_v loss of G}~\eqref{eqn: bound gamma(parf,f)gamma_gain(f,parf) final result},
~\eqref{eqn: Green's iden for partial f}$\mathcal{G}^m$ has a bound as
\begin{equation}\label{eqn: bound of G}
\begin{split}
   & C(p)\bigg[\Big(1+\sup_m\Vert w_{\theta'} f^{m}\Vert_\infty\Big)\int_0^t \Vert e^{-\lambda s\langle v\rangle }\langle v\rangle^{1/p}w_{\tilde{\theta}} \alpha_{f^m,\epsilon}^\beta \partial f^{m+1}\Vert_p^p \\
    & +\Big(1+\sup_m\Vert w_{\theta'} f^m\Vert_\infty+\Vert \nabla^2 \phi^m\Vert_\infty\Big)\int_0^t \Big(\Vert e^{-\lambda s\langle v\rangle }w_{\tilde{\theta}}\alpha_{f^{m-1},\epsilon}^\beta \partial f^m  \Vert_p^p+\Vert e^{-\lambda s\langle v\rangle }w_{\tilde{\theta}}\alpha_{f^{m},\epsilon}^\beta \partial f^{m+1}  \Vert_p^p\Big)\\
   & +\Big(1+\sup_m\Vert w_{\theta'} f^{m}\Vert_\infty\Big)\int_0^t \Vert e^{-\lambda s\langle v\rangle}w_{\tilde{\theta}}f^m\Vert_p^p\bigg].
\end{split}
\end{equation}
In order to control the first line in~\eqref{eqn: bound of G} by $\frac{\lambda}{2}\int_0^t\Vert e^{-\lambda s\langle v\rangle} \langle v\rangle^{1/p} w_{\tilde{\theta}}\alpha_{f^m,\epsilon}^\beta \partial f^{m+1}\Vert_p^p$ in~\eqref{eqn: Green's iden for partial f}, we require the $\lambda$ satisfy
\begin{equation}\label{eqn: lambda condition 2}
\frac{\lambda}{4}\geq C(p)(1+\sup_m\Vert w_{\theta'}f^{m}\Vert_\infty ).
\end{equation}

\begin{description}
\item[Step 5: estimate of~\eqref{eqn: Green's iden for partial f}$\gamma_-$]
\end{description}
We focus on~\eqref{eqn: Green's iden for partial f}$\gamma_-$. The overall strategy is similar to~\eqref{eqn: f on nega bound}. From~\eqref{eqn: derivative bound on the boundary first part}~\eqref{eqn: derivative bound on the boundary second part}
\begin{equation}\label{eqn: negative bdr in green's indentity}
\begin{split}
   &\int_{0}^{t}|e^{-\lambda s \langle v\rangle}w_{\tilde{\theta}}\alpha_{f^m,\epsilon}^\beta \partial f^{m+1}|^p_{p,-}  \\
    & =\int_0^t \int_{\partial \Omega}\int_{n(x)\cdot v<0} |n(x)\cdot v|^{\beta p} |e^{-\lambda s \langle v\rangle}w_{\tilde{\theta}}\nabla_{x,v}f^{m+1}(t,x,v)|^p |n(x)\cdot v|dv\\
    &\lesssim \int_0^t \int_{\partial \Omega}\int_{n(x)\cdot v<0}  \langle v\rangle^{2p} e^{-\lambda ps \langle v\rangle}e^{p[\frac{1}{4T_M}-\frac{1}{2T_w(x)}]|v|^2} w_{\tilde{\theta}}^p \left(|n(x)\cdot v|^{\beta p+1}+|n(x)\cdot v|^{(\beta-1)p+1}\right)\times|\eqref{eqn: derivative bound on the boundary second part} |^pdv.
\end{split}
\end{equation}

Now we bound $|\eqref{eqn: derivative bound on the boundary second part}|^p$.

\begin{itemize}
\item[--]
First line of~\eqref{eqn: derivative bound on the boundary second part}, we split the $u$-integration into $\gamma_{+,2}^{v,x,\epsilon}\cup \Big(\gamma_+(x)\backslash\gamma_{+,2}^{v,x,\epsilon}\Big)$, where
\end{itemize}
\begin{equation}\label{eqn: gamma_+^v 2}
\begin{split}
   & \gamma_{+,2}^{v,x,\epsilon}=\{(x,u)\in \gamma_+:|n(x)\cdot u|\leq \epsilon \text{ or }|u_\parallel-\frac{2T_\zeta(1-r_\parallel)}{2T_\zeta+(T_w(x)-2T_\zeta)r_\parallel(2-r_\parallel)}v_\parallel|\geq \epsilon^{-1}  \\
    & \text{ or }|u_\perp-\frac{2T_\zeta\sqrt{1-r_\perp}}{2T_\zeta+\big(T_w(x)-2T_\zeta\big)r_\perp}v_\perp|\geq \epsilon^{-1}\},
\end{split}
\end{equation}
and $T_\zeta$ will be defined later in~\eqref{eqn: Tzeta}.

By the H\"{o}lder inequality
\[\left(\int_{n(x)\cdot u>0}|e^{-\lambda s \langle u\rangle}w_{\tilde{\theta}} \alpha_{f^{m-1},\epsilon}^\beta \nabla_{x,v}f^m(s,x,u)|\{e^{\lambda s \langle v\rangle}w_{\tilde{\theta}}^{-1}\alpha^{-\beta}_{f^{m-1},\epsilon}(u)\} \langle u\rangle e^{-[\frac{1}{4T_M}-\frac{1}{2T_w(x)}]|u|^2} d\sigma(u,v)   \right)^p\]
\begin{equation}\label{eqn: first term of the boundary derivative 1st}
\begin{split}
   & \lesssim\left(\int_{\gamma_{+,2}^{v,x,\epsilon}(x)} |e^{-\lambda s \langle u\rangle}w_{\tilde{\theta}} \alpha_{f^{m-1},\epsilon}^\beta \nabla_{x,v}f^m(s,x,u)|^p \{n(x)\cdot u\}  du\right) \\
    & \times     \bigg(      \int_{\gamma_{+,2}^{v,x,\epsilon}(x)}\underbrace{\{e^{-\lambda s \langle u\rangle} w_{\tilde{\theta}} \alpha^\beta_{f^{m-1},\epsilon}(u)\}^{- p^*}}_{(I)} |n(x)\cdot u|\langle u\rangle^{p^*} e^{-p^*[\frac{1}{4T_M}-\frac{1}{2T_w(x)}]|u|^2} \\
   &\times I_0 \Big( \frac{ (1-r_\perp)^{1/2} u_\perp v_\perp}{T_w(x)r_\perp} \Big)^{p^*}e^{-\frac{p^*}{2T_w(x)}\big[\frac{|u_\perp|^2+(1-r_\perp)|v_\perp|^2}{r_\perp}+\frac{|u_\parallel-(1-r_\parallel)v_\parallel|^2}{r_\parallel(2-r_\parallel)}\big]}           du \bigg)^{p/p^*}
\end{split}
\end{equation}

\begin{equation}\label{eqn: first term of the boundary derivative 2nd}
\begin{split}
& + \left(  \int_{\gamma_+(x)\backslash \gamma_{+,2}^{v,x,\epsilon}(x)} |e^{-\lambda s\langle u\rangle}w_{\tilde{\theta}}\alpha_{f^{m-1},\epsilon}^\beta \nabla_{x,v}f^m(s,x,u)|^p \{n(x)\cdot u\}du  \right) \\
 &  \times \bigg(\int_{\gamma_+(x)\backslash\gamma_{+,2}^{v,x,\epsilon}(x)}  \{e^{-\lambda s\langle u\rangle}w_{\tilde{\theta}}\alpha^\beta_{f^m,\epsilon}(s,x,u)\}^{- p^*} |n(x)\cdot u|\langle u\rangle^{p^*} e^{-p^*[\frac{1}{4T_M}-\frac{1}{2T_w(x)}]|u|^2} \\
 &  \times I_0 \Big( \frac{ (1-r_\perp)^{1/2} u_\perp v_\perp}{T_w(x)r_\perp} \Big)^{p^*}e^{-\frac{p^*}{2T_w(x)}\big[\frac{|u_\perp|^2+(1-r_\perp)|v_\perp|^2}{r_\perp}+\frac{|u_\parallel-(1-r_\parallel)v_\parallel|^2}{r_\parallel(2-r_\parallel)}\big]}           du\bigg)^{p/p^*}.
\end{split}
\end{equation}
Similar to Step 1, we separate the discussion of~\eqref{eqn: first term of the boundary derivative 1st} and~\eqref{eqn: first term of the boundary derivative 2nd}.
\begin{itemize}
\item[--]{estimate of~\eqref{eqn: first term of the boundary derivative 1st}}.
\begin{itemize}
\item[--]
To compute the $u$-integration, for any $c>0$ we bound
\begin{equation}\label{eqn: bound use zeta}
e^{\lambda sp^* \langle u\rangle} |n(x)\cdot u|^{c} \langle u\rangle^{p^*}\lesssim e^{p^* \zeta|u|^2},
\end{equation}
where $\zeta$ will be defined later in~\eqref{eqn: zeta}. Then we introduce $c_1>1$ with $1=\frac{1}{c_1}+\frac{1}{c_1^*}$ to deal with the $\alpha_{f^{m-1},\e}$ in (I). Then the $u$-integration is
\begin{equation}\label{eqn: bound use c1}
\begin{split}
& \lesssim\int_{\gamma_{+,2}^{v,x,\epsilon}(x)}\{e^{-\lambda s\langle u\rangle} w_{\tilde{\theta}} \alpha^\beta_{f^{m-1},\epsilon}(u)\}^{- p^*} |n(x)\cdot u|^{1/c_1}|n(x)\cdot u|^{1/c_1^*}\langle u\rangle^{p^*} e^{-p^*[\frac{1}{4T_M}-\frac{1}{2T_w(x)}]|u|^2}\\
&\times I_0 \Big( \frac{ (1-r_\perp)^{1/2} u_\perp v_\perp}{T_w(x)r_\perp} \Big)^{p^*}e^{-\frac{p^*}{2T_w(x)}\big[\frac{|u_\perp|^2+(1-r_\perp)|v_\perp|^2}{r_\perp}+\frac{|u_\parallel-(1-r_\parallel)v_\parallel|^2}{r_\parallel(2-r_\parallel)}\big]}           du\\
& \lesssim \int_{\gamma_{+,2}^{v,x,\epsilon}(x)}\Big[ w_{\tilde{\theta}} \alpha^\beta_{f^{m-1},\epsilon}(u)\Big]^{- p^*}    |n(x)\cdot u|^{1/c_1^*}e^{p^* \zeta |u|^2} e^{-p^*\big[\frac{1}{4T_M}-\frac{1}{2T_w(x)}\big]|u|^2} \\
&\times I_0 \Big( p^*\frac{ (1-r_\perp)^{1/2} u_\perp v_\perp}{T_w(x)r_\perp} \Big)e^{-\frac{p^*}{2T_w(x)}\big[\frac{|u_\perp|^2+(1-r_\perp)|v_\perp|^2}{r_\perp}+\frac{|u_\parallel-(1-r_\parallel)v_\parallel|^2}{r_\parallel(2-r_\parallel)}\big]}           du,
\end{split}
\end{equation}
where we have applied~\eqref{eqn: bound use zeta}. Applying the H\"{o}lder inequality once more with $1=\frac{1}{c_1}+\frac{1}{c_1^*}$, we obtain
\begin{align}
\eqref{eqn: bound use c1}   &\lesssim\Big(\int_{\gamma_{+,2}^{v,x,\epsilon}(x)}  \big[ w_{\tilde{\theta}}\alpha_{f^{m-1},\epsilon}^\beta(u)\big]^{-p^*c_1}     du\Big)^{\frac{1}{c_1}}\label{eqn: alpha betapc_1 bounded}  \\
   & \times \bigg(\int_{\gamma_{+,2}^{v,x,\epsilon}(x)}|n(x)\cdot u| e^{-c_1^*p^*[\frac{1}{4T_M}-\frac{1}{2T_w(x)}-\zeta]|u|^2}
I_0 \Big( c_1^*p^*\frac{ (1-r_\perp)^{1/2} u_\perp v_\perp}{T_w(x)r_\perp} \Big) \\
& e^{-\frac{c_1^*p^*}{2T_w(x)}\big[\frac{|u_\perp|^2+(1-r_\perp)|v_\perp|^2}{r_\perp}+\frac{|u_\parallel-(1-r_\parallel)v_\parallel|^2}{r_\parallel(2-r_\parallel)}\big]}            du\bigg)^{\frac{1}{c_1^*}}.\label{eqn: small term of gamma_+}
\end{align}
We choose $c_1$ to be close to $1^+$ to guarantee $\beta p^*c_1<1$. Using Proposition~\ref{prop_int_alpha} with $v=0$ and $w_{\tilde{\theta}}^{-p^* c_1}=e^{-\tilde{\theta}p^* c_1 |u|^2}$, we have$~\eqref{eqn: alpha betapc_1 bounded}\lesssim_p 1$.

For~\eqref{eqn: small term of gamma_+} we let $\zeta<\frac{1}{4T_M}$ and denote
\begin{equation}\label{eqn: Tzeta}
\frac{1}{4T_{\zeta}}=\frac{1}{4T_M}-\zeta,\quad     T_{\zeta}>T_M.
\end{equation}
By $0<r_{min}\leq 1$, we choose $\zeta=\zeta(T_M,r_{min})$ to be small such that
\begin{equation}\label{eqn: zeta}
2T_{\zeta}(1-r_{min})+T_Mr_{min}<2T_M,\quad \frac{T_M}{T_\zeta}>1/2.
\end{equation}
\end{itemize}
\begin{itemize}
\item[--]To control~\eqref{eqn: small term of gamma_+}, recall the definition of~\eqref{eqn: gamma_+^v 2}. Here we simply replace the $T_M$ in~\eqref{eqn: gamma_+^v} by $T_\zeta$. Thus we can apply the same decomposition as in~\eqref{eqn: three terms} and obtain the result as~\eqref{eqn: fl} in Step 1 with replacing $T_M$ by $T_{\zeta}$. We get
\begin{equation}\label{eqn: bound of small term of gamma+}
\eqref{eqn: small term of gamma_+}\lesssim \e\exp\Big(\frac{2T_{\zeta}-T_w(x)}{2T_w(x)[2T_\zeta(1-r_{min})+r_{min} T_w(x)]}(1-r_{min})p^*|v|^2\Big).
\end{equation}

Thus we obtain
\begin{equation}\label{eqn: extra term after integrating the derivative on the boudnary}
~\eqref{eqn: first term of the boundary derivative 1st}\lesssim_p \exp\Big(\frac{2T_{\zeta}-T_w(x)}{2T_w(x)[2T_{\zeta}(1-r_{min})+r T_w(x)]}(1-r_{min})p|v|^2\Big)
\end{equation}
\[\quad\quad\quad\quad\times \epsilon\int_{\gamma_{+,2}^{v,x,\epsilon}(x)} |e^{-\lambda s \langle u\rangle}w_{\tilde{\theta}} \alpha_{f^{m-1},\epsilon}^\beta \nabla_{x,v}f^m(s,x,u)|^p \{n(x)\cdot u\}  du .\]
\end{itemize}
\begin{itemize}
\item[--]With the integrand controlled, we move to the $v$-integration~\eqref{eqn: negative bdr in green's indentity}.
Plugging~\eqref{eqn: extra term after integrating the derivative on the boudnary} into~\eqref{eqn: negative bdr in green's indentity} we have the boundedness of the integrand:
\begin{equation}\label{eqn: integrand dv}
\begin{split}
   & \langle v\rangle^{2p}e^{-\lambda ps\langle v\rangle} \left(|n(x)\cdot v|^{\beta p+1}+|n(x)\cdot v|^{(\beta-1)p+1}\right) w^p_{\tilde{\theta}} \\
    & \times\exp\bigg(p\big[\frac{1}{4T_M}-\frac{1}{2[2T_{\zeta}(1-r_{min})+r_{min} T_w(x)]}\big]|v|^2\bigg),
\end{split}
\end{equation}
where we apply the same computation as~\eqref{eqn: v integrand for 1}.

By~\eqref{Condition for p}, $(\beta-1)p+1>-1$, thus $|n(x)\cdot v|^{(\beta-1)p+1}\in L_{\text{loc}}^1$. Using~\eqref{eqn: zeta} we derive
\[2[2T_{\zeta}(1-r_{min})+T_w(x)r_{min}]<4T_M,\]
We take $\tilde{\theta}=\tilde{\theta}(\zeta,r_{min},T_M)\ll 1$ to obtain
\begin{equation}\label{eqn: thete tilde conditiopn}
p\tilde{\theta}+p\big[\frac{1}{4T_M}-\frac{1}{2[2T_{\zeta}(1-r_{min})+r_{min} T_w(x)]}\big]<0.
\end{equation}
Thus we derive$~\eqref{eqn: integrand dv}\in L_v^1$.

Therefore, the contribution of~\eqref{eqn: first term of the boundary derivative 1st} in~\eqref{eqn: Green's iden for partial f}$\gamma_-$ is
\begin{equation}\label{eqn: result of the first term in the decompo for bdy deri}
\lesssim_{p,T_M,r_{min}} \e\int_0^t |e^{-\lambda s \langle u\rangle}w_{\tilde{\theta}} \alpha_{f^{m-1},\epsilon}^\beta \nabla_{x,v}f^m|^p_{p,+}.
\end{equation}

\end{itemize}
\end{itemize}

\begin{itemize}
\item[--]{estimate of~\eqref{eqn: first term of the boundary derivative 2nd}}. Similar to the Step 1, the integration in $u$ does not provide a small $\e$. Thus we have
    \[\eqref{eqn: first term of the boundary derivative 2nd}\lesssim\exp\Big(\frac{2T_{\zeta}-T_w(x)}{2T_w(x)[2T_{\zeta}(1-r_{min})+r_{min} T_w(x)]}(1-r_{min})p|v|^2 \Big)\]
\begin{equation}\label{eqn: second term of the first term}
\quad\quad\quad\quad\quad\times\Big(\int_{\gamma_+(x)\backslash\gamma_{+,2}^{v,x,\epsilon}(x)} |e^{-\lambda s \langle u\rangle}w_{\tilde{\theta}} \alpha_{f^{m-1},\epsilon}^\beta \nabla_{x,v}f^m(s,x,u)|^p \{n(x)\cdot u\}  du\Big).
\end{equation}
Plugging~\eqref{eqn: second term of the first term} into~\eqref{eqn: negative bdr in green's indentity} we conclude the integrand is given by$~\eqref{eqn: integrand dv}\in L^1_v$.
    Again we decompose $v$ into $\mathbf{1}_{|v|\leq \epsilon^{-1}}$ and $\mathbf{1}_{|v|> \epsilon^{-1}}$.

\begin{itemize}
\item[--]{When $|v|>\e^{-1}$,} by the exponential decaying function in$~\eqref{eqn: integrand dv}$ the contribution of $\eqref{eqn: first term of the boundary derivative 2nd}\mathbf{1}_{|v|> \epsilon^{-1}}$ in~\eqref{eqn: Green's iden for partial f}$\gamma_-$
\begin{equation}\label{eqn: bound of first line of bdry deri v large}
\lesssim_{p,T_M,r_{min}} \epsilon\int_0^t \int_{\partial \Omega}\eqref{eqn: second term of the first term}\leq \epsilon\int_0^t |e^{-\lambda s \langle u\rangle}w_{\tilde{\theta}} \alpha_{f^m,\epsilon}^\beta \nabla_{x,v}f^m|^p_{p,+}.
\end{equation}

\item[--]{When $|v|\leq\epsilon^{-1}$,} since $u\in\gamma_+\backslash \gamma_{+,2}^{v,x,\e}$, for any $x\in \partial\Omega$ we have
\[|u|\leq 2\e^{-1}+ |\frac{2T_\zeta\sqrt{1-r_\perp}}{2T_\zeta+(T_w(x)-2T_\zeta)r_\perp}v_\parallel|+|\frac{2T_\zeta(1-r_\parallel)}{2T_\zeta+(T_w(x)-2T_\zeta)r_\parallel(2-r_\parallel)}v_\parallel|\leq 10 \e^{-1}.\]
In the derivation we used~\eqref{eqn: Tzeta} and~\eqref{eqn: TM less than 2} to conclude
\[\frac{2T_\zeta\sqrt{1-r_\perp}}{2T_\zeta+(T_w(x)-2T_\zeta)r_\perp}\leq \frac{1}{(1-r_\parallel)+\frac{T_w(x)r_\parallel(2-r_\parallel)}{4T_M(1-r_\parallel)}}\leq 4,\]
and similarly to have
\[\frac{2T_\zeta\sqrt{1-r_\perp}}{2T_\zeta+[T_w(x)-2T_\zeta]r_\perp}\leq 4.\]
Thus $u\in \gamma_+(x)/ \gamma_+^{\epsilon/10}$, where $\gamma_+^{\e/6}$ is defined in~\eqref{eqn: gamma+ e}. From Lemma~\ref{lemma: trace thm} the contribution of $\eqref{eqn: first term of the boundary derivative 2nd}\mathbf{1}_{|v|\leq\epsilon^{-1}}$ in~\eqref{eqn: Green's iden for partial f}$\gamma_-$ is
\begin{equation}\label{eqn: bound of first line of bdry deri v bounded}
\begin{split}
   & \lesssim_\e\int_0^t \int_{\partial \Omega}\int_{\gamma_+(x)/\gamma_+^{\epsilon/10}(x)}|e^{-\lambda s \langle u\rangle}w_{\tilde{\theta}} \alpha_{f^{m-1},\epsilon}^\beta \nabla_{x,v}f^m(s,x,u)|^p \{n(x)\cdot u\}  dudS_x ds \\
    & \lesssim_\e \Vert w_{\tilde{\theta}} \alpha_{f^{m-1},\epsilon}^\beta \nabla_{x,v}f(0)\Vert_p^p+ \int_0^t \Vert e^{-\lambda s \langle v\rangle}w_{\tilde{\theta}} \alpha_{f^{m-1},\epsilon}^\beta \nabla_{x,v}f^m\Vert_p^p + \eqref{eqn: bdy deri bounded by G}
\end{split}
\end{equation}
with
\begin{align}
   &  \int_0^t \iint_{\Omega\times \mathbb{R}^3}[\partial_t +v\cdot \nabla_x -\nabla_x \phi^{m-1} \cdot \nabla_v +\nu_{\phi^{m-1},\lambda,w_{\tilde{\theta}}}]|e^{-\lambda s \langle v\rangle}w_{\tilde{\theta}} \alpha_{f^{m-1},\epsilon}^\beta \nabla_{x,v}f^m|^p\label{eqn: bdy deri bounded by G}\\
   & \leq \int_0^t \iint_{\Omega \times \mathbb{R}^3}p e^{-\lambda ps \langle v\rangle}w^p_{\tilde{\theta}} \alpha_{f^{m-1},\epsilon}^{\beta p} |\nabla_{x,v}f^m|^{p-1}|\mathcal{G}^{m-1}|.\label{eqn: bdy deri bounded by G part2}
\end{align}
Clearly~\eqref{eqn: bdy deri bounded by G part2}$\lesssim$~\eqref{eqn: bound of G} with replacing all $m+1$ by $m$ and $m$ by $m-1$.
\end{itemize}
\end{itemize}

Collecting~\eqref{eqn: result of the first term in the decompo for bdy deri}~\eqref{eqn: bound of first line of bdry deri v large}~\eqref{eqn: bound of first line of bdry deri v bounded}~\eqref{eqn: bdy deri bounded by G}, the contribution of the first line~\eqref{eqn: derivative bound on the boundary second part} in~\eqref{eqn: Green's iden for partial f}$\gamma_-$ is
\begin{equation}\label{eqn: contribution of first line}
\begin{split}
   &\lesssim_{p,T_M,r_{min}} \epsilon\int_0^t |e^{-\lambda s\langle v\rangle}w_{\tilde{\theta}}\alpha_{f^{m-1},\epsilon}^\beta \partial f^{m}|_{p,+}^p+C(\e)\Big[\Vert w_{\tilde{\theta}} \alpha_{f^m,\epsilon}^\beta \nabla_{x,v}f(0)\Vert_p^p\\
    & + \int_0^t \Vert e^{-\lambda s \langle v\rangle}w_{\tilde{\theta}} \alpha_{f^{m-1},\epsilon}^\beta \nabla_{x,v}f^m\Vert_p^p+~\eqref{eqn: Green's iden for partial f}\mathcal{G}^{m-1}\Big],
\end{split}
\end{equation}
where $C(\e)$ comes from~\eqref{eqn: bound of first line of bdry deri v bounded}.

\begin{itemize}
\item[--]{Second line of~\eqref{eqn: derivative bound on the boundary second part}.} By the H\"{o}lder inequality, we have
\end{itemize}
\begin{equation}\label{eqn: second line contribution}
\begin{split}
   & \bigg(\int_{n(x)\cdot u>0} \Big(\langle u\rangle^2 \Vert \nabla_x \phi^{m-1}\Vert_{L^\infty} |f^m|\Big)+\Vert w_{\theta'} f^m\Vert_{L^\infty} \int_{\mathbb{R}^3}\mathbf{k}_{\rho}(u,u')|f^{m-1}(u')|du' e^{-[\frac{1}{4T_M}-\frac{1}{2T_w(x)}]|u|^2} d\sigma(u,v)\bigg)^p \\
   &\lesssim  \bigg(\int_{n(x)\cdot u>0} \Big(\langle u\rangle^2 \Vert \nabla_x \phi^{m-1}\Vert_{L^\infty} |e^{-\lambda s\langle u\rangle}w_{\tilde{\theta}}(u) f^m|\Big)e^{\lambda s\langle u\rangle}w_{\tilde{\theta}}^{-1}(u)d\sigma(u,v) \\
   &+\Vert w_{\theta'} f^m\Vert_{L^\infty} \int_{\mathbb{R}^3}\mathbf{k}_{\rho}(u,u')|e^{-\lambda s\langle u'\rangle}w_{\tilde{\theta}}(u') f^{m-1}(u')|du' e^{-[\frac{1}{4T_M}-\frac{1}{2T_w(x)}]|u|^2} e^{\lambda s\langle u\rangle}w_{\tilde{\theta}}^{-1}(u) d\sigma(u,v)\bigg)^p.
   \end{split}
\end{equation}
Similarly to~\eqref{eqn: bound use zeta}, we bound $\langle u\rangle^2$ as $\langle u \rangle^2\lesssim e^{\zeta|u|^2}$ with the same $\zeta$ satisfying~\eqref{eqn: zeta}. Using $e^{\lambda s\langle u\rangle}w_{\tilde{\theta}}^{-1}(u) \lesssim 1$ we obtain
\begin{align*}
\eqref{eqn: second line contribution}   &  \lesssim \int_{n(x)\cdot u>0} |e^{-\lambda s\langle u\rangle}w_{\tilde{\theta}}(u) f^m|^p \{n\cdot u\}du \times     \bigg(      \int_{n(x)\cdot u>0}e^{p^*\zeta |u|^2} e^{-p^*[\frac{1}{4T_M}-\frac{1}{2T_w(x)}]|u|^2}\\
   & \times I_0 \Big(\frac{ p^*(1-r_\perp)^{1/2} u_\perp v_\perp}{T_w(x) r_\perp} \Big)e^{-\frac{p^*}{2T_w(x)}\big[\frac{|u_\perp|^2+(1-r_\perp)|v_\perp|^2}{r_\perp}+\frac{|u_\parallel-(1-r_\parallel)v_\parallel|^2}{r_\parallel(2-r_\parallel)}\big]}           du \bigg)^{p/p^*}\\
   &+\Vert w_{\theta'} f^m\Vert_{L^\infty} \bigg(\int_{n(x)\cdot u>0} \big(\int_{\mathbb{R}^3} \mathbf{k}_\rho(u,u')du'\big)^{p/q} \big(\int_{\mathbb{R}^3} \mathbf{k}_\rho(u,u')|e^{-\lambda s\langle u'\rangle}w_{\tilde{\theta}}(u') f^{m-1}(u')|^p du'\big)^{1/p}\\
   &\times e^{-[\frac{1}{4T_M}-\frac{1}{2T_w(x)}]|u|^2} \{n(x)\cdot u\} I_0e^{-\frac{1}{2T_w(x)}\Big[\frac{|u_\perp|^2+(1-r_\perp)|v_\perp|^2}{r_\perp}+\frac{|u_\parallel-(1-r_\parallel)v_\parallel|^2}{r_\parallel(2-r_\parallel)}\Big]}            du                         \bigg)^p.
\end{align*}

Then we can apply the same computation as in~\eqref{eqn: first term of the boundary derivative 1st},~\eqref{eqn: first term of the boundary derivative 2nd} for the $u$-integration. Thus by~\eqref{eqn: extra term after integrating the derivative on the boudnary} we derive
\begin{equation*}
\begin{split}
\eqref{eqn: second line contribution}&\lesssim   \int_{n(x)\cdot u>0} |e^{-\lambda s\langle u\rangle}w_{\tilde{\theta}}(u)f^m|^p \{n\cdot u\}du \times \eqref{eqn: extra term after integrating the derivative on the boudnary}\\
&+\Vert w_{\theta'} f^m\Vert_{L^{\infty}} \Big(\int_{\mathbb{R}^3}\int_{\mathbb{R}^3}\mathbf{k}_\rho(u,u')|e^{-\lambda s\langle u'\rangle}w_{\tilde{\theta}}(u') f^{m-1}(u')|^p du'du\Big) \times \eqref{eqn: extra term after integrating the derivative on the boudnary}\\
& \lesssim \Big(\int_{n(x)\cdot u>0} |e^{-\lambda s\langle u\rangle}w_{\tilde{\theta}}(u) f^m|^p \{n\cdot u\}du+ \Vert e^{-\lambda s\langle u\rangle}w_{\tilde{\theta}}(u) f^{m-1}\Vert_p^p \Big)   \times \eqref{eqn: extra term after integrating the derivative on the boudnary}.
\end{split}
\end{equation*}
By exactly the same computation as~\eqref{eqn: integrand dv}, the integrand for the $v$-integration in~\eqref{eqn: negative bdr in green's indentity} is $\in L_v^1$. Thus the contribution of the second line of~\eqref{eqn: derivative bound on the boundary second part} in~\eqref{eqn: Green's iden for partial f}$\gamma_-$ is
\begin{equation}\label{eqn: seond line contribution}
\lesssim_{p,T_M,r_{min}}\Vert w_{\theta'} f^m\Vert_\infty\Big(\int_0^t |e^{-\lambda s\langle u\rangle}w_{\tilde{\theta}}(u) f^m|_{p,+}^p + \int_0^t \Vert e^{-\lambda s\langle u\rangle}w_{\tilde{\theta}}(u) f^{m-1}\Vert_p^p\Big).
\end{equation}

Collecting~\eqref{eqn: contribution of first line}~\eqref{eqn: seond line contribution} we conclude that
\begin{equation}\label{eqn: bound for negative bdy deri}
\begin{split}
   & \int_0^t |e^{-\lambda s\langle v\rangle} w_{\tilde{\theta}} \alpha^\beta_{f^{m},\epsilon} \partial f^{m+1}|_{p,-}^p \\
    & \leq  C(p,T_M,r_{min}) \times\epsilon\int_0^t |e^{-\lambda s\langle v\rangle}w_{\tilde{\theta}} \alpha_{f^{m-1},\epsilon}^\beta \partial f^m|^p_{p,+} + C(p,T_M,r_{min})C(\e)\Vert w_{\tilde{\theta}}\alpha_{f^{m-1},\epsilon}^\beta \nabla_{x,v}f(0)\Vert_p^p+\\
    &+C(p,T_M,r_{min})C(\e)\sup_m\Vert w_{\theta'} f^m\Vert_\infty\Big(\int_0^t \Vert e^{-\lambda s\langle v\rangle}\langle v\rangle^{1/p}w_{\tilde{\theta}}\alpha_{f^{m-1},\epsilon}^\beta \nabla_{x,v} f^m \Vert_p^p     +\int_0^t|e^{-\lambda s\langle v\rangle} w_{\tilde{\theta}} f^m|_{p,+}^p \Big)  \\
    &+C(p,T_M,r_{min})C(\e)\times (\sup_m\Vert w_{\theta'} f^m\Vert_\infty+\sup_{l\leq m}\Vert \nabla^2 \phi^{l}\Vert_\infty)\times  \\
   &\Big( \int_0^t\Vert e^{-\lambda s\langle v\rangle} w_{\tilde{\theta}} f^{m-1}\Vert_p^p     +\int_0^t \Vert e^{-\lambda s\langle v\rangle }w_{\tilde{\theta}} \alpha_{f^{m-2},\epsilon}^\beta \nabla_{x,v} f^{m-1}\Vert_p^p +\int_0^t \Vert e^{-\lambda s\langle v\rangle }w_{\tilde{\theta}} \alpha_{f^{m-1},\epsilon}^\beta \nabla_{x,v} f^{m}\Vert_p^p  \Big),
\end{split}
\end{equation}
where $C(p,T_M,r_{min})$ comes from $\lesssim_{p,T_M,r_{min}}$ and $C(p)$ in~\eqref{eqn: bound of G}. Similar to Step 1, here it's important to note that in the second line, the first term has $\e\ll 1$, while the second term has $C(\e)$, which can be large number depends on $\e$.

\begin{remark}
We comment the largeness of $\lambda$ comes from~\eqref{eqn: C large}, the boundedness of $f$~\eqref{eqn: lambda condition 1}, the boundedness of $\partial f$~\eqref{eqn: lambda condition 2} and in~\eqref{eqn: lambda condition 3}. The smallness of $\tilde{\theta}$ comes from~\eqref{eqn: dominated} and~\eqref{eqn: thete tilde conditiopn}. The largeness of the constant $C_W$ in~\eqref{induc_hypo} comes from~\eqref{eqn: C_W}. The smallness the time $t_W$ in~\eqref{induc_hypo} comes from~\eqref{eqn: lesssim}.
\end{remark}

\end{proof}

\section{$L_x^3L_v^{1+}$-Estimate of $\nabla_v f$ and $L^{1+}$-Stability}
As we mention in the introduction, to conclude the uniqueness we need to control $\nabla_v f$ with certain norm. With $W^{1,p}$ estimate for $\nabla_x f$ in section 3, we will establish the $L_x^3L_v^{1+}$-estimate for the sequence solution $\nabla_v f^{m+1}$ in Proposition \ref{proposition: L3L1 estimate} in the section. With such estimate for $\nabla_v f$, we then show the sequence $f^{m+1}$ is $L^{1+}$ Cauchy in Proposition \ref{Prop: L1+stability Cauchy}. The $L^{1+}$ Cauchy is crucial to show the existence of the VPB equation. These two propositions lead to the $L^3_x L_v^{1+}$-estimate for $\nabla_v f$ and the $L^{1+}$-stability for $f$ that satisfies~\eqref{eqn: VPB equation} under good initial condition. These two propositions are given in Proposition \ref{Prop: L3L1 for nabla f}, \ref{L1+stability} respectively. The $L^{1+}$ stability directly leads to the uniqueness of VPB system.

\begin{proposition}\label{proposition: L3L1 estimate}
Assume $f^{m+1}$ solves~\eqref{eqn:formula of f^(m+1)} and satisfy all assumptions in Proposition \ref{Prop W1p}. We also assume extra initial condition
		\Be \label{Extra_uniq}
		\| w_{\tilde{\theta}} \nabla_v f_{0} \|_{L^3_{x,v}} < \infty.
		\Ee
There exists $t_{\delta}\ll 1$ ($t_{\delta}<t_W$) and $C_\delta$ such that when $0\leq t<t_{\delta}$, if
\begin{equation}\label{eqn: nabla v fm}
\begin{split}
    & \sup_{0\leq s\leq t}\Vert e^{-\lambda s\langle v\rangle}\nabla_v f^m(s)\Vert_{L_x^3L_v^{1+\delta}} \\
     &\leq \underbrace{2C_{\delta}\Big[\Vert w_{\tilde{\theta}}\nabla_v f(0) \Vert_{L_{x,v}^3}+\sup_{n}\sup_{0\leq s\leq t}\Vert w_{\theta'} f^n(s)\Vert_\infty+ \sup_n\sup_{0\leq s\leq t}\Vert e^{-\lambda s\langle u\rangle }w_{\tilde{\theta}}\alpha_{f^{n-1},\epsilon}^\beta \nabla_{x,v}f^n(s)\Vert_p \Big]}_{\eqref{eqn: nabla v fm}_*},
\end{split}
\end{equation}
then we have
\Be\label{bound_nabla_v_g_global}
\| e^{-\lambda t \langle v\rangle}\nabla_v f^{m+1}(t) \|_{L^3_x L^{1+\delta}_v} \leq~\eqref{eqn: nabla v fm}_*.
\Ee

Here $C_\delta$ is defined in~\eqref{eqn: C delta} and $t_\delta$ satisfies~\eqref{eqn: t delta}.

\end{proposition}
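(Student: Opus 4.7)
The equation for $w_{\tilde\theta}e^{-\lambda t\langle v\rangle}\partial_{v_i}f^{m+1}$ is obtained by commuting $\partial_{v_i}$ past \eqref{eqn: fm+1 with w and C}; the commutator with $v\cdot\nabla_x$ produces a $\partial_{x_i}f^{m+1}$ source, and the remaining terms constitute precisely the $\mathcal{G}^m$ introduced in \eqref{eqn: formula for G}. On the incoming boundary the relevant formula is \eqref{eqn: velocity derivative on the boundary} which, unlike the normal derivative \eqref{eqn: normal derivative}, carries \emph{no} $1/(n(x)\cdot v)$ singularity and expresses $\nabla_v f^{m+1}|_{\gamma_-}$ as an integral of $f^m$ (not $\nabla_v f^m$) against a Gaussian kernel times $d\sigma(u,v)$. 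This is what makes the unweighted $L^3_x L^{1+\delta}_v$ estimate realistic, in contrast with the $\alpha^\beta$-weighted $W^{1,p}$ estimate of Proposition \ref{Prop W1p}.

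The strategy is a mixed-norm energy/characteristics argument. For the bulk terms I would control the commutator $\partial_{x_i}f^{m+1}$ by a H\"older argument in $v$ that uses Proposition \ref{prop_int_alpha} to absorb $\alpha^{-\beta}$, reducing it to $\|e^{-\lambda s\langle v\rangle}\alpha_{f^{m},\epsilon}^{\beta}\nabla_x f^{m+1}\|_p$, which is already part of $\mathcal{E}^m(t)$ from Proposition \ref{Prop W1p}. The derivatives of $\Gamma$ are handled through \eqref{bound_Gamma_nabla_vf1}--\eqref{Gvgain}; the Carleman kernel $\mathbf{k}_\rho$ is disposed of via \eqref{k_theta_comparision}--\eqref{grad_estimate} and a Minkowski interchange, so that the resulting $\|e^{-\lambda s\langle v\rangle}\nabla_v f^m\|_{L^3_xL^{1+\delta}_v}$ closes on the induction hypothesis \eqref{eqn: nabla v fm}, while the $\|e^{-\lambda s\langle v\rangle}\langle v\rangle^{1/(1+\delta)}\nabla_v f^{m+1}\|$ pieces are absorbed on the left by choosing $\lambda\gg 1$, exactly as in Step 2 of Proposition \ref{Prop W1p}.

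The boundary contribution is handled by the same two-layer splitting used in Section 3 Step 5: I would decompose $\gamma_+(x)=\gamma_{+,2}^{v,x,\epsilon}\cup(\gamma_+(x)\setminus \gamma_{+,2}^{v,x,\epsilon})$ as in \eqref{eqn: gamma_+^v 2}, apply H\"older with exponent conjugate to $1+\delta$, and integrate the Gaussian factor in $u$ via Lemmas \ref{Lemma: abc} and \ref{Lemma: perp abc}. On $\gamma_{+,2}^{v,x,\epsilon}$ one gains an $\epsilon$; on its complement a further split $|v|\gtrless\epsilon^{-1}$ together with \eqref{eqn: u bounded} reduces the bounded-$v$ regime to the non-grazing set $\gamma_+\setminus\gamma_+^{\epsilon/10}$, where Lemma \ref{lemma: trace thm} converts the boundary contribution into interior quantities controlled by $\|w_{\theta'}f^m\|_\infty$ and $\mathcal{E}^m$. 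The polynomial factor $\langle v\rangle$ coming from \eqref{eqn: velocity derivative on the boundary} is absorbed by $e^{-\lambda t\langle v\rangle}w_{\tilde\theta}$ in the spirit of \eqref{eqn: bound use zeta}--\eqref{eqn: zeta}, with $\tilde\theta$ tuned so that the tangential Gaussian growth $\exp\bigl(p[\tfrac{1}{4T_M}-\tfrac{1}{2T_w(x)}]|v|^2\bigr)$ is defeated, as in \eqref{eqn: dominated}.

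The main obstacle I anticipate is the mixed nature of the target norm $L^3_xL^{1+\delta}_v$: one must arrange that each of the bulk-source pieces produces exactly this norm of $\nabla_v f^m$ (rather than $L^{1+\delta}_{x,v}$), and that the trace-type boundary estimate can be interpolated back to $L^3_x$ regularity in $x$. Both points are resolved by combining Minkowski with the $L^\infty_x$ control of $f^m$ from Proposition \ref{proposition: boundedness} and with the Morrey-type $L^p$ ($p>3$) control of $\nabla_{x,v}f^m$ from Proposition \ref{Prop W1p}. Once one has arrived at a bound of the schematic form $\|\nabla_v f^{m+1}\|_{L^3_xL^{1+\delta}_v}\le \epsilon\,\|\nabla_v f^m\|_{L^3_xL^{1+\delta}_v}+C(\epsilon)\bigl(\|w_{\tilde\theta}\nabla_v f_0\|_{L^3_{x,v}}+\|w_{\theta'}f^m\|_\infty+\mathcal{E}^m\bigr)+\tfrac{C(\epsilon)}{\lambda}\|\nabla_v f^m\|_{L^3_xL^{1+\delta}_v}$, the standard recipe of choosing first $\epsilon$, then $\lambda$ large, and finally $t_\delta\ll 1$ closes the induction and pins down $C_\delta$, in complete analogy with \eqref{eqn: final final W1p}.
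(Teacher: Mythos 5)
Your treatment of the bulk terms matches the paper's proof: the Duhamel representation along characteristics, the H\"older-in-$v$ absorption of $\alpha^{-\beta}$ via Proposition \ref{prop_int_alpha} to reduce the $\nabla_x f^{m+1}$ source to $\int_0^t\|e^{-\lambda s\langle v\rangle}w_{\tilde\theta}\alpha^\beta_{f^m,\epsilon}\nabla_x f^{m+1}\|_p$, and the $|u|\lessgtr N$ split with Hardy--Littlewood--Sobolev for the collision kernel, closing on the induction hypothesis \eqref{eqn: nabla v fm}, are all exactly what the paper does.

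The one genuine problem is your boundary treatment. You correctly observe that \eqref{eqn: velocity derivative on the boundary} expresses $\nabla_v f^{m+1}|_{\gamma_-}$ through $f^m$ alone, with no $1/(n\cdot v)$ singularity, but you then propose the full Section~3 machinery: the two-layer split \eqref{eqn: gamma_+^v 2}, the $|v|\gtrless\epsilon^{-1}$ dichotomy, and Lemma \ref{lemma: trace thm}. That machinery exists to trade a $\gamma_+$ integral of $\nabla_{x,v}f^m$ for interior quantities; here it is both unnecessary and structurally mismatched. In the Duhamel representation the boundary enters as a pointwise evaluation at $(t-\tb,\xb,\vb)$ --- the terms \eqref{g_bdry1}--\eqref{g_bdry} --- not as a spacetime flux integral over $\gamma_\pm$, so there is no object to which Lemma \ref{lemma: trace thm} (an $L^1$ statement tied to Green's identity) applies, and you do not explain how its $L^1$ output would be upgraded to the mixed norm $L^3_xL^{1+\delta}_v$; you name this obstacle but do not resolve it. The paper instead simply bounds $|f^m(t-\tb,\xb,u)|\leq \|w_{\theta'}f^m\|_\infty e^{-\theta'|u|^2}$ and evaluates the resulting Gaussian $u$-integral against $d\sigma(u,v_b)$ explicitly via \eqref{eqn: result for para} (with $\tfrac{2\xi}{\xi+1}T_M$ replaced by $2T_M$ and $t$ by $0$), arriving at $\|w_{\theta'}f^m\|_\infty\,|v_b|^2\exp\bigl(\bigl[\tfrac{1}{4T_M}-\tfrac{1}{2[2T_M(1-r_{min})+r_{min}T_w(x_b)]}\bigr]|v_b|^2\bigr)$, whose exponent is negative; this is in $L^3_xL^{1+\delta}_v$ outright, giving \eqref{eqn: g bdry esti}. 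No $\epsilon$-gain, no trace theorem, and no extra largeness of $\lambda$ are needed: the closing step tunes only $t_\delta$ as in \eqref{eqn: t delta}, so your ``first $\epsilon$, then $\lambda$, then $t_\delta$'' scheme overcomplicates the final absorption as well.
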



\begin{proof}[\textbf{Proof of Proposition \ref{proposition: L3L1 estimate}}]
First we take $t_{\delta}\leq t_{W}$ with $t_{W}$ defined in Proposition \ref{Prop W1p} so that we can apply Proposition \ref{Prop W1p} and Proposition \ref{proposition: boundedness}. We have
		\Be\begin{split}\label{eqtn_g_v}
			&[\p_t + v\cdot \nabla_x - \nabla_x \phi^m \cdot \nabla_v   ] (e^{-\lambda t\langle v\rangle}\p_v f^{m+1})+[\lambda \langle v\rangle-\lambda t +\frac{v}{2T_M} \cdot \nabla_x \phi^m+\nu(F^m)] (e^{-\lambda t\langle v\rangle}\p_v f^{m+1})\\
			& = e^{-\lambda t\langle v\rangle}\times \Big[-\nabla_x f^{m+1}- \frac{1}{2T_M} \nabla_x \phi^m f^{m+1}  + \p_v \Big(\Gamma_{\text{gain}}(f^m,f^m)\Big)\Big].
		\end{split}\Ee
By~\eqref{eqn: velocity derivative on the boundary}, we have boundary bound for $(x,v) \in\gamma_-$
		\Be\label{bdry_g_v}
		\big|\p_v f^{m+1}(t,x,v)  \big| \lesssim   |v|^2 e^{[\frac{1}{4T_M}-\frac{1}{2T_w(x)}]|v|^2} \int_{n \cdot u>0} |f^m(t,x,u)| |u| e^{-[\frac{1}{4T_M}-\frac{1}{2T_w(x)}]|u|^2} \dd \sigma(u,v) \ \ \text{on } \ \gamma_-.
		\Ee
By~\eqref{eqn: C large} we have
\[\lambda \langle v\rangle-\lambda t\partial_v \langle v\rangle+\frac{v}{2T_M}\cdot \nabla_x \phi^m +\nu(F^m)\geq \frac{\lambda}{2}\langle v\rangle.\]
Then we bound $|e^{-\lambda t\langle v\rangle}\nabla_v f^{m+1}|$  along the characteristics
		\begin{eqnarray}
		&&|e^{-\lambda t\langle v\rangle}\p_v f^{m+1}(t,x,v)|\nonumber
		\\
		&\leq &   \mathbf{1}_{\tb(t,x,v)> t}
		|\p_v f(0,X(0;t,x,v), V(0;t,x,v))|\label{g_initial}\\
		& +&   \ \mathbf{1}_{\tb(t,x,v)<t} |v_b|^2 e^{[\frac{1}{4T_M}-\frac{1}{2T_w(x)}]|v_b|^2}\label{g_bdry1} \\
&&\int_{n(\xb) \cdot u>0}
		| f^m(t-\tb, \xb, u) | |u| e^{-[\frac{1}{4T_M}-\frac{1}{2T_w(x)}]|u|^2} \dd \sigma(u,v_b)\label{g_bdry}\\
&+ &
		\int^t_{\max\{t-\tb, 0\}}
		\Vert \nabla_x \phi^m\Vert_\infty |v| |{w}_{{\theta'}}( V(s;t,x,v))|^{-1} \Vert w_{\theta'} f^{m+1}\Vert_\infty
		\dd s
		\label{g_infty}\\
		&  +&   \int^t_{\max\{t-\tb, 0\}}
		|\nabla_x f^{m+1}(s, X(s;t,x,v),V(s;t,x,v))|
		\dd s\label{g_x}\\
		&   + &\int^t_{\max\{t-\tb, 0\}}
		(1+ \| w_{\theta'} f^m \|_\infty)
		\int_{\R^3} \mathbf{k}_\varrho (V(s;t,x,v),u) |\p_v f^m(s,X(s),u)| \dd u
		\dd s\label{g_K}.
		\end{eqnarray}
		
We will discuss every term in~\eqref{g_initial}-\eqref{g_K} separately. In Step 1 we analyze~\eqref{g_initial}-\eqref{g_infty}. In Step 2,3 we analyze~\eqref{g_x},\eqref{g_K} respectively. In Step 4 we conclude this lemma by summarizing all the estimates in previous steps.

\textbf{Step 1.}

		Note that if $|v| > 2C_{\phi^m}t $, for $0 \leq s \leq t$,
		\Be\label{V_lower_bound_v}
		\begin{split}
			|V(s;t,x,v)| &\geq |v| - \int^t_0 |\nabla_x \phi^m (\tau;t,x,v)| \dd \tau \geq |v|
			- C_{\phi^m}t \geq \frac{|v|}{2}.
		\end{split}
		\Ee
		Therefore
		\Be\label{tilde_w_integrable}
		\sup_{s,t,x}\left\|    \frac{1}{{w}_{\tilde{\theta}} (V(s;t,x,v)) }    \right\|_{ L^{r}_v} \lesssim_{\tilde{\theta}} 1 \  \text{ for any }    1 \leq r \leq \infty .
		\Ee

\begin{itemize}
  \item[--]{Estimate of~\eqref{g_initial}.} We derive
		\Be\begin{split}\label{est_g_initial}
			&\| (\ref{g_initial})\|_{L^3_x L^{1+ \delta}_v}\\
			\lesssim & \ \left(
			\int_{\O}
			\left(\int_{\R^3} |w_{\tilde{\theta}} \p_v f(0,X(0 ), V(0 ))|^3
			\right)
			\left(
			\int_{\R^3} \frac{1}{|w_{\tilde{\theta}} (V(0 ))|^{(1+ \delta) \frac{3}{2-\delta}}}
			dv\right)^{\frac{2-\delta}{1+ \delta}}
			\right)^{1/3} \\
			\lesssim_{\tilde{\theta}} & \
			\left(\iint_{\O \times \R^3} |w_{\tilde{\theta}}(V(0;t,x,v)) \p_v f(0,X(0;t,x,v), V(0;t,x,v))|^3 \dd v \dd x\right)^{1/3}= \| w_{\tilde{\theta}} \p_v f (0) \|_{L^3_{x,v}},
		\end{split}
		\Ee
		where we have used a change of variables $(x,v) \mapsto (X(0;t,x,v), V(0;t,x,v))$ and (\ref{tilde_w_integrable}).

		\hide
		Also we use $|V(0;t,x,v)| \gtrsim |v|$ for $|v|\gg 1$, from (\ref{decay_phi}), and hence $\tilde{w}(V(0;t,x,v))^{- (1+ \delta) \frac{3}{2-\delta}} \in L_v^1 (\R^3)$.\unhide
		
\item[--]{Estimate of~\eqref{g_infty}.} Clearly with $\theta'>0$,
		\Be\label{est_g_bdry}
\|(\ref{g_infty})\|_{L^3_x L^{1+ \delta}_v} \lesssim_{\theta'} \sup_{0 \leq s \leq t} \| w_{\theta'} f^{m+1}(s) \|_\infty.\Ee

\item[--]{Estimate of~\eqref{g_bdry1},\eqref{g_bdry}.} We bound~\eqref{g_bdry1}~\eqref{g_bdry} by
\begin{align*}
   &\Vert w_{\theta'} f^m\Vert_\infty |v_b|^2 e^{[\frac{1}{4T_M}-\frac{1}{2T_w(x)}]|v_b|^2}\int_{n(x_b)\cdot u>0}  e^{-\theta' u^2}|u|e^{-[\frac{1}{4T_M}-\frac{1}{2T_w(x_b)}]|u|^2}d\sigma(u,v_b)  \\
   & \lesssim \Vert w_{\theta'} f^m\Vert_\infty |v_b|^2 e^{[\frac{1}{4T_M}-\frac{1}{2T_w(x)}]|v_b|^2}\int_{n(x_b)\cdot u>0}e^{-[\frac{1}{4T_M}-\frac{1}{2T_w(x_b)}]|u|^2}d\sigma(u,v_b)\\
   &\lesssim \Vert w_{\theta'} f^m\Vert_\infty |v_b|^2 e^{[\frac{1}{4T_M}-\frac{1}{2T_w(x_b)}]|v_b|^2}
\exp\bigg(\Big[\frac{[2T_M-T_w(x_b)][1-r_{min}]}{2T_w(x_b)\big[2T_M(1-r_{min})+r_{min} T_w(x_b)\big]}\Big]|v_b|^2\bigg)\\
&=\Vert w_{\theta'} f^m\Vert_\infty |v_b|^2 \exp\Big(\big[\frac{1}{4T_M}-\frac{1}{2[2T_M(1-r_{min})+r_{min}T_w(x_b)]} \big]|v_b|^2 \Big),
\end{align*}
where we use~\eqref{eqn: int over V_l} and directly apply~\eqref{eqn: result for para} with replacing $\frac{2\xi}{\xi+1}T_M$ by $2T_M$, $t$ by $0$ in the third line for the $u$-integration. Using
\[\frac{1}{4T_M}-\frac{1}{2[2T_M(1-r_{min})+r_{min}T_w(x_b)]}<0,\]
we obtain
\begin{equation}\label{eqn: g bdry esti}
\Vert \eqref{g_bdry1}~\eqref{g_bdry}\Vert_{L^3_x L^{1+\delta}_v} \lesssim_{T_M,r_{min}} \sup_{0\leq s\leq t}\Vert w_{\theta'} f^m(s) \Vert_\infty.
\end{equation}

\end{itemize}

\textbf{Step 2. }
\begin{itemize}
\item[--]{Estimate of~\eqref{g_x}.} We claim
		\Be\label{est_g_x}
		\|(\ref{g_x})\|_{L^3_x L^{1+ \delta}_v} \lesssim_{\tilde{\theta},\beta,p} \int^t_0 \| e^{-\lambda s\langle v\rangle} w_{\tilde{\theta}} \alpha_{f^m,\epsilon}^\beta \nabla_x f^{m+1} (s) \|_{L^p_{x,v}}.
		\Ee
		For $3<p<6$, by the H\"older inequality $\frac{1}{1+ \delta}= \frac{1}{ \frac{p+p \delta}{p-1 - \delta}}+ \frac{1}{p}$,
		\Be\label{init_p_xf}
		\begin{split}
			&\left\|\left\| \int^t_{\max\{t-\tb, 0\}}
			\p_x f^{m+1}(s,X(s;t,x,v),V(s;t,x,v)) \dd s
			\right\|_{L_{v}^{1+ \delta}(  \R^3)}\right\|_{L^{3}_x}\\
			\lesssim & \ \left\|\left\| \int^t_{\max\{t-\tb, 0\}}  \frac{e^{-\lambda s\langle V(s;t,x,v)\rangle} w_{\tilde{\theta}} \alpha_{f^{m},\epsilon}^{\beta} \partial_x f^{m+1}(s,X(s;t,x,v),V(s;t,x,v))}{e^{-\lambda s\langle V(s;t,x,v)\rangle}w_{\tilde{\theta}}\alpha^{\beta}_{f^{m},\epsilon}(s,X(s;t,x,v),V(s;t,x,v)) }
			\dd s
			\right\|_{L_{v}^{1+ \delta}(  \R^3)}\right\|_{L^{3}_x}
			\\
			\lesssim & \ \sup_{s,x}\left\|  \frac{ e^{\lambda s\langle v\rangle} w_{\tilde{\theta}}(v)^{-1}}{\alpha_{f^{m},\epsilon}(s,x,v)^\beta}\right\|_{L_v^{\frac{p+p \delta}{p-1 - \delta}} (\R^3) }\\
			& \times \left\|
			\left\| \int^t_0 e^{-\lambda s\langle V(s;t,x,v)\rangle}w_{\tilde{\theta}}\alpha_{f^m,\epsilon}^\beta \partial_x f^{m+1}(s,X(s;t,x,v), V(s;t,x,v)) \dd s\right\|_{L_{v}^p( \R^3)}\right\|_{L^{3}_x}\\
			\lesssim &  \sup_{s,x}\left\| \frac{ e^{\lambda s\langle v\rangle}w_{\tilde{\theta}}(v)^{-1}}{\alpha_{f^{m},\epsilon}(s,x,v)^\beta}\right\|_{L_v^{\frac{p+p \delta}{p-1 - \delta}} (\R^3) }
			\times \int^t_0 \Vert  e^{-\lambda s\langle v\rangle}w_{\tilde{\theta}} \alpha_{f^m,\epsilon}^\beta \partial_x f^{m+1} (s) \Vert_{L^p_{x,v}} ds,
	\end{split}\Ee
		where we have used
$\alpha_{f^m,\epsilon}(t,x,v)=\alpha_{f^m,\epsilon}(s,X(s;t,x,v),V(s;t,x,v))$ for $t-\tb(t,x,v)\leq s \leq t$ and the change of variables $(x,v) \mapsto (X(s;t,x,v), V(s;t,x,v))$ and the Minkowski inequality.
		
		For $\beta$ in~\eqref{Condition for p}, we have $
		\beta \frac{p}{p-1} < 1$ since $\frac{2}{3} < \frac{p-1}{p}$ for $3 < p$. Therefore, we can choose $0 < \delta=\delta(\beta,p) \ll 1$ so that $\beta$ satisfies
		\Be\label{less_than_1}
		\beta \times \frac{p+p \delta}{p-1 - \delta} < 1.
		\Ee
		\hide
		For $\beta$ in (\ref{W1p_initial}) we have $
		\beta \frac{p+p \delta}{p-1 - \delta}> \frac{p+p\delta}{p} \frac{p-2}{p-1-\delta}.$ Therefore for $0<\delta\ll1$ we can choose $\beta$ in (\ref{W1p_initial}) and satisfies
		\Be\label{less_than_1}
		\beta \times \frac{p+p \delta}{p-1 - \delta} < 1.
		\Ee\unhide
		We apply Proposition \ref{prop_int_alpha} to conclude that
		\Be\label{bound_g_x}
		\sup_{s,x} \left\| \frac{ e^{\lambda s\langle v\rangle}w_{\tilde{\theta}} (v)^{-1}}{\alpha_{f^m,\epsilon}(s,x,v)^\beta}\right\|_{L_v^{\frac{p+p \delta}{p-1 - \delta}} (\R^3) }^{\frac{p+p \delta}{p-1 - \delta}}
		\lesssim_{\tilde{\theta}} \sup_{s,x} \int_{\R^3}  \frac{1}
		{\alpha_{f^m,\epsilon}(s,x,v)^{\beta \frac{p+p \delta}{p-1 - \delta}   }}   \dd v \lesssim_{\beta,p} 1.
		\Ee

		Finally, from (\ref{init_p_xf}), (\ref{bound_g_x}), we conclude the claim (\ref{est_g_x}).

\end{itemize}

\textbf{Step 3. }
 \begin{itemize}
 \item[--]{Estimate of~\eqref{g_K}.} We consider (\ref{g_K}). We split the $u$-integration of (\ref{g_K}) into two parts with $N\gg 1$ as
		\begin{eqnarray}
		&&\int_{|u| \leq N} \mathbf{k}_\varrho (V(s;x,t,v), u) |\nabla_v f^m(s,X(s ), u) | \dd u \label{g_K_split1}\\
		&+& \int_{|u| \geq N}  \mathbf{k}_\varrho (V(s;t,x,v), u) |\nabla_v f^m(s,X(s ), u) | \dd u .\label{g_K_split2}
		\end{eqnarray}

		First we bound (\ref{g_K_split1}). From the change of variables $(x,v) \mapsto (X(s;t,x,v), V(s;t,x,v))$ for $t-\tb(t,x,v)\leq s \leq t$
		\Be\label{g_K_COV}
		\begin{split}
			&\left\|  \int_{|u| \leq N} \mathbf{k}_\varrho (V(s;t,x,v), u) |\nabla_v f^m(s,X(s;t,x,v ), u) | \dd u  \right\|_{L^3_x L^{1+\delta}_v}\\
			\lesssim & \ \left\|\int_{|u| \leq N}\mathbf{k}_\varrho (v, u) |e^{-\lambda s\langle u\rangle}\nabla_v f^m(s,x, u) | \dd u  \right\|_{L^3_x L^{1+\delta}_v},
		\end{split}
		\Ee
where we use $e^{\lambda s\langle u\rangle}\lesssim 1$ when $|u|\leq N$. If $|v|\geq 2N$ then $|v-u|^2\gtrsim |v|^2$ and $|v-u|\geq N$, thus for $|v|\geq 2N$ and $|u| \leq N$,
\[\mathbf{k}_\varrho (v,u) \lesssim \frac{e^{-C_1|v|^2}}{|v-u|}=O(1/N).\]
If $|v|\leq 2N$, for $0 < \delta \ll 1$ with $\frac{3(1+ \delta)}{1-2\delta}>3$,
		\Be
		\begin{split}\label{g_K_COV1}
	(\ref{g_K_COV})		&\lesssim  \ \left\| \left\| \int_{|u| \leq N} \mathbf{k}_\varrho (v, u) |e^{-\lambda s\langle u\rangle}\nabla_v f^m(s,x, u) | \dd u \right\|_{L^{1+\delta}_v
				(\{|v| \geq 2N\})
			} \right\|_{L^3_x  }\\
			&+ \left\| \left\|  e^{-C|v|^2}\right\|_{L^{3/2}_v} \left\| \int_{|u| \leq N} \frac{1}{|v-u|} |e^{-\lambda s\langle u\rangle}\nabla_v f^m(s,x, u) | \dd u \right\|_{L^{\frac{3(1+ \delta)}{1-2\delta}}_v
				(\{|v| \leq 2N\})
			} \right\|_{L^3_x  }\\
			 &\lesssim \| e^{-\lambda s\langle v\rangle}\nabla_v f^m(s) \|_{L^3_x L^{1+ \delta}_v}+ \left\|   \left\|   \frac{\mathbf{1}_{|v|\leq 2N}}{|v- \cdot |} * |e^{-\lambda s\langle \cdot\rangle}\nabla_v f^m(s,x, \cdot ) |   \right\|_{L^{\frac{3(1+ \delta)}{1-2\delta}}_v
			} \right\|_{L^3_x  }.
		\end{split}
		\Ee
		Then by the Hardy-Littlewood-Sobolev inequality with $1+ \frac{1}{\frac{3(1+ \delta)}{1- 2\delta}} = \frac{1}{3} + \frac{1}{1+ \delta}$, we derive that
		\Be\begin{split}\notag
			(\ref{g_K_COV1}) 
			\lesssim_\delta & \ \left\| \| e^{-\lambda s\langle v\rangle}\nabla_v f^m(s, x,v)  \|_{L^{1+ \delta}_v  }\right\|_{L^3_x}
			= \| e^{-\lambda s\langle  v\rangle}\nabla_v f^m(s) \|_{L^3_x L^{1+ \delta}_v}
			.
		\end{split}\Ee
		Combining the last estimate with (\ref{g_K_COV}), (\ref{g_K_COV1}), we prove that
		\Be\label{est_g_K_split1}
		\|(\ref{g_K_split1})\|_{L^3_x L^{1+ \delta}_v} \lesssim_{\delta}    \| e^{-\lambda s\langle v\rangle}\nabla_v f^m(s) \|_{L^3_x L^{1+ \delta}_v}.
		\Ee
		
		Now we consider (\ref{g_K_split2}). We have
		\Be\notag
		\begin{split}
		(\ref{g_K_split2})	&
			\lesssim \int_{|u| \geq N} \frac{1}{w_{\tilde{\theta}-t} (V(s;t,x,v))^{1/2}} \frac{w_{\tilde{\theta}-t}(V(s;t,x,v))  }{w_{\tilde{\theta}}(u)} \frac{\mathbf{k}_\varrho (V(s;t,x,v), u)e^{\lambda s \langle u\rangle}}{\alpha_{f^{m-1},\epsilon}(s,X(s;t,x,v), u)^\beta} \\
			& \ \  \times \frac{1}{w_{\tilde{\theta}-t}(V(s;t,x,v))^{ 1/2}} e^{-\lambda s\langle u\rangle}w_{\tilde{\theta}}(u)\alpha_{f^{m-1},\epsilon}(s,X(s;t,x,v), u)^\beta | \nabla_v f^m(s,X(s;t,x,v), u)| \dd u .
		\end{split}
		\Ee

		By the H\"older inequality with $\frac{1}{p} + \frac{1}{p^*}=1$ with $3<p<6$,
		\Be
		\begin{split}\label{g_K_split2_Holder}
	|(\ref{g_K_split2})|		& \lesssim  \frac{1}{w_{\tilde{\theta}-t} (V(s;t,x,v))^{1/2}} \left\|  \frac{w_{\tilde{\theta}-t}(V(s;t,x,v))  }{w_{\tilde{\theta}}(u)} \frac{\mathbf{k}_\varrho (V(s;t,x,v), u)e^{\lambda s \langle u\rangle}}{\alpha_{f^{m-1},\epsilon}(s,X(s;t,x,v), u)^\beta} \right\|_{L^{p^*} (\{ |u|\geq N \})} \\
			&  \times  \left\|  \frac{e^{-\lambda s\langle u\rangle} w_{\tilde{\theta}}(u) }{w_{\tilde{\theta}-t}(V(s;t,x,v))^{ 1/2}} \alpha_{f^{m-1},\epsilon}(s,X(s;t,x,v), u)^\beta | \nabla_v f^m(s,X(s;t,x,v), u)| \right\|_{L^p_u (\R^3)}.
		\end{split}
		\Ee
		
		Then by the H\"older inequality with $\frac{1}{1+ \delta} = \frac{1}{p} + \frac{1}{\frac{(1+ \delta)p}{p - (1+ \delta)}}$,
		\Be
		\begin{split}\notag
	\|(\ref{g_K_split2})\|_{L^{1+ \delta}_v}		&\lesssim  \left\| \frac{1}{w_{\tilde{\theta}-t} (V(s;t,x,v))^{1/2}} \right\|_{L_v^{\frac{(1+ \delta) p}{ p- (1+ \delta)}}} \\
			& \times \sup_v \left\|  \frac{w_{\tilde{\theta}-t}(V(s;t,x,v))  }{w_{\tilde{\theta}}(u)} \frac{\mathbf{k}_\varrho (V(s;t,x,v), u) e^{\lambda s\langle u\rangle}}{\alpha_{f^{m-1},\epsilon}(s,X(s;t,x,v), u)^\beta} \right\|_{L^{p^*} (\{ |u|\geq N \})} \\
			& \times \left\| \left\|  \frac{e^{-\lambda s\langle u\rangle}w_{\tilde{\theta}}(u) }{w_{\tilde{\theta}-t}(V(s;t,x,v))^{ 1/2}} \alpha_{f^{m-1},\epsilon}(s,X(s;t,x,v), u)^\beta | \nabla_v f^{m}(s,X(s;t,x,v), u)| \right\|_{L^p_u  }\right\|_{L^{p}_v}.
		\end{split}
		\Ee
		
From (\ref{k_theta_comparision}), for some $0<\tilde{\varrho}< \varrho$ we have
\[\mathbf{k}_\varrho (v,u) \frac{e^{(\tilde{\theta} -t)|v|^2}e^{\lambda s\langle u\rangle}}{e^{\tilde{\theta} |u|^2}} \lesssim \mathbf{k}_\varrho (v,u) \frac{e^{(\tilde{\theta} -t)|v|^2}}{e^{(\tilde{\theta} -t)|u|^2}} \frac{e^{\lambda t\langle u\rangle}}{e^{t|u|^2}}  \lesssim \mathbf{k}_{\tilde{\varrho}} (v,u).\]

Hence using (\ref{tilde_w_integrable}) we derive,
		\Be
		\begin{split}\notag
			&\left\|\|(\ref{g_K_split2})\|_{L^{1+ \delta}_v}\right\|_{L^3_x}\lesssim_{\O,\beta,p}   \ \sup_{X,V} \left\| \frac{e^{- \frac{\tilde{\varrho}}{10}|V-u|^2  }}{|V-u|}  \frac{1}{\alpha_{f^{m-1},\epsilon}(s,X , u)^\beta} \right\|_{L^{p^*} (\{ |u|\geq N \})} \\
			&\times  \left\|  \frac{ e^{-\lambda s\langle u\rangle} w_{\tilde{\theta}}(u) }{w_{\tilde{\theta}-t}(V(s;t,x,v))^{1/2}} \alpha_{f^{m-1},\epsilon}(s,X(s;t,x,v), u)^\beta | \nabla_v f^m(s,X(s;t,x,v), u)| \right\|_{L^p_{u,v,x}  } .
		\end{split}
		\Ee
		By (\ref{NLL_split3}) in Proposition \ref{prop_int_alpha} with $ \frac{p-2}{p-1}<\beta p^*< 1$ from~\eqref{Condition for p} and applying the change of variables $(x,v) \mapsto (X(s;t,x,v), V(s;t,x,v))$, we derive that
		\Be
		\begin{split}\label{g_K_split2_Holder2}
			&\left\|\|(\ref{g_K_split2})\|_{L^{1+ \delta}_v}\right\|_{L^3_x}\lesssim_{\Omega,p,\beta}
			\left\| \left\|  \frac{e^{-\lambda s\langle u\rangle} }{w_{\tilde{\theta}-t}(v)^{ 1/2}}w_{\tilde{\theta}}(u) \alpha_{f^{m-1},\epsilon}(s,x, u)^\beta | \nabla_v f^m(s,x, u)| \right\|_{L^p_v} \right\|_{L^p_{u ,x}  } \\
			  &\lesssim \left\|  \frac{1 }{w_{\tilde{\theta}-t}(v)^{1/2}} \right\|_{L^p_v}    \left\| e^{-\lambda s\langle u\rangle} w_{\tilde{\theta}}(u) \alpha_{f^{m-1},\epsilon}(s,x, u)^\beta | \nabla_v f^m(s,x, u)|   \right\|_{L^p_{u ,x}  }\\
			 & \lesssim_{\tilde{\theta}}  \left\|e^{-\lambda s\langle u\rangle} w_{\tilde{\theta}} \alpha_{f^{m-1},\epsilon}^\beta | \nabla_v f^m(s )|   \right\|_{L^p  }.
		\end{split}
		\Ee
		Combining (\ref{g_K_split2_Holder}) and (\ref{g_K_split2_Holder2}) we conclude that
		\Be
		\|(\ref{g_K_split2})\|_{L^3_x L^{1+ \delta}_v} \lesssim_{\Omega,\beta,p,\tilde{\theta}}   \| e^{-\lambda s\langle u\rangle} w_{\tilde{\theta}} \alpha_{f^{m-1},\epsilon}^\beta   \nabla_v f^m(s )  \|_{L^p_{x,v}}.\label{est_g_K_split2}
		\Ee
		
		Finally from (\ref{est_g_K_split1}) and (\ref{est_g_K_split2}), and using the Minkowski inequality, we conclude that
		\Be\label{est_g_K}\begin{split}
			& \| (\ref{g_K}) \|_{L^3_v L^{1+ \delta}_x} \\
			 & \lesssim_{\Omega,\beta,p,\tilde{\theta}}  (1+ \| w_{\theta'} f^m\|_\infty) \int^t_0  \big[
			\|e^{-\lambda s\langle v\rangle} \nabla_v f^m (s) \|_{L^3_x L^{1+ \delta}_v}
			+
			\|e^{-\lambda s\langle v\rangle} w_{\tilde{\theta}} \alpha_{f^{m-1},\epsilon}^\beta  \nabla_v f^m(s ) \|_{L^p_{x,v}}\big] \dd s. 
		\end{split}\Ee
\end{itemize}

	\textbf{Step 4.}	
Since all assumption in Proposition \ref{proposition: boundedness},\ref{Prop W1p} are satisfied, we have the uniform in $n$ bound
\[\sup_n \Vert w_{\theta'} f^n\Vert_\infty<\infty,\quad \sup_n \Vert e^{-\lambda s\langle v\rangle} w_{\tilde{\theta}}\alpha^\beta_{f^{m-1},\e}\nabla_{x,v}f^{n}\Vert_p<\infty.\]

		Collecting terms from~\eqref{est_g_initial},~\eqref{est_g_bdry},\eqref{eqn: g bdry esti},~\eqref{est_g_x}, and (\ref{est_g_K}),
 we derive
		\Be\begin{split}\label{bound_nabla_v_g}
			&\| e^{-\lambda t\langle v\rangle}\nabla_vf^{m+1}(t) \|_{L^3_xL^{1+ \delta}_v} \\
			\leq & \ C(\Omega,p,\beta,\tilde{\theta})\times (1+\Vert w_{\theta'}f^m\Vert_\infty) \Big[
			\| w_{\tilde{\theta}} \nabla_v f(0) \|_{L^3_{x,v}} +
		\sup_n	\sup_{0 \leq s \leq t}
			\| w_{\theta'} f^n(s) \|_\infty
			\\
			&
			+ \sup_n\sup_{0\leq s\leq t}\Vert e^{-\lambda s\langle u\rangle }w_{\tilde{\theta}}\alpha_{f^{n-1},\epsilon}^\beta \nabla_{x,v}f^n(s)\Vert_p+t\sup_{0\leq s\leq t}
			\| e^{-\lambda s\langle v\rangle}\nabla_v f^m (s) \|_{L^3_x L^{1+ \delta}_v}
			\Big].
		\end{split} \Ee
Now we define the constant in~\eqref{eqn: nabla v fm} as
\begin{equation}\label{eqn: C delta}
  C_\delta=C(\Omega,p,\beta,\tilde{\theta})(1+\sup_n\Vert w_{\theta'}f^n\Vert_\infty).
\end{equation}
For the last term in~\eqref{bound_nabla_v_g} by the assumption~\eqref{eqn: nabla v fm}, we take $t<t_{\delta}=t_{\delta}(C_\delta)\ll 1$ small enough such that
\begin{align}
   & C(\Omega,p,\beta,\tilde{\theta})(1+\sup_n\Vert w_{\theta'}f^n\Vert_\infty)t\sup_{0\leq s\leq t}
			\|e^{-\lambda s\langle v\rangle} \nabla_v f^m (s) \|_{L^3_x L^{1+ \delta}_v} \notag\\
   & \leq t_\delta C_{\delta}^2\Big[\Vert w_{\tilde{\theta}}\nabla_v f(0) \Vert_{L_{x,v}^3}+\sup_{n}\sup_{0\leq s\leq t}\Vert w_{\theta'} f^n(s)\Vert_\infty+ \sup_n\sup_{0\leq s\leq t}\Vert e^{-\lambda s\langle u\rangle }w_{\tilde{\theta}}\alpha_{f^{n-1},\epsilon}^\beta \nabla_{x,v}f^n(s)\Vert_p \Big]\notag\\
   &\leq C_{\delta}\Big[\Vert w_{\tilde{\theta}}\nabla_v f(0) \Vert_{L_{x,v}^3}+\sup_{n}\sup_{0\leq s\leq t}\Vert w_{\theta'} f^n(s)\Vert_\infty+ \sup_n\sup_{0\leq s\leq t}\Vert e^{-\lambda s\langle u\rangle }w_{\tilde{\theta}}\alpha_{f^{n-1},\epsilon}^\beta \nabla_{x,v}f^n(s)\Vert_p \Big].\label{eqn: t delta}
\end{align}

Finally we get

\begin{align*}
   & \| e^{-\lambda t\langle v\rangle}\nabla_vf^{m+1}(t) \|_{L^3_xL^{1+ \delta}_v}  \\
   & \leq 2C_\delta\Big[\Vert w_{\tilde{\theta}}\nabla_v f(0) \Vert_{L_{x,v}^3}+\sup_{n}\sup_{0\leq s\leq t}\Vert w_{\theta'} f^n(s)\Vert_\infty+ \sup_n\sup_{0\leq s\leq t}\Vert e^{-\lambda s\langle u\rangle }w_{\tilde{\theta}}\alpha_{f^{n-1},\epsilon}^\beta \nabla_{x,v}f^n(s)\Vert_p \Big].
\end{align*}

We prove~\eqref{bound_nabla_v_g_global} and derive the proposition.
\end{proof}

The next proposition follows from the Proposition \ref{proposition: L3L1 estimate}.

\begin{proposition}\label{Prop: L1+stability Cauchy}
Suppose $f^{m+1}$ and $f^{m}$ solve~\eqref{eqn: fm+1} with boundary condition~\eqref{eqn: fm+1 BC}, and satisfy all assumption in Proposition \ref{proposition: boundedness} \ref{Prop W1p} \ref{proposition: L3L1 estimate}. Then there exists $\bar{t}\ll 1$ ($\bar{t}\leq t_{\delta}$) with $t\leq \bar{t}$ such that
\Be\label{1+delta_stability}
\begin{split}
    & \sup_{0\leq s\leq t}\|e^{-\lambda s\langle v\rangle} \big[f^{m+1}(s) - f^m(s)\big] \|_{L^{1+\delta}(\O \times \R^3)}+\int_{0}^{t}|e^{-\lambda s\langle v\rangle}(f^{m+1}-f^m)(s)|_{1+\delta,+}^{1+\delta} \\
     & \leq \frac{1}{2}\sup_{0\leq s\leq t}\|e^{-\lambda s\langle v\rangle} \big[f^m(s) - f^{m-1}(s)\big] \|_{L^{1+\delta}(\O \times \R^3)}+\frac{1}{2}\int_{0}^{t}|e^{-\lambda s\langle v\rangle}(f^{m}-f^{m-1})(s)|_{1+\delta,+}^{1+\delta}.
\end{split}		
		\Ee

Here $\bar{t}$ satisfies~\eqref{eqn: bar t}.

	\end{proposition}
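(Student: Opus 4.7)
The plan is to prove the $L^{1+\delta}$ contraction by writing an equation for the difference $g^{m+1} := f^{m+1}-f^m$ and applying Green's identity (Lemma~\ref{lemma: Green's indentity}) with the weight $e^{-\lambda s \langle v\rangle}$ raised to the power $1+\delta$. Subtracting the equations~\eqref{eqn: fm+1} for $f^{m+1}$ and $f^m$ produces a transport equation for $g^{m+1}$ whose source terms are (i) the field-difference term $(\nabla_x\phi^m-\nabla_x\phi^{m-1})\cdot\nabla_v f^m$; (ii) the associated multiplicative terms $\frac{1}{2T_M}g^{m+1}v\cdot\nabla_x\phi^m+\frac{1}{2T_M}f^m v\cdot(\nabla_x\phi^m-\nabla_x\phi^{m-1})$; (iii) the collision difference $\Gamma_{\text{gain}}(f^m,f^m)-\Gamma_{\text{gain}}(f^{m-1},f^{m-1})=\Gamma_{\text{gain}}(f^m-f^{m-1},f^m)+\Gamma_{\text{gain}}(f^{m-1},f^m-f^{m-1})$; (iv) the damping difference $-\nu(F^m)g^{m+1}-[\nu(F^m)-\nu(F^{m-1})]f^m$; and boundary data for $g^{m+1}|_{\gamma_-}$ expressed through $g^m$ via~\eqref{eqn: fm+1 BC}. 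After choosing $\lambda$ large enough so that the linear damping absorbs the multiplicative terms (exactly as in~\eqref{eqn: formula for the nu in the uniqueness}), the remaining task is to bound each source term by $\bar t$ times $\sup_{0\leq s\leq t}\|e^{-\lambda s\langle v\rangle}g^m(s)\|_{L^{1+\delta}}^{1+\delta}$, plus a fraction of the boundary energy of $g^m$.

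The field-difference term is where Proposition~\ref{proposition: L3L1 estimate} enters in an essential way. Since $-\Delta(\phi^m-\phi^{m-1})=\int g^m\sqrt\mu \,dv$ with zero Neumann data, elliptic regularity gives $\phi^m-\phi^{m-1}\in W^{2,1+\delta}(\Omega)$, and Sobolev embedding then yields
\[
\|\nabla_x(\phi^m-\phi^{m-1})\|_{L^{\frac{3(1+\delta)}{2-\delta}}(\Omega)}\lesssim \|g^m\|_{L^{1+\delta}_{x,v}}.
\]
Applying Hölder's inequality in $x$ with the conjugate exponent $3/(1+\delta)$ gives
\[
\|(\nabla_x\phi^m-\nabla_x\phi^{m-1})\cdot\nabla_v f^m\|_{L^{1+\delta}_{x,v}}\lesssim \|\nabla_x(\phi^m-\phi^{m-1})\|_{L^{\frac{3(1+\delta)}{2-\delta}}_x}\,\|\nabla_v f^m\|_{L^3_x L^{1+\delta}_v},
\]
where the second factor is uniformly bounded in $m$ by Proposition~\ref{proposition: L3L1 estimate}. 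The term $f^m v\cdot(\nabla_x\phi^m-\nabla_x\phi^{m-1})$ is handled similarly, using that $w_{\theta'}f^m$ is uniformly bounded in $L^\infty$ to pull $f^m$ out. For the collision difference, the kernel bound~\eqref{bound_Gamma_k} together with the $L^\infty$ bound on $w_{\theta'}f^{m-1}$ and the standard $\mathbf k_\varrho$ convolution estimate~\eqref{eqn: k_p_bound} give
\[
\|e^{-\lambda s\langle v\rangle}[\Gamma_{\text{gain}}(f^m,f^m)-\Gamma_{\text{gain}}(f^{m-1},f^{m-1})]\|_{L^{1+\delta}_{x,v}}\lesssim \sup_n\|w_{\theta'}f^n\|_\infty\,\|e^{-\lambda s\langle v\rangle}g^m\|_{L^{1+\delta}_{x,v}},
\]
and the damping difference is controlled analogously (noting that $\nu(F^m)-\nu(F^{m-1})=\nu(F^m-F^{m-1})$ is linear in $g^m\sqrt\mu$).

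The main obstacle is the boundary term $\int_0^t|e^{-\lambda s\langle v\rangle}g^{m+1}|^{1+\delta}_{1+\delta,-}$. Since $g^{m+1}|_{\gamma_-}$ inherits the Cercignani--Lampis integral structure from~\eqref{eqn: fm+1 BC} applied to $g^m$, the estimate mirrors Step~1 of the proof of Proposition~\ref{Prop W1p}: we split $\gamma_+(x)=\gamma_{+,1}^{v,x,\epsilon}\cup(\gamma_+(x)\setminus\gamma_{+,1}^{v,x,\epsilon})$ as in~\eqref{eqn: gamma_+^v}, apply Hölder with the dual exponent $(1+\delta)^*$ against the kernel $d\sigma$, and use Lemma~\ref{Lemma: abc}, \ref{Lemma: perp abc} to carry out the Gaussian integrals. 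The grazing piece contributes an $O(\epsilon)$ prefactor multiplying $\int_0^t|e^{-\lambda s\langle v\rangle}g^m|^{1+\delta}_{1+\delta,+}$; the non-grazing piece is controlled by the trace estimate of Lemma~\ref{lemma: trace thm} applied to $|e^{-\lambda s\langle v\rangle}g^m|^{1+\delta}$, producing bulk terms already bounded above. Combining everything and using $\tilde\theta$ small enough so that the Gaussian weight~\eqref{eqn: dominated} is integrable, one obtains
\[
\sup_{0\leq s\leq t}\|e^{-\lambda s\langle v\rangle}g^{m+1}\|_{L^{1+\delta}}^{1+\delta}+\int_0^t|e^{-\lambda s\langle v\rangle}g^{m+1}|^{1+\delta}_{1+\delta,+}\leq \bigl[C\epsilon+C(\epsilon)\,\bar t\bigr]\cdot\bigl(\text{same with }m,m-1\bigr).
\]
Choosing $\epsilon$ small, then $\bar t\leq t_\delta$ small depending on $\epsilon$, $\lambda$, and the uniform energy bounds, yields the factor $\tfrac12$ and completes~\eqref{1+delta_stability}.
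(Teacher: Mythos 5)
Your proposal is correct and follows essentially the same route as the paper's proof: the difference equation for $f^{m+1}-f^m$, Green's identity in $L^{1+\delta}$, the elliptic-plus-Sobolev bound on $\nabla_x(\phi^m-\phi^{m-1})$ paired with the $L^3_xL^{1+\delta}_v$ bound on $\nabla_v f^m$ from Proposition~\ref{proposition: L3L1 estimate}, the bilinear decomposition of the collision difference, and the boundary estimate by the grazing/non-grazing splitting of Step~1 of Proposition~\ref{Prop W1p} followed by the trace lemma. The only cosmetic difference is that you apply H\"older to get an $L^{1+\delta}_{x,v}$ bound on each source and then pair with $|f^{m+1}-f^m|^\delta$, whereas the paper uses a single three-exponent H\"older; these are the same computation.
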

\begin{remark}
This proposition is crucial to show the existence of the solution. In Proposition \ref{Prop existence} we will use the $L^{1+}$ Cauchy with~\eqref{eqn: theta'} to conclude the existence of the solution $f$.

\end{remark}

\begin{proof}
First we take $\bar{t}\leq t_{\delta}$ with $t_{\delta}$ defined in Proposition \ref{proposition: L3L1 estimate} so that we can apply all the previous Propositions.

Assume $f^{m+1}$ and $f^m$ solve~\eqref{eqn: fm+1}, then
\begin{equation}\label{eqn: fm+1-fm}
  \begin{split}
      & \partial_t\big[e^{-\lambda t\langle v\rangle}(f^{m+1}-f^m) \big]+v\cdot \nabla_x\big[e^{-\lambda t\langle v\rangle}(f^{m+1}-f^m) \big]-\nabla_x \phi^m \cdot\nabla_v\big[e^{-\lambda t\langle v\rangle}(f^{m+1}-f^m) \big]\\
     & +\Big(\lambda\langle v\rangle+\frac{v}{2T_M}\cdot \nabla_x \phi^m-\lambda t\partial_v \langle v\rangle+\nu(F^m) \Big) \big[e^{-\lambda t\langle v\rangle}(f^{m+1}-f^m) \big] \\
       & =(\nabla_x \phi^m-\nabla_x \phi^{m-1})\nabla_v (e^{-\lambda t\langle v\rangle}f^m) -\frac{v}{2T_M}\cdot (\nabla_x \phi^m-\nabla_x \phi^{m-1})(e^{-\lambda t\langle v\rangle} f^{m}) \\
       & +e^{-\lambda t\langle v\rangle}   \bigg[\Gamma_{\text{gain}}(f^m,f^m)-\Gamma_{\text{gain}}(f^{m-1},f^{m-1})+f^m\Big(\nu(F^{m-1})-\nu(F^m) \Big) \bigg].
  \end{split}
\end{equation}
By~\eqref{eqn: C large} we have
\[
  \lambda\langle v\rangle+\frac{v}{2T_M}\cdot \nabla_x \phi^m-\lambda t\partial_v \langle v\rangle +\nu(F^m)\geq \frac{\lambda}{2}\langle v\rangle.
\]

Then using Lemma~\ref{lemma: Green's indentity} for $L^{1+\delta}$-space with $0<\delta\ll 1$, we obtain
  \begin{equation}\label{eqn: Greens 1+delta}
    \begin{split}
    &   \Vert e^{-\lambda t\langle v\rangle}(f^{m+1}-f^m) (t)\Vert_{1+\delta}^{1+\delta} +\int_0^t |e^{-\lambda s\langle v\rangle}(f^{m+1}-f^m) (s)|_{1+\delta,+}^{1+\delta}+\frac{\lambda}{2}\int_0^t \Vert \langle v\rangle e^{-\lambda s\langle v\rangle}(f^{m+1}-f^m) (s)  \Vert_{1+\delta}^{1+\delta} \\
         & \leq \Vert [f^{m+1}-f^m](0)\Vert_{1+\delta}^{1+\delta}+\int_0^t \iint_{\Omega\times \mathbb{R}^3}|\text{RHS of~\eqref{eqn: fm+1-fm}}|e^{-\lambda s\langle v\rangle}(f^{m+1}-f^m) |^{\delta}+\int_0^t |e^{-\lambda s\langle v\rangle}(f^{m+1}-f^m) |_{1+\delta,-}^{1+\delta}.
    \end{split}
  \end{equation}

We now analyze the three terms in RHS of~\eqref{eqn: fm+1-fm}.
\begin{itemize}
\item[--]{Estimate of the first term.} For $0 < \delta \ll 1$, by the H\"older inequality with $1=\frac{1}{\frac{3(1+ \delta)}{2- \delta}} + \frac{1}{3} +  \frac{1}{\frac{1+ \delta}{\delta}}$ and the Sobolev embedding $W^{1, 1+ \delta} (\O)\subset L^{\frac{3(1+ \delta)}{2- \delta}}(\O)$ when $\O \subset \R^3$, the contribution of the first term of the RHS of~\eqref{eqn: fm+1-fm} is bounded by

\begin{equation}\label{eqn: bound nabla_vg}
  \begin{split}
      & \int_0^t \int_{\Omega\times \mathbb{R}^3}|(\nabla_x \phi^m-\nabla_x \phi^{m-1})\cdot \nabla_v\big(e^{-\lambda s\langle v\rangle}f^m\big)||e^{-\lambda s\langle v\rangle}(f^{m+1}-f^m)|^\delta \\
       & \lesssim \int_0^t \Vert \nabla_x \phi^m-\nabla_x \phi^{m-1}\Vert_{L_x^{\frac{3(1+\delta)}{2-\delta}}} \Vert e^{-\lambda s\langle v\rangle}\nabla_v f^m\Vert_{L_x^3 L_v^{1+\delta}}\Vert e^{-\lambda s\langle v\rangle}[f^{m+1}-f^m]^{\delta}\Vert_{L_{x,v}^{\frac{1+\delta}{\delta}}}\\
       & \lesssim \sup_{0\leq s\leq t}\Vert e^{-\lambda s\langle v\rangle}\nabla_v f^m(s)\Vert_{L_x^3 L_v^{1+\delta}}\times \int_0^t \Vert e^{-\lambda s\langle v \rangle}(f^{m+1}-f^m)(s)\Vert_{1+\delta}^{1+\delta} ds.
  \end{split}
\end{equation}

\item[--]{Estimate of the second term.} By the H\"{o}lder inequality with $1=\frac{\delta}{1+\delta}+\frac{1}{1+\delta}$, the contribution of the second term of the RHS of~\eqref{eqn: fm+1-fm} is bounded by
\begin{equation}\label{eqn: 2nd RHS}
\begin{split}
    & \int_0^t \int_{\Omega \times \mathbb{R}^3} \frac{v}{2T_M}\cdot (\nabla_x \phi^m-\nabla_x \phi^{m-1})(e^{-\lambda s\langle v\rangle} f^{m}) |e^{-\lambda s\langle v\rangle}(f^{m+1}-f^m)|^\delta  \\
     & \lesssim  \int_0^t  \sup_{x} \Vert \langle v\rangle f^m\Vert_{L_v^{1+\delta}}\Vert \nabla_x \phi^m-\nabla_x \phi^{m-1}\Vert_{L_x^{1+\delta}}\Vert e^{-\lambda s\langle v\rangle}(f^{m+1}-f^m)^\delta\Vert_{L_{x,v}^{\frac{1+\delta}{\delta}}}\\
     & \lesssim_\Omega \big[\Vert w_{\theta'} f^m\Vert_\infty+\Vert w_{\theta'} f^{m+1}\Vert_\infty \big]\int_{0}^{t} \Vert e^{-\lambda s\langle v\rangle}(f^{m+1}-f^m)(s)\Vert_{1+\delta}^{1+\delta}ds.
\end{split}
\end{equation}

\item[--]{Estimate of the third term.} By~\eqref{bound_Gamma_k}, using the H\"{o}lder inequality with $1=\frac{\delta}{1+\delta}+\frac{1}{1+\delta}$, the contribution of the last term of the RHS of~\eqref{eqn: fm+1-fm} is bounded by
\begin{equation}\label{eqn: bound gamma g}
  \begin{split}
  & \int_0^t \int_{\Omega\times \mathbb{R}^3} e^{-\lambda s\langle v\rangle}\big[\Gamma_{\text{gain}}(f^m,f^m)-\Gamma_{\text{gain}}(f^{m-1},f^{m-1})+\Gamma_{
  \text{loss}}(f^{m-1},f^m)-\Gamma_{\text{loss}}(f^m,f^m)\big]|e^{-\lambda s\langle v\rangle}(f^{m+1}-f^m)|^\delta  \\
  &    \lesssim \int_0^t \int_{\Omega\times\mathbb{R}^3} \big[\Gamma_{\text{gain}}(f^m,f^m)-\Gamma_{\text{gain}}(f^{m},f^{m-1})+ \Gamma_{\text{gain}}(f^m,f^{m-1})-\Gamma_{\text{gain}}(f^{m-1},f^{m-1})            \\
  &    +\Gamma_{
  \text{loss}}(f^{m-1},f^m)-\Gamma_{\text{loss}}(f^m,f^m) \big]|e^{-\lambda s\langle v\rangle}(f^{m+1}-f^m)|^\delta  \\
 &  \lesssim \big[ \Vert w_{\theta'} f^{m}\Vert_\infty +\Vert w_{\theta'} f^{m-1}\Vert_\infty \big]\int_0^t \int_x \int_v \int_u e^{-\lambda s\langle v\rangle}\mathbf{k}_\rho(v,u)|f^m(u)-f^{m-1}(u)||e^{-\lambda s\langle v\rangle}\big[ f^{m+1}(v)-f^{m}(v)\big]|^{\delta}   \\
 & \lesssim \int_0^t \int_x \int_v \int_u   \big[\Vert w_{\theta'} f^{m}\Vert_\infty+\Vert w_{\theta'} f^{m-1}\Vert_\infty \big] \mathbf{k}_\rho(v,u) |e^{-\lambda s\langle v\rangle}\big[ f^{m+1}(v)-f^{m}(v)\big]|^\delta   \\
 & \lesssim \int_0^t \int_x \int_v |e^{-\lambda s\langle v\rangle}\big[ f^{m+1}(v)-f^{m}(v)\big]|^{1+\delta} \int_u \big(\mathbf{k}_\rho(v,u) \big)^{1+\delta}\\
 & \lesssim \int_0^t \Vert e^{-\lambda s\langle v\rangle}\big[ f^{m+1}(v)-f^{m}(v)\big]\Vert_{1+\delta}^{1+\delta}.
  \end{split}
\end{equation}

\end{itemize}

Since all assumptions in Proposition~\ref{proposition: boundedness} and Proposition~\ref{proposition: L3L1 estimate} are satisfied, we have
\begin{equation}\label{eqn: sup nablav+infty}
\sup_{0 \leq s \leq t}\sup_n\big\{  \| e^{-\lambda s\langle v\rangle}\nabla_v f^n (s) \|_{L^{3}_xL^{1+ \delta}_v} + \| w_{\theta'} f^n(s) \|_\infty
			\big\}<\infty.
\end{equation}

Collecting~\eqref{eqn: bound nabla_vg}~\eqref{eqn: 2nd RHS} and~\eqref{eqn: bound gamma g}, in~\eqref{eqn: Greens 1+delta} we have
\begin{equation}\label{eqn: RHS delta m}
\begin{split}
    & \int_0^t \iint_{\Omega\times \mathbb{R}^3} |\text{RHS of}~\eqref{eqn: fm+1-fm}| |e^{-\lambda s\langle v\rangle}(f^{m+1}-f^m)|^\delta \\
     & \lesssim_\Omega  \eqref{eqn: sup nablav+infty}\times\int_0^t \Vert e^{-\lambda s\langle v\rangle}(f^{m+1}-f^{m})\Vert_{1+\delta}^{1+\delta}.
\end{split}
\end{equation}

Following the proof of the Step 1 in Proposition~\ref{Prop W1p}, we apply the same decomposition~\eqref{eqn: bound the f on negative bdr} to $\gamma_+(x)$. By~\eqref{eqn: RHS delta m}, we can obtain
		\Be\label{eqn: bdr delta m}
		\begin{split}
			&\int_0^t |e^{-\lambda s\langle v\rangle}[f^{m+1}-f^m]|_{1+ \delta, - }^{1 + \delta} \\
			&\lesssim_{\delta,T_M,r_{min},\Omega}   \e \int_0^t |e^{-\lambda s\langle v\rangle}[f^{m}-f^{m-1}]|_{1+ \delta, + }^{1 + \delta}+C(\e)\| [f^{m}- f^{m-1}](0)\|_{1+ \delta}^{1+ \delta} \\
			&  +C(\e)
			\eqref{eqn: sup nablav+infty}\times  \int^t_0 \|e^{-\lambda s\langle v\rangle} (f^{m}-f^{m-1}) \|_{1+ \delta}^{1+ \delta}.
		\end{split}
		\Ee

By~\eqref{eqn: Greens 1+delta}~\eqref{eqn: RHS delta m} and~\eqref{eqn: bdr delta m}, using $f^m(0)=f^{m+1}(0)=f_0$ and $\frac{\lambda}{2}\int_0^t \Vert \langle v\rangle e^{-\lambda s\langle v\rangle}(f^{m+1}-f^m) (s)  \Vert_{1+\delta}^{1+\delta}\geq 0$,
\begin{equation}\label{eqn: final 1+delta m}
  \begin{split}
     &  \Vert e^{-\lambda t\langle v\rangle}(f^{m+1}-f^m)(t)\Vert_{1+\delta}^{1+\delta}+\int_0^t |e^{-\lambda s\langle v\rangle} (f^{m+1}-f^m)(s)|^{1+\delta}_{1+\delta,+}      \\
      & \leq C(\delta,T_M,r_{min},\Omega)\eqref{eqn: sup nablav+infty} \times \Big( t\sup_{0\leq s\leq t} \Vert e^{-\lambda s\langle v\rangle}(f^{m+1}-f^m)(s)\Vert_{1+\delta}^{1+\delta}\\
&+C(\e)t\sup_{0\leq s\leq t} \Vert e^{-\lambda s\langle v\rangle}(f^{m}-f^{m-1})(s)\Vert_{1+\delta}^{1+\delta}+\e \int_0^t |e^{-\lambda s\langle v\rangle}[f^{m}-f^{m-1}]|_{1+ \delta, + }^{1 + \delta}\Big).
  \end{split}
\end{equation}
Now we take $\e$ and $t<\bar{t}(\delta,T_M,r_{min},\Omega,\e)$ small enough such that
\[C(\delta,T_M,r_{min},\Omega)\e \times\eqref{eqn: sup nablav+infty}<\frac{1}{10},\]
\begin{equation}\label{eqn: bar t}
C(\delta,T_M,r_{min},\Omega)C(\e) \bar{t}\times\eqref{eqn: sup nablav+infty}\leq \frac{1}{10} ,
\end{equation}
we derive~\eqref{1+delta_stability} and prove the Proposition.

\end{proof}

The Proposition \ref{proposition: L3L1 estimate} suggests, according to~\eqref{bound_nabla_v_g_global}, that the $L^3_x L_v^{1+\delta}$ estimate of $\nabla_v f$ is obtained upon a good initial condition, the boundedness in $L^\infty$ and the weighted $W^{1,p}$ estimate. In particular, we have the following proposition.

\begin{proposition}\label{Prop: L3L1 for nabla f}
Assume $f$ and $\phi$ solve~\eqref{equation for f}~\eqref{equation for phi_f}~\eqref{eqn:BC}, and satisfy estimates
\begin{equation}\label{eqn: L_infty f}
\Vert w_{\theta'} f\Vert<\infty,
\end{equation}
\begin{equation}\label{eqn: W1p f}
\Vert w_{\tilde{\theta}}e^{-\lambda t\langle v\rangle }\alpha^\beta_{f,\epsilon}\nabla_{x,v} f\Vert_p<\infty.
\end{equation}

We also assume extra initial condition
		\Be \label{Extra_uniq}
		\| w_{\tilde{\theta}} \nabla_v f_{0} \|_{L^3_{x,v}} < \infty.
		\Ee
Then
\begin{equation}\label{eqn: L3L1 f bounded}
\Vert e^{-\lambda t\langle v\rangle} \nabla_v f\Vert_{L_x^3 L_v^{1+\delta}}<\infty.
\end{equation}

\end{proposition}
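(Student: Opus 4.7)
The plan is to mimic the proof of Proposition~\ref{proposition: L3L1 estimate}, but applied directly to the single solution $f$ rather than to an iterative sequence. I would first derive the equation satisfied by $e^{-\lambda t\langle v\rangle}\nabla_v f$, which is the direct analogue of (\ref{eqtn_g_v}), with damping coefficient bounded below by $\frac{\lambda}{2}\langle v\rangle$ by (\ref{eqn: C large}) (note that $\Vert\nabla_x\phi\Vert_\infty$ is finite via Lemma~\ref{lemma: phi_inf} and hypothesis (\ref{eqn: L_infty f})). Using Duhamel's formula along the Hamiltonian characteristics together with the boundary bound (\ref{bdry_g_v}), I obtain a pointwise representation of $|e^{-\lambda t\langle v\rangle}\nabla_v f(t,x,v)|$ as the sum of five terms parallel to (\ref{g_initial})--(\ref{g_K}): an initial-data contribution, a boundary-data contribution, a lower-order field term, a characteristic integral of $\nabla_x f$, and a collision contribution with kernel $\mathbf{k}_\varrho$ acting on $\nabla_v f$.

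Next, I would estimate the first three contributions exactly as in (\ref{est_g_initial}), (\ref{eqn: g bdry esti}), and (\ref{est_g_bdry}). The change-of-variables $(x,v)\mapsto(X(0;t,x,v),V(0;t,x,v))$ combined with the weight integrability (\ref{tilde_w_integrable}) reduces the initial term to $\Vert w_{\tilde\theta}\nabla_v f_0\Vert_{L^3_{x,v}}$, which is finite by hypothesis (\ref{Extra_uniq}). The boundary term is bounded by $\Vert w_{\theta'}f\Vert_\infty$ after performing the $u$-integration against $d\sigma$ using Lemma~\ref{Lemma: abc} precisely as in (\ref{eqn: int over V_l})--(\ref{eqn: g bdry esti}), thanks to the negativity of the exponential coefficient $\frac{1}{4T_M}-\frac{1}{2[2T_M(1-r_{\min})+r_{\min}T_w(x_b)]}$. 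The lower-order field term is controlled directly by $\Vert w_{\theta'}f\Vert_\infty$ via (\ref{tilde_w_integrable}).

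The characteristic integral of $\nabla_x f$ is handled by the H\"older inequality with exponent $\frac{p+p\delta}{p-1-\delta}$ applied as in (\ref{init_p_xf}), combined with the integrability bound (\ref{bound_g_x}) from Proposition~\ref{prop_int_alpha} (valid since $\beta\frac{p+p\delta}{p-1-\delta}<1$ for sufficiently small $\delta$ by (\ref{Condition for p})). This produces a bound by $\int_0^t\Vert e^{-\lambda s\langle v\rangle}w_{\tilde\theta}\alpha^\beta_{f,\epsilon}\nabla_x f(s)\Vert_p\,ds$, which is finite by assumption (\ref{eqn: W1p f}).

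The main obstacle, and the self-referential piece, is the collision contribution (\ref{g_K}) involving $\nabla_v f$ on the right-hand side. I would split the $u$-integral at a threshold $|u|\le N$ versus $|u|\ge N$. On $\{|u|\le N\}$, the Hardy-Littlewood-Sobolev inequality applied as in (\ref{g_K_COV})--(\ref{est_g_K_split1}) produces a bound linear in $\Vert e^{-\lambda s\langle v\rangle}\nabla_v f(s)\Vert_{L^3_xL^{1+\delta}_v}$; on $\{|u|\ge N\}$, the weight comparison (\ref{k_theta_comparision}) exchanges $\mathbf{k}_\varrho$ for $\mathbf{k}_{\tilde\varrho}$ and then (\ref{NLL_split3}) of Proposition~\ref{prop_int_alpha} (with $\beta p^*<1$) absorbs $\alpha^{-\beta}$, reducing the term to $\Vert e^{-\lambda s\langle u\rangle}w_{\tilde\theta}\alpha^\beta_{f,\epsilon}\nabla_v f(s)\Vert_p$, which is finite by (\ref{eqn: W1p f}). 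Collecting all five contributions, $\Vert e^{-\lambda t\langle v\rangle}\nabla_v f(t)\Vert_{L^3_xL^{1+\delta}_v}$ satisfies an integral inequality of Gr\"onwall type whose inhomogeneity is controlled by (\ref{eqn: L_infty f}), (\ref{eqn: W1p f}), and (\ref{Extra_uniq}); since the analysis is carried out on a small interval $[0,\bar t]$, Gr\"onwall's lemma closes the estimate and yields (\ref{eqn: L3L1 f bounded}).
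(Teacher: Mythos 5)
Your proposal is correct and follows essentially the same route as the paper: the paper's own proof simply substitutes $f^{m+1}=f^m=f$ into the representation \eqref{g_initial}--\eqref{g_K} from Proposition~\ref{proposition: L3L1 estimate}, estimates each term exactly as you describe (arriving at the analogue \eqref{eqn: result of L3L1 f} of \eqref{bound_nabla_v_g}), and closes with Gronwall's inequality, with the hypotheses \eqref{eqn: L_infty f}, \eqref{eqn: W1p f}, \eqref{Extra_uniq} bounding the inhomogeneity. Your only cosmetic deviation is invoking smallness of the time interval at the Gronwall step, which is unnecessary (Gronwall alone suffices since the self-referential term enters linearly under the time integral) but harmless.
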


\begin{proof}
By replacing $f^{m+1}$ and $f^m$ by $f$ in~\eqref{eqtn_g_v}, we obtain bound for $\partial_v f$ using~\eqref{g_initial}-\eqref{g_infty} with replacing $f^m$ and $f^{m+1}$ into $f$. Following exactly the same proof in Proposition \ref{proposition: L3L1 estimate}, by~\eqref{bound_nabla_v_g}, we obtain
\begin{equation}\label{eqn: result of L3L1 f}
\begin{split}
   &  \Vert e^{-\lambda t\langle v\rangle}\nabla_v f\Vert_{L_x^3 L_v^{1+\delta}}\\
    & \leq C(\Omega,p,\beta,\tilde{\theta})\times (1+\Vert w_{\theta'}f\Vert_\infty)\Big[\Vert w_{\tilde{\theta}}\nabla_v f(0)\Vert_{L_{x,v}^3}+\Vert w_{\theta'} f\Vert_\infty+\Vert e^{-\lambda s\langle v\rangle} w_{\tilde{\theta}}\alpha_{f,\epsilon}^\beta \nabla_{x,v}f\Vert_p\\
    &+\int_0^t \Vert e^{-\lambda s\langle v\rangle
    }\nabla_v f(s)\Vert_{L_x^3L_v^{1+\delta}}ds\Big].
\end{split}
\end{equation}
By assumption~\eqref{eqn: L_infty f} and~\eqref{eqn: W1p f}, the first line of the RHS of~\eqref{eqn: result of L3L1 f} is bounded. We derive the proposition by the Gronwall's inequality.

\end{proof}

The Proposition \ref{Prop: L1+stability Cauchy} suggests that the $L^{1+}$ stability of $f$ can be also obtained upon a good initial condition.
\begin{proposition}\label{L1+stability}
Suppose $f$ and $g$ solve~\eqref{equation for f}~\eqref{equation for phi_f}~\eqref{eqn:BC}, and satisfy all assumption in Proposition \ref{Prop: L3L1 for nabla f}. Then
\Be\label{1+delta_stability for f}
		\|e^{-\lambda t\langle v\rangle} \big[f(t) - g(t)\big] \|_{L^{1+\delta}(\O \times \R^3)}\lesssim \| f_0 - g_0 \|_{L^{1+\delta}(\O \times \R^3)}.
		\Ee

	\end{proposition}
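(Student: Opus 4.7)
The plan is to mirror the argument of Proposition~\ref{Prop: L1+stability Cauchy}, but for the two genuine solutions $f,g$ of~\eqref{equation for f} rather than consecutive iterates. Writing $E_f=-\nabla_x\phi_f$ and $E_g=-\nabla_x\phi_g$, subtracting the equations for $f$ and $g$ yields, after multiplication by $e^{-\lambda t\langle v\rangle}$,
\begin{equation*}
\begin{split}
&[\partial_t+v\cdot\nabla_x-\nabla_x\phi_f\cdot\nabla_v+\nu_{\lambda,\phi_f,1}]\bigl(e^{-\lambda t\langle v\rangle}(f-g)\bigr)\\
&=\bigl(\nabla_x\phi_f-\nabla_x\phi_g\bigr)\cdot\nabla_v\bigl(e^{-\lambda t\langle v\rangle}g\bigr)
-\tfrac{v}{2T_M}\cdot\bigl(\nabla_x\phi_f-\nabla_x\phi_g\bigr)\bigl(e^{-\lambda t\langle v\rangle}g\bigr)\\
&\quad+e^{-\lambda t\langle v\rangle}\bigl[\Gamma_{\text{gain}}(f,f)-\Gamma_{\text{gain}}(g,g)+g(\nu(G)-\nu(F))\bigr],
\end{split}
\end{equation*}
with $\nu_{\lambda,\phi_f,1}\geq \tfrac{\lambda}{2}\langle v\rangle$ as in~\eqref{eqn: formula for the nu in the uniqueness} provided $\lambda\gg 1$. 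First I would apply Lemma~\ref{lemma: Green's indentity} with $p=1+\delta$ for $0<\delta\ll1$, which produces an energy identity for $\|e^{-\lambda t\langle v\rangle}(f-g)(t)\|_{1+\delta}^{1+\delta}$ together with an outgoing boundary term, balanced by the initial data, the bulk source integrated against $|e^{-\lambda s\langle v\rangle}(f-g)|^{\delta}$, and the incoming boundary term.

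Next I would dispatch the three bulk contributions exactly as in the proof of Proposition~\ref{Prop: L1+stability Cauchy}. For the first one, the H\"older triple $1=\tfrac{2-\delta}{3(1+\delta)}+\tfrac{1}{3}+\tfrac{\delta}{1+\delta}$ combined with the Sobolev embedding $W^{1,1+\delta}(\Omega)\hookrightarrow L^{3(1+\delta)/(2-\delta)}(\Omega)$ for $\nabla_x\phi_f-\nabla_x\phi_g$ reduces the estimate to the product
\[
\|\nabla_x\phi_f-\nabla_x\phi_g\|_{L^{3(1+\delta)/(2-\delta)}_x}\,\|e^{-\lambda s\langle v\rangle}\nabla_v g\|_{L^3_xL^{1+\delta}_v}\,\|e^{-\lambda s\langle v\rangle}(f-g)\|_{1+\delta}^{\delta},
\]
and the elliptic estimate $\|\nabla_x\phi_f-\nabla_x\phi_g\|\lesssim\|f-g\|_{1+\delta}$ bounds it by a constant times $\int_0^t\|e^{-\lambda s\langle v\rangle}(f-g)\|_{1+\delta}^{1+\delta}$; crucially, Proposition~\ref{Prop: L3L1 for nabla f} guarantees that $\sup_{0\leq s\leq t}\|e^{-\lambda s\langle v\rangle}\nabla_v g(s)\|_{L^3_xL^{1+\delta}_v}<\infty$. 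The second term is handled by the weaker H\"older pairing $1=\tfrac{1}{1+\delta}+\tfrac{\delta}{1+\delta}$ combined with $\sup_x\|\langle v\rangle g\|_{L^{1+\delta}_v}\lesssim\|w_{\theta'}g\|_\infty<\infty$ from Proposition~\ref{proposition: boundedness}. For the collision difference I would split $\Gamma_{\text{gain}}(f,f)-\Gamma_{\text{gain}}(g,g)=\Gamma_{\text{gain}}(f-g,f)+\Gamma_{\text{gain}}(g,f-g)$ and use~\eqref{bound_Gamma_k} together with $\int_{\R^3}\mathbf k_\rho(v,u)^{1+\delta}\,du<\infty$ exactly as in~\eqref{eqn: bound gamma g} to absorb everything into $\int_0^t\|e^{-\lambda s\langle v\rangle}(f-g)\|_{1+\delta}^{1+\delta}$.

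The delicate step is the incoming boundary integral $\int_0^t|e^{-\lambda s\langle v\rangle}(f-g)|^{1+\delta}_{1+\delta,-}$, where the C--L kernel couples $v$ and $u$. I would repeat the two-layer splitting of $\gamma_+$ used in Step~1 of the proof of Proposition~\ref{Prop W1p}: split $\gamma_+$ into $\gamma_{+,1}^{v,x,\epsilon}$ of~\eqref{eqn: gamma_+^v} and its complement, then split the $v$-integral further by $\mathbf 1_{|v|>\epsilon^{-1}}$ versus $\mathbf 1_{|v|\leq\epsilon^{-1}}$ (the latter forcing $u$ into $\gamma_+\setminus\gamma_+^{\epsilon/6}$, where Lemma~\ref{lemma: trace thm} applies). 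This yields, in analogy with~\eqref{eqn: bdr delta m},
\[
\int_0^t|e^{-\lambda s\langle v\rangle}(f-g)|^{1+\delta}_{1+\delta,-}\le C\,\epsilon\int_0^t|e^{-\lambda s\langle v\rangle}(f-g)|^{1+\delta}_{1+\delta,+}+C(\epsilon)\|f_0-g_0\|_{1+\delta}^{1+\delta}+C(\epsilon)\int_0^t\|e^{-\lambda s\langle v\rangle}(f-g)\|^{1+\delta}_{1+\delta},
\]
where the smallness $\epsilon$ comes from the Gaussian-type estimates in Lemma~\ref{Lemma: abc} and Lemma~\ref{Lemma: perp abc}, and the requirement~\eqref{eqn: Constrain on T} again enters through the choice of $\tilde\theta$ small enough (cf.~\eqref{eqn: dominated}). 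The main obstacle is controlling the $\nabla_v g$ factor generated by the difference of the Vlasov fields: without Proposition~\ref{Prop: L3L1 for nabla f} there is no reason the H\"older tripling above would close. Finally, choosing $\epsilon$ small and then $t\leq\bar t$ small enough to absorb the outgoing boundary and bulk self-terms on the left, Gronwall's inequality applied to $\|e^{-\lambda t\langle v\rangle}(f-g)(t)\|_{1+\delta}^{1+\delta}$ delivers~\eqref{1+delta_stability for f}.
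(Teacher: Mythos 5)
Your proposal is correct and follows essentially the same route as the paper: the paper proves this proposition precisely by re-running the estimates of Proposition~\ref{Prop: L1+stability Cauchy} (the bulk bounds~\eqref{eqn: RHS delta m}, using the $L^3_xL^{1+\delta}_v$ bound on $\nabla_v g$ from Proposition~\ref{Prop: L3L1 for nabla f}, and the boundary splitting~\eqref{eqn: bdr delta m}) with $f^{m+1}=f$, $f^m=g$, then choosing $\epsilon\ll 1$ to absorb the outgoing trace term and concluding by Gronwall. The only cosmetic difference is that you invoke smallness of $t$ to absorb the bulk self-term, whereas the paper lets Gronwall handle it directly; this does not affect the validity of the argument.
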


\begin{remark}
Clearly, this proposition serves as a criteria for showing the uniqueness of the solution. The main assumption is that the solution needs to satisfy the initial condition~\eqref{Extra_uniq} and the estimates~\eqref{eqn: L_infty f} and~\eqref{eqn: W1p f}. In Section 5 where we show the uniqueness in Proposition 10, the effort is devoted to bounding~\eqref{eqn: W1p f} for the solution $f$.
\end{remark}

\begin{proof}[\textbf{{Proof of Proposition \ref{L1+stability}}}]

By replacing $f^{m+1}=f$ and $f^m=g$ in Proposition~\ref{Prop: L1+stability Cauchy}, using~\eqref{eqn: Greens 1+delta}~\eqref{eqn: RHS delta m} and~\eqref{eqn: bdr delta m}, we obtain

\begin{equation}\label{eqn: final 1+delta}
  \begin{split}
     &  \Vert e^{-\lambda t\langle v\rangle}(f-g)(t)\Vert_{1+\delta}^{1+\delta} +\int_0^t |e^{-\lambda s\langle v\rangle} (f-g)(s)|^{1+\delta}_{1+\delta,+}      \\
      & \leq C(\delta,T_M,r_{min},\Omega)C(\e)\Vert f_0-g_0\Vert_{1+\delta}^{1+\delta}+C(\delta,T_M,r_{min},\Omega)\int_0^t \Vert  e^{-\lambda s\langle v\rangle}(f-g)(s)  \Vert_{1+\delta}^{1+\delta}\\
      & + C(\delta,T_M,r_{min})\e \int_0^t |e^{-\lambda s\langle v\rangle}(f-g)|_{1+ \delta, + }^{1 + \delta}.
  \end{split}
\end{equation}
We pick $\e\ll 1$ such that $ C(\delta,T_M,r_{min},\Omega)\e <\frac{1}{10}$. With $\e$ fixed we applying the Gronwall inequality and derive the $L^{1+\delta}$-stability (\ref{1+delta_stability for f}).\end{proof}

\section{Existence and Uniqueness}

In this section we prove the existence and uniqueness of the VPB system. The existence is given by Proposition \ref{Prop existence} and the uniqueness is given by Proposition \ref{Prop uniqueness}. The combination of these two Propositions finish the proof of Theorem \ref{local_existence}.

To show the existence, we first realize that due to the linearity of the boundary condition, the boundary contribution of integration of the equation over the domain with a test function will converge to that of the weak limit. The strategy of the proof for the case of diffuse boundary condition thus can be carried over. Below we adapt the proof of Theorem 6 in~\cite{CKL} to fit our setting.

In order to apply all the propositions in the previous sections, we let $t\leq \bar{t}$ with  $\bar{t}$ given in Proposition \ref{Prop: L1+stability Cauchy}. Then from the assumption in Proposition~\ref{proposition: boundedness}, \ref{Prop W1p}, \ref{proposition: L3L1 estimate}, \ref{Prop: L1+stability Cauchy}, we have
  \[\bar{t}\leq t_\delta\leq t_W\leq t_\infty.\]
The condition for these four terms are~\eqref{eqn: bar t},\eqref{eqn: t delta},\eqref{eqn: lesssim} and~\eqref{eqn: t_1} respectively. Thus we conclude the $\bar{t}$ only depends on the variable in~\eqref{eqn: bar t in thm} in Theorem \ref{local_existence}.

\begin{proposition}\label{Prop existence}
Given the assumption in Proposition~\ref{proposition: boundedness} and Proposition \ref{Prop: L1+stability Cauchy}, for $t\leq \bar{t}$ there exists at least one solution $f$ that satisfies
\[\partial_t f+v\cdot \nabla_x f-\nabla_x \phi\cdot \nabla_v f+\frac{v}{2T_M}\cdot \nabla_x \phi f=\Gamma(f,f).\]
Moreover, we have
\begin{equation}\label{eqn: w_theta f infty}
  \Vert w_{\theta'} f\Vert_\infty<\infty.
\end{equation}

\end{proposition}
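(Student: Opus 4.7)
The plan is to extract a limit $f$ from the iteration sequence $\{f^m\}$ constructed in Section~2 and verify it solves the VPB system. Proposition~\ref{Prop: L1+stability Cauchy} shows $\{f^m\}$ is Cauchy in $L^\infty_t L^{1+\delta}_{x,v}$ (and in $L^{1+\delta}(\gamma_+)$) on $[0,\bar t]$, so there exists $f\in L^\infty([0,\bar t]; L^{1+\delta}(\Omega\times \mathbb{R}^3))$ with $f^m \to f$ strongly. The uniform bound $\sup_m \|w_{\theta'} f^m\|_\infty < \infty$ from Proposition~\ref{proposition: boundedness} yields, along a subsequence, weak-$*$ convergence of $w_{\theta'}f^m$ in $L^\infty$, and by lower semicontinuity the limit satisfies $\|w_{\theta'}f\|_\infty <\infty$, which is~\eqref{eqn: w_theta f infty}. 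Interpolating strong $L^{1+\delta}$ convergence against the uniform $L^\infty$ bound upgrades this to strong convergence in $L^q$ for every $q\in[1+\delta,\infty)$; in particular $\rho^m := \int f^m \sqrt{\mu}\,dv \to \rho$ strongly in $L^p(\Omega)$ for every finite $p$, so the elliptic estimate of Lemma~\ref{lemma: phi_inf} promotes this to $\phi^m \to \phi$ in $C^{1,1-\delta}(\bar\Omega)$ uniformly in $t\in [0,\bar t]$.

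With these convergences in hand, I would pass to the limit in the weak formulation of~\eqref{eqn: fm+1}. Testing against $\psi \in C^1_c([0,\bar t)\times \bar\Omega\times \mathbb{R}^3)$ and integrating by parts moves $\nabla_v$ off $f^{m+1}$, giving
\begin{equation*}
\int_0^t \iint f^{m+1}\big(\partial_t\psi + v\cdot \nabla_x\psi - \nabla_x\phi^m\cdot \nabla_v\psi\big) - \int_0^t \iint \tfrac{1}{2T_M}(v\cdot \nabla_x\phi^m) f^{m+1}\psi + \int_0^t \iint \Gamma_{\text{gain}}(f^m,f^m)\psi - \nu(F^m)f^{m+1}\psi,
\end{equation*}
plus boundary and initial contributions. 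The strong $L^{1+\delta}$ convergence of $f^m$ and the uniform $C^1$ convergence of $\phi^m$ let us pass to the limit in every linear-in-$f$ term and in the drift-test-function products. The nonlinear collision term is handled by the bilinear splitting $\Gamma(f^m,f^m) - \Gamma(f,f) = \Gamma(f^m-f,f^m) + \Gamma(f,f^m-f)$, together with the kernel estimate~\eqref{bound_Gamma_k} and the uniform $L^\infty$ bound, which reduces the convergence to the strong $L^{1+\delta}$ convergence of $f^m-f$. The boundary contribution is linear in $f^m$ and the scattering kernel $d\sigma$ is bounded (indeed a probability measure), so Cauchyness of the boundary traces in $L^{1+\delta}(\gamma_+)$ from Proposition~\ref{Prop: L1+stability Cauchy} directly yields the limiting Cercignani--Lampis boundary condition.

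The main obstacle is the drift term $\nabla_x\phi^m\cdot \nabla_v f^{m+1}$, because one has only weak convergence of $\nabla_v f^{m+1}$ (from the $W^{1,p}$ estimate of Proposition~\ref{Prop W1p} and the $L^3_x L^{1+\delta}_v$ estimate of Proposition~\ref{proposition: L3L1 estimate}). The integration by parts above trades $\nabla_v f^{m+1}$ for $\nabla_v \psi$ and a $f^{m+1} v\cdot \nabla_x\phi^m$ piece, both of which behave well: the first requires only strong $L^{1+\delta}$ convergence of $f^m$ against a product $\nabla_x \phi^m\cdot \nabla_v\psi$ that converges uniformly, while the second is linear in $f^{m+1}$ and involves $\nabla_x\phi^m \to \nabla_x\phi$ uniformly. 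The verification that $f$ solves the equation in the strong sense then follows from the $W^{1,p}$ and $L^3_xL^{1+\delta}_v$ regularity of $f$ (obtained by weak lower semicontinuity from Propositions~\ref{Prop W1p} and~\ref{proposition: L3L1 estimate}), which guarantees that all terms in~\eqref{equation for f} are well-defined almost everywhere. Finally, the initial condition is preserved because the strong $L^{1+\delta}$ convergence is on the closed interval $[0,\bar t]$ and each $f^m$ has the prescribed initial datum $f_0$.
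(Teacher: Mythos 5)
Your argument is correct and reaches the same conclusion, but it resolves the key compactness step by a genuinely different mechanism than the paper. You pass to the limit in the nonlinear terms by exploiting the strong $L^{1+\delta}$ convergence from Proposition~\ref{Prop: L1+stability Cauchy}, interpolated against the uniform bound $\sup_m\|w_{\theta'}f^m\|_\infty<\infty$ to get strong (weighted) $L^q$ convergence for all finite $q$, and then treat $\Gamma$ via the bilinear splitting and the Grad-type kernel bound~\eqref{bound_Gamma_k}; similarly you upgrade $\rho^m\to\rho$ to $L^p$ for large $p$ and get $\nabla_x\phi^m\to\nabla_x\phi$ uniformly from elliptic regularity. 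The paper instead follows the framework of \cite{CKL}: it works only with the weak-$*$ $L^\infty$ limit of $f^{m+1}$ and establishes compactness of the velocity averages $\int \mathrm{k}_\delta f^m \Phi\,du$ in $L^2_{t,x}$ via the averaging lemma (Lemma~\ref{extension} and~\eqref{avergae}), with a $\delta$-covering of $\{|v|\le N+1\}$ and a Cantor diagonal extraction to handle $\Gamma_{\text{gain}}$, and the same device for $\nabla_x\phi^m$. Your route is more elementary and shorter given that strong convergence is already in hand; the paper's averaging-lemma route is the more robust one that would survive if only weak convergence were available. Two small caveats: (i) in the splitting $\Gamma_{\text{gain}}(f,f^m-f)$ you need the Grad estimate with the $L^\infty$ norm falling on the \emph{first} slot and the kernel integral on the increment — this symmetric form does hold (cf.~\eqref{bound_Gamma_nabla_vf1}) but should be stated; (ii) your final sentence about $f$ solving the equation "in the strong sense" via weak lower semicontinuity of the $\alpha$-weighted $W^{1,p}$ norm silently assumes the identification $\alpha^\beta_{f^m,\epsilon}\nabla f^{m+1}\rightharpoonup \alpha^\beta_{f,\epsilon}\nabla f$, which is the delicate point the paper defers to (and proves at length in) Proposition~\ref{Prop uniqueness}; for the existence statement itself a weak solution suffices, so this over-claim is harmless here but should not be presented as immediate.
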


To prove this proposition we first cite a lemma. This lemma will be used to apply the average lemma in~\eqref{avergae}.
\begin{lemma}\label{extension} Lemma 14 of~\cite{CKL}\\
Assume $f(s,x,v)=e^s f_0(x,v)$ for $s<0$. Assume $\Omega$ is convex and $\sup_{0\leq t\leq T}\Vert E\Vert_{L^\infty(\Omega)}<\infty$. Let $\bar{E}(t,x)=1_{\Omega}(x)E(t,x)$ for $x\in \mathbb{R}^3$. There exists $\bar{f}(t,x,v)\in L^2(\mathbb{R}\times \mathbb{R}^3\times \mathbb{R}^3)$, an extension of $f_\delta$, such that
\[f|_{\Omega\times \mathbb{R}^3}=f_\delta \text{ and } \bar{f}|_{\gamma}=f_{\delta}|_{\gamma},\text{ and } \bar{f}|_{t=0}=f_{\delta}|_{t=0}.\]
%
\end{lemma}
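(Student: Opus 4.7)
The plan is to construct $\bar{f}$ by transporting $f_\delta$ along the characteristic flow generated by the extended field $\bar{E}$, exploiting the fact that $\bar{E}(t,x)=\mathbf{1}_\Omega(x) E(t,x)$ vanishes outside $\Omega$. Since $\bar{E}\in L^\infty$ and is piecewise smooth (smooth inside $\Omega$, identically zero outside), the Hamiltonian ODE
\[
\dot{\bar X}(s)=\bar V(s),\qquad \dot{\bar V}(s)=-\bar E(s,\bar X(s))
\]
has a unique global flow $(\bar X(s;t,x,v),\bar V(s;t,x,v))$: trajectories travel in straight lines with constant velocity while outside $\bar\Omega$, and follow the original Hamiltonian flow of $E$ while inside $\Omega$. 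Convexity of $\Omega$ will be used to guarantee that the entry/exit times are a.e.\ well-defined (grazing trajectories form a set of phase-space measure zero).

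The construction proceeds in three steps. First, define the \emph{first entry time into the closure} by
\[
\tau(t,x,v):=\sup\{s\leq t:\bar X(s;t,x,v)\in\bar\Omega\},
\]
with the convention $\tau=-\infty$ if the set is empty. For $x\in\bar\Omega$ one has $\tau=t$, and for $x\notin\bar\Omega$ the trajectory is a backward straight line $x-(t-s)v$ until (if ever) it meets $\partial\Omega$; convexity guarantees that this first intersection is transverse outside a measure-zero set in $(x,v)$. Second, set
\[
\bar f(t,x,v):=\begin{cases}
f_\delta(t,x,v), & (x,v)\in\Omega\times\mathbb{R}^3,\\[1mm]
f_\delta\bigl(\tau,\bar X(\tau),\bar V(\tau)\bigr), & x\notin\bar\Omega,\ 0<\tau\leq t,\\[1mm]
e^{\tau}\tilde f_0\bigl(\bar X(0),\bar V(0)\bigr), & x\notin\bar\Omega,\ \tau\leq 0,
\end{cases}
\]
where $\tilde f_0$ is any bounded $L^2$-extension of $f_0$ to $\mathbb{R}^3\times\mathbb{R}^3$ (e.g.\ via the Stein extension for the Lipschitz domain $\Omega$); for $s<0$ declare $\bar f(s,x,v):=e^{s}\tilde f_0(x,v)$. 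Third, verify the three compatibility statements: the interior identity $\bar f|_{\Omega\times\mathbb{R}^3}=f_\delta$ is immediate; on $\gamma$ the trajectory has $\tau=t$ and $(\bar X(\tau),\bar V(\tau))=(x,v)$, giving $\bar f|_\gamma=f_\delta|_\gamma$; at $t=0$, the cases collapse to $\bar f(0,x,v)=f_\delta(0,x,v)$ inside $\Omega$ and to $\tilde f_0$ (which coincides with $f_\delta|_{t=0}=f_0$ on $\Omega$) outside.

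The $L^2$ integrability is verified by the change of variables $(x,v)\mapsto(\bar X(\tau;t,x,v),\bar V(\tau;t,x,v))$. Since the Hamiltonian flow of $\bar E$ is symplectic its Jacobian equals one, so the contribution from $\{x\notin\bar\Omega,\ \tau>0\}$ is bounded by a time-weighted integral of $|f_\delta|^2$ on $\gamma$, which is finite under the standing assumptions; the contribution from $\{\tau\leq 0\}$ and from $s<0$ is controlled by $\int_{-\infty}^{0}e^{2s}\,\mathrm{d}s\cdot\|\tilde f_0\|_{L^2}^2<\infty$, using the decay of the prefactor $e^{s}$. Boundedness of $\bar E$ on the finite time window $[0,T]$ ensures that trajectories cannot escape to infinity in finite time, so these integrals are finite on $\mathbb{R}\times\mathbb{R}^3\times\mathbb{R}^3$.

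The main obstacle is the treatment of the grazing set $\gamma_0=\{(x,v)\in\gamma:n(x)\cdot v=0\}$, together with its pull-back by the flow to interior phase points whose backward trajectory touches $\partial\Omega$ tangentially. On this set the function $\tau$ may fail to be continuous, and the straight-line/curved-line splitting of the trajectory becomes ambiguous. Here convexity of $\Omega$ is essential: it implies that any backward straight line intersects $\partial\Omega$ in at most two transverse points (outside the tangential set), and that the set of phase points leading to a tangency has vanishing Lebesgue measure in $\mathbb{R}^3\times\mathbb{R}^3$, so $\bar f$ is unambiguously defined almost everywhere, which is all that is required for an $L^2$ representative.
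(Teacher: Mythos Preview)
The paper does not prove this lemma; it is quoted verbatim as ``Lemma 14 of \cite{CKL}'' and used as a black box in the proof of Proposition~\ref{Prop existence}. So there is no in-paper proof to compare against.

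Your construction---propagating $f_\delta$ along the characteristics of the extended field $\bar E=\mathbf 1_\Omega E$, using straight lines outside $\bar\Omega$ and the original flow inside---is the standard approach and is in the spirit of how the result is established in the cited reference. The ingredients you single out (symplectic, hence measure-preserving, flow for the $L^2$ bound; convexity to make the grazing set null; the $e^s$ weight for $s<0$) are the right ones.

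One point deserves to be made explicit. The only reason this extension lemma is invoked in the paper is to apply an averaging lemma to the sequence $f^m_\delta$; for that one needs not merely $\bar f\in L^2$ but that $\bar f$ solves a transport equation $\partial_t\bar f+v\cdot\nabla_x\bar f-\bar E\cdot\nabla_v\bar f=\bar g$ on all of $\mathbb R\times\mathbb R^3\times\mathbb R^3$ with $\bar g\in L^2$. Your characteristic construction delivers this automatically (the source is zero outside $\Omega$ and coincides with the original right-hand side inside), but you should state it, since the lemma as reproduced here omits that conclusion. Note also the apparent typo in the statement: $f|_{\Omega\times\mathbb R^3}=f_\delta$ should read $\bar f|_{\Omega\times\mathbb R^3}=f_\delta$.

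A minor technical remark on your $L^2$ estimate for the exterior piece $\{x\notin\bar\Omega,\ \tau>0\}$: the change of variables you describe sends $(t,x,v)$ to $(\tau,x_b,v)\in[0,T]\times\partial\Omega\times\mathbb R^3$ together with the free-flight time $t-\tau$, and the Jacobian produces the factor $|n(x_b)\cdot v|$. Thus the bound is by $\int_0^T\!\int_{\gamma_+}|f_\delta|^2\,|n\cdot v|$ times the length of the exterior free segment, which is finite because the cutoff $\mathrm k_\delta$ in $f_\delta$ kills large $|v|$ and small $|n\cdot v|$. You gesture at this but it is worth writing the Jacobian explicitly.
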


\begin{proof}[\textbf{Proof of Proposition \ref{Prop existence}}]
Since assumptions on Proposition 6 are all satisfied, we apply the result for:
\begin{equation}\label{eqn: Cauchy sequence}
 \sup_{0\leq s\leq t} \Vert e^{-\lambda s\langle v\rangle}(f^l-f^m)(s)\Vert_{L^{1+\delta}}\leq (\frac{1}{2})^{\min\{l,m\}}.
\end{equation}
Thus $e^{-\lambda s\langle v\rangle}f^m$ is a Cauchy sequence in $L^{1+\delta}$ and there exists $f$ such that
\begin{equation}\label{eqn: Strong converge}
  e^{-\lambda t\langle v\rangle}f^m\to e^{-\lambda t\langle v\rangle}f \text{ strongly in }L^{1+\delta}(\Omega\times \mathbb{R}^3).
\end{equation}

By~\eqref{eqn: L_infty bound for f^m+1} and~\eqref{eqn: Strong converge}, there is a unique weak-* limit(up to subsequence) $(w_{\theta'} f^m,w_{\theta'} f^{m+1})\rightharpoonup^* (w_{\theta'} f,w_{\theta'} f)$ weakly-* in $L^\infty(\mathbb{R}\times \Omega \times \mathbb{R}^3)\cap L^\infty(\mathbb{R}\times \gamma)$ with $\Vert w_{\theta'} f\Vert_\infty<\infty$. For $\varphi\in C_c^\infty([0,\bar{t}]\times \bar{\Omega}\times \mathbb{R}^3)$,
\begin{align*}
   & \int_0^{\bar{t}} \int_{\Omega\times \mathbb{R}^3}f^{m+1}[-\partial_t-v\cdot \nabla_x]\varphi+f^{m+1}\{\nabla_x \phi^m\cdot \nabla_v \varphi+\frac{v}{2T_M}\cdot \nabla_x \phi^m\varphi\} \\
   & +\int_{\Omega\times \mathbb{R}^3}f^{m+1}(\bar{t},x,v)\varphi(\bar{t},x,v)-\int_{\Omega\times \mathbb{R}^3}f_0(x,v)\phi(0,x,v)\\
   &=\int_0^{\bar{t}}\int_{\Omega\times \mathbb{R}^3}\Gamma_{\text{gain}}(f^m,f^m)\varphi-\Gamma_{\text{loss}}(f^m,f^{m+1})\varphi \\ &+\int_0^{\bar{t}}\int_{\gamma_+}f^{m+1}\varphi-\int_0^{\bar{t}}\int_{\gamma_-}e^{[\frac{1}{4T_M}-\frac{1}{2T_w(x)}]|v|^2}\int_{n\cdot u>0}f^m(u)e^{[\frac{1}{2T_w(x)}-\frac{1}{4T_M}]|u|^2}d\sigma(u,v)\varphi.
\end{align*}

From~\eqref{eqn: L_infty bound for f^m+1} all the terms except $f^{m+1}\{\nabla_x \phi^m\cdot \nabla_v \varphi+\frac{v}{2T_M}\cdot \nabla_x \phi^m\varphi\}, \Gamma_{\text{gain}}(f^m,f^m)\varphi, \Gamma_{\text{loss}}(f^m,f^{m+1})\varphi$ converge to limit with $f$ instead of $f^{m+1}$ or $f^m$.

We define, for $(t,x,v)\in \mathbb{R}\times \bar{\Omega}\times \mathbb{R}^3$ and for $0<\delta\ll 1$,
\begin{equation}\label{eqn:delta cut off}
  f_\delta^m(t,x,v)=\mathrm{k}_\delta(x,v)f^m(t,x,v)=\chi(\frac{|n(x)\cdot v|}{\delta})[1-\chi(\delta|v|)]\chi(\frac{|v|}{\delta}-1)f^m(t,x,v)
\end{equation}
with smooth function
\begin{equation}\label{}
  \chi(x)=\left\{
            \begin{array}{ll}
              0, & \hbox{$x<0$;} \\
              1, & \hbox{$x\geq 1$.}
            \end{array}
          \right.
\end{equation}
Then $f_\delta(t,x,v)=0$ if either $|n(x)\cdot v|\leq \delta$, $|v|\geq \frac{1}{\delta}$, or $|v|<\delta$. First we consider the term $\Gamma_{\text{loss}}(f^m,f^{m+1})\varphi$.
\begin{align*}
   & |\int_0^{\bar{t}} \int_{\Omega\times \mathbb{R}^3}\Gamma_{\text{loss}}(f^m,f^{m+1})\varphi-\Gamma_{\text{loss}}(f,f)\varphi| \\
   & \leq |\int_0^{\bar{t}} \int_{\Omega\times \mathbb{R}^3}\int_{\mathbb{R}^3}|v-u|[f^m(u)-f(u)]\sqrt{\mu(u)}duf^{m+1}(v)\varphi(t,x,v)dvdxdt|\\
   &+|\int_0^{\bar{t}} \int_{\Omega\times \mathbb{R}^3}\int_{\mathbb{R}^3}|v-u|f(u)\sqrt{\mu(u)}du[f^{m+1}(v)-f(v)]\varphi(t,x,v)dvdxdt|.
\end{align*}

The second term converges to zero from~\eqref{eqn: L_infty bound for f^m+1}. Applying the H\"{o}lder inequality the first term is bounded by
\begin{equation}\label{eqn:Gamma_loss}
\begin{split}
&\int_0^{\bar{t}} \int_{\Omega\times \mathbb{R}^3}\int_{\mathbb{R}^3} (|v|+|u|)e^{-\theta'|v|^2}\big(\mathrm{k}_\delta(x,u)+1-\mathrm{k}_\delta(x,u)\big)[f^m(u)-f(u)]\sqrt{\mu(u)}du\\
&\times\varphi(t,x,v)dvdxdt\sup_{0\leq t\leq \bar{t}} \Vert  e^{\theta'|v|^2}f^{m+1}(t)\Vert_\infty\\
&\lesssim \Big[\int_0^{\bar{t}}\int_{\Omega\times \mathbb{R}^3}|v|e^{-\theta'|v|^2} \big(\int_{\mathbb{R}^3} \langle u\rangle \sqrt{\mu(u)} \mathrm{k}_\delta(x,u)[f^m(u)-f(u)]du\big)^2dvdxdt   \Big]^{1/2}\\
&\times \Big[\int_0^{\bar{t}}\int_{\Omega\times \mathbb{R}^3} \varphi^2(t,x,v)dvdxdt \Big]^{1/2}+O(\delta)\\
&\lesssim \Big[\int_{\mathbb{R}^3} |v|e^{-\theta'|v|^2}\big\Vert \int_{\mathbb{R}^3}\mathrm{k}_\delta(x,u)[f^m(t,x,u)-f(t,x,u)]\langle u\rangle\sqrt{\mu}(u)du\big\Vert_{L^2(\Omega\times [0,T])}dv\Big]^{1/2}+O(\delta).
\end{split}
\end{equation}
The $O(\delta)$ is due to the integration with $1-\mathrm{k}_\delta(x,u)$ which is not 0 only when $|u|\leq 2\delta$ or $|u|\leq \frac{1}{\delta}$, or $|n(x)\cdot u|\leq \delta$, and thus
\[\int_0^{\bar{t}} \int_{\Omega\times \mathbb{R}^3}\int_{\mathbb{R}^3}|u|\sqrt{\mu}\mathbf{1}_{|u|\leq 2\delta \text{ or }|u|\geq \delta^{-1}}[\cdots] =O(\delta).\]

From Lemma~\ref{extension} we have an extension $\bar{f}^m(t,x,v)$ of $\mathrm{k}_\delta(x,u)f^m(t,x,v)$. We apply the average lemma in~\cite{Gl} to $f^m(t,x,v)$,
\begin{equation}\label{avergae}
  \sup_m \Vert \int_{\mathbb{R}^3} \bar{f}^m(t,x,u)\langle u\rangle\sqrt{\mu(u)}du\Vert_{H_{t,x}^{1/4}(\mathbb{R}\times \mathbb{R}^3)}<\infty.
\end{equation}
By $H^{1/4}\subset \subset L^2$, up to subsequence we conclude that
\[\int_{\mathbb{R}^3} \mathrm{k}_\delta(x,u)f^m(t,x,u)\langle u\rangle\sqrt{\mu(u)}du\to \int_{\mathbb{R}^3} \mathrm{k}_\delta(x,u)f(t,x,u)\langle u\rangle\sqrt{\mu(u)}du \text{ strongly in }  L^2_{t,x}.\]
We conclude that~\eqref{eqn:Gamma_loss} goes to 0 as $m\to \infty$.

For $\Gamma_{\text{gain}}(f^m,f^m)\varphi$ we use a test function $\varphi_1(v)\varphi_2(t,x)$. By the standard change of variables $(v,u)\to (v',u')$ and $(v,u)\to (u',u)$, we get
\begin{equation}\label{eqn:v to u}
\begin{split}
   & \int_0^{\bar{t}} \int_{\Omega \times \mathbb{R}^3} \Gamma_{\text{gain}}(f^m,f^m)\varphi-\Gamma_{\text{gain}}(f,f)\varphi \\
    & =\int_0^{\bar{t}} \int_{\Omega\times \mathbb{R}^3} \Gamma_{\text{gain}}(f^m-f,f^m)\varphi+\int_0^{\bar{t}} \iint_{\Omega\times \mathbb{R}^3}\Gamma_{\text{gain}}(f,f^m-f)\varphi\\
    &=\int_0^{\bar{t}} \int_{\Omega\times \mathbb{R}^3}\Big(\int_{\mathbb{R}^3}\int_{\mathbb{S}^2}\big(f^m(t,x,u)-f(t,x,u)\big)\sqrt{\mu(u')}|(v-u)\cdot \omega|\varphi_1(u)d\omega du\Big)\times f^m(t,x,v)\varphi_2(t,x)dvdxdt
\end{split}
\end{equation}
\begin{equation}\label{eqn:v to u'}
    +\int_0^{\bar{t}} \int_{\Omega\times \mathbb{R}^3}\Big(\int_{\mathbb{R}^3}\int_{\mathbb{S}^2}\big(f^m(t,x,u)-f(t,x,u)\big)\sqrt{\mu(u)}|(v-u)\cdot \omega|\varphi_1(u')d\omega du\Big)\times f(t,x,v)\varphi_2(t,x)dvdxdt.
\end{equation}
For $N\gg 1$ we decompose the integration of~\eqref{eqn:v to u'} and~\eqref{eqn:v to u} using
\begin{equation}\label{}
  1=\{1-\chi(|u|-N)\}\{1-\chi(|v|-N)\}+\chi(|u|-N)+\chi(|v|-N)-\chi(|u|-N)\chi(|v|-N).
\end{equation}
Note that $\{1-\chi(|u|-N)\}\{1-\chi(|v|-N)\}\neq 0$ if $|v|\leq N+1$ and $|u|\leq N+1$, and if $\chi(|u|-N)+\chi(|v|-N)-\chi(|u|-N)\chi(|v|-N)\neq 0$ then either $|v|\geq N$ or $|u|\geq N$. Take ~\eqref{eqn:v to u} for example, based on the decomposition, the second part of~\eqref{eqn:v to u} are bounded by
\begin{equation}\label{eqn:first b}
\begin{split}
   & \int_0^{\bar{t}} \int_{\Omega\times \mathbb{R}^3}\int_{\mathbb{R}^3}\int_{\mathbb{S}^2}[\cdot\cdot\cdot]\times\{\chi(|u|-N)+\chi(|v|-N)-\chi(|u|-N)\chi(|v|-N)\} \\
   & \lesssim \sup_m\Vert e^{\theta'|v|^2}f^m\Vert_\infty \Vert e^{\theta'|v|^2}f\Vert_\infty \Big(\int_0^{\bar{t}} \int_{\Omega\times \mathbb{R}^3}\int_{\mathbb{R}}e^{-\frac{\theta'}{2}|v|^2}e^{-\frac{\theta'}{2}|u|^2}[\mathbf{1}_{|v|\geq N}+\mathbf{1}_{|u|\geq N}]dudvdxdt \Big)\leq O(\frac{1}{N}).
\end{split}
\end{equation}
Thus we only need to consider the part with $\{1-\chi(|u|-N)\}\{1-\chi(|v|-N)\}$. First we consider the contribution of this part in~\eqref{eqn:v to u}, which equals to
\begin{equation}\label{eqn:Gamma_gain}
\begin{split}
    &   \int_0^{\bar{t}} \int_{\Omega\times \mathbb{R}^3}\int_{\mathbb{R}^3}\Big(f^m(t,x,v)-f(t,x,v)\Big) \\
     &  \times \{1-\chi(|u|-N)\}\Big(\int_{\mathbb{S}^2}\sqrt{\mu(u')}|(v-u)\cdot \omega|\varphi_1(u)d\omega \Big)du\\
     & \times\{1-\chi(|v|-N)\}f^m(t,x,v)\varphi_2(t,x)dvdxdt.
\end{split}
\end{equation}
Define
\begin{equation}\label{eqn:Phi}
  \Phi_v(u)=\{1-\chi(|u|-N)\}\int_{\mathbb{S}^2}\sqrt{\mu(u')}|(v-u)\cdot \omega|\varphi_1(u)d\omega \text{ for } |v|\leq N+1.
\end{equation}

Then~\eqref{eqn:Gamma_gain} is further written as
\begin{equation}\label{eqn:Gamma gain further}
\begin{split}
   & \int_0^{\bar{t}}\int_\Omega \int_{\mathbb{R}^3}
(1-\mathrm{k}_\delta)\Big(f^m(t,x,v)-f(t,x,v)\Big)\Phi_v(u)\{1-\chi(|v|-N)\}f^m(t,x,v)\varphi_2(t,x)dvdxdt. \\
  & + \mathrm{k}_\delta \Big(f^m(t,x,v)-f(t,x,v)\Big)\Phi_v(u)\{1-\chi(|v|-N)\}f^m(t,x,v)\varphi_2(t,x)dvdxdt.
\end{split}
\end{equation}
The first term in~\eqref{eqn:Gamma gain further} is bounded by $O(\delta)\sup_m \Vert e^{\theta'|v|^2}f^m\Vert_\infty,$ introducing $O(\delta)$ error.

To handle the second term in~\eqref{eqn:Gamma gain further}, we form an open cover of $\{v\in \mathbb{R}^3:|v|\leq N+1\}\subset \bigcup_{i=1}^{O(N^3/\delta^3)}B(v_i,\delta)$. We choose $\delta$ to be small enough so that
\begin{equation}\label{eqn:difference}
  |\Phi_v(u)-\Phi_{v_i}(u)|<\epsilon, \text{ if } v\in B(v_i,\delta).
\end{equation}
This leads to
\begin{align*}
   &   \int_0^{\bar{t}}\int_{\Omega}\int_{\mathbb{R}^3}\sum_{i}1_{v\in B(v_i,\delta)}\int_{\mathbb{R}^3}\big(f^m(t,x,u)-f(t,x,u)\big)\big(\Phi_v(u)-\Phi_{v_i}(u)\big)du\\
   & \times \{1-\chi(|v|-N)\}f^m(t,x,v)\varphi_2(t,x)dvdxdt=O(\e).
\end{align*}

%
By rewriting $\Phi_v(u)$ in the second term of~\eqref{eqn:Gamma gain further} as $\Phi_v(u)-\Phi_{v_i}(u)+\Phi_{v_i} u$ we finally obtain
\begin{equation}\label{eqn:Gamma gain bound}
\begin{split}
 \eqref{eqn:Gamma gain further}  &\leq O(\epsilon)+O(\delta)+\int_0^{\bar{t}} \int_{\Omega}\sum_i \int_{\mathbb{R}^3}1_{v\in B(v_i,\delta)}\int_{\mathbb{R}^3}\mathrm{k}_\delta(x,u)(f^m(t,x,u)-f(t,x,u))\Phi_{v_i}(u)du  \\
   &   \times \{1-\chi(|v|-N)\}f^m(t,x,v)\varphi_2(t,x)dvdxdt.
\end{split}
\end{equation}
By the average lemma we can conclude
\begin{equation}\label{eqn:Average lemma for Gamma gain}
\max_{1\leq i \leq O(\frac{N^3}{\delta^3})}\sup_m \Vert \int_{\mathbb{R}^3}\mathrm{k}_\delta(x,u)f^m(t,x,u)\Phi_{v_i}(u)du\Vert_{H_{t,x}^{1/4}(\mathbb{R}\times \mathbb{R}^3)}<\infty.
\end{equation}
For $i=1$ we extract a subsequence $m_1\subset \mathcal{M}_1$ such that
\begin{equation}\label{eqn: Convergent subsequence}
  \int_{\mathbb{R}^3}\mathrm{k}_\delta(x,u)f^{m}(t,x,u)\Phi_{v_i}(u)du\to \int_{\mathbb{R}^3}\mathrm{k}_\delta(x,u)f(t,x,u)\Phi_{v_i}(u)du \text{ strongly in }L_{t,x}^2.
\end{equation}
By the Cantor diagonal argument we can extract convergent subsequences $\mathcal{M}_{O(\frac{N^3}{\delta^3})}\subset \cdots \subset \mathcal{M}_2\subset \mathcal{M}_1$. We take the last subsequence $m\in \mathcal{M}_{O(\frac{N^3}{\delta^3})}$ and define it as $f^m$. Then we have~\eqref{eqn: Convergent subsequence} for all $i$. Thus we conclude
\begin{equation}\label{eqn: second b}
\eqref{eqn:Gamma_gain}\leq C_{\varphi_2,N}\sup_m\Vert e^{\theta'|v|^2}f^m\Vert_\infty \max_i\int_0^{\bar{t}} \Big\Vert\int_{\mathbb{R}^3}\mathrm{k}_\delta(x,u)(f^m(t,x,u)-f(t,x,u))\Phi_{v_i}(u)du\Big\Vert_{L^2_{t,x}} \to 0.
\end{equation}
Combining~\eqref{eqn:first b}~\eqref{eqn: second b} we conclude $\eqref{eqn:v to u}$ goes to 0.

Similarly we can prove~\eqref{eqn:v to u'} goes to 0. This in the end gives the convergence of $\Gamma_{\text{gain}}(f^m,f^m)$

Finally for the last term $f^{m+1}\{\nabla_x \phi^m\cdot \nabla_v \varphi+\frac{v}{2T_M}\cdot \nabla_x \phi^m \varphi\}$, from
\[-(\Delta \phi^m-\Delta \phi)=\int \mathrm{k}_\delta (f^m-f)\sqrt{\mu}+\int(1-\mathrm{k}_\delta)(f^m-f)\sqrt{\mu}.\]
By the standard elliptic estimate we have
\begin{equation}\label{eqn:phi^m}
  \Vert \nabla_x \phi^m-\nabla_x \phi\Vert_{L^2_{t,x}}\leq \Vert \mathrm{k}_\delta (f^m-f)\sqrt{\mu}\Vert_{L_{t,x}^2}+O(\delta)\sup_m\Vert e^{\theta|v|^2}f^m\Vert_\infty.
\end{equation}
By the previous argument with the average lemma we can prove $\nabla_x \phi^m\to \nabla_x \phi$ strongly in $L_{t,x}^2$ as $m\to \infty$. We have
\begin{align*}
   & \int_0^{\bar{t}} \int_{\Omega\times \mathbb{R}^3} f^{m+1}\{\nabla_x \phi^m\cdot \nabla_v \varphi+\frac{v}{2}\cdot \nabla_x \phi^m\varphi\}-f\{\nabla_x \phi\cdot \nabla_v \varphi+\frac{v}{2}\cdot \nabla_x \phi\varphi\} dvdxdt \\
   & \leq \int_0^{\bar{t}} \int_{\Omega\times \mathbb{R}^3}(f^{m+1}-f)\{\nabla_x \phi^m\cdot \nabla_v \varphi+\frac{v}{2}\cdot \nabla_x \phi^m\varphi\}    dvdxdt\\
   &+\int_0^{\bar{t}}\int_{\Omega\times \mathbb{R}^3}f\{\nabla_x(\phi^m-\phi)\cdot \nabla_v\varphi+\frac{v}{2}\cdot \nabla_x (\phi^m-\phi)\varphi\}dvdxdt.
\end{align*}

The first term goes to 0 by the weak* convergence of $e^{\theta|v|^2}f$ in $L^\infty$, the second term goes to 0 by~\eqref{eqn:phi^m}. This proves the existence of a weak solution $f\in L^\infty$.

\end{proof}

The next proposition gives the uniqueness of the VPB system.

\begin{proposition}\label{Prop uniqueness}
Assume $\Vert w_\theta f_0\Vert_\infty<\infty$ and $T_w$ satisfies~\eqref{eqn: Constrain on T}. Assume $ \| w_{\tilde{\theta}} \alpha_{f_0 ,\epsilon}^\beta \nabla_{x,v} f _0 \|_{p} <\infty$ for $0< \epsilon \ll1$ , and $(p, \beta)$ satisfy~\eqref{Condition for p}.

Also if $\| w_{\tilde{\theta}} \nabla_v f_0 \|_{L^{3}_{x,v}}<+\infty,$ then for $t\leq \bar{t}$ there is a unique solution to~\eqref{equation for f} satisfying~\eqref{31_local_bound} and~\eqref{W1p_local_bound}.

\end{proposition}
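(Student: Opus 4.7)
The plan is to combine the existence construction of Proposition \ref{Prop existence} with the $L^{1+\delta}$-stability of Proposition \ref{L1+stability}, once we verify that the solution produced by the iterative scheme actually inherits the weighted $W^{1,p}$ bound \eqref{W1p_local_bound} and the $L^3_xL^{1+\delta}_v$ bound \eqref{31_local_bound} needed to apply Proposition \ref{L1+stability}. In other words, the bulk of the proof is to push every uniform-in-$m$ bound established in Sections 2--4 to the weak limit $f=\lim_{m\to\infty}f^m$.

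First I would collect the ingredients already in hand. From Proposition \ref{proposition: boundedness} we have the uniform $L^\infty$ bound $\sup_m\|w_{\theta'}f^m\|_\infty<\infty$, hence by the weak-$*$ convergence established in Proposition \ref{Prop existence} the limit satisfies \eqref{infty_local_bound}. From Proposition \ref{Prop W1p} we have $\sup_m\mathcal{E}^m(t)\lesssim 1$ on $[0,\bar t]$, and from Proposition \ref{proposition: L3L1 estimate} we have $\sup_m\|e^{-\lambda t\langle v\rangle}\nabla_v f^m(t)\|_{L^3_xL^{1+\delta}_v}\lesssim 1$. By weak compactness of reflexive spaces, up to a subsequence $\nabla_{x,v}f^m\rightharpoonup \nabla_{x,v}f$ in $L^p_{t,x,v}$ and $\nabla_v f^m\rightharpoonup\nabla_v f$ in $L^{1+\delta}$-type spaces.

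The key technical point - and the main obstacle - is passing the \emph{$\alpha$-weighted} bound through the limit, since $\alpha_{f^{m-1},\epsilon}$ depends on the field $\phi^{m-1}$ through the characteristics. I would argue as follows: since $\phi^m\to\phi$ strongly in $L^2_{t,x}$ (shown in \eqref{eqn:phi^m}) and $\sup_m\|\phi^m\|_{C^{1,1-\delta}}<\infty$ by Lemma \ref{lemma: phi_inf}, we can (along a further subsequence) upgrade to $\nabla_x\phi^m\to\nabla_x\phi$ uniformly on $\overline{\Omega}\times[0,\bar t]$ via Arzel\`a--Ascoli. The Hamiltonian flow thus converges, so the backward exit time $\tb^{\phi^{m-1}}\to\tb^\phi$ and $(x_\mathbf{b}^{\phi^{m-1}},v_\mathbf{b}^{\phi^{m-1}})\to(x_\mathbf{b}^\phi,v_\mathbf{b}^\phi)$ pointwise a.e.~on $\{\alpha_{f,\epsilon}\neq 0\}$ by convexity of $\Omega$ and Velazquez-type continuity of the exit map. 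Consequently $\alpha_{f^{m-1},\epsilon}\to\alpha_{f,\epsilon}$ a.e. Combined with the uniform $L^p$ bound on $w_{\tilde\theta}e^{-\lambda t\langle v\rangle}\alpha_{f^{m-1},\epsilon}^\beta\nabla_{x,v}f^m$ and the weak convergence $\nabla_{x,v}f^m\rightharpoonup\nabla_{x,v}f$, a standard lower semicontinuity argument (Fatou in the boundary term and weak lower semicontinuity in the bulk term) yields
\[
\sup_{0\le t\le\bar t}\Big\{\|w_{\tilde\theta}e^{-\lambda t\langle v\rangle}\alpha_{f,\epsilon}^\beta\nabla_{x,v}f(t)\|_p^p+\int_0^t|w_{\tilde\theta}e^{-\lambda s\langle v\rangle}\alpha_{f,\epsilon}^\beta\nabla_{x,v}f(s)|_{p,+}^p\Big\}<\infty,
\]
which is \eqref{W1p_local_bound}. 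An analogous (simpler) weak-limit argument gives \eqref{31_local_bound} from the uniform bound in Proposition \ref{proposition: L3L1 estimate}.

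With these three estimates in hand for the constructed solution $f$, uniqueness is immediate: suppose $f$ and $g$ are two solutions on $[0,\bar t]$ satisfying \eqref{infty_local_bound}, \eqref{31_local_bound}, \eqref{W1p_local_bound} with the same initial datum $f_0$. Then both meet the hypotheses of Proposition \ref{L1+stability} (the assumptions in Proposition \ref{Prop: L3L1 for nabla f} are precisely $\|w_{\theta'}f\|_\infty<\infty$ and the weighted $W^{1,p}$ bound together with \eqref{Extra_uniq}), and applying \eqref{1+delta_stability for f} yields
\[
\|e^{-\lambda t\langle v\rangle}[f(t)-g(t)]\|_{L^{1+\delta}(\Omega\times\R^3)}\lesssim\|f_0-g_0\|_{L^{1+\delta}(\Omega\times\R^3)}=0,
\]
so $f\equiv g$ on $[0,\bar t]$. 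The time $\bar t$ is the one in Proposition \ref{Prop: L1+stability Cauchy}, depending only on the quantities listed in \eqref{eqn: bar t in thm}, which completes the proof. The most delicate step, and the one that would require the most care to write out rigorously, is the a.e.~convergence $\alpha_{f^{m-1},\epsilon}\to\alpha_{f,\epsilon}$; everything else is routine weak/weak-$*$ compactness combined with the already-proven uniform estimates.
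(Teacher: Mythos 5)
Your overall architecture matches the paper's: take the limit $f$ of the iterates, push the uniform weighted $W^{1,p}$ and $L^3_xL^{1+\delta}_v$ bounds to the limit, then conclude uniqueness from Proposition \ref{Prop: L3L1 for nabla f} and Proposition \ref{L1+stability}. The final stability step is handled correctly. The gap is in the passage to the limit of the weighted derivative bound. You assert that ``by weak compactness of reflexive spaces, up to a subsequence $\nabla_{x,v}f^m\rightharpoonup\nabla_{x,v}f$ in $L^p_{t,x,v}$.'' This is not available: the only uniform-in-$m$ bound is on $w_{\tilde\theta}e^{-\lambda t\langle v\rangle}\alpha^\beta_{f^{m-1},\epsilon}\nabla_{x,v}f^m$, and $\alpha_{f^{m-1},\epsilon}$ degenerates on the grazing set, exactly where $\nabla_{x,v}f^m$ blows up like $|n(\xb)\cdot\vb|^{-1}$ (cf.~\eqref{deriv_singular}). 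So the unweighted derivatives need not be bounded in $L^p$ and the weak limit you invoke cannot be extracted globally. To identify the weak limit $\mathcal{F}$ of the weighted sequence one must work on sets where $\alpha_{f^{m-1},\epsilon}$ is bounded below \emph{uniformly in $m$}, and that lower bound is the genuinely hard part: it is not enough that $(x,v)$ itself is non-grazing, because $\alpha_{f^{m-1},\epsilon}(t,x,v)$ involves the normal velocity at the backward exit point of the $m$-dependent characteristics, so one must rule out, uniformly in $m$, that a trajectory launched from a non-grazing $(x,v)$ with $|v|\le N$ exits almost tangentially. The paper does this in Steps 1--3 of its proof, using convexity \eqref{eqn: convex}, the Neumann condition $n\cdot\nabla\phi^m=0$ on $\partial\Omega$, and the ODE analysis of Lemma \ref{cannot_graze}, arriving at the uniform lower bound \eqref{lower_bound_V_n_final} on $(\mathcal{S}_N)^c$. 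Your proposal supplies no substitute for this estimate, and without it neither the extraction of a weak limit of $\nabla_{x,v}f^m$ nor the identification $\mathcal{F}=w_{\tilde\theta}\alpha^\beta_{f,\epsilon}\nabla_{x,v}f$ (nor the boundary-trace part of \eqref{W1p_local_bound}) goes through.

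A secondary remark: your route to $\alpha_{f^{m-1},\epsilon}\to\alpha_{f,\epsilon}$ a.e.\ (uniform convergence of $\nabla_x\phi^m$ by Arzel\`a--Ascoli, hence convergence of exit times and exit points) is plausible but itself leans on the same missing non-grazing information, since the exit map is discontinuous at grazing exits. The paper instead identifies the weak-$*$ limit of $\alpha^\beta_{f^\ell,\epsilon}$ by passing to the limit in the transport equation it satisfies, with boundary data $|n\cdot v|^\beta$, and invoking uniqueness for the limiting Vlasov equation; and it identifies $\mathcal{F}$ by integrating by parts against test functions supported away from the grazing set, which moves the derivative onto $\alpha^\beta_{f^\ell,\epsilon}\psi$ and again requires the uniform lower bound on $\alpha$ to control $\nabla_{x,v}\alpha^\beta_{f^\ell,\epsilon}$ in the term \eqref{af_l_3}. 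Either way, the quantitative non-grazing estimate is unavoidable and is the piece your argument must add.
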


Before proving the proposition we need this lemma. Such lemma will be used in proving the convergence of~\eqref{af_l_3}.
\begin{lemma}\label{cannot_graze}Assume that $\O$ is convex~\eqref{eqn: convex}. Suppose that $\sup_t\| E(t) \|_{C^1_x} < \infty$ and
	\Be\label{nE=0}
	n(x) \cdot E(t,x) =0 \ \ \text{for } x \in \p\O \ \text{and for all  } t.
	\Ee
	Assume $(t,x,v) \in \R_+ \times \bar{\O} \times \R^3$ and $t+1 \geq \tb(t,x,v)$. If $x \in \p\O$ then we further assume that $n(x) \cdot v > 0$. Then we have
	\Be
	n(\xb(t,x,v)) \cdot \vb(t,x,v) <0.\label{no_graze}
	\Ee
\end{lemma}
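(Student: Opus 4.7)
The plan is the standard convexity/Taylor-expansion argument: compare the trajectory against a boundary-defining function and exploit the strict convexity condition \eqref{eqn: convex} together with the tangential constraint \eqref{nE=0} on $E$.

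Choose a smooth function $\xi:\R^3\to\R$ with $\Omega=\{\xi>0\}$, $\partial\Omega=\{\xi=0\}$, and $\nabla\xi=-n$ on $\partial\Omega$ (so $\xi$ grows inward). Differentiating the boundary relation $\xi(\eta_p(x_\parallel))\equiv 0$ twice and substituting a tangent vector $\tau$ in the sums, one obtains $\tau^T\nabla^2\xi(x)\tau = n(x)\cdot \sum_{i,j}\tau_i\tau_j\partial_i\partial_j\eta_p(x_\parallel)$, which by the upper bound in \eqref{eqn: convex} gives the key convexity estimate
\begin{equation}\label{eqn: key_convex}
\tau^T\nabla^2\xi(x)\tau\leq -C_\Omega|\tau|^2 \qquad\text{for }x\in\partial\Omega,\ \tau\perp n(x).
\end{equation}
Define $\alpha(s):=\xi(X(s;t,x,v))$ on $[t-\tb,t]$. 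By the definition of $\tb$, $\alpha(t-\tb)=0$ and $\alpha(s)>0$ for $s\in(t-\tb,t)$ (the hypothesis $n(x)\cdot v>0$ when $x\in\partial\Omega$ guarantees the trajectory moves into $\Omega$ immediately backward in time, so the open interval is nonempty). Thus $\alpha$ attains a one-sided minimum at $s=t-\tb$.

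The first-order condition $\dot\alpha(t-\tb)\geq 0$ reads $\vb\cdot\nabla\xi(\xb)=-n(\xb)\cdot\vb\geq 0$, which already gives $n(\xb)\cdot\vb\leq 0$. Suppose for contradiction $n(\xb)\cdot\vb=0$, i.e.\ $\vb$ is tangent to $\partial\Omega$ at $\xb$. Using \eqref{hamilton_ODE} I compute
\begin{equation}\label{eqn: second_der}
\ddot\alpha(s) = V(s)^T\nabla^2\xi(X(s))V(s) - E(s,X(s))\cdot\nabla\xi(X(s)).
\end{equation}
At $s=t-\tb$, the second term vanishes because $E(t-\tb,\xb)\cdot\nabla\xi(\xb)=-E\cdot n(\xb)=0$ by \eqref{nE=0}. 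Since $\vb$ is tangential, \eqref{eqn: key_convex} gives $\ddot\alpha(t-\tb)\leq -C_\Omega|\vb|^2$. The second-order minimum condition $\ddot\alpha(t-\tb)\geq 0$ then forces $\vb=0$, which is the delicate case I will need to rule out separately.

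If $\vb=0$, I expand to higher order. From $\dot V=-E$ and the $C^1_x$ regularity of $E$, $V(s)=-E(t-\tb,\xb)(s-(t-\tb))+O((s-(t-\tb))^2)$, and $X(s)-\xb=-\tfrac12 E(t-\tb,\xb)(s-(t-\tb))^2+O((s-(t-\tb))^3)$. Since \eqref{nE=0} forces $E(t-\tb,\xb)$ to be tangent to $\partial\Omega$, both $V(s)$ and $X(s)-\xb$ are purely tangential to leading order. Expanding $\xi$ around $\xb$, the linear term vanishes (because $\nabla\xi(\xb)\cdot E=0$), and applying \eqref{eqn: key_convex} to the quadratic term yields
\begin{equation}
\xi(X(s)) \leq -\tfrac{C_\Omega}{8}|E(t-\tb,\xb)|^2(s-(t-\tb))^4 + O((s-(t-\tb))^5),
\end{equation}
which is strictly negative for $s$ slightly greater than $t-\tb$ whenever $E(t-\tb,\xb)\neq 0$, contradicting $X(s)\in\Omega$. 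The residual sub-case $\vb=0$ and $E(t-\tb,\xb)=0$ will be ruled out from the ODE \eqref{hamilton_ODE}, since the characteristic through $(\xb,0)$ would be stationary and hence $X(\tau)\equiv\xb\in\partial\Omega$ for all $\tau$, violating $X(\tau)\in\Omega$ on $(t-\tb,t)$ (this uses that the trajectory must enter $\Omega$, which is forced either by $x\in\mathring\Omega$ or by the hypothesis $n(x)\cdot v>0$ when $x\in\partial\Omega$).

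The main obstacle will be this last degenerate case; the other steps are essentially algebraic manipulations of \eqref{hamilton_ODE} and \eqref{eqn: convex}. I expect that a careful bookkeeping of the lowest non-vanishing Taylor coefficient of $\alpha$, combined with \eqref{nE=0} and strict convexity, suffices in every sub-case.
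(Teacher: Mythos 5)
Your first- and second-order steps are fine: the one-sided minimum of $\xi(X(s))$ at $s=t-\tb$ gives $n(\xb)\cdot\vb\le 0$, and if $\vb$ is tangential and nonzero the strict convexity \eqref{eqn: convex} together with \eqref{nE=0} forces $\frac{d^2}{ds^2}\xi(X(s))|_{s=t-\tb}<0$, a contradiction. The gap is in the degenerate case $\vb=0$, and it is genuine on both branches. First, the sub-case $\vb=0$, $E(t-\tb,\xb)=0$: a zero of $E$ at a single point and a single time is \emph{not} a rest point of the flow \eqref{hamilton_ODE}, because $E$ depends on $(t,x)$; e.g.\ $\dot V(s)=-E(s,X(s))$ can be nonzero for $s$ arbitrarily close to $t-\tb$, so the trajectory is not stationary and your contradiction evaporates. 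Second, the sub-case $\vb=0$, $E(t-\tb,\xb)\neq 0$: your fourth-order expansion drops a term of exactly the critical order. The linear term $\nabla\xi(\xb)\cdot(X(s)-\xb)=-n(\xb)\cdot(X(s)-\xb)$ does not vanish to order $4$; since $X(s)-\xb\approx-\tfrac12 E\,\sigma^2$ is tangential, the variation of the normal (and of $E$) along this tangential drift produces a contribution $n(\xb)\cdot E(\tau',X(\tau'))=O(\|\nabla n\|\,|E|^2\sigma'^2)$, whose double integral is $O(|E|^2\sigma^4)$ with indeterminate sign and a constant that need not be dominated by $C_\Omega$. So the quartic coefficient is not sign-definite and the expansion does not close. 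More fundamentally, $E$ could vanish to all orders at $(t-\tb,\xb)$, so no finite Taylor expansion can settle the degenerate case.

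The paper avoids this by a Gronwall argument on the normal components in boundary-fitted coordinates: writing $\dot X_n=V_n$ and the equation \eqref{hamilton_ODE_perp} for $\dot V_n$, the curvature term is $\le 0$ by \eqref{eqn: convex}, and \eqref{nE=0} gives $E\cdot n=O(X_n)$ via the expansion \eqref{expansion_E}, so on any interval where $V_n\ge 0$ one gets $\frac{d}{ds}[X_n+V_n]\lesssim X_n+V_n$ as in \eqref{ODE_X+V}. With $X_n(t-\tb)+V_n(t-\tb)=0$ this forces $X_n\equiv V_n\equiv 0$ up to time $t$, i.e.\ $x\in\p\O$ and $n(x)\cdot v=0$, contradicting the hypothesis that either $x\in\O$ or $n(x)\cdot v>0$. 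You should replace your Taylor bookkeeping in the degenerate case by this differential-inequality/Gronwall step (or an equivalent bootstrap), which handles all orders of degeneracy at once.
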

\begin{proof}\hide  \textit{Step 1.}  We claim that for all $(t,x) \in [ 0, \infty) \times \bar{\O}$ as $N \rightarrow \infty$
	\Be\label{conv_NN}
	\mathbf{1}_{\tb(t,x,u)< N}
	\mathbf{1}_{n(\xb(t,x,u)) \cdot \vb(t,x,u) < -\frac{1}{N}}
	\nearrow \mathbf{1}_{\tb(t,x,u)< \infty} \ \ \text{almost every } u \in \R^3.
	\Ee
	First we prove that, for fixed $N \in \mathbb{N}$, as $M \rightarrow \infty$
	\Be\label{conv_NM}
	\mathbf{1}_{\tb(t,x,u)< N}
	\mathbf{1}_{n(\xb(t,x,u)) \cdot \vb(t,x,u) < -\frac{1}{M}}
	\nearrow \mathbf{1}_{\tb(t,x,u)< N} \ \ \text{almost every } u \in \R^3.
	\Ee
	Since $\mathbf{1}_{\tb(t,x,u)< N}$ converges to $\mathbf{1}_{\tb(t,x,u)< \infty}$ as $N \rightarrow \infty$ we can apply Cantor's diagonal argument to conclude (\ref{conv_NN}) from (\ref{conv_NM}).\unhide
	

	\textit{Step 1.} Note that locally we can parameterize the trajectory (see Lemma 15 in \cite{GKTT} for details). We consider local parametrization~\eqref{eqn: C3 map}. We drop the subscript $p$ for the sake of simplicity. If $X(s;t,x,v)$ is near the boundary then we can define $(X_n, X_\parallel)$ to satisfy
	\Be\label{X_local}
	X(s;t,x,v)  =   \eta (X_\parallel (s;t,x,v)) + X_n(s;t,x,v) [- n(X_\parallel(s;t,x,v))].
	\Ee
	
	\hide $\eta$ near $\p\O$ such that
	\Be
	\eta : (x,y,z)\in B(0,r_{1})\cap \R^{3}_{+} \mapsto B(p,r_{2})\cap\O,
	\Ee
	where $\eta(0)=p\in\p\O$ and $\eta(x,y,0) \in \p\O$ for some $r_{1}, r_{2} > 0$. Then for $X(s;t,x,v)$, there exist a unique $x_{*}\in B(p,r_{2})\cap \p\O$ such that
	\Be \label{X def}
	|X(s;t,x,v)-x_{*}| \leq \sup_{x\in B(p,r_{2})\cap \p\O} |X(s;t,x,v)-x|,
	\Ee
	since $\eta$ is bijective. We define
	\[
	(X_{\parallel},0) = \eta^{-1}(x_{*}) \quad \text{and} \quad  X_{n} := |X(s;t,x,v)-x_{*}|.
	\]\unhide
	For the normal velocity 
	we define
	\Be\label{def_V_n}
	V_{n}(s;t,x,v) := V(s;t,x,v)\cdot [-n(X_{\parallel}(s;t,x,v))].
	\Ee
	We define $V_{\parallel}$ tangential to the level set $  \big( \eta(X_{\parallel}) + X_{n}(-n(X_{\parallel})) \big)$ for fixed $X_{n}$. Note that
	\[
	\frac{\p   \big( \eta(x_{\parallel} ) + x_{n}(-n(x_{\parallel})) \big) }{\p {x_{\parallel, i}}}  \perp n(x_{\parallel}) \ \  \text{for} \ i=1,2.
	\]
	We define $(V_{\parallel,1}, V_{\parallel,2})$ as
	\Be\label{def_V_parallel}
	V_{\parallel, i} :=\Big( V - V_{n}[-n(X_{\parallel})]\Big) \cdot   \Big(
	\frac{\p   \eta(X_{\parallel} )}{\p {x_{\parallel, i}} } + X_{n}\Big[- \frac{\p n(X_{\parallel})}{\p x_{\parallel, i}}\Big] \Big)  .
	\Ee
	\hide \Be\notag
	\begin{split}
		&\sum_{i=1,2} V_{\parallel,i} \p_{i} \big( \eta(X_{\parallel},0) + X_{n}(-n(X_{\parallel})) \big)  \\
		&= \nabla_{\parallel} \big( \eta(X_{\parallel},0) + X_{n}(-n(X_{\parallel})) \big) V_{\parallel} = V - V_{n}(-n(X_{\parallel}))  .
	\end{split}
	\Ee\unhide
	Therefore we obtain
	\Be \label{V_local}
	V(s;t,x,u)   = V_n [- n(X_\parallel)] + V_\parallel \cdot \nabla_{x_\parallel} \eta (X_\parallel )
	- X_n V_\parallel \cdot \nabla_{x_\parallel} n (X_\parallel).
	\Ee
	
	Directly we have
	\Be \begin{split}\notag
		\dot{X}(s;t,x,u) &=\dot{X}_{\parallel} \cdot \nabla_{x_\parallel}\eta (X_\parallel) + \dot{X}_n [- n(X_\parallel)] - X_{n}\dot{X}_{\parallel}  \cdot \nabla_{x_\parallel} n(X_{\parallel}) .
	\end{split}\Ee
	Comparing coefficients of normal and tangential components, we obtain that
	\Be\label{dot_Xn_Vn}
	\dot{X}_{n}(s;t,x,v) = V_{n}(s;t,x,v) , \ \  \dot{X}_{\parallel}(s;t,x,v) = V_{\parallel}(s;t,x,v).
	\Ee
	
	On the other hand, from (\ref{V_local}),
	\Be \begin{split} \label{Vdotn}
		\dot{V} (s) &=  \dot{V}_{n} [-n(X_{\parallel})] - V_{n} \nabla_{x_\parallel} n(X_{\parallel})\dot{X}_{\parallel} + V_{\parallel}\cdot\nabla^{2}_{x_\parallel}\eta(X_{\parallel}) \dot{X}_{\parallel} + \dot{V}_{\parallel} \cdot \nabla_{x_\parallel}\eta(X_{\parallel})  \\
		&\quad - \dot{X}_{n}\nabla_{x_\parallel} n(X_{\parallel})V_{\parallel} - X_{n}\nabla_{x_\parallel} n(X_{\parallel})\dot{V}_{\parallel} - X_{n} V_{\parallel}\cdot\nabla_{x_\parallel}^{2}n(X_{\parallel})\dot{X}_{\parallel}.
	\end{split}\Ee
	From $(\ref{Vdotn})\cdot [-n(X_{\parallel})]$, (\ref{dot_Xn_Vn}), and $\dot{V}=E$, we obtain that
	\Be \begin{split}\label{hamilton_ODE_perp}
		\dot{V}_n (s)
		&=  [V_\parallel (s)\cdot \nabla^2 \eta (X_\parallel(s)) \cdot V_\parallel(s) ] \cdot n(X_\parallel(s))
		+  E (s , X (s ) ) \cdot [-n(X_\parallel(s)) ] \\
		&\quad - X_n (s) [V_\parallel(s) \cdot \nabla^2 n (X_\parallel(s)) \cdot V_\parallel(s)]  \cdot n(X_\parallel(s)) .
	\end{split}\Ee
	
	\vspace{4pt}
	
	\textit{Step 2.} We prove (\ref{no_graze}) by the contradiction argument. Assume we choose $(t,x,v)$ satisfying the assumptions of Lemma \ref{cannot_graze}. Let us assume
	\Be\label{initial_00}
	X_n (t-\tb;t,x,v)  +V_n (t-\tb;t,x,v) =0.
	\Ee
	
	First we choose $0<\epsilon \ll 1$ such that $X_n(s;t,x,v) \ll 1$ and
	\Be\label{Vn_positive}
	V_n (s;t,x,v) \geq0 \ \ \text{for} \  t- \tb(t,x,v)<s<t-\tb(t,x,v) + \epsilon.
	\Ee
	The sole case that we cannot choose such $\epsilon>0$ is when there exists $0< \delta\ll1$ such that $V_n(s;t,x,v)<0$ for all $s \in ( t-\tb(t,x,v), t-\tb(t,x,v) + \delta)$. But from (\ref{dot_Xn_Vn}) for $s \in ( t-\tb(t,x,v), t-\tb(t,x,v) + \delta)$
	$$
	0 \leq X_n(s;t,x,v)   =  X_n(t-\tb(t,x,v);t,x,v)  +  \int^s_{t-\tb(t,x,v)} V_n (\tau; t,x,v) \dd \tau <  0.$$
	
	Now with $\epsilon>0$ in (\ref{Vn_positive}), temporarily we define that $t_* := t-\tb(t,x,v) + \epsilon$, $x_* = X(t-\tb(t,x,v) + \epsilon; t,x,v),$ and $v_* = V(t-\tb(t,x,v) + \epsilon; t,x,v)$. Then $(X_n(s;t,x,v), X_\parallel (s;t,x,v)) = (X_n(s; t_*, x_*, v_*), X_\parallel (s; t_*, x_*, v_*))$ and \\$(V_n(s;t,x,v), V_\parallel (s;t,x,v)) = (V_n(s; t_*, x_*, v_*), V_\parallel (s; t_*, x_*, v_*))$.
	\hide
	From (\ref{hamilton_ODE}), for $(X_n(s), X_\parallel (s)) = (X_n(s; t_*, x_*, u_*), X_\parallel (s; t_*, x_*, u_*))$ and $(V_n(s), V_\parallel (s)) = (V_n(s; t_*, x_*, u_*), V_\parallel (s; t_*, x_*, u_*))$,
	\Be \begin{split}\label{hamilton_ODE_perp}
		\dot{X}_n (s)  =& V_n (s),\\
		\dot{V}_n (s)  =&
		[V_\parallel (s)\cdot \nabla^2 \eta (X_\parallel(s)) \cdot V_\parallel(s) ] \cdot n(X_\parallel(s))
		+   \nabla\phi (s , X (s ) ) \cdot n(X_\parallel(s))  \\
		&+   X_n (s) [V_\parallel(s) \cdot \nabla^2 n (X_\parallel(s)) \cdot V_\parallel(s)]  \cdot n(X_\parallel(s)) .
	\end{split}\Ee\unhide
	
	Now we consider the RHS of (\ref{hamilton_ODE_perp}). From~\eqref{eqn: convex}, the first term $[V_\parallel(s) \cdot \nabla^2 \eta (X_\parallel(s)) \cdot V_\parallel(s) ] \cdot n(X_\parallel(s))\leq 0$. By an expansion and (\ref{nE=0}) we can bound the second term
	\Be\begin{split}\label{expansion_E}
		&E (s , X(s )) \cdot n(X_\parallel(s ) )\\
		=&   \ E (s , X_n(s ), X_\parallel(s ) ) \cdot n(X_\parallel (s )) \\
		=& \  E (s , 0, X_\parallel(s ) ) \cdot n(X_\parallel (s ))
		+ \| E (s) \|_{C_x^1}  O( |X_n(s )| )\\
		=&  \ \| E (s) \|_{C_x^1}  O( |X_n(s )| ).
	\end{split}\Ee
	From (\ref{hamilton_ODE}) and assumptions of Lemma \ref{cannot_graze},
	$$|V_\parallel (s;t,x,v )|\leq |v| + \tb(t,x,v) \| E \|_\infty  \leq |v| +  (1+t) \| E \|_\infty.$$
	Combining the above results with (\ref{hamilton_ODE_perp}), we conclude that
	\Be\label{eqn: dot V_n}
	\dot{V}_n(s;t_*,x_*,v_*) \lesssim  ( |v| + (1+ t) \| E\|_\infty  )^2X_n(s;t_*,x_*,v_*) ,
	\Ee
	and hence from (\ref{dot_Xn_Vn}) for $t-\tb(t,x,v)\leq s \leq t_*$
	\Be\label{ODE_X+V}
	\begin{split}
		&\frac{d}{ds} [X_n (s;t_*,x_*,v_* )  +V_n (s ;t_*,x_*,v_*) ]\\
		\lesssim & \ ( |v| + (1+ t) \| E\|_\infty  )^2  [X_n (s;t_*,x_*,v_* )  +V_n (s;t_*,x_*,v_* ) ].\end{split}
	\Ee
	By the Gronwall inequality and (\ref{initial_00}), for $t-\tb(t,x,v)\leq s \leq t_*$
	\Be \begin{split}\notag
		& [X_n (s;t_*,x_*,v_*)  +V_n (s;t_*,x_*,v_*) ]  \\
		\lesssim & \   [X_n (t-\tb(t,x,u))  +V_n (t-\tb(t,x,u)) ] e^{C \epsilon ( |v| + (1+ t) \| E\|_\infty  )^2) }\\
		=&  \ 0.
	\end{split}\Ee
	
	From (\ref{Vn_positive}) we conclude that $X_n (s;t,x,v) \equiv 0$ and $V_n (s;t,x,v) \equiv 0$ for all $s \in [t-\tb(t,x,u), t-\tb(t,x,u) + \epsilon]$. We can continue this argument successively to deduce that $X_n (s;t,x,v) \equiv 0$ and $V_n (s;t,x,v) \equiv 0$ for all $s \in [t-\tb(t,x,v), t]$. Therefore $x_n =0 = v_n$ which implies $x \in \p\O$ and $n(x) \cdot v =0$. This is a contradiction since we chose $n(x) \cdot v>0$ if $x \in \p\O$.\end{proof}

\hide\begin{lemma} If $n(\xb(t,x,v)) \cdot \vb(t,x,v) \neq 0$ then $(\tb,\xb,\vb)$ is differentiable and
	\Be\begin{split}\label{computation_tb_x}
		\frac{\p\tb}{\p x_i}  =& \  \frac{1}{n(\xb) \cdot \vb}n(\xb) \cdot  \left[
		e_i + \int^{t-\tb}_t \int^s_t \Big(\frac{\p X(\tau )}{\p x_i} \cdot \nabla\Big) E(\tau, X(\tau )) \dd \tau \dd s
		\right]  ,\\
		\frac{ \p\xb}{\p x_i} = & \  e_i - \frac{\p \tb}{\p x_i} \vb + \int^{t-\tb}_{t} \int^s_t    \Big(\frac{\p X(\tau )}{\p x_i} \cdot \nabla\Big) E(\tau, X(\tau ))  \dd \tau \dd s,\\
		\frac{\p \vb}{\p x_i} = & \ - \frac{\p \tb}{\p x_i} E(t-\tb, \xb) + \int^{t-\tb}_t   \Big(\frac{\p X(\tau )}{\p x_i} \cdot \nabla\Big) E(\tau, X(\tau )) \dd \tau,\\
		\frac{\p\tb}{\p v_i}  =& \  \frac{1}{n(\xb) \cdot \vb}n(\xb) \cdot  \left[
		e_i + \int^{t-\tb}_t \int^s_t \Big(\frac{\p X(\tau )}{\p v_i} \cdot \nabla\Big) E(\tau, X(\tau )) \dd \tau \dd s
		\right]  ,\\
		\frac{\p \xb}{\p v_i} = & \ - \tb e_i - \frac{
			\p \tb}{\p v_i} \vb + \int^{t-\tb}_{t} \int^s_t    \Big(\frac{\p X(\tau )}{\p v_i} \cdot \nabla\Big) E(\tau, X(\tau ))  \dd \tau \dd s ,\\
		\frac{\p \vb}{\p v_i} = & \ e_i - \frac{\p \tb}{\p v_i} E(t-\tb, \xb) + \int^{t-\tb}_t
		\Big(\frac{\p X(\tau )}{\p v_i} \cdot \nabla\Big) E(\tau, X(\tau ))  \dd \tau.
	\end{split} \Ee
	where $(X(\tau),V(\tau))= (X(\tau;t,x,v), V(\tau;t,x,v))$.
\end{lemma}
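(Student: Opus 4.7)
The plan is to treat the backward exit time as an implicit function and apply the implicit function theorem. Since $\xb \in \p\O$ and $\p\O$ is $C^3$, there is a neighborhood of $\xb$ and a $C^2$ local defining function $\xi$ with $\{\xi = 0\} = \p\O$, $\xi > 0$ inside $\O$, and $\nabla \xi(\xb) = n(\xb)$. Then $\tb(t,x,v)$ is characterized by
\[
\Psi(s; x,v) := \xi\big(X(t-s; t,x,v)\big) = 0, \qquad s = \tb(t,x,v).
\]
Since $E \in C^1_x$ via Lemma~\ref{lemma: phi_inf}, standard ODE theory for the Hamiltonian system (\ref{hamilton_ODE_1}) gives that $(X,V)(s;t,x,v)$ is jointly $C^1$ in $(x,v)$. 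The sensitivities satisfy the linear Volterra equations, obtained by differentiating the integrated form $X(s) = x + (s-t) v + \int_t^s \int_t^\tau E(\tau', X(\tau'))\,\dd\tau' \dd\tau$,
\[
\frac{\p X(s)}{\p x_i} = e_i + \int_t^s \int_t^\tau \Big(\frac{\p X(\tau')}{\p x_i}\cdot \nabla\Big) E(\tau', X(\tau'))\,\dd\tau' \dd\tau,
\]
\[
\frac{\p X(s)}{\p v_i} = (s-t) e_i + \int_t^s \int_t^\tau \Big(\frac{\p X(\tau')}{\p v_i}\cdot \nabla\Big) E(\tau', X(\tau'))\,\dd\tau' \dd\tau,
\]
together with $\frac{\p V(s)}{\p y_i} = \int_t^s (\frac{\p X(\tau)}{\p y_i} \cdot \nabla) E(\tau,X(\tau)) \,\dd\tau + \delta_{y,v} e_i$ for $y \in \{x,v\}$.

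Next, I would apply the implicit function theorem to $\Psi$. A direct computation gives
\[
\p_s \Psi(s;x,v)\big|_{s=\tb} = -\nabla \xi(\xb) \cdot \dot X(t-\tb) = -\,n(\xb) \cdot \vb,
\]
which is nonzero by hypothesis. Hence $\tb$ is differentiable in $(x,v)$ near the reference point, with
\[
\frac{\p \tb}{\p x_i} = \frac{1}{n(\xb) \cdot \vb}\, n(\xb) \cdot \frac{\p X}{\p x_i}\Big|_{s=t-\tb}, \qquad
\frac{\p \tb}{\p v_i} = \frac{1}{n(\xb) \cdot \vb}\, n(\xb) \cdot \frac{\p X}{\p v_i}\Big|_{s=t-\tb}.
\]
Substituting the Volterra formulas above evaluated at $s = t-\tb$ gives the two claimed expressions (the factor $(s-t)|_{s=t-\tb} = -\tb$ appearing in the $v_i$ case).

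Finally, the formulas for $\xb = X(t-\tb;t,x,v)$ and $\vb = V(t-\tb;t,x,v)$ follow from the chain rule:
\[
\frac{\p \xb}{\p y_i} = -\frac{\p \tb}{\p y_i}\, V(t-\tb) + \frac{\p X}{\p y_i}\Big|_{s=t-\tb}, \qquad
\frac{\p \vb}{\p y_i} = -\frac{\p \tb}{\p y_i}\, E(t-\tb,\xb) + \frac{\p V}{\p y_i}\Big|_{s=t-\tb},
\]
for $y \in \{x,v\}$, using $V(t-\tb) = \vb$ and $\dot V(t-\tb) = E(t-\tb,\xb)$. Substituting the explicit sensitivities yields the stated formulas.

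The main obstacle is the hypothesis check in the IFT. Near the grazing set $\{n(\xb) \cdot \vb = 0\}$ the denominator vanishes and the whole scheme breaks down; the non-grazing assumption is exactly what both makes $\Psi$ transversal in $s$ at $s=\tb$ and ensures the entire argument is purely local in nature. Lemma~\ref{cannot_graze} is precisely the tool used elsewhere to certify that on relevant trajectories this hypothesis is satisfied, so the present lemma becomes applicable along characteristic computations in the bulk of the paper. Beyond this, the only delicate point is that the electric field $E$ enjoys only $C^{1,1-\delta}_x$ regularity from Lemma~\ref{lemma: phi_inf}, so $(X,V)$ and $(\tb,\xb,\vb)$ are differentiable in the classical sense but not smoother — which matches exactly what is claimed.
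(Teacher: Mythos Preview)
Your proposal is correct and matches the paper's own approach exactly: the paper's proof simply reads ``These equalities can be derived from direct computations and the implicit function theorem. See \cite{KL1} for details,'' and you have carried out precisely those computations. One small correction: the $C^1_x$ regularity of $E$ that you need for the variational equations does not come from Lemma~\ref{lemma: phi_inf} (which only gives $\phi \in C^{1,1-\delta}$, hence $E \in C^{0,1-\delta}$) but from the $C^2$ estimate \eqref{C2 estimate for phi}, which is available once the weighted $W^{1,p}$ bound is in hand.
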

\begin{proof}
	These equalities can be derived from direct computations and the implicit function theorem. See \cite{KL1} for details.
\end{proof}\unhide

\begin{proof}[\textbf{Proof of Proposition \ref{Prop uniqueness}}]

Since all the assumption in Proposition \ref{proposition: boundedness} \ref{Prop W1p} \ref{Prop: L1+stability Cauchy} are valid, from Proposition \ref{Prop existence} we have the existence of the solution $f$ to~\eqref{equation for f}. To conclude the uniqueness, we aim to apply Proposition \ref{L1+stability}. Thus we need to verify the condition~\eqref{eqn: L_infty f} and~\eqref{eqn: W1p f}. The first condition is already given in~\eqref{eqn: w_theta f infty} in Proposition \ref{Prop existence}. Thus we focus on establishing the second condition.

For $f$ satisfying~\eqref{equation for f}, we claim
		\Be\label{eqn: claim w1p}\begin{split}
			&\sup_{0 \leq t \leq \bar{t}}  \|w_{\tilde{\theta}} f  (t) \|_p^p
			+
			\sup_{0 \leq t \leq \bar{t}}  \|e^{-\lambda t\langle v\rangle} w_{\tilde{\theta}}\alpha_{f ,\epsilon}^\beta \nabla_{x,v} f (t) \|_{p}^p
			+
			\int^{\bar{t}}_0  |e^{-\lambda t\langle v\rangle} w_{\tilde{\theta}}\alpha_{f ,\epsilon}^\beta \nabla_{x,v} f (t) |_{p,+}^p \\
			&\lesssim  \   \| w_{\tilde{\theta}}f_0 \|_p^p
			+
			\| w_{\tilde{\theta}}\alpha_{f_{0 },\epsilon}^\beta \nabla_{x,v} f  _0 \|_{p}^p
			.
		\end{split}\Ee

By the weak lower-semicontinuity of $L^p$ we know that
		\[
			w_{\tilde{\vartheta}}\alpha^\beta_{f^\ell, \e} \nabla_{x,v} f^{\ell+1} \rightharpoonup \mathcal{F},\quad
			\sup_{0 \leq t \leq \bar{t}}\|\mathcal{F}(t)\|_p^p \leq \liminf   \sup_{0 \leq t \leq \bar{t}}  \| w_{\tilde{\vartheta}}\alpha_{f^{\ell} ,\e}^\beta \nabla_{x,v} f^{\ell+1} (t) \|_{p}^p,
		\]
		and
		\[
		\int^{\bar{t}}_0  | \mathcal{F}  |_{p,+}^p \leq \liminf
		\int^{\bar{t}}_0  | w_{\tilde{\vartheta}}\alpha_{f^{\ell} ,\e}^\beta \nabla_{x,v} f^{\ell+1} (t) |_{p,+}^p.
		\]
		We need to prove that
		\Be\label{a_nabla_f_seq}
			\mathcal{F}
			=w_{\tilde{\vartheta}}\alpha^{\beta}_{f,\e} \nabla_{x,v}f  \ \ \text{almost everywhere except } \gamma_0.
		\Ee

		We claim that, up to some subsequence, for any given smooth test function $\psi \in C_c^\infty(\bar{\O} \times \R^3\backslash \gamma_0)$
\Be\label{lim_Fell=F}
\lim_{\ell  \rightarrow \infty}\int^t_0\iint_{\O \times \R^3} w_{\tilde{\vartheta}} \alpha^\beta_{f^{\ell },\e} \nabla_{x,v} f^{\ell +1} \psi \dd x\dd v
= \int^t_0\iint_{\O \times \R^3} w_{\tilde{\vartheta}} \alpha^\beta_{f ,\e} \nabla_{x,v} f  \psi \dd x\dd v.
\Ee
We note that we need to extract a single subsequence, let say $\{\ell_*\} \subset \{\ell\}$, satisfying (\ref{lim_Fell=F}) for all test functions in $C_c^\infty(\bar{\O} \times \R^3\backslash \gamma_0)$.

We will exam (\ref{lim_Fell=F}) by the identity obtained from the integration by parts
		\begin{eqnarray}
			&&\int^t_0\iint_{\O \times \R^3} w_{\tilde{\vartheta}} \alpha^\beta_{f^{\ell },\e} \nabla_{x,v} f^{{\ell }+1} \psi \dd x\dd v
			\notag
			\\
			&= &
			-  \int^t_0\iint_{\O \times \R^3} \alpha^\beta_{f^{\ell },\e}  f^{{\ell }+1} \nabla_{x,v}(w_{\tilde{\vartheta}}\psi) \dd x\dd v\label{af_l_1}
			  \\
			&& + \int^t_0 \iint_{\gamma} n  \alpha^\beta_{f^{\ell },\e}   f^{{\ell }+1} (w_{\tilde{\vartheta}}\psi)
			\label{af_l_2}\\
			&&-  {\int^t_0\iint_{\O \times \R^3} \nabla_{x,v}\alpha^\beta_{f^{\ell },\e}  f^{{\ell }+1} (w_{\tilde{\vartheta}}\psi) \dd x\dd v}. \label{af_l_3}
		\end{eqnarray}

For each $N\in\mathbb{N}$ we define a set
\Be\label{set_N}
\mathcal{S}_N:= \Big\{ (x,v) \in \bar{\O} \times \R^3: \text{dist}(x, \p\O) \leq \frac{1}{N}  \ \text{and} \ |n(x) \cdot v| \leq \frac{1}{N}  \Big\}
		 \cup \{|v|>N\}.
\Ee
For a given test function we can always find $N\gg1$ such that
		\Be\label{test_function_condition}
		supp(\psi) \subset (\mathcal{S}_N)^c:=\bar{\O} \times \R^3 \backslash \mathcal{S}_N.
		 \Ee

	We focus on proving the convergence of (\ref{af_l_1}) and (\ref{af_l_2}). From (\ref{alphaweight}), Lemma \ref{lemma: phi_inf} and the uniform in $\ell$ estimate~\eqref{eqn: L_infty bound for f^m+1} , if $(x,v) \in (\mathcal{S}_N)^c$ then
		\Be
		\begin{split}\notag
		\sup_{\ell \geq 0}	| \alpha^\beta_{f^\ell,\e}(t,x,v)|  \lesssim |v|^\beta + (t+\e)^\beta\sup_{\ell \geq 0} \| \nabla \phi_{f^\ell} \|_\infty^\beta \lesssim N^\beta  +(\bar{t}+\e)^\beta \sup_{\ell \geq 0} \| w_{\vartheta} f^\ell \|_\infty^\beta\leq C_N <+\infty  .
		\end{split}
		\Ee
	Hence we extract a subsequence (let say $\{\ell_N\}$) out of subsequence in Proposition \ref{Prop existence} such that $
	 \alpha_{f^{\ell_N},\e}^\beta  \overset{\ast}{\rightharpoonup} A \in L^\infty  \textit{ weakly}-* \textit{ in }  L^\infty ((0,\bar{t}) \times (\mathcal{S}_N)^c) \cap L^\infty ((0,\bar{t})  \times (\gamma \cap  (\mathcal{S}_N)^c)).$ Note that $\alpha_{f^{\ell_N},\e}^\beta$ satisfies $[\p_t + v\cdot \nabla_x - \nabla_x \phi^{\ell_N}\cdot \nabla_v]  \alpha_{f^{\ell_N},\e}^\beta=0$ and $\alpha_{f^{\ell_N},\e}^\beta|_{\gamma_-} =|n \cdot v |^\beta$. By passing a limit in the weak formulation we conclude that $[\p_t + v\cdot \nabla_x - \nabla_x \phi_f \cdot \nabla_v]  A =0$ and $A |_{\gamma_-} =|n \cdot v |^\beta$. By the uniqueness of the Vlasov equation ($\nabla \phi_f \in W^{1,p}$ for any $p<\infty$) we derive $A=  \alpha_{f ,\e}^\beta$ almost everywhere and hence conclude that
	\Be\label{converge_alpha_ell}
	 \alpha_{f^{\ell_N},\e}^\beta  \overset{\ast}{\rightharpoonup}   \alpha_{f ,\e}^\beta  \textit{ weakly}-* \textit{ in }  L^\infty ((0,\bar{t})  \times (\mathcal{S}_N)^c) \cap L^\infty ((0,\bar{t})  \times (\gamma \cap  (\mathcal{S}_N)^c)).
	\Ee	
Now the convergence of (\ref{af_l_1}) and (\ref{af_l_2}) is a direct consequence of strong convergence of~\eqref{eqn: Strong converge} and the weak$-*$ convergence of (\ref{converge_alpha_ell}):
\begin{equation}\label{convergence and fs}
\lim_{\ell\to \infty} (\ref{af_l_1})+(\ref{af_l_2})=-  \int^t_0\iint_{\O \times \R^3} \alpha^\beta_{f,\e}  f \nabla_{x,v}(w_{\tilde{\vartheta}}\psi) \dd x\dd v+ \int^t_0 \iint_{\gamma} n  \alpha^\beta_{f,\e}   f (w_{\tilde{\vartheta}}\psi).
\end{equation}

 We now show the convergence of~\eqref{af_l_3}.

		\vspace{4pt}

		\textit{Step 1. } Let us choose $(x,v) \in (\mathcal{S}_N)^c$. From~\eqref{alphaweight},
	\Be\label{alpha=1}
\text{If} \  \ \tb^{f^\ell} \geq t+ \e \  \ then \  \ 	\alpha_{f^\ell, \e}(t,x,v)=1.
	\Ee
	 From now we only consider that case
	\Be\label{tb_upperT^**}
	\tb^{f^\ell} (t,x,v) \leq \e +  t.
	\Ee
	
	If $|v|\geq 2 (\e + \bar{t})\sup_{\ell} \| \nabla \phi^\ell \|_\infty$ then
	\Be\notag
	\begin{split}
	|V^{f^\ell}(s;t,x,v)| &\geq |v| - \int^t_s \| \nabla \phi^\ell(\tau) \|_\infty \dd \tau \\
	&\geq  (\e + \bar{t})\sup_{\ell} \| \nabla \phi^\ell \|_\infty \ \ \ for \ all \  \ell  \ and \ s \in [-\e ,\bar{t}].
	\end{split}
	\Ee
Then we apply a velocity lemma derived in (3.32) of \cite{CKL}. We define
			\Be\label{beta}
			\tilde{\alpha}(t,x,v) : = \sqrt{\xi(x)^2  + |\nabla \xi(x) \cdot u|^2 - 2 (u\cdot \nabla_x^2 \xi(x) \cdot u) \xi(x)
			}.
			\Ee
			For $|u| \geq N$ and $t- \tb(t,x,u)\geq - \e/2$,
			\Be\label{velocity_N}
			 {\alpha}_{f,\e}(t,x,u)^2  \lesssim
				\tilde{\alpha} ( t,x,u)^2 \lesssim {\alpha}_{f,\e}(t,x,u)^2.
			\Ee
At $s=t-\tb^{f^{\ell}}(t,x,v)$, we obtain
\Be\label{alpha_|v|>}
|n(\xb^{f^\ell}) \cdot \vb^{f^\ell}|
 \geq \frac{e^{- \frac{C_\O }{\sup_{\ell} \| \nabla \phi^\ell  \|_\infty}}}{C_\O } \times \frac{1}{N} \ \ \ for \ all \ \ell.
\Ee

	 	\vspace{4pt}
	
	 \textit{Step 2. }  From now on we assume (\ref{tb_upperT^**}) and
	\Be\label{upper_|V|}
	\begin{split}
	&|v|\leq 2 (\e + \bar{t})\sup_{\ell} \| \nabla \phi^\ell \|_\infty,\\
	or& ,\ from \ (\ref{hamilton_ODE}), \ |V^{f^\ell}(s;t,x,v)|\leq 3 (\e + \bar{t})\sup_{\ell} \| \nabla \phi^\ell  \|_\infty \ \    for \  \ s \in [-\e,\bar{t}].
	\end{split}\Ee
	 Let $(X_n^{f^\ell}, X_\parallel^{f^\ell},V_n^{f^\ell}, V_\parallel^{f^\ell})$ satisfy (\ref{dot_Xn_Vn}), (\ref{def_V_parallel}), and (\ref{hamilton_ODE_perp}) with $E=- \nabla \phi^\ell$.
	
	 	 Let us define
		\Be \label{tau_1}
		\tau_1
		: = \sup  \big\{ \tau \geq  0: V_n^{f^\ell} (s;t,x,v)\geq 0 \ for \ all \ s \in [t-\tb^{f^\ell}(t,x,v),  \tau ]
		\big\}
		.
		\Ee
		Since $(X^{f^\ell} (s;t,x,v),V^{f^\ell} (s;t,x,v))$ is $C^1$ (note that $\nabla\phi^\ell \in C^1_{t,x}$) in $s$ we have $V_n^{f^\ell} (\tau_1 ;t,x,v)=0$.

	 We claim that, there exists some constant $\delta_{**} = O_{\e, \bar{t}, \sup_{\ell} \| \nabla \phi^\ell  \|_{C^1}}(\frac{1}{N})$ in (\ref{choice_delta**}) which does not depend on $\ell$ such that
	 \Be\label{claim_Vperp_growth}
	 \begin{split}
	 \text{If} \ \ &0\leq V_n^{f^\ell} (t-\tb^{f^\ell}(t,x,v);t,x,v) < \delta_{**} \text{ and } \  (\ref{upper_|V|}),
	 \\
	&then \ \ V_n^{f^\ell} (s;t,x,v) \leq e^{C|s-(t-\tb^{f^\ell}(t,x,v))|^2}V_n^{f^\ell} (t-\tb^{f^\ell}(t,x,v);t,x,v) \ \ for  \ s \in [ t-\tb^{f^\ell},\tau_1].
	 \end{split}\Ee
For the proof we regard the equations (\ref{dot_Xn_Vn}), (\ref{def_V_parallel}), and (\ref{hamilton_ODE_perp}) as the forward-in-time problem with an initial datum at $s=t-\tb^{f^\ell} (t,x,v)$. 	Clearly we have $X_n^{f^\ell}(t-\tb^{f^\ell}(t,x,v);t,x,v)=0$ and $V_n^{f^\ell}(t-\tb^{f^\ell}(t,x,v);t,x,v)\geq0$ from Lemma \ref{cannot_graze}. Again from Lemma \ref{cannot_graze}, if $V_n^{f^\ell}(t-\tb^{f^\ell}(t,x,v);t,x,v)=0$ then $X_n^{f^\ell}(s;t,x,v)=0$ for all $s\geq t-\tb^{f^\ell} (t,x,v)$. From now on we assume $V_n^{f^\ell} (t-\tb^{f^\ell}(t,x,v);t,x,v)] >0$. From (\ref{hamilton_ODE_perp}), as long as $t - \tb^{f^\ell} (t,x,v) \leq s \leq \bar{t}$ and
	\Be\label{small_s_V_n}
	V_n^{f^\ell}(s;t,x,v) \geq 0 \ \ and \ \
	X_n^{f^\ell} (s;t,x,v) \leq \frac{1}{N} \ll 1,
	\Ee
	then we have
	\Be\label{hamilton_ODE_perp_bound}\begin{split}
	\dot{V}_n^{f^\ell} (s)
		= & \  \underbrace{[V^{f^\ell}_\parallel (s)\cdot \nabla^2 \eta (X^{f^\ell}_\parallel(s)) \cdot V^{f^\ell}_\parallel(s) ] \cdot n(X^{f^\ell}_\parallel(s)) }_{\leq 0  \    from \ ~\eqref{eqn: convex}} \\
	- 	& \underbrace{\nabla \phi^\ell  (s , X^{f^\ell} (s ) ) \cdot [-n(X^{f^\ell}_\parallel(s)) ]}_{
		=O(1) \sup_\ell  \|  \nabla \phi^\ell \|_{C^1} \times X_n^{f^\ell} (s)
		 \   from \  (\ref{expansion_E})} \\
		  - &\underbrace{X_n^{f^\ell} (s) [V^{f^\ell}_\parallel(s) \cdot \nabla^2 n (X^{f^\ell}_\parallel(s)) \cdot V^{f^\ell}_\parallel(s)]  \cdot n(X^{f^\ell}_\parallel(s)) }_{
		=O(1)  \{3 (\e + \bar{t})\sup_{\ell} \| \nabla \phi^\ell  \|_\infty \}^2 \times X_n^{f^\ell} (s) \ from \
		(\ref{upper_|V|})
		}\\
		 \leq & \
		C (1+ \e  + \bar{t})^2 ( \sup_{\ell} \| \nabla \phi^\ell  \|_{C^1}  \sup_{\ell} \| \nabla \phi^\ell  \|_{\infty} )
		  \times X_n^{f^\ell} (s).
	\end{split}\Ee	
	
Let us consider (\ref{hamilton_ODE_perp_bound}) together with $\dot{X}^{f^\ell}_{n}(s;t,x,v) = V^{f^\ell}_{n}(s;t,x,v)$. Then, as long as $s$ satisfies (\ref{small_s_V_n}),
\Be
\begin{split}\notag
V_n^{f^\ell} (s)& =    V_n^{f^\ell} (t-\tb^{f^\ell} ) + \int^s_{t-\tb^{f^\ell}} \dot{V}_n^{f^\ell}(\tau) \dd \tau \\
&\leq V_n^{f^\ell} (t-\tb^{f^\ell} )+\int^s_{t-\tb^{f^\ell}}
C (1+ \e  + \bar{t})^2  ( \sup_{\ell} \| \nabla \phi^\ell  \|_{C^1}  \sup_{\ell} \| \nabla \phi^\ell  \|_{\infty} )
		  \times X_n^{f^\ell} (\tau)
\dd \tau\\
&= V_n^{f^\ell} (t-\tb^{f^\ell} )+\int^s_{t-\tb^{f^\ell}}
C (1+ \e  + \bar{t})^2  ( \sup_{\ell} \| \nabla \phi^\ell  \|_{C^1}  \sup_{\ell} \| \nabla \phi^\ell  \|_{\infty} )
	\int^\tau_{t-\tb^{f^\ell}} V_n^{f^\ell}(\tau^\prime) \dd \tau^\prime
\dd \tau \\
& \leq C (1+ \e  + \bar{t})^2  ( \sup_{\ell} \| \nabla \phi^\ell  \|_{C^1}  \sup_{\ell} \| \nabla \phi^\ell  \|_{\infty} )
\int^s_{t-\tb^{f^\ell}}
|s -  (t-\tb^{f^\ell}) |  V_n^{f^\ell}(\tau^\prime)  \dd \tau^\prime.
\end{split}
\Ee
From the Gronwall's inequality, we derive that, as long as (\ref{small_s_V_n}) holds, 	
\Be\label{upper_bound_Vn}
V_n^{f^\ell} (s;t,x,v) \leq  V_n^{f^\ell} (t-\tb^{f^\ell}(t,x,v)) e^{C (1+ \e  + \bar{t})^2 ( \sup_{\ell} \| \nabla \phi^\ell  \|_{C^1}  \sup_{\ell} \| \nabla \phi^\ell  \|_{\infty} ) \times |s -  (t-\tb^{f^\ell}(t,x,v)) | ^2}.
\Ee	

Now we verify the conditions of (\ref{small_s_V_n}) for all $- \e \leq t - \tb^{f^\ell} (t,x,v) \leq s \leq \bar{t}$. Note that we are only interested in the case of $V_n^{f^\ell} (t-\tb^{f^\ell}(t,x,v);t,x,v)< \delta_{**}$. From the argument of (\ref{hamilton_ODE_perp_bound}), ignoring negative curvature term,
		 \Be
		 \begin{split}\notag
		| X_n^{f^\ell} (s;t,x,v)| \leq& 
	\ 	(\e+ \bar{t}) | V_n^{f^\ell} (t-\tb^{f^\ell};t,x,v)  | \\
	&
		+   C[1 + (\e + \bar{t})^2 \sup_\ell \| \nabla \phi^\ell \|_\infty]  \sup_\ell\| \nabla \phi^\ell \|_{C^1}
		 \int^s_{t-\tb^{f^\ell}} \int^\tau_{t-\tb^{f^\ell}} |X_n^{f^\ell}(\tau;t,x,v)| \dd \tau  \dd s\\
		  \leq & \  (\e+ \bar{t}) | V_n^{f^\ell} (t-\tb^{f^\ell};t,x,v)  | +  C  \int_{t-\tb^{f^\ell}} ^s |\tau- (t-\tb^{f^\ell})| |X_n^{f^\ell}(\tau;t,x,v)|  \dd \tau.
		 \end{split}
		 \Ee
		 Then by the Gronwall's inequality we derive that, in case of (\ref{tb_upperT^**}),
		 \Be\label{upper_X_n}
		 | X_n^{f^\ell} (s;t,x,v)|\leq C_{\e+ \bar{t}} | V_n^{f^\ell} (t-\tb^{f^\ell};t,x,v)  | \ \ for  \ all \  -\e \leq t-\tb^{f^\ell} \leq s \leq t  \leq \bar{t}.
		 \Ee
If we choose
\Be\label{choice_delta**}
\delta_{**} = \frac{o(1)}{ |\bar{t}+ \e |}\times \frac{1}{N},
\Ee
then (\ref{upper_bound_Vn}) holds for $- \e \leq t - \tb^{f^\ell} (t,x,v) \leq s \leq \bar{t}$. Hence we complete the proof of (\ref{claim_Vperp_growth}).

	\hide

	\Be\notag
	\dot{X}_{n}(s;t,x,v) = V_{n}(s;t,x,v) , \ \  \dot{X}_{\parallel}(s;t,x,v) = V_{\parallel}(s;t,x,v).
	\Ee

	Then the proof of (\ref{no_graze}) asserts that
		\Be\label{alpha_lower_bound}
		|V_n^f(s;t,x,v)| \gtrsim 1 \ \ \text{for all } \ (t,x,v) \in [0,\frac{\e}{2}] \times  \text{supp} (\psi) .
		\Ee
		Now we consider $\alpha_{f^\ell, \e}(t,x,v)$.

		 On the other hand if $\tb^{f^\ell}(t,x,v)< 2\e$, since (\ref{uniform_h_ell}), from (\ref{hamilton_ODE_perp}),
		\Be
		\begin{split}\notag
			&\sup_{\ell \geq 0}|V^f_n(s;t,x,v) - V^{f^\ell }_n(s;t,x,v)|\\
			\lesssim & \ \sup_{\ell \geq 0}\max\{ \tb^{f}, \tb^{f^\ell}\} \times \left\{N^2  + \| \nabla \phi_f \|_\infty + \| \nabla \phi_{f^\ell} \|_\infty\right\}\\
			\lesssim & \ \e (1+ N^2).
		\end{split}\Ee
	Therefore, from (\ref{alpha_lower_bound})	 for small $\e>0$, we prove the lower bound
		\Be
	\inf_{\ell }	|n(\xb^{f^\ell})  \cdot \vb^{f^\ell}| \gtrsim 1 \ \ \text{for all } \ (t,x,v) \in [0,\frac{\e}{2}] \times  \text{supp} (\psi) .
		\Ee
		
		\unhide

		\vspace{4pt}

		\textit{Step 3. } Suppose that (\ref{upper_|V|}) holds and $0 \leq V_n^{f^\ell} (t-\tb^{f^\ell}(t,x,v);t,x,v) < \delta_{**}$ with $\delta_{**}$ of  (\ref{choice_delta**}). Recall the definition of $\tau_1$ in (\ref{tau_1}). Inductively we define $\tau_2
		: = \sup  \big\{ \tau \geq  0: V_n^{f^\ell} (s;t,x,v)\leq 0 \ for \ all \ s \in [\tau_1,  \tau ]
		\big\}$ and $\tau_3, \tau_4, \cdots$. Clearly such points can be countably many at most in an interval of $[t-\tb^{f^\ell},t]$. Suppose $ \lim_{k\rightarrow \infty}\tau_k=t   $. Then choose $k_0\gg1$ such that $|\tau_{k_0} -t| \ll  |V_n^{f^\ell} (t-\tb^{f^\ell} ;t,x,v)|$. Then, for $s \in [\tau_{k_0}, t] $, from (\ref{hamilton_ODE_perp_bound}) and (\ref{upper_|V|}),
		\Be\begin{split}\label{upper_V_n_0}
		|V_n^{f^\ell} (t;t,x,v)| \lesssim   {|V_n^{f^\ell} (t-\tb^{f^\ell} ;t,x,v)|} .
		\end{split}\Ee
		
		Now we assume that $\tau_{k_0} < t \leq \tau_{k_0+1}$. From the definition of $\tau_i$ in  (\ref{tau_1}) we split the case in two.
		
		\textit{\underline{Case 1:} Suppose $V_n^{f^\ell} (s;t,x,v)>0$ for $s \in (\tau_{k_0}, t)$. }

		From (\ref{hamilton_ODE_perp_bound}) and (\ref{upper_X_n})
		\Be
		\begin{split}\label{upper_V_n_1}
		 V_n^{f^\ell} (t;t,x,v)  \lesssim \int^{\bar{t}}_{\tau_{k_0}} X_n^{f^\ell}(s) \lesssim |V_n^{f^\ell} (t-\tb^{f^\ell};t,x,v)|.
		\end{split}
		\Ee

		\textit{\underline{Case 2:} Suppose $V_n^{f^\ell} (s;t,x,v)<0$ for $s \in (\tau_{k_0}, t)$. }
		
		Suppose
		\Be\label{assumption_lowerbound_V_n}
		-V_n^{f^\ell} (t;t,x,v)= |V_n^{f^\ell} (t;t,x,v)| \geq \frac{1}{\e}|V_n^{f^\ell} (t-\tb^{f^\ell};t,x,v)|^{1/2}.
		\Ee \hide	
In this step we claim that
\Be\label{max_Xn}
X_n^{f^\ell} (
t-\tb^{f^\ell}(t,x,v)+
\tau_*^\ell (t,x,v);t,x,v)\geq \frac{1}{2} \frac{| V_n^{f^\ell} (t- \tb^{f^\ell} (t,x,v);t,x,v)|^2}{1+\{3 (\e + T^{**})\sup_{\ell} \| \nabla \phi^\ell  \|_\infty\}^2 }.
\Ee		
\unhide
From (\ref{hamilton_ODE_perp_bound}), now taking account of the curvature term this time, we derive that 	\Be
	\begin{split}\notag
	-V_n^{f^\ell} (t;t,x,v)  \leq & \   \int_{\tau_{k_0}}^{t} (-1) [V^{f^\ell}_\parallel (s)\cdot \nabla^2 \eta (X^{f^\ell}_\parallel(s)) \cdot V^{f^\ell}_\parallel(s) ] \cdot n(X^{f^\ell}_\parallel(s))  \dd s\\
	& +C | V_n^{f^\ell} (t- \tb^{f^\ell} (t,x,v);t,x,v)|,
	\end{split}
	\Ee		
	where we have used (\ref{upper_|V|}) and (\ref{upper_X_n}). From (\ref{assumption_lowerbound_V_n}) the above inequality implies that, for $| V_n^{f^\ell} (t- \tb^{f^\ell} (t,x,v);t,x,v)|\ll 1$,
	\Be\notag
	\frac{1}{2\e}|V_n^{f^\ell} (t-\tb^{f^\ell};t,x,v)|^{1/2} \leq \int_{\tau_{k_0}}^{t} (-1) [V^{f^\ell}_\parallel (s)\cdot \nabla^2 \eta (X^{f^\ell}_\parallel(s)) \cdot V^{f^\ell}_\parallel(s) ] \cdot n(X^{f^\ell}_\parallel(s))  \dd s.
	\Ee
Note that $|\frac{d}{ds} V^{f^\ell}_\parallel (s)|$ and $|\frac{d}{ds}X^{f^\ell}_\parallel (s)|$ are all bound from $\nabla \phi^\ell \in C^1$, (\ref{upper_|V|}), and (\ref{upper_X_n}). By~\eqref{upper_|V|} and~\eqref{eqn: convex} we can take $\e$ to be sufficiently small such that
\[\int_{t- |V_n^{f^\ell} (t-\tb^{f^\ell};t,x,v)|^{1/2}}^{t} (-1) [V^{f^\ell}_\parallel (s)\cdot \nabla^2 \eta (X^{f^\ell}_\parallel(s)) \cdot V^{f^\ell}_\parallel(s) ] \cdot n(X^{f^\ell}_\parallel(s))  \dd s\]
\[\leq |V_n^{f^\ell} (t-\tb^{f^\ell};t,x,v)|^{1/2}C_\eta C_{\e,\bar{t},\sup_{\ell}\Vert \nabla \phi^\ell\Vert_\infty}\leq \frac{1}{4\e}|V_n^{f^\ell} (t-\tb^{f^\ell};t,x,v)|^{1/2}.\]
Hence we obtain
\Be\label{lower_bound_V_||^2}
	\frac{1}{4\e}|V_n^{f^\ell} (t-\tb^{f^\ell};t,x,v)|^{1/2} \leq \int_{\tau_{k_0}}^{t- |V_n^{f^\ell} (t-\tb^{f^\ell};t,x,v)|^{1/2}} (-1) [V^{f^\ell}_\parallel (s)\cdot \nabla^2 \eta (X^{f^\ell}_\parallel(s)) \cdot V^{f^\ell}_\parallel(s) ] \cdot n(X^{f^\ell}_\parallel(s))  \dd s.
	\Ee
	On the other hand, if $t- |V_n^{f^\ell} (t-\tb^{f^\ell};t,x,v)|^{1/2}\leq \tau_{k_0}$ then $|t-\tau_{k_0}| \leq |V_n^{f^\ell} (t-\tb^{f^\ell};t,x,v)|^{1/2}$, which implies that, from (\ref{hamilton_ODE_perp_bound}), (\ref{upper_|V|}), and (\ref{upper_X_n}),
	\Be\label{upper_V_n_2}
	|V_n^{f^\ell}(t;t,x,v)| \lesssim  |V_n^{f^\ell} (t-\tb^{f^\ell};t,x,v)|^{1/2}.
	\Ee
	
	Now we consider $X_n^{f^\ell} (t;t,x,v)$. From (\ref{hamilton_ODE_perp_bound}) and $\dot{X}^{f^\ell}_{n}(s;t,x,v) = V^{f^\ell}_{n}(s;t,x,v)$ together with (\ref{upper_X_n}) and (\ref{upper_|V|})
	\Be\label{X_n<0}
	\begin{split}
	&X_n^{f^\ell} (t;t,x,v)\\
	 \leq & \   (\bar{t}+\e)| V_n^{f^\ell} (t-\tb^{f^\ell};t,x,v)  |
	 + \int_{\tau_{k_0}}^t \int_{\tau_{k_0}}^\tau\underbrace{[V^{f^\ell}_\parallel (s)\cdot \nabla^2 \eta (X^{f^\ell}_\parallel(s)) \cdot V^{f^\ell}_\parallel(s) ] \cdot n(X^{f^\ell}_\parallel(s)) }_{\leq 0}  \dd s  \dd \tau \\
	\leq  & \   (\bar{t}+\e)| V_n^{f^\ell} (t-\tb^{f^\ell};t,x,v)  | \\
	&
	 +  |V_n^{f^\ell} (t-\tb^{f^\ell};t,x,v)|^{1/2} \int_{\tau_{k_0}}^{t- |V_n^{f^\ell} (t-\tb^{f^\ell};t,x,v)|^{1/2}}[V^{f^\ell}_\parallel (s)\cdot \nabla^2 \eta (X^{f^\ell}_\parallel(s)) \cdot V^{f^\ell}_\parallel(s) ] \cdot n(X^{f^\ell}_\parallel(s))   \dd s   \\
	 \leq & \  (\bar{t}+\e)| V_n^{f^\ell} (t-\tb^{f^\ell};t,x,v)  | -  \frac{1}{4\e}|V_n^{f^\ell} (t-\tb^{f^\ell};t,x,v)|^{1}
	 \ \ \ from \ (\ref{lower_bound_V_||^2})
	 \\
	  < & \ 0.
	\end{split}
	\Ee

 Clearly this cannot happen since $x \in \bar{\O}$ and $x_n\geq 0$. Therefore our assumption (\ref{assumption_lowerbound_V_n}) was wrong and we conclude (\ref{upper_V_n_2}).

		\vspace{4pt}

		\textit{Step 4 } From (\ref{claim_Vperp_growth}), (\ref{upper_V_n_0}), (\ref{upper_V_n_1}), and (\ref{upper_V_n_2}) in \textit{Step 1} and \textit{Step 2}, we conclude that the same estimate (\ref{upper_V_n_2}) for $|V_n^{f^\ell} (t-\tb^{f^\ell};t,x,v)|\ll 1$ in the case of (\ref{tb_upperT^**}) and (\ref{upper_|V|}). Finally from (\ref{alpha=1}), (\ref{alpha_|v|>}), (\ref{claim_Vperp_growth}), and (\ref{upper_V_n_2})
		Therefore we conclude that
		\Be\label{lower_bound_V_n_final}
		|V_n^{f^\ell} (t-\tb^{f^\ell} (t,x,v);t,x,v )|\gtrsim \left(\frac{1}{N^2}\right) \ \ (t,x,v) \in [0,\bar{t}] \times (\mathcal{S}_N)^c.
		\Ee

	\hide

 From (\ref{hamilton_ODE_perp}) and (\ref{upper_|V|})
		\Be\begin{split}\label{lower_t_*1}
		&\{  3 (\e + T^{**})\sup_{\ell} \| \nabla \phi^\ell  \|_\infty\}^2 \times  \tau_*^\ell (t,x,v)
		\\
		\geq & \ \int^{t-\tb^{f^\ell} +\tau_*^\ell }_{t-\tb^{f^\ell}} (-1) [V^{f^\ell}_\parallel (s)\cdot \nabla^2 \eta (X^{f^\ell}_\parallel(s)) \cdot V^{f^\ell}_\parallel(s) ] \cdot n(X^{f^\ell}_\parallel(s))  \dd s
		\\
\geq & \ 	V_n^{f^\ell} (\tau_*^\ell (t,x,v);t,x,v)	
+  \int^{t-\tb^{f^\ell} +\tau_*^\ell}_{t-\tb^{f^\ell}}  \nabla \phi^\ell  (s , X^{f^\ell} (s ) ) \cdot [-n(X^{f^\ell}_\parallel(s)) ] \dd s \\
& +   \int^{t-\tb^{f^\ell} +\tau_*^\ell }_{t-\tb^{f^\ell}}  X_n^{f^\ell} (s) [V^{f^\ell}_\parallel(s) \cdot \nabla^2 n (X^{f^\ell}_\parallel(s)) \cdot V^{f^\ell}_\parallel(s)]  \cdot n(X^{f^\ell}_\parallel(s)).
		\end{split}
		\Ee
	Note that, from (\ref{upper_bound_Vn}), the last two terms of the above estimates are bounded above by
	\Be\begin{split}\label{lower_t_*2}
&
\{ \| \nabla \phi^{\ell} \|_{C^1}
+9 (\e + T^{**})^2\sup_{\ell} \| \nabla \phi^\ell  \|_\infty^2 \}
\int^{t-\tb^{f^\ell} +\tau_*^\ell }_{t-\tb^{f^\ell}}  X_n^{f^\ell} (s)\\
\leq & \ \{ \| \nabla \phi^{\ell} \|_{C^1}
+9 (\e + T^{**})^2\sup_{\ell} \| \nabla \phi^\ell  \|_\infty^2 \}
 \int^{t-\tb^{f^\ell} +\tau_*^\ell }_{t-\tb^{f^\ell}}  \int^\tau_{t-\tb^{f^\ell}}
V_n^{f^\ell} (\tau;t,x,v)
 \dd \tau   \dd s \\
 \leq & \  C_* \times  |\tau^\ell_*(t,x,v)|^2 \times V_n^{f^\ell} (t-\tb^{f^\ell}(t,x,v)) ,
	\end{split}\Ee	
		where
		\Be\notag
		C_* := \{ \| \nabla \phi^{\ell} \|_{C^1}
+9 (\e + T^{**})^2\sup_{\ell} \| \nabla \phi^\ell  \|_\infty^2 \}
		e^{C (1+ \e  + T^{**})^2 (1+ \sup_{\ell} \| \nabla \phi^\ell  \|_{C^1} )^2 \times | \e + T^{**}| ^2}.
		\Ee
		By considering $\tau_*^\ell (t,x,v)\leq \e + T^{**} \ll1 $ and $\tau_*^\ell (t,x,v)\geq \e + T^{**}$ separately, from (\ref{lower_t_*1}) and (\ref{lower_t_*2}), we derive
		\Be\begin{split} \label{lower_t_*3}
		  \tau_*^\ell (t,x,v)
		 \geq  & \  \tau_{**}^\ell (t,x,v)\\& \ :=  \min \left\{ \e + T^{**} ,  \frac{(1- C_* ( \e + T^{**})^2)}{1+\{  3 (\e + T^{**})\sup_{\ell} \| \nabla \phi^\ell  \|_\infty\}^2} \times V_n^{f^\ell} (t- \tb^{f^\ell} (t,x,v);t,x,v) \right\}	.
	\end{split}	\Ee

		Now we study the lower bound of $X_n^{f^\ell} ( t- \tb^{f^\ell}+ \tau_{**}^\ell (t,x,v);t,x,v)$. From (\ref{upper_bound_Vn}) and (\ref{hamilton_ODE_perp_bound}),
		\Be\label{lower_bound_Xn}
		\begin{split}
		&X_n^{f^\ell} ( t- \tb^{f^\ell}(t,x,v) + \tau_{*}^\ell (t,x,v);t,x,v)\\
		\geq & \ X_n^{f^\ell} ( t- \tb^{f^\ell}(t,x,v) + \tau_{**}^\ell (t,x,v);t,x,v)\\
		\geq & \    \int^{ t- \tb^{f^\ell}(t,x,v) + \tau_{**}^\ell  (t,x,v) }_{t- \tb^{f^\ell}(t,x,v) } V_n ^{f^\ell} (s;t,x,v) \dd s\\
		\geq & \  V_n^{f^\ell} (t- \tb^{f^\ell} (t,x,v);t,x,v) \times   \tau_{**}^\ell (t,x,v) \\
		& \ -
		C (1+ \e  + T^{**})^2  ( \sup_{\ell} \| \nabla \phi^\ell  \|_{C^1}  \sup_{\ell} \| \nabla \phi^\ell  \|_{\infty} )\times  |\tau_{**}^\ell (t,x,v)|^2
		\\
		\geq & \
		(1- C_* ( \e + T^{**})^2) \big\{
		1-
		C (1- C_* ( \e + T^{**})^2) (1+ \e  + T^{**})^2  ( \sup_{\ell} \| \nabla \phi^\ell  \|_{C^1}  \sup_{\ell} \| \nabla \phi^\ell  \|_{\infty} )
		\big\}\\
		&
		\times
		\frac{| V_n^{f^\ell} (t- \tb^{f^\ell} (t,x,v);t,x,v)|^2}{1+\{3 (\e + T^{**})\sup_{\ell} \| \nabla \phi^\ell  \|_\infty\}^2 }\\
		\geq & \ \frac{1}{2} \frac{| V_n^{f^\ell} (t- \tb^{f^\ell} (t,x,v);t,x,v)|^2}{1+\{3 (\e + T^{**})\sup_{\ell} \| \nabla \phi^\ell  \|_\infty\}^2 },
		\end{split}
		\Ee
		as long as $\sup_{\ell} \| \nabla \phi^\ell  \|_{C^1}  \sup_{\ell} \| \nabla \phi^\ell  \|_{\infty}  \ll 1$ and $\e + T^{**} \ll 1.$

		\vspace{4pt}

		\textit{Step 8-c. } Suppose that (\ref{upper_|V|}) holds and $0 \leq V_n^{f^\ell} (t-\tb^{f^\ell}(t,x,v);t,x,v) < \delta_{**}$ with $\delta_{**}$ of  (\ref{choice_delta**}). Let us define
		\Be\label{tau^ell}
		\tau_*^\ell (t,x,v)
		: = \sup  \big\{ \tau \geq  0: V_n^{f^\ell} (s;t,x,v)\geq 0 \ for \ all \ s \in [t-\tb^{f^\ell}(t,x,v), 
		\tau ]
		\big\}
		.
		\Ee
		With two parameters $0< B < A \leq 1$,
		\Be\label{AB_choice}
		A, B
		\Ee
		 we split the case in three.
		
			\vspace{2pt}
			
			\textit{\underline{Case 1.} } Suppose $|V_\parallel^{f^\ell} (\tau_*^\ell (t,x,v);t,x,v)|\geq |V_n^{f^\ell} (t-\tb^{f^\ell}(t,x,v);t,x,v)|^A$. Set
			\Be\label{triangle_1}
			 \vartriangle =  (\e + T^{**} )^{1/2} \times |V_n^{f^\ell} (t-\tb^{f^\ell}(t,x,v);t,x,v)|^{\frac{1- 2A}{2}}.
			\Ee

			Note that
			\Be\notag
			|V_\parallel^{f^\ell} (\tau_*^\ell + \vartriangle ;t,x,v)|\geq  |V_n^{f^\ell} (t-\tb^{f^\ell}(t,x,v);t,x,v)|^A - \| \nabla \phi^\ell \|_\infty \times  \vartriangle,
			\Ee
			and, from (\ref{convexity_eta}),
			\Be
			\begin{split}
			&X^{f^\ell}_n(\tau_*^\ell + \vartriangle ;t,x,v)\\
			= & \ X^{f^\ell}_n(\tau_*^\ell   ;t,x,v) + \int^{\vartriangle}_0  \int_0^s \dot{V}_n^{f^\ell}(\tau_*^\ell + \tau;t,x,v) \dd \tau  \dd s \\
			\leq & \ (\e + T^{**})|V_n^{f^\ell} (  t-\tb^{f^\ell} ;t,x,v)| - C_\O \int^{\vartriangle}_0  \int_0^s  |V_n^{f^\ell}(t-\tb^{f^\ell};t,x,v)|^{2A} \dd \tau  \dd s\\
			\leq & \  0.
			\end{split}
			\Ee
			
			\vspace{2pt}
			
			\textit{\underline{Case 2.} } Suppose
			\Be
			\begin{split}\label{case_2}
		 	|V_\parallel^{f^\ell} (\tau_*^\ell (t,x,v);t,x,v)|&\leq |V_n^{f^\ell} (t-\tb^{f^\ell}(t,x,v);t,x,v)|^A\\
		 	 and    \  |\nabla \phi^\ell(\tau_*^\ell (t,x,v);t,x,v)|&\leq |V_n^{f^\ell} (t-\tb^{f^\ell}(t,x,v);t,x,v)|^B.
			\end{split}\Ee

			Note that, from (\ref{XV_ell}), for $\tau \geq 0$
			\Be\notag
			|V^{f^\ell}(\tau^\ell_*-\tau;t,x,v)| \leq |V^{f^\ell}(\tau^\ell_* ;t,x,v)|+ \int_0^{\tau}  \Big|
			\frac{V^{f^\ell}(\tau^\ell_*-\tau^\prime )}{|V^{f^\ell}(\tau^\ell_*-\tau^\prime )|} \cdot
			\nabla \phi (\tau^\ell_*-\tau^\prime, X^{f^\ell} (\tau^\ell_*-\tau^\prime ))
			\Big|\dd \tau^{\prime}.
			\Ee
			This, together with (\ref{case_2}), deduces that
			\Be
			\begin{split}
			&\Big|
			\frac{V^{f^\ell}(\tau^\ell_*-s )}{|V^{f^\ell}(\tau^\ell_*-s )|} \cdot
			\nabla \phi (\tau^\ell_*-s, X^{f^\ell} (\tau^\ell_*- s ))
			\Big|\\
			 \leq & \ |
			\nabla \phi (\tau^\ell_*-s, X^{f^\ell} (\tau^\ell_*- s ))
			 |\\
			 \leq&  \  |V_n^{f^\ell} (t-\tb^{f^\ell} ;t,x,v)|^B\\
			 & + \int
			\end{split}
			\Ee

			\Be
			\begin{split}
			&|\nabla \phi^\ell (X^{f^\ell}(\tau^\ell_*-s;t,x,v))| \\
			\leq&  \  |V_n^{f^\ell} (t-\tb^{f^\ell}(t,x,v);t,x,v)|^B
			\end{split}
			\Ee
			
			\vspace{2pt}
			
			\textit{\underline{Case 3.} }

		\vspace{4pt}

		\textit{Step 8-d. }

		Now we are ready to prove the claim (\ref{alpha_lower_bound_ell}). Choose $(x,v)$ in $(\mathcal{S}_N)^c$ in (\ref{test_function_condition}) and $- \e    \leq t \leq  T^{**}.$ If $|v|\leq 2 (\e + T^{**})\sup_{\ell} \| \nabla \phi^\ell \|_\infty$ and $0\leq V_n^{f^\ell} (t-\tb^{f^\ell}(t,x,v);t,x,v)< \delta_{**}$ then from (\ref{claim_Vperp_growth}) we have
		\Be\label{lower_Vn1}
  V_n^{f^\ell} (t-\tb^{f^\ell}(t,x,v);t,x,v)
\geq  e^{-C|T^{**} + \e |^2}	V_n^{f^\ell} (s;t,x,v)
\geq \frac{1}{e^{ C|T^{**} + \e |^2}} \times \frac{1}{N}.	
		\Ee

		Finally from (\ref{alpha=1}), (\ref{alpha_|v|>}), and (\ref{lower_Vn1}), we conclude that \Be\label{alpha_lower_bound_ell}
\inf_{\substack{ \ell \in \mathbb{N}, \ 0 \leq t \leq T^{**}
\\
 (x,v) \in (\mathcal{S}_N)^c  }}	\alpha_{f^\ell, \e} (t,x,v) \geq \frac{1}{N} \times \min \left\{    \frac{e^{- \frac{C_\O }{\sup_{\ell} \| \nabla \phi^\ell  \|_\infty}}}{C_\O }     ,\frac{e^{-C (1+ \e  + T^{**})^4 (1+ \sup_{\ell} \| \nabla \phi^\ell  \|_{C^1} )^2}}{ |T^{**} + \e |},  e^{ -C|T^{**} + \e |^2}\right\}.
	\Ee

			\unhide

		 From (2.36), (2.37), (2.40), and (2.41) in Lemma 2.4 in \cite{KL},
		\Be\notag
	\sup_{ \substack{ \ell \in \mathbb{N}
, \
 (x,v) \in (\mathcal{S}_N)^c , \\  - \e \leq t - \tb^{f^\ell} (t,x,v)   \leq t \leq  \bar{t}}}		|\nabla_{x,v} \alpha^\beta_{f^\ell,\e}(t,x,v)| \lesssim \frac{1}{|V_n^{f^\ell} (t-\tb^{f^\ell};t,x,v)|^{2- \beta}}  \lesssim_{\e,N,\bar{t}  } 1.
		\Ee
	Hence we extract another subsequence out of all previous steps (and redefine this as $\{\ell_N\}$) such that
	\Be\label{converge_D_alpha_ell}
	\nabla_{x,v} \alpha_{f^{\ell_N},\e}^\beta  \overset{\ast}{\rightharpoonup}  	\nabla_{x,v} \alpha_{f ,\e}^\beta  \textit{ weakly}-* \textit{ in } L^\infty
			( (-\e, \bar{t}) \times (\mathcal{S}_N)^c
			).
	\Ee	
Note that the limiting function is identified from (\ref{converge_alpha_ell}). Finally the trong convergence of~\eqref{eqn: Strong converge} and the weak$-*$ convergence of (\ref{converge_D_alpha_ell}) justifies the convergence of (\ref{af_l_3}):
\begin{equation}\label{convergence of third}
\lim_{\ell \to \infty}  (\ref{af_l_3})=-  {\int^t_0\iint_{\O \times \R^3} \nabla_{x,v}\alpha^\beta_{f,\e}  f (w_{\tilde{\vartheta}}\psi) \dd x\dd v}.
\end{equation}

		Now we extract the final subsequence $\{\ell_*\}$ from the previous subsequence: By the Cantor's diagonal argument we define
		\Be\label{ell_*}
		\ell_*= \ell_{ \ell} .
		\Ee
Combining~\eqref{convergence and fs} and~\eqref{convergence of third} we have (\ref{lim_Fell=F}) with this subsequence for any test function $\psi$. For any $\psi \in C^\infty_c (\bar{\O} \times \R^3 \backslash \gamma_0)$ there exists $N_{\psi} \in \mathbb{N}$ such that $supp (\psi) \subset (\mathcal{S}_{N_\psi})^c$. Hence \eqref{a_nabla_f_seq} follows from~\eqref{lim_Fell=F}.

Finally we obtain~\eqref{eqn: claim w1p}. Assumptions in Proposition \ref{Prop: L3L1 for nabla f} thus hold. Applying Proposition \ref{Prop: L3L1 for nabla f} \ref{L1+stability}, assuming $f_1$ and $f_2$ are both solutions, then
\[		\|e^{-\lambda t\langle v\rangle} \big[f_1(t) - f_2(t)\big] \|_{L^{1+\delta}(\O \times \R^3)}\lesssim \| f_1(0) - f_2(0) \|_{L^{1+\delta}(\O \times \R^3)},\]
so the solution is unique.

\end{proof}

\section{Appendix}
\begin{lemma}\label{Lemma: Prob measure}
For $R(u\to v;x,t)$ given by~\eqref{eqn: Formula for R}, given any $u$ such that $u\cdot n(x)>0$,
\begin{equation}\label{eqn: integrate 1}
  \int_{n(x)\cdot v<0}R(u\to v;x,t)dv=1.
\end{equation}

\end{lemma}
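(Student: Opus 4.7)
The plan is to exploit the tensor-product structure of $R(u\to v;x,t)$ in the normal/tangential decomposition $v=v_\perp n(x) + v_\parallel$ introduced in~\eqref{eqn: def of vperppara}. Writing $dv = dv_\perp\,dv_\parallel$ with $v_\perp\in\R$ and $v_\parallel\in\R^2$, the integration region $\{n(x)\cdot v<0\}$ becomes $\{v_\perp<0\}\times \R^2$. A direct bookkeeping of the constants in~\eqref{eqn: Formula for R} shows that $R$ factorizes as $R_\perp(v_\perp)\,R_\parallel(v_\parallel)$, where
\[
R_\parallel(v_\parallel):=\frac{1}{2\pi T_w(x)\,r_\parallel(2-r_\parallel)} \exp\!\left(-\frac{|v_\parallel-(1-r_\parallel)u_\parallel|^2}{2T_w(x)\,r_\parallel(2-r_\parallel)}\right),
\]
\[
R_\perp(v_\perp):=\frac{|v_\perp|}{T_w(x)\,r_\perp} \exp\!\left(-\frac{|v_\perp|^2+(1-r_\perp)|u_\perp|^2}{2T_w(x)\,r_\perp}\right) I_0\!\left(\frac{(1-r_\perp)^{1/2} v_\perp u_\perp}{T_w(x)\,r_\perp}\right).
\]
So the claim reduces to showing $\int_{\R^2} R_\parallel\,dv_\parallel=1$ and $\int_{-\infty}^0 R_\perp\,dv_\perp=1$ separately.

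For the tangential factor, $R_\parallel$ is a shifted isotropic $2$D Gaussian with variance $\sigma^2=T_w(x)\,r_\parallel(2-r_\parallel)$ in each direction, and the prefactor $1/(2\pi\sigma^2)$ is exactly its normalization constant; the integral equals $1$ after a translation $v_\parallel\mapsto v_\parallel+(1-r_\parallel)u_\parallel$. This step is routine.

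The main (but still standard) work is the normal factor. After the substitution $s=-v_\perp>0$, then the non-dimensionalization $a=s/\sqrt{T_w(x)\,r_\perp}$ and $b=\sqrt{1-r_\perp}\,u_\perp/\sqrt{T_w(x)\,r_\perp}$, one gets
\[
\int_{-\infty}^0 R_\perp(v_\perp)\,dv_\perp = e^{-b^2/2}\int_0^\infty a\, e^{-a^2/2}\, I_0(ab)\,da.
\]
The remaining integral is a classical Weber--Schafheitlin type identity,
\[
\int_0^\infty a\,e^{-\alpha a^2}I_0(\beta a)\,da=\frac{1}{2\alpha}\exp\!\Big(\frac{\beta^2}{4\alpha}\Big),
\]
which with $\alpha=1/2$, $\beta=b$ yields $e^{b^2/2}$; hence the whole normal integral is $e^{-b^2/2}\cdot e^{b^2/2}=1$. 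I would prove this Bessel identity by inserting the series representation $I_0(z)=\sum_{k\geq 0} (z/2)^{2k}/(k!)^2$, interchanging sum and integral (justified by monotone convergence since the integrand is non-negative), evaluating each term via the Gamma function, and re-summing to an exponential.

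Multiplying the two unit factors completes the proof of~\eqref{eqn: integrate 1}. The only subtlety is the Bessel integral identity above; everything else is direct change of variable and Gaussian normalization.
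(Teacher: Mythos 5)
Your proposal is correct and follows essentially the same route as the paper's appendix proof: split $R$ into tangential and normal factors, note the tangential factor is a normalized shifted Gaussian, and evaluate the normal integral by expanding $I_0$ in its power series, interchanging sum and integral by Tonelli, computing the Gaussian moments term by term, and resumming to an exponential. The only cosmetic difference is that you non-dimensionalize before invoking the Bessel identity, whereas the paper rescales mid-computation; the constants check out either way.
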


\begin{proof}

We can transform the basis from $\{n,\tau_1,\tau_2\}$ to the standard bases $\{e_1,e_2,e_3\}$. For the sake of simplicity, we assume $T_w(x)=1$. The integration over $\mathcal{V}_\parallel$, after the orthonormal transformation, becomes integration over $\mathbb{R}^2$. We have
\[\int_{\mathbb{R}^2} \frac{1}{r_\parallel(2-r_\parallel)}  \exp\Big(\frac{|v_\parallel-(1-r_\parallel)u_\parallel|^2}{r_\parallel(2-r_\parallel)} \Big)dv_\parallel,\]
which is obviously normalized.

Then we consider the integration over $\mathcal{V}_\perp$, which is $e_3<0$ after the transformation. We want to show

\begin{equation}\label{eqn: I0}
\frac{2}{r_\perp}\int_{-\infty}^0 -v_\perp e^{-\frac{|v_\perp|^2}{r_\perp}}e^{\frac{-(1-r_\perp)|u_\perp|^2}{r_\perp}}I_0(\frac{2(1-r_\perp)^{1/2}v_\perp u_\perp}{r_\perp})dv_\perp=1.
\end{equation}
The Bessel function reads
\[J_0(y)=\frac{1}{\pi}\int_0^{\pi} e^{iy\cos\theta}d\theta=\sum_{k=0}^\infty \frac{1}{\pi}\int_0^\pi \frac{(iy\cos\theta)^k}{k!} d\theta=\sum_{k=0}^\infty \int_0^\pi  \frac{(iy\cos\theta)^{2k}}{(2k)!}  d\theta\]
\[\sum_{k=0}^\infty   \int_0^\pi  \frac{(-1)^k (y)^{2k} (\cos\theta)^{2k}}{(2k)!}d\theta=\sum_{k=0}^\infty (-1)^k \frac{(\frac{1}{4}y^2)^k}{(k!)^2},\]
where we use the Fubini's theorem and the fact that
\[\int_0^\pi \cos^{2k}\theta=\frac{\pi}{2^{2k}}\left(
                                                 \begin{array}{c}
                                                   2k \\
                                                   k \\
                                                 \end{array}
                                               \right).
\]
Hence
\begin{equation}\label{eqn: I0 sequence}
I_0(y)=\frac{1}{\pi}\int_0^\pi e^{i(-iy)\cos \theta}d\theta =J_0(-iy)=\sum_{k=0}^\infty \frac{(\frac{1}{4}y^2)^k}{(k!)^2},\quad I_0(y)=I_0(-y).
\end{equation}
By taking the change of variable $v_\perp\to -v_\perp$, the LHS of \eqref{eqn: I0} can be written as
\[\frac{2}{r_\perp}\int_{0}^\infty v_\perp e^{-\frac{|v_\perp|^2}{r_\perp}}e^{\frac{-(1-r_\perp)|u_\perp|^2}{r_\perp}}I_0(\frac{2(1-r_\perp)^{1/2}v_\perp u_\perp}{r_\perp})dv_\perp.\]
Using~\eqref{eqn: I0 sequence} we rewrite the above term as
\begin{equation}\label{eqn: I_0 tough}
\sum_{k=0}^\infty \frac{2}{r_\perp}\int_0^\infty v_\perp e^{\frac{-|v_\perp|^2}{r_\perp}} e^{\frac{-(1-r_\perp)|u_\perp|^2}{r_\perp}}\frac{(1-r_\perp)^k v_\perp^{2k}u_\perp^{2k}}{(k!)^2r_\perp^{2k}}dv,
\end{equation}
where we use the Tonelli theorem. Rescale $v_\perp=\sqrt{r_\perp}v_\perp$ we have
\[\frac{2}{r_\perp}\int_0^\infty v_\perp e^{\frac{-|v_\perp|^2}{r_\perp}} e^{\frac{-(1-r_\perp)|u_\perp|^2}{r_\perp}}\frac{(1-r_\perp)^k v_\perp^{2k}u_\perp^{2k}}{(k!)^2r_\perp^{2k}}dv\]
\[=2\int_0^\infty v_\perp e^{-|v_\perp|^2} e^{\frac{-(1-r_\perp)|u_\perp|^2}{r_\perp}}\frac{(1-r_\perp)^k v_\perp^{2k}u_\perp^{2k}}{(k!)^2r_\perp^{k}}dv\]
\begin{equation}\label{eqn: appendix for I0}
=2\int_0^\infty v_\perp^{2k+1}e^{-|v_\perp|^2}dv    e^{\frac{-(1-r_\perp)|u_\perp|^2}{r_\perp}}\frac{(1-r_\perp)^k u_\perp^{2k}}{(k!)^2r_\perp^{k}}
\end{equation}
\[=2\frac{k!}{2}e^{\frac{-(1-r_\perp)|u_\perp|^2}{r_\perp}}\frac{(1-r_\perp)^k u_\perp^{2k}}{(k!)^2r_\perp^{k}}=e^{\frac{-(1-r_\perp)|u_\perp|^2}{r_\perp}}\frac{(1-r_\perp)^k u_\perp^{2k}}{k!r_\perp^{k}}.\]
Therefore, the LHS of~\eqref{eqn: I0} can be written as
\[e^{\frac{-(1-r_\perp)|u_\perp|^2}{r_\perp}}\sum_{k=0}^\infty \frac{(1-r_\perp)^k u_\perp^{2k}}{k!r_\perp^{k}} =e^{\frac{-(1-r_\perp)|u_\perp|^2}{r_\perp}}e^{\frac{(1-r_\perp)|u_\perp|^2}{r_\perp}}=1.\]

\end{proof}

\begin{lemma}\label{Lemma: abc}
For any $a>0,b>0,\e>0$ with $a+\e<b$,
\begin{equation}\label{eqn: coe abc}
\frac{b}{\pi}\int_{\mathbb{R}^2} e^{\e|v|^2}  e^{a|v|^2}e^{-b|v-w|^2}dv=\frac{b}{b-a-\e}e^{\frac{(a+\e)b}{b-a-\e}|w|^2}.
\end{equation}
And when $\delta\ll 1$,
\begin{eqnarray}
     \frac{b}{\pi}\int_{|v-\frac{b}{b-a-\e}w|>\delta^{-1}} e^{\e|v|^2}  e^{a|v|^2}e^{-b|v-w|^2}dv  &\leq  &e^{-(b-a-\e)\delta^{-2}} \frac{b}{b-a-\e} e^{\frac{(a+\e)b}{b-a-\e}|w|^2} \label{eqn: coe abc smaller} \\
   & \leq & \delta \frac{b}{b-a-\e}e^{\frac{(a+\e)b}{b-a-\e}|w|^2} \label{eqn: coe abc small}.
\end{eqnarray}

\end{lemma}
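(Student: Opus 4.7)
The plan is to prove all three estimates by a single completion-of-squares calculation followed by a standard two-dimensional Gaussian tail bound in polar coordinates. I will first rewrite the exponent
\[
(a+\varepsilon)|v|^2 - b|v-w|^2 = -(b-a-\varepsilon)\left|v-\tfrac{b}{b-a-\varepsilon}w\right|^2 + \tfrac{(a+\varepsilon)b}{b-a-\varepsilon}|w|^2,
\]
using that $b-a-\varepsilon>0$ so the coefficient of the squared displacement is strictly negative. This identity isolates a translated Gaussian in $v$ with shift $\frac{b}{b-a-\varepsilon}w$ and the multiplicative factor $e^{\frac{(a+\varepsilon)b}{b-a-\varepsilon}|w|^2}$ that appears on the right-hand side of all three claimed inequalities.

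For \eqref{eqn: coe abc}, I would substitute $u=v-\tfrac{b}{b-a-\varepsilon}w$ (a translation, hence with Jacobian $1$) and invoke the two-dimensional Gaussian identity $\int_{\mathbb{R}^2}e^{-c|u|^2}\,du=\pi/c$ with $c=b-a-\varepsilon$, which immediately yields the prefactor $\frac{b}{b-a-\varepsilon}$ after canceling the $\frac{b}{\pi}$.

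For \eqref{eqn: coe abc smaller}, after the same substitution the domain $\{|v-\tfrac{b}{b-a-\varepsilon}w|>\delta^{-1}\}$ becomes $\{|u|>\delta^{-1}\}$, and the key computation reduces to the explicit 2D Gaussian tail
\[
\int_{|u|>\delta^{-1}} e^{-(b-a-\varepsilon)|u|^2}\,du = 2\pi\int_{\delta^{-1}}^{\infty} r\,e^{-(b-a-\varepsilon)r^2}\,dr = \frac{\pi}{b-a-\varepsilon}\,e^{-(b-a-\varepsilon)\delta^{-2}},
\]
which I will obtain by the antiderivative $r\mapsto -\frac{1}{2(b-a-\varepsilon)}e^{-(b-a-\varepsilon)r^2}$. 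Multiplying back the constants and the Gaussian factor in $w$ gives \eqref{eqn: coe abc smaller} exactly. Finally, \eqref{eqn: coe abc small} follows because $e^{-(b-a-\varepsilon)\delta^{-2}}\leq \delta$ whenever $\delta\ll 1$, since exponential decay in $\delta^{-2}$ beats any polynomial in $\delta$; quantitatively, the threshold depends only on $b-a-\varepsilon$.

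There is no genuine obstacle here: the proof is essentially one Gaussian identity applied twice (once to the whole plane, once on an annular complement). The only care required is to verify that $b-a-\varepsilon>0$ so the translated integrand is genuinely integrable and the polar-coordinate tail bound is well-defined; this is precisely the hypothesis, so nothing further is needed.
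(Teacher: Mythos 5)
Your proposal is correct and follows essentially the same route as the paper: complete the square in the exponent, translate, and use the exact two-dimensional Gaussian integral $\int_{\mathbb{R}^2}e^{-c|u|^2}\,du=\pi/c$ together with its explicit radial tail $\int_{|u|>\delta^{-1}}e^{-c|u|^2}\,du=\tfrac{\pi}{c}e^{-c\delta^{-2}}$. The only cosmetic difference is that the paper carries the shift as $v+\tfrac{b}{a+\e-b}w$ rather than $v-\tfrac{b}{b-a-\e}w$, which is the same point.
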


\begin{proof}
\begin{align*}
   & \frac{b}{\pi}\int_{\mathbb{R}^2} e^{\e|v|^2}  e^{a|v|^2}e^{-b|v-w|^2}dv = \frac{b}{\pi}\int_{\mathbb{R}^2} e^{(a+\e-b)|v|^2} e^{2bv\cdot w} e^{-b|w|^2}dv    \\
   & =\frac{b}{\pi}\int_{\mathbb{R}^2}   e^{(a+\e-b)|v+\frac{b}{a+\e-b}w|^2} e^{\frac{-b^2}{a+\e-b}|w|^2} e^{-b|w|^2}dv\\
   &=\frac{b}{\pi}\int_{\mathbb{R}^2} e^{(a+\e-b)|v|^2}dv e^{\frac{(a+\e)b}{b-a-\e}|w|^2}=\frac{b}{b-a-\e}e^{\frac{(a+\e)b}{b-a-\e}|w|^2},
\end{align*}
where we apply change of variable $v+\frac{b}{a+\e-b}w\to v$ in the first step of the last line, then we obtain~\eqref{eqn: coe abc}.

Following the same derivation
\begin{align*}
   & \frac{b}{\pi}\int_{|v-\frac{b}{b-a-\e}w|>\delta^{-1}} e^{\e|v|^2}  e^{a|v|^2}e^{-b|v-w|^2}dv \\
   & =\frac{b}{\pi}\int_{|v-\frac{b}{b-a-\e}w|>\delta^{-1}} e^{(a+\e-b)|v-\frac{b}{b-a-\e}w|^2}dv e^{\frac{(a+\e)b}{b-a-\e}|w|^2}\\
   &\leq  e^{-(b-a-\e)\delta^{-2}} \frac{b}{b-a-\e} e^{\frac{(a+\e)b}{b-a-\e}|w|^2} \leq \delta     \frac{b}{b-a-\e} e^{\frac{(a+\e)b}{b-a-\e}|w|^2},
\end{align*}
thus we obtain~\eqref{eqn: coe abc small}.

\end{proof}

\begin{lemma}\label{Lemma: perp abc}
For any $a>0,b>0,\e>0$ with $a+\e<b$,
\begin{equation}\label{eqn: coe abc perp}
2b\int_{\mathbb{R}^+}v e^{\e v^2}e^{av^2} e^{-bv^2}e^{-bw^2}I_0(2bv w)dv=\frac{b}{b-a-\e}e^{\frac{(a+\e)b}{b-a-\e}w^2}.
\end{equation}
And when $\delta\ll 1$,
\begin{equation}\label{eqn: coe abc perp small}
2b\int_{0< v<\delta}v e^{\e v^2}e^{av^2} e^{-bv^2}e^{-bw^2}I_0(2bv w)dv\leq \delta\frac{b}{b-a-\e}e^{\frac{(a+\e)b}{b-a-\e}w^2}.
\end{equation}

\end{lemma}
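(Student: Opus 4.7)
The plan is to reduce Lemma~\ref{Lemma: perp abc} to Lemma~\ref{Lemma: abc} by recognizing that the LHS of~\eqref{eqn: coe abc perp} is the polar-coordinate representation of a $2$D Gaussian integral. The crucial observation is
\[
\int_0^{2\pi} e^{2brw\cos\theta}\,d\theta = 2\pi I_0(2brw),
\]
which follows from the definition $I_0(y)=\pi^{-1}\int_0^\pi e^{y\cos\phi}d\phi$ together with the $2\pi$-periodicity and evenness of the integrand. Treating $w>0$ as the modulus of some fixed $\mathbf{w}\in\mathbb{R}^2$ and writing $v\in\mathbb{R}^2$ in polar coordinates $(r,\theta)$ with $\theta$ measured from $\mathbf{w}$, one computes
\[
\frac{b}{\pi}\int_{\mathbb{R}^2} e^{(a+\e)|v|^2}e^{-b|v-\mathbf{w}|^2}\,dv = 2b\int_0^\infty r\, e^{(a+\e-b)r^2}e^{-bw^2}I_0(2brw)\,dr,
\]
which identifies the LHS of~\eqref{eqn: coe abc perp} with $\frac{b}{\pi}\int_{\mathbb{R}^2}e^{(a+\e)|v|^2}e^{-b|v-\mathbf{w}|^2}dv$. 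The identity~\eqref{eqn: coe abc perp} is then an immediate consequence of~\eqref{eqn: coe abc}.

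For the tail estimate~\eqref{eqn: coe abc perp small}, the same $2$D reformulation sends the $1$D interval $\{0<v<\delta\}$ to the disk $\{|v|<\delta\}\subset\mathbb{R}^2$. The region $\{|v-\tfrac{b}{b-a-\e}\mathbf{w}|>\delta^{-1}\}$ appearing in~\eqref{eqn: coe abc small} does not correspond to this disk, so I will not invoke~\eqref{eqn: coe abc small}; instead I will redo the completion of the square used in the proof of Lemma~\ref{Lemma: abc},
\[
(a+\e)|v|^2 - b|v-\mathbf{w}|^2 = -(b-a-\e)\Bigl|v-\tfrac{b}{b-a-\e}\mathbf{w}\Bigr|^2 + \tfrac{(a+\e)b}{b-a-\e}w^2,
\]
to pull the target factor $e^{(a+\e)bw^2/(b-a-\e)}$ outside the integral. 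Bounding the remaining Gaussian $e^{-(b-a-\e)|v-\frac{b}{b-a-\e}\mathbf{w}|^2}$ by $1$ and using $|\{|v|<\delta\}|=\pi\delta^2$ produces the upper bound $b\delta^2\,e^{(a+\e)bw^2/(b-a-\e)}$. For $\delta$ small enough that $\delta(b-a-\e)\leq 1$ (which is absorbed into the hypothesis $\delta\ll 1$), this is dominated by $\delta\,\tfrac{b}{b-a-\e}e^{(a+\e)bw^2/(b-a-\e)}$, yielding~\eqref{eqn: coe abc perp small}.

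There is no substantive difficulty beyond the polar-coordinate bookkeeping: once the $I_0$ factor is recognized as the angular part of a $2$D Gaussian, both parts of the lemma reduce to the planar estimates of Lemma~\ref{Lemma: abc}. An alternative approach that stays entirely in one dimension, expanding $I_0(y)=\sum_{k\ge 0}(y/2)^{2k}/(k!)^2$ and integrating term by term as in the proof of Lemma~\ref{Lemma: Prob measure}, would also give the identity, but it is less natural for the small-$\delta$ bound: the area factor $\pi\delta^2$ of the $2$D disk is exactly what produces the required linear factor $\delta$ after cancellation with $1/(b-a-\e)$.
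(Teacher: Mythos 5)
Your proof is correct, but it takes a genuinely different route from the paper's in both parts. For the identity~\eqref{eqn: coe abc perp} the paper never passes to two dimensions: it completes the square directly in the one-dimensional exponent to pull out the factor $\frac{b}{b-a-\e}e^{\frac{(a+\e)b}{b-a-\e}w^2}$, and then recognizes the remaining integral $2(b-a-\e)\int_0^\infty v\, e^{(a+\e-b)v^2}I_0(2bvw)e^{\frac{(bw)^2}{a+\e-b}}\,dv$ as exactly the normalization identity~\eqref{eqn: I0} of Lemma~\ref{Lemma: Prob measure}, which is itself proved there by expanding $I_0$ in its Taylor series and integrating term by term. Your observation that $\int_0^{2\pi}e^{2brw\cos\theta}\,d\theta=2\pi I_0(2brw)$ converts the whole left-hand side into the planar Gaussian integral of Lemma~\ref{Lemma: abc}, which is arguably cleaner and makes the structural parallel between Lemmas~\ref{Lemma: abc} and~\ref{Lemma: perp abc} transparent (it also explains why the two answers coincide). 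For the tail bound~\eqref{eqn: coe abc perp small} the paper again stays in one dimension: it bounds $I_0(y)\le e^{y}$, completes the square to reduce the integrand to $v\, e^{(a+\e-b)(v+\frac{bw}{a+\e-b})^2}\le v$, and computes $2(b-a-\e)\int_0^\delta v\,dv=(b-a-\e)\delta^2<\delta$; your two-dimensional version replaces this by bounding the shifted Gaussian by $1$ over the disk of area $\pi\delta^2$, yielding $b\delta^2$ and hence the same requirement $\delta(b-a-\e)\le 1$. The two arguments need $\delta\ll1$ in exactly the same quantitative sense, so nothing is lost or gained there; the main payoff of your approach is the unified treatment of the two lemmas, at the modest cost of the polar-coordinate bookkeeping and the implicit reduction to $w\ge0$ (harmless, since $I_0$ is even).
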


\begin{proof}
\begin{align*}
   & 2b\int_{\mathbb{R}^+}v e^{\e v^2}e^{av^2} e^{-bv^2}e^{-bw^2}I_0(2bv w)dv \\
   & =2b\int_{\mathbb{R}^+} v e^{(a+\e-b)v^2}I_0(2bv w) e^{\frac{b^2}{a+\e-b}w^2} e^{\frac{b^2}{b-a-\e}w^2} dv e^{-bw^2}\\
   &=2(b-a-\e)\int_{\mathbb{R}^+}v e^{(a+\e-b)v^2}I_0(2bv w)e^{\frac{(bw)^2}{a+\e-b}}dv \frac{b}{b-a-\e}e^{\frac{(a+\e)b}{b-a-\e}w^2}\\
   &=\frac{b}{b-a-\e}e^{\frac{(a+\e)b}{b-a-\e}w^2},
\end{align*}
where we use~\eqref{eqn: I0} in Lemma~\ref{Lemma: Prob measure} in the last line, then we obtain~\eqref{eqn: coe abc perp}.

Following the same derivation we have
\begin{align*}
   & 2b\int_{0< v< \delta}v e^{\e v^2}e^{av^2} e^{-bv^2}e^{-bw^2}I_0(2bv w)dv \\
  & =2(b-a-\e)\int_{0<v<\delta} v e^{(a+\e-b)v^2}I_0(2bv w)e^{\frac{(bw)^2}{a+\e-b}}dv \frac{b}{b-a-\e}e^{\frac{(a+\e)b}{b-a-\e}w^2}.
\end{align*}
Using the definition of $I_0$ we have
\[I_0(y)=\frac{1}{\pi}\int_{0}^{\pi} e^{y\cos\phi}d\phi\leq e^{y}.\]
Thus when $a-b+\e<0$,
\begin{align*}
 & 2(b-a-\e)\int_{0<v<\delta} v e^{(a+\e-b)v^2}I_0(2bv w)e^{\frac{(bw)^2}{a+\e-b}}dv \\
   & \leq 2(b-a-\e)\int_{0<v<\delta} v e^{(a-b+\e)v^2}e^{2v b w}e^{\frac{(bw)^2}{a-b+\e}}=2(b-a-\e)\int_{0<v<\delta} v e^{(a-b+\e)(v+\frac{bw}{a-b+\e})^2}dv\\
   &\leq 2(b-a-\e)\int_{0<v<\delta}vdv<\delta,
\end{align*}
where we use $\delta\ll 1$ in the last step, then we obtain~\eqref{eqn: coe abc perp small}. Then we derive~\eqref{eqn: coe perp small 2}.

\end{proof}

\begin{lemma}\label{Lemma: integrate normal small}
For any $m,n>0$, when $\delta\ll 1$, we have
\begin{equation}\label{eqn: smallness for i0}
2m^2\int_{\frac{n}{m}u_\perp+\delta^{-1}}^\infty     v_\perp e^{-m^2v_\perp^2}I_0(2mnv_\perp u_\perp)e^{-n^2u_\perp^2}dv_\perp \lesssim e^{-\frac{m^2}{4\delta^{2}}}.
\end{equation}
In consequence, for any $a>0,b>0,\e>0$ with $a+\e<b$,
\begin{eqnarray}
 2b\int_{\frac{b}{b-a-\e}w+\delta^{-1}}^\infty v e^{\e v^2}e^{av^2} e^{-bv^2}e^{-bw^2}I_0(2bv w)dv  &\leq  & e^{\frac{-(b-a-\e)}{4\delta^2}}\frac{b}{b-a-\e}e^{\frac{(a+\e)b}{b-a-\e}w^2} \label{eqn: coe perp smaller 2}\\
   &\leq  & \delta\frac{b}{b-a-\e}e^{\frac{(a+\e)b}{b-a-\e}w^2}. \label{eqn: coe perp small 2}
\end{eqnarray}

\end{lemma}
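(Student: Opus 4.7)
The plan for the kernel estimate \eqref{eqn: smallness for i0} is to replace the modified Bessel function by its elementary exponential majorant. From the integral representation $I_0(y)=\pi^{-1}\int_0^\pi e^{y\cos\phi}\,d\phi$ one has $I_0(y)\le e^{y}$ for $y\ge 0$. Substituting this into the integrand and completing the square gives the identity
\[
-m^2 v_\perp^2 + 2mn v_\perp u_\perp - n^2 u_\perp^2 \;=\; -(m v_\perp - n u_\perp)^2.
\]
After the change of variables $z = m v_\perp - n u_\perp$ (so $z\ge m\delta^{-1}$ because of the lower limit $v_\perp \ge \tfrac{n}{m}u_\perp+\delta^{-1}$) the quantity in question becomes
\[
2\int_{m\delta^{-1}}^{\infty} (z + n u_\perp)\, e^{-z^{2}}\,dz.
\]

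To extract the factor $e^{-m^{2}/(4\delta^{2})}$, I would split $e^{-z^{2}} = e^{-z^{2}/2}\, e^{-z^{2}/2}$ and use that on the region $z\ge m\delta^{-1}$ one has $e^{-z^{2}/2} \le e^{-m^{2}/(4\delta^{2})}$. The remaining factor $(z+nu_\perp)e^{-z^{2}/2}$ is absolutely integrable on $(0,\infty)$, so Gaussian tail estimates (analogous to those already used in the proofs of Lemmas~\ref{Lemma: abc} and \ref{Lemma: perp abc}) give the desired bound. Any residual polynomial-in-$nu_\perp$ growth produced by the translation enters only through the implicit constant in $\lesssim$ and will be harmless in the application, since it is absorbed by the Gaussian prefactor $e^{\frac{(a+\varepsilon)b}{b-a-\varepsilon}w^{2}}$ on the right-hand side of \eqref{eqn: coe perp smaller 2}.

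To deduce \eqref{eqn: coe perp smaller 2} from \eqref{eqn: smallness for i0}, I would reproduce the algebraic factorisation from the proof of Lemma~\ref{Lemma: perp abc}: inserting $1 = e^{\frac{b^{2}}{a+\varepsilon-b}w^{2}}\,e^{\frac{b^{2}}{b-a-\varepsilon}w^{2}}$ into the integrand and combining $e^{-bw^{2}}\,e^{\frac{b^{2}w^{2}}{b-a-\varepsilon}} = e^{\frac{(a+\varepsilon)b}{b-a-\varepsilon}w^{2}}$ allows one to pull the desired Gaussian prefactor out of the tail integral. What remains is exactly the integral appearing in \eqref{eqn: smallness for i0} with the parameter identification $m^{2} = b-a-\varepsilon$ and $nu_\perp = bw/\sqrt{b-a-\varepsilon}$, chosen so that $2mn u_\perp = 2bw$ and $n^{2}u_\perp^{2} = \tfrac{b^{2}w^{2}}{b-a-\varepsilon}$. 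Applying \eqref{eqn: smallness for i0} in this form yields the claimed factor $e^{-(b-a-\varepsilon)/(4\delta^{2})}$, and hence \eqref{eqn: coe perp smaller 2}. Finally, \eqref{eqn: coe perp small 2} is an immediate consequence of \eqref{eqn: coe perp smaller 2}: for $\delta \ll 1$ and since $b-a-\varepsilon > 0$ by assumption, $e^{-(b-a-\varepsilon)/(4\delta^{2})}\le \delta$.

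The only real obstacle is the $nu_\perp$-dependent term that emerges from the translation $z = mv_\perp - nu_\perp$; it prevents a bound that is literally uniform in $u_\perp$ if one insists on pure exponential decay. The factor-of-four improvement built into the exponent (going from the naive $e^{-m^{2}/\delta^{2}}$ down to $e^{-m^{2}/(4\delta^{2})}$) is precisely what provides the slack needed to absorb this linear growth. Once this point is handled, the rest of the argument is a straightforward completion-of-square and comparison with the normalisation identity already established in Lemmas~\ref{Lemma: Prob measure} and \ref{Lemma: perp abc}.
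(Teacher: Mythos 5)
Your reduction of \eqref{eqn: coe perp smaller 2} to \eqref{eqn: smallness for i0} (inserting $1=e^{\frac{b^2w^2}{a+\e-b}}e^{\frac{b^2w^2}{b-a-\e}}$, pulling out $e^{\frac{(a+\e)b}{b-a-\e}w^2}$, and identifying $m^2=b-a-\e$, $nu_\perp=bw/\sqrt{b-a-\e}$) and the final step to \eqref{eqn: coe perp small 2} are correct and coincide with the paper. The gap is in your proof of \eqref{eqn: smallness for i0} itself. After the majorization $I_0(y)\le e^y$ and the substitution $z=mv_\perp-nu_\perp$ you are left with $2\int_{m\delta^{-1}}^\infty(z+nu_\perp)e^{-z^2}\,dz\lesssim\bigl(1+\tfrac{nu_\perp\delta}{m}\bigr)e^{-m^2/\delta^2}$, and the term coming from $nu_\perp\int_{m\delta^{-1}}^\infty e^{-z^2}\,dz$ grows linearly in $u_\perp$. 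Your claim that the slack between $e^{-m^2/\delta^2}$ and $e^{-m^2/(4\delta^2)}$ absorbs this is false: that slack is the fixed number $e^{-3m^2/(4\delta^2)}$, while $u_\perp$ is unbounded. Hence you do not obtain \eqref{eqn: smallness for i0} with a constant uniform in $u_\perp$, which is exactly what the lemma asserts and what the applications need: with your bound the right-hand sides of \eqref{eqn: coe perp smaller 2}--\eqref{eqn: coe perp small 2} would acquire an extra factor linear in $w$, i.e.\ the Gaussian exponent $\frac{(a+\e)b}{b-a-\e}$ would have to be enlarged, and that exponent is tracked exactly in the induction of Lemma \ref{lemma: boundedness} and Lemma \ref{Lemma: Step3}.

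The paper closes this gap by splitting at $v_\perp=2\frac{n}{m}u_\perp$. For $v_\perp>2\frac{n}{m}u_\perp$ one has $(mv_\perp-nu_\perp)^2\ge m^2v_\perp^2/4$, so the crude bound $I_0(y)\le e^y$ already gives $\int v_\perp e^{-m^2v_\perp^2/4}\,dv_\perp\lesssim e^{-m^2/(4\delta^2)}$; this is the actual source of the factor $1/4$ in the exponent, not a reserve for absorbing growth in $u_\perp$. For $\frac{n}{m}u_\perp+\delta^{-1}\le v_\perp\le 2\frac{n}{m}u_\perp$ the prefactor $v_\perp\sim u_\perp$ is the dangerous one, and the paper removes it by a term-by-term Stirling comparison of the Taylor series of $v_\perp I_0(2mnv_\perp u_\perp)$ and of $e^{2mnv_\perp u_\perp}$, which amounts to exploiting $e^{-y}I_0(y)\sim(2\pi y)^{-1/2}$: since $v_\perp/u_\perp\le 2n/m$ in this regime, one gets $v_\perp I_0(2mnv_\perp u_\perp)\lesssim_{m,n}e^{2mnv_\perp u_\perp}$ with no leftover power of $v_\perp$, and the remaining Gaussian tail integral is $\lesssim e^{-m^2\delta^{-2}}$. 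Some version of this sharpened bound on $v_\perp I_0$ is indispensable; the elementary majorant $I_0(y)\le e^y$ alone does not suffice.
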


\begin{proof}
We discuss two cases. The first case is $v_\perp>2\frac{n}{m}u_\perp$. We bound $I_0$ as
\[I_0(2mnv_\perp u_\perp)\leq \frac{1}{\pi}\int_0^\pi \exp \Big( 2mnv_\perp u_\perp\Big) d\theta=\exp \Big(2mnv_\perp u_\perp\Big).\]
The LHS of~\eqref{eqn: smallness for i0} is bounded by
\[2m^2\int_{\max\{2\frac{n}{m}u_\perp,\frac{n}{m}u_\perp+\delta^{-1}\}}^\infty    ve^{-m^2(v_\perp-\frac{n}{m}u_\perp)^2}dv.\]
Using $v_\perp>2\frac{n}{m}u_\perp$ we have
\begin{equation*}
(v_\perp-\frac{n}{m}u_\perp)^2\geq (\frac{v_\perp}{2}+\frac{v_\perp}{2}-\frac{n}{m}u_\perp)^2 \geq \frac{v_\perp^2}{4}.
\end{equation*}
Thus we can further bound LHS of~\eqref{eqn: smallness for i0} by
\begin{equation*}
2m^2\int_{\max\{2\frac{n}{m}u_\perp,\frac{n}{m}u_\perp+\delta^{-1}\}}^\infty      v_\perp e^{-\frac{m^2 v_\perp^2}{4}} dv_\perp \lesssim e^{-\frac{m^2}{4\delta^2}}.
\end{equation*}

The second case is $0\leq v_\perp\leq 2\frac{n}{m}u_\perp$. Since $\frac{n}{m}u_\perp+\delta^{-1}<v_\perp$, without loss of generality, we can assume $u_\perp>\delta^{-1}$. We compare the Taylor series of $v_\perp I_0(2mnv_\perp u_\perp)$ and $\exp \Big(2mnv_\perp u_\perp \Big)$. We have
\begin{equation}\label{eqn: vI_0 taylor}
v_\perp I_0(2mnv_\perp u_\perp )=\sum_{k=0}^\infty \frac{m^{2k}n^{2k}v_\perp^{2k+1}u_\perp^{2k}}{(k!)^2},
\end{equation}
and
\begin{equation}\label{eqn: exp taylor}
\exp \Big(2mnv_\perp u_\perp \Big)=\sum_{k=0}^\infty \frac{2^k m^k n^k v_\perp^k u_\perp^k}{k!}.
\end{equation}
We choose $k_1$ such that when $k>k_1$, we can apply the Sterling formula such that
\[\frac{1}{2}\leq |\frac{k!}{k^ke^{-k}\sqrt{2\pi k}}|\leq 2.\]
Then we observe the quotient of the $k$-th term of~\eqref{eqn: vI_0 taylor} and the $2k+1$-th term of~\eqref{eqn: exp taylor},
\begin{align*}
   & \frac{m^{2k}n^{2k}v_\perp^{2k+1}u_\perp^{2k}}{(k!)^2}/\Big(\frac{2^{2k+1} m^{2k+1}n^{2k+1}v_\perp^{2k+1} u_\perp^{2k+1}}{(2k+1)!} \Big) \\
   & \leq \frac{4}{k^{2k}e^{-2k}2\pi k}/\Big(\frac{2^{2k+1} mn u_\perp}{(2k+1)^{2k+1}e^{-(2k+1)}\sqrt{2\pi (2k+1)}} \Big)\\
   &= \frac{4e}{2\pi mn}\Big(\frac{k+1/2}{k} \Big)^{2k+1}  \frac{\sqrt{2\pi (2k+1)}}{u_\perp}\\
   &= \frac{4e}{2\pi mn}\Big(\frac{2k+1}{2k} \Big)^{2k+1} \frac{\sqrt{2\pi (2k+1)}}{u_\perp}\leq \frac{4e^2}{\sqrt{\pi} mn} \frac{\sqrt{k}}{u_\perp}.
\end{align*}

Thus we can take $k_u=u_\perp^2$ such that when $k\leq k_u$,
\begin{equation}\label{k<ku}
\sum_{k=k_1}^{k_u} \frac{m^{2k}n^{2k}v_\perp^{2k+1}u_\perp^{2k}}{(k!)^2}\leq \frac{4e^2}{\sqrt{\pi}mn}\sum_{k=k_1}^{k_u}    \frac{2^{2k+1} m^{2k+1}n^{2k+1}v_\perp^{2k+1} u_\perp^{2k+1}}{(2k+1)!}.
\end{equation}
Similarly we observe the quotient of the $k$-th term of~\eqref{eqn: vI_0 taylor} and the $2k$-th term of~\eqref{eqn: exp taylor},
\[\frac{m^{2k}n^{2k}v_\perp^{2k+1}u_\perp^{2k}}{(k!)^2}/\Big(\frac{2^{2k} m^{2k}n^{2k}v_\perp^{2k} u_\perp^{2k}}{(2k)!} \Big)
\]
\[\leq \frac{4v_\perp}{k^{2k}e^{-2k}2\pi k}/\Big(\frac{2^{2k}}{(2k)^{2k}e^{-2k}\sqrt{4\pi k}} \Big)=\frac{4v_\perp}{\sqrt{\pi} \sqrt{k}}.\]
When $k>k_u=u_\perp^2$, by $u_\perp>\delta^{-1}$ and $v_\perp<2\frac{n}{m}u_\perp$ we have
\[\frac{4v_\perp}{\sqrt{\pi} \sqrt{k}}\leq \frac{4v_\perp}{\sqrt{\pi}u_\perp}\leq \frac{8n}{m\sqrt{\pi}}.\]
Thus we have
\begin{equation}\label{eqn: k>ku}
\sum_{k=k_u}^\infty \frac{m^{2k}n^{2k}v_\perp^{2k+1}u_\perp^{2k}}{(k!)^2}\leq \frac{8n}{m\sqrt{\pi}}\sum_{k=k_u}^\infty \frac{2^{2k} m^{2k}n^{2k}v_\perp^{2k} u_\perp^{2k}}{(2k)!}.
\end{equation}

Collecting~\eqref{eqn: k>ku}~\eqref{k<ku}, when $v_\perp<2\frac{n}{m}u_\perp$, we obtain
\begin{equation}\label{I0 <exp}
v_\perp I_0(2mnv_\perp u_\perp )\lesssim \exp \Big(\frac{2(1-r_\perp)^{1/2} v_\perp u_\perp}{r_\perp} \Big).
\end{equation}
By~\eqref{I0 <exp}, we have
\[\int_{\frac{n}{m}u_\perp+\delta^{-1}}^{2\frac{n}{m}u_\perp}v_\perp I_0(2mnv_\perp u_\perp)) e^{-m^2v_\perp^2}e^{n^2 v_\perp^2}dv\]
\begin{equation}\label{eqn: middle}
\lesssim \int_{\frac{n}{m}u_\perp+\delta^{-1}}^{2\frac{n}{m}u_\perp}    e^{-m^2(v_\perp-\frac{n}{m}u_\perp)^2} dv\leq e^{-m^2\delta^{-2}}.
\end{equation}
Collecting~\eqref{eqn: exp taylor} and~\eqref{eqn: middle} we prove~\eqref{eqn: smallness for i0}.

Then following the same derivation as~\eqref{eqn: coe abc perp},
\begin{align*}
   &  2b\int_{\frac{b}{b-a-\e}w+\delta^{-1}}^\infty v e^{\e v^2}e^{av^2} e^{-bv^2}e^{-bw^2}I_0(2bv w)dv\\
   & =2(b-a-\e)\int_{\frac{b}{b-a-\e}w+\delta^{-1}}^\infty v e^{(a+\e-b)v^2}I_0(2bv w)e^{\frac{(bw)^2}{a+\e-b}}dv \frac{b}{b-a-\e}e^{\frac{(a+\e)b}{b-a-\e}w^2}\\
   &\leq e^{\frac{-(b-a-\e)}{4\delta^2}}\frac{b}{b-a-\e}e^{\frac{(a+\e)b}{b-a-\e}w^2} \leq \delta \frac{b}{b-a-\e}e^{\frac{(a+\e)b}{b-a-\e}w^2},
\end{align*}
where we apply~\eqref{eqn: smallness for i0} in the first step in the third line and take $\delta\ll 1$ in the last step of the third line.

\end{proof}

\begin{lemma}
If $0<\frac{\theta}{4}<\rho$, if $0<\tilde{\rho}<  \rho- \frac{\theta}{4}$, $0\leq \lambda t< \theta$,
		\begin{equation}\label{k_theta}
		\mathbf{k}_{  \varrho}(v,u) \frac{e^{{\theta} |v|^2}}{e^{\mathcal{\theta} |u|^2}}\frac{e^{\lambda t\langle u\rangle}}{e^{\lambda t\langle v\rangle}} \lesssim  \mathbf{k}_{\tilde{\varrho}}(v,u) .
		\end{equation}
\end{lemma}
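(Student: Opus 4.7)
The plan is to establish (\ref{k_theta}) pointwise in $(v,u)$. After cancelling the common factor $|v-u|^{-1}$ from both sides, the inequality reduces to showing
\[
\theta(|v|^2-|u|^2) + \lambda t(\langle u\rangle - \langle v\rangle) \leq (\rho-\tilde{\rho})\Big(|v-u|^2 + \frac{||v|-|u||^2}{|v-u|^2}\Big) + C
\]
for some universal constant $C>0$ (absorbed into $\lesssim$). I intend to bound the two terms on the left separately, apportioning the right-hand decay appropriately.

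For the $\lambda t$-term I will use the identity $\langle u\rangle - \langle v\rangle = (|u|^2-|v|^2)/(\langle u\rangle+\langle v\rangle)$, which gives $|\langle u\rangle - \langle v\rangle|\leq ||u|-|v||\leq |v-u|$. Young's inequality then yields $\lambda t|v-u|\leq \eta|v-u|^2 + (\lambda t)^2/(4\eta)$ for any $\eta>0$, so this contribution costs only an arbitrarily small slice of the $|v-u|^2$-decay plus a bounded additive term. Under the assumption $0\leq \lambda t < \theta$, the parameter $\eta$ is at our disposal.

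The principal step is controlling $\theta(|v|^2-|u|^2)$. Factoring $|v|^2-|u|^2=(|v|-|u|)(|v|+|u|)$ and rewriting as
\[
\theta(|v|^2-|u|^2) = \theta \cdot \frac{||v|-|u||}{|v-u|} \cdot (|v|+|u|) \cdot |v-u|,
\]
I plan to split according to the ratio $r:=||v|-|u||/|v-u|\in[0,1]$. In the regime where $r$ is bounded below, the kernel's second exponent furnishes decay of order $\rho r^2$, which combined with $|v|+|u|\leq 2|u|+2|v-u|$ and a further Young's inequality absorbs the growing factor $|v|+|u|$, the residual $|u|^2$ term being swallowed by the $r^2$-decay of the kernel. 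In the tangential regime where $r$ is small, the quantity $|v|^2-|u|^2$ is already small compared to $(|v|+|u|)|v-u|$, and a single weighted AM-GM of the form $\theta ab\leq \kappa a^2 + (\theta^2/(4\kappa))b^2$ with $a=|v-u|$ and $b=r(|v|+|u|)$ closes the bound. Matching $\kappa$ against the gap $\rho-\tilde{\rho}-\eta$ produces the threshold $\theta/4<\rho$.

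The main difficulty I anticipate is obtaining precisely the constant $\theta/4$ rather than the coarser $\theta/2$ that a single uniform AM-GM would yield; this will require the two-regime split above and the simultaneous exploitation of both exponential pieces of the kernel. Once the pointwise inequality is established, with $C$ depending only on the auxiliary parameters $\eta,\lambda t$, exponentiation gives (\ref{k_theta}).
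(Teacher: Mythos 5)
There is a genuine gap in your principal step, and it traces back to the second exponent of the kernel. As you have transcribed it from \eqref{eqn: k_rho}, that exponent is $\varrho\,\frac{||v|-|u||^2}{|v-u|^2}=\varrho r^2\le \varrho$, a \emph{uniformly bounded} quantity: it supplies no decay at all and cannot ``swallow'' any unbounded term such as the residual $|u|^2$ you invoke in the regime where $r$ is bounded below. Concretely, take $v=2u$ with $|u|\to\infty$: then $r=1$, the left side of your reduction inequality is $\theta(|v|^2-|u|^2)=3\theta|u|^2$, while the right side is $(\varrho-\tilde\varrho)(|u|^2+1)+C$, so you would need $3\theta\le\varrho-\tilde\varrho$, which is far stronger than $\theta<4\varrho$. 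The same defect kills the tangential regime: your AM-GM with $b=r(|v|+|u|)$ produces $\frac{\theta^2}{4\kappa}\,r^2(|v|+|u|)^2$, and the kernel's $\varrho r^2$ cannot absorb the extra factor $(|v|+|u|)^2$.

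The resolution is that the kernel actually used in the paper's proof (and standard in the Grad estimate) carries $\varrho\,\frac{\big||v|^2-|u|^2\big|^2}{|v-u|^2}$ in the second exponent --- the display \eqref{eqn: k_rho} has a typo --- and this equals $\varrho\,r^2(|v|+|u|)^2$, i.e.\ exactly $\varrho\,b^2$ for your $b=r(|v|+|u|)$. With the correct kernel, your single weighted AM-GM $\theta ab\le\kappa a^2+\frac{\theta^2}{4\kappa}b^2$ already closes the estimate with no regime splitting; this is precisely what the paper does, phrased instead as negative-definiteness of a quadratic form in $\big(|\eta|,\,\tfrac{v\cdot\eta}{|\eta|}\big)$ after substituting $\eta=v-u$, $u=v-\eta$ and checking a discriminant. (Your handling of the $e^{\lambda t\langle u\rangle}/e^{\lambda t\langle v\rangle}$ factor via $|\langle u\rangle-\langle v\rangle|\le|v-u|$ and Young is fine, and somewhat cleaner than the paper's case split on $\langle u\rangle-\langle v\rangle\lessgtr 1$ followed by folding $\lambda t$ into the Gaussian weight as $\theta-\lambda t$.) Note also that the single AM-GM yields the threshold $\theta/2<\varrho$ rather than $\theta/4<\varrho$; I do not see how your proposed two-regime split would improve this, so you should not build the argument around recovering $\theta/4$.
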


\begin{proof}
When $\langle u \rangle -\langle v\rangle\leq 1$,
\[\frac{e^{\lambda s\langle u\rangle }}{e^{\lambda s\langle v\rangle}}\leq e^{\lambda s}.\]
When $\langle u\rangle -\langle v\rangle \geq 1$,
\[\langle u\rangle ^2-\langle v\rangle^2=(\langle u\rangle -\langle v\rangle )(\langle u\rangle +\langle v\rangle)\geq \langle u\rangle-\langle v\rangle.\]
Thus by $\langle u\rangle^2=|u|^2+1$,
\[\frac{e^{\lambda s\langle u\rangle }}{e^{\lambda s\langle v\rangle}}\lesssim 1+\frac{e^{\lambda s | u|^2 }}{e^{\lambda s | v|^2}}.\]
Note
		\Be\notag
		\begin{split}
			\mathbf{k}_{  \varrho}(v,u) \frac{e^{\vartheta |v|^2}}{e^{\vartheta |u|^2}}
			=  \frac{1}{|v-u| } \exp\left\{- {\varrho} |v-u|^{2}
			-  {\varrho} \frac{ ||v|^2-|u|^2 |^2}{|v-u|^2} + \vartheta |v|^2 - \vartheta |u|^2
	\right\}.
		\end{split}\Ee

		Let $v-u=\eta $ and $u=v-\eta $. Then the exponent equals
		\begin{eqnarray*}
			&&- \varrho|\eta |^{2}-\varrho\frac{||\eta |^{2}-2v\cdot \eta |^{2}}{%
				|\eta |^{2}}-\vartheta \{|v-\eta |^{2}-|v|^{2}\}-\lambda t \{|v|-|v-\eta|\} \\
			&=&-2 \varrho |\eta |^{2}+ 4 \varrho v\cdot \eta - 4 \varrho\frac{|v\cdot
				\eta |^{2}}{|\eta |^{2}}-\vartheta \{|\eta |^{2}-2v\cdot \eta \} \\
			&=&(-2 \varrho-\vartheta  )|\eta |^{2}+(4 \varrho+2\vartheta )v\cdot \eta -%
			4 \varrho\frac{\{v\cdot \eta \}^{2}}{|\eta |^{2}}.
		\end{eqnarray*}%
		If $0<\vartheta <4 \varrho$ then the discriminant of the above quadratic form of
		$|\eta |$ and $\frac{v\cdot \eta }{|\eta |}$ is
		\begin{equation*}
		(4 \varrho+2\vartheta )^{2}-4
		(-2 \varrho-\vartheta  )(-%
		4 \varrho)
		=4\vartheta ^{2}- 16 \varrho \vartheta<0.
		\end{equation*}%
		Hence, the quadratic form is negative definite. We thus have, for $%
		0<\tilde{\varrho}< \varrho - \frac{\vartheta}{4}  $, the following perturbed quadratic form is still negative definite
		\[
		-(\varrho - \tilde{\varrho})|\eta |^{2}-(\varrho - \tilde{\varrho})\frac{||\eta
			|^{2}-2v\cdot \eta |^{2}}{|\eta |^{2}}-\vartheta \{|\eta |^{2}-2v\cdot \eta \}  \leq 0.\]

For
\[			\mathbf{k}_{  \varrho}(v,u) \frac{e^{\vartheta |v|^2}}{e^{\vartheta |u|^2}}\frac{e^{\lambda t\langle u\rangle^2}}{e^{\lambda t |v|^2}}
			=  \frac{1}{|v-u| } \exp\left\{- {\varrho} |v-u|^{2}
			-  {\varrho} \frac{ ||v|^2-|u|^2 |^2}{|v-u|^2} + (\theta-\lambda t) |v|^2 - (\theta-\lambda t) |u|^2
	\right\}.\]
We just need to replace $\theta$ by $\theta-\lambda t$ in the previous computation. By $\lambda t\ll \theta$,
		\[
		-(\varrho - \tilde{\varrho})|\eta |^{2}-(\varrho - \tilde{\varrho})\frac{||\eta
			|^{2}-2v\cdot \eta |^{2}}{|\eta |^{2}}-(\theta-\lambda t) \{|\eta |^{2}-2v\cdot \eta \}  \leq 0.\]
Therefore, we conclude the lemma.

	\end{proof}

\textbf{Acknowledgements.} Q.L. is support in part by National Science Foundation under award 1619778, 1750488. H.C. is support in part by Wisconsin Data Science Initiative. C.K is partly support in part by National Science Foundation under award NSF DMS-1501031, DMS-1900923.

\bibliographystyle{amsplain}
\bibliography{VPB}

\end{document}